\documentclass[11pt,twoside,ngerman,english]{scrartcl}
\usepackage[T1]{fontenc}
\usepackage[latin9]{luainputenc}
\usepackage{geometry}
\geometry{verbose,tmargin=3cm,bmargin=3.7cm,lmargin=2.8cm,rmargin=2.8cm}
\usepackage{xcolor}
\usepackage{babel}
\usepackage{float}
\usepackage{amsmath}
\usepackage{amsthm}
\usepackage{amssymb}
\usepackage{esint}
\usepackage[unicode=true,pdfusetitle,
 bookmarks=true,bookmarksnumbered=false,bookmarksopen=false,
 breaklinks=false,pdfborder={0 0 1},backref=false,colorlinks=false]
 {hyperref}

\makeatletter
\numberwithin{equation}{section}
\theoremstyle{plain}
\newtheorem{thm}{\protect\theoremname}[section]
  \theoremstyle{plain}
  \newtheorem{assumption}[thm]{\protect\assumptionname}
  \theoremstyle{remark}
  \newtheorem{rem}[thm]{\protect\remarkname}
  \theoremstyle{plain}
  \newtheorem{lem}[thm]{\protect\lemmaname}
  \theoremstyle{definition}
  \newtheorem{defn}[thm]{\protect\definitionname}
  \theoremstyle{plain}
  \newtheorem{prop}[thm]{\protect\propositionname}
  \theoremstyle{remark}
  \newtheorem{notation}[thm]{\protect\notationname}
  \theoremstyle{plain}
  \newtheorem{cor}[thm]{\protect\corollaryname}

\allowdisplaybreaks[1]

\makeatother

  \addto\captionsenglish{\renewcommand{\assumptionname}{Assumption}}
  \addto\captionsenglish{\renewcommand{\corollaryname}{Corollary}}
  \addto\captionsenglish{\renewcommand{\definitionname}{Definition}}
  \addto\captionsenglish{\renewcommand{\lemmaname}{Lemma}}
  \addto\captionsenglish{\renewcommand{\notationname}{Notation}}
  \addto\captionsenglish{\renewcommand{\propositionname}{Proposition}}
  \addto\captionsenglish{\renewcommand{\remarkname}{Remark}}
  \addto\captionsenglish{\renewcommand{\theoremname}{Theorem}}
  \addto\captionsngerman{\renewcommand{\assumptionname}{Annahme}}
  \addto\captionsngerman{\renewcommand{\corollaryname}{Korollar}}
  \addto\captionsngerman{\renewcommand{\definitionname}{Definition}}
  \addto\captionsngerman{\renewcommand{\lemmaname}{Lemma}}
  \addto\captionsngerman{\renewcommand{\notationname}{Notation}}
  \addto\captionsngerman{\renewcommand{\propositionname}{Satz}}
  \addto\captionsngerman{\renewcommand{\remarkname}{Bemerkung}}
  \addto\captionsngerman{\renewcommand{\theoremname}{Theorem}}
  \providecommand{\assumptionname}{Assumption}
  \providecommand{\corollaryname}{Corollary}
  \providecommand{\definitionname}{Definition}
  \providecommand{\lemmaname}{Lemma}
  \providecommand{\notationname}{Notation}
  \providecommand{\propositionname}{Proposition}
  \providecommand{\remarkname}{Remark}
\providecommand{\theoremname}{Theorem}

\newcommand{\N}{\mathbb{N}}
\newcommand{\R}{\mathbb{R}}
\newcommand{\eps}{\epsilon}

\begin{document}
\global\long\def\d{\mathrm{d}}
\global\long\def\gt{\Gamma_{t}\left(\delta\right)}
\global\long\def\we{\tilde{\mathbf{w}}_{1}}
\global\long\def\vt{\tilde{\mathbf{v}}_{A}^{\epsilon}}
\global\long\def\wei{\mathbf{w}_{1}^{\epsilon}}
\global\long\def\wzw{\mathbf{w}_{2}^{\epsilon}}
\global\long\def\yb{\left(\Psi_{1}^{0}\right)^{\bot}}
\global\long\def\dh{\d\mathcal{H}^{n-1}}
\global\long\def\twe{\tilde{\mathbf{w}}_{1}^{\epsilon}}
\global\long\def\tweh{\tilde{\mathbf{w}}_{1}^{\epsilon,H}}
\global\long\def\twz{\tilde{\mathbf{w}}_{2}^{\epsilon}}
\global\long\def\cte{\tilde{c}^{\epsilon}}
\global\long\def\mte{\tilde{\mu}^{\epsilon}}
\global\long\def\vte{\tilde{\mathbf{v}}^{\epsilon}}
\global\long\def\pte{\tilde{p}^{\epsilon}}
\global\long\def\di{\text{div}}
\global\long\def\vn{\mathbf{u}^{\epsilon,\mathbf{n}}}
\global\long\def\vt{\mathbf{u}^{\epsilon,\tau}}
\global\long\def\cae{\overline{c_{A}^{\epsilon}}}
\global\long\def\caeh{\overline{c_{A}^{\epsilon,H}}}
\global\long\def\wei{\mathbf{w}_{1}^{\epsilon}}
\global\long\def\rs{\mathbf{r}_{\mathrm{S}}^{\epsilon}}
\global\long\def\rdiv{r_{\mathrm{div}}^{\epsilon}}
\global\long\def\rc{r_{\mathrm{CH1}}^{\epsilon}}
\global\long\def\rh{r_{\mathrm{CH2}}^{\epsilon}}
\global\long\def\ra{\mathcal{R}_{\alpha}}
\global\long\def\rsi{\mathbf{r}_{\mathrm{S},I}^{\epsilon}}
\global\long\def\rso{\mathbf{r}_{\mathrm{S},O}^{\epsilon}}
\global\long\def\rdivi{r_{\mathrm{div},I}^{\epsilon}}
\global\long\def\rdivo{r_{\mathrm{div},O}^{\epsilon}}
\global\long\def\rci{r_{\mathrm{CH1},I}^{\epsilon}}
\global\long\def\rco{r_{\mathrm{CH1},O}^{\epsilon}}
\global\long\def\rhi{r_{\mathrm{CH2},I}^{\epsilon}}
\global\long\def\rhO{r_{\mathrm{CH2},O}^{\epsilon}}
\global\long\def\iome{\int_{\Omega_{T_{\epsilon}}}}
\global\long\def\rsb{\mathbf{r}_{\text{S},\mathbf{B}}^{\epsilon}}
\global\long\def\rdivb{r_{\text{div},\mathbf{B}}^{\epsilon}}
\global\long\def\rcb{r_{\text{CH1},\mathbf{B}}^{\epsilon}}
\global\long\def\rhb{r_{\text{CH2},\mathbf{B}}^{\epsilon}}
\global\long\def\trhb{\tilde{r}_{\text{CH2},\mathbf{B}}^{\epsilon}}

\title{Sharp Interface Limit of a Stokes/Cahn-Hilliard System, Part II:
Approximate Solutions}

\author{Helmut Abels\thanks{  \textit{Fakultät für Mathematik,   Universität Regensburg,   93040 Regensburg,   Germany}   \textsf {helmut.abels@ur.de} } \  and Andreas Marquardt\thanks{\textit   {Fakultät für Mathematik,   Universität Regensburg,   93040 Regensburg,   Germany}   }}

\maketitle
\begin{abstract}
We construct rigorously suitable approximate solutions to the
Stokes/Cahn-Hilliard system by using the method of matched
asymptotics expansions. This is a main step in the proof of convergence given in the first part of this contribution, \cite{NSCH1}, where the rigorous sharp
interface limit of a coupled Stokes/Cahn-Hilliard system
in a two dimensional, bounded and smooth domain is shown. As a novelty compared to earlier works, we introduce fractional order terms, which
are of significant importance, but share the problematic feature that
they may not be uniformly estimated in $\epsilon$ in arbitrarily
strong norms. As a consequence, gaining necessary estimates for the error, which occurs when considering the
approximations in the Stokes/Cahn-Hilliard system, is rather involved.
\end{abstract}

{\small\noindent
{\textbf {Mathematics Subject Classification (2000):}}
Primary: 76T99; Secondary:
35Q30, 
35Q35, 
35R35,
76D05, 
76D45\\ 
{\textbf {Key words:}} Two-phase flow, diffuse interface model, sharp interface limit, Cahn-Hilliard equation, Free boundary problems
}

\section{Introduction and Overview}

Let $T>0$, $\Omega\subset\mathbb{R}^{2}$ be a bounded and smooth
domain, $\Omega_{T}:=\Omega\times(0,T)$, $\partial\Omega_T= \partial\Omega\times (0,T)$ and $\alpha_{0}>0$
be a fixed constant. We consider the Stokes/Cahn-Hilliard system
\begin{align}
-\Delta\mathbf{v}^{\epsilon}+\nabla p^{\epsilon} & =\mu^{\epsilon}\nabla c^{\epsilon} &  & \text{in }\Omega_{T},\label{eq:StokesPart}\\
\operatorname{div}\mathbf{v}^{\epsilon} & =0 &  & \text{in }\Omega_{T},\label{eq:StokesPart2}\\
\partial_{t}c^{\epsilon}+\mathbf{v}^{\epsilon}\cdot\nabla c^{\epsilon} & =\Delta\mu^{\epsilon} &  & \text{in }\Omega_{T},\label{eq:CH-Part1}\\
\mu^{\epsilon} & =-\epsilon\Delta c^{\epsilon}+\tfrac{1}{\epsilon}f'(c^{\epsilon}) &  & \text{in }\Omega_{T},\label{eq:CH-Part2}\\
c^{\epsilon}|_{t=0} & =c_{0}^{\epsilon} &  & \text{in }\Omega,\label{eq:CH-Anfang}\\
\left(-2D_{s}\mathbf{v}^{\epsilon}+p^{\epsilon}\mathbf{I}\right)\cdot\mathbf{n}_{\partial\Omega} & =\alpha_{0}\mathbf{v}^{\epsilon}  &  & \text{on }\partial_{T}\Omega,\label{eq:StokesBdry}\\
(\mu^{\epsilon}, c^\epsilon) & =(0,-1)  &  & \text{on }\partial_{T}\Omega,\label{eq:Dirichlet2}
\end{align}
$\mathbf{v}^{\epsilon}$ and $p^{\epsilon}$ represent the mean velocity
and pressure, $D_{s}\mathbf{v}^{\epsilon}:=\tfrac{1}{2}\big(\nabla\mathbf{v}^{\epsilon}+(\nabla\mathbf{v}^{\epsilon})^{T}\big)$,
$c^{\epsilon}$ is related to the concentration difference of the
fluids and $\mu^{\epsilon}$ is the chemical potential of the mixture.
Moreover, $c_{0}^{\epsilon}$ is a suitable initial value, specified
in Theorem \ref{Main} and $f\colon \mathbb{R}\rightarrow\mathbb{R}$ is
a double well potential. It is the aim of \cite{NSCH1} to establish
that the sharp interface limit of (\ref{eq:StokesPart})\textendash (\ref{eq:Dirichlet2})
is given by the system
\begin{align}
-\Delta\mathbf{v}+\nabla p & =0 &  & \text{in }\Omega^{\pm}(t),t\in[0,T_0],\label{eq:S-SAC1}\\
\operatorname{div}\mathbf{v} & =0 &  & \text{in }\Omega^{\pm}(t),t\in[0,T_0],\label{eq:S-SAC2}\\
\Delta\mu & =0 &  & \text{in }\Omega^{\pm}(t),t\in[0,T_0],\label{eq:S-SAC3}\\
\left(-2D_{s}\mathbf{v}+p\mathbf{I}\right)\mathbf{n}_{\partial\Omega} & =\alpha_{0}\mathbf{v} &  & \text{on }\partial_{T_{0}}\Omega,\\
\mu & =0 &  & \text{on }\partial_{T_{0}}\Omega,\\
\left[2D_{s}\mathbf{v}-p\mathbf{I}\right]\mathbf{n}_{\Gamma_{t}} & =-2\sigma H_{\Gamma_{t}}\mathbf{n}_{\Gamma_{t}} &  & \text{on }\Gamma_{t},t\in[0,T_0],\label{eq:S-SAC4}\\
\mu & =\sigma H_{\Gamma_{t}} &  & \text{on }\Gamma_{t},t\in[0,T_0],\label{eq:S-SAC5}\\
-V_{\Gamma_{t}}+\mathbf{n}_{\Gamma_{t}}\cdot\mathbf{v} & =\tfrac{1}{2}\left[\mathbf{n}_{\Gamma_{t}}\cdot\nabla\mu\right] &  & \text{on }\Gamma_{t},t\in[0,T_0],\label{eq:S-SAC6}\\
\left[\mathbf{v}\right] & =0 &  & \text{on }\Gamma_{t},t\in[0,T_0],\\
\Gamma(0) & =\Gamma_{0}.\label{eq:S-SAC8}
\end{align}
Here, $\Gamma_{0}\subset\subset\Omega$ is a given, smooth, non-intersecting,
closed initial curve. We assume that $\Gamma=\bigcup_{t\in[0,T_0]}\Gamma_{t}\times\left\{ t\right\} $
is a smoothly evolving hypersurface in $\Omega$, where $\left(\Gamma_{t}\right)_{t\in[0,T_0]}$
are compact, non-intersecting, closed curves in $\Omega$. Moreover, 
$\Omega^{+}(t)$ is defined as the inside of $\Gamma_{t}$ and
$\Omega^{-}(t)$ is such that $\Omega$ is the disjoint
union of $\Omega^{+}(t)$, $\Omega^{-}(t)$
and $\Gamma_{t}$. Furthermore, we define $\Omega_{T}^{\pm}=\cup_{t\in\left[0,T\right]}\Omega^{\pm}(t)\times\left\{ t\right\} $
for $T\in[0,T_0]$ and define $\mathbf{n}_{\Gamma_{t}}(p)$
for $p\in\Gamma_{t}$ as the exterior normal with respect to $\Omega^{-}(t)$
and $V_{\Gamma_{t}}$, and $H_{\Gamma_{t}}$ as the normal velocity
and mean curvature of $\Gamma_{t}$ with respect to $\mathbf{n}_{\Gamma_{t}}$,
$t\in[0,T_0]$.   We use the definitions
\begin{align}
\left[g\right](p,t) & :=\lim_{h\searrow0}\left(g(p+\mathbf{n}_{\Gamma_{t}}(p)h)-g(p-\mathbf{n}_{\Gamma_{t}}(p)h)\right)\text{ for }p\in\Gamma_{t},\nonumber \\
\sigma & :=\frac{1}{2}\int_{-\infty}^{\infty}\theta_{0}'(s)^{2}\d s,\label{eq:sigma}
\end{align}
where $\theta_{0}\colon\mathbb{R}\rightarrow\mathbb{R}$ is the solution
to the ordinary differential equation 
\begin{equation}
-\theta_{0}''+f'(\theta_{0})=0\quad\text{in }\mathbb{R},\quad \theta_{0}(0)=0,\;\lim_{\rho\rightarrow\pm\infty}\theta_{0}(\rho)=\pm1.\label{eq:optprofdef}
\end{equation}
We refer to the introduction of \cite{NSCH1} for a review of known analytic results for the previous systems. 

Throughout this work we consider the following assumptions and notations:
Let $\left(\mathbf{v},p,\mu,\Gamma\right)$ be a smooth solution to
(\ref{eq:S-SAC1})\textendash (\ref{eq:S-SAC8}) and $\left(c^{\epsilon},\mu^{\epsilon},\mathbf{v}^{\epsilon},p^{\epsilon}\right)$
be smooth solutions to (\ref{eq:StokesPart})\textendash (\ref{eq:Dirichlet2})
for some $T_{0}>0$ and $\epsilon\in (0,1)$. More precisely $(\mathbf{v},p,\mu)$ are assumed to be smooth in $\Omega_{T_0}^{\pm}$ such that the function and their derivatives extend continuously  to $\overline{\Omega_{T_0}^{\pm}}$. Let 
\[
d_{\Gamma}:\Omega_{T_{0}}\rightarrow\mathbb{R},\;(x,t)\mapsto\begin{cases}
\mbox{dist}\left(\Omega^{-}(t),x\right) & \mbox{if }x\notin\Omega^{-}(t),\\
-\mbox{dist}\left(\Omega^{+}(t),x\right) & \mbox{if }x\in\Omega^{-}(t)
\end{cases}
\]
denote the signed distance function to $\Gamma$ such that $d_{\Gamma}$
is positive inside $\Omega_{T_{0}}^{+}$. We write $\Gamma_{t}(\alpha):=\left\{ \left.x\in\Omega\right|\left|d_{\Gamma}(x,t)\right|<\alpha\right\} $
for $\alpha>0$ and set $\Gamma(\alpha;T):=\bigcup_{t\in\left[0,T\right]}\Gamma_{t}(\alpha)\times\left\{ t\right\} $
for $T\in[0,T_0]$. Moreover, we assume that $\delta>0$
is a small positive constant such that $\text{dist}\left(\Gamma_{t},\partial\Omega\right)>5\delta$
for all $t\in[0,T_0]$ and such that $\operatorname{Pr}_{\Gamma_{t}}\colon\Gamma_{t}(3\delta)\rightarrow\Gamma_{t}$
is well-defined and smooth for all $t\in[0,T_0]$. In
the following we often use the notation $\Gamma(2\delta):=\Gamma(2\delta;T_{0})$
as a simplification. We also define a tubular neighborhood around
$\partial\Omega$: For this let $d_{\mathbf{B}}\colon\Omega\rightarrow\mathbb{R}$
be the signed distance function to $\partial\Omega$ such that $d_{\mathbf{B}}<0$
in $\Omega$. As for $\Gamma_{t}$ we define a tubular neighborhood
by $\partial\Omega(\alpha):=\left\{ x\in\Omega\left|-\alpha<d_{\mathbf{B}}(x)<0\right.\right\} $
and $\partial_{T}\Omega(\alpha):=\left\{ \left.(x,t)\in\Omega_{T}\right|d_{\mathbf{B}}(x)\in(-\alpha,0)\right\} $
for $\alpha>0$ and $T\in(0,T_{0}]$. Moreover, we denote
the outer unit normal to $\Omega$ by $\mathbf{n}_{\partial\Omega}$
and denote the normalized tangent by $\tau_{\partial\Omega}$, which
is fixed by the relation
\[
\mathbf{n}_{\partial\Omega}(p)=\left(\begin{array}{cc}
0 & -1\\
1 & 0
\end{array}\right)\tau_{\partial\Omega}(p)
\]
for $p\in\partial\Omega$. Finally we assume that $\delta>0$ is chosen
small enough such that the projection $\operatorname{Pr}_{\partial\Omega}\colon\partial\Omega(\delta)\rightarrow\partial\Omega$
along the normal $\mathbf{n}_{\partial\Omega}$ is also well-defined
and smooth.

Considering the potential $f$, we assume that it is a fourth order
polynomial, satisfying 
\begin{equation}
  f(\pm1)=f'(\pm1)=0,\,f''(\pm1)>0,\:f(s)=f(-s)>0\quad\text{for all }s\in\mathbb{R}  
  \label{eq:f}
\end{equation}
for some $C>0$ and fulfilling $k_{f}:=f^{(4)}>0$. Then
the ordinary differential equation (\ref{eq:optprofdef}) allows for
a unique, monotonically increasing solution $\theta_{0}\colon \mathbb{R}\rightarrow(-1,1)$.
This solution furthermore satisfies the decay estimate
\begin{equation}
\big|\theta_{0}^{2}(\rho)-1\big|+\big|\theta_{0}^{(n)}(\rho)\big|\leq C_{n}e^{-\alpha\left|\rho\right|}\quad\text{for all }\rho\in\mathbb{R},\;n\in\mathbb{N}\backslash\left\{ 0\right\} \label{eq:optimopti}
\end{equation}
for constants $C_{n}>0$, $n\in\mathbb{N}\backslash\left\{ 0\right\} $
and fixed $\alpha\in\left(0,\min\left\{ \sqrt{f''(-1)},\sqrt{f''(1)}\right\} \right)$.
As it will be needed a lot in this work, we denote by $\xi\in C^{\infty}(\mathbb{R})$ a
cut-off function such that
\begin{equation}
\xi(s)=1\text{ if }\left|s\right|\leq\delta,\,\xi(s)=0\text{ if }\left|s\right|>2\delta,\text{ and }0\geq s\xi'(s)\geq-4\text{ if }\delta\leq\left|s\right|\leq2\delta.\label{eq:cut-off}
\end{equation}
The main result of \cite{NSCH1} is the following (for an explanation
of the used notations see the preliminaries section): 
\begin{thm}[Main Result]
  \label{Main}~\\ Let $\left(\mathbf{v},p,\mu,\Gamma\right)$ be a smooth solution to
\eqref{eq:S-SAC1}-\eqref{eq:S-SAC8} for some $T_{0}>0$. Moreover, let $M\in\N$ with $M\geq 4$, let
  $\xi$ satisfy \eqref{eq:cut-off} and let $\gamma(x):=\xi(4d_{\mathbf{B}}(x))$
for all $x\in\Omega$ and let for $\epsilon\in(0,1)$ a
smooth function $\psi_{0}^{\epsilon}\colon\Omega\rightarrow\mathbb{R}$
be given, which satisfies $\left\Vert \psi_{0}^{\epsilon}\right\Vert _{C^{1}(\Omega)}\leq C_{\psi_{0}}\epsilon^{M}$
for some $C_{\psi_{0}}>0$ independent of $\epsilon$. Then there
are smooth functions $c_{A}^{\epsilon}\colon \Omega\times[0,T_0]\rightarrow\mathbb{R},\mathbf{v}_{A}^{\epsilon}:\Omega\times[0,T_0]\rightarrow\mathbb{R}^{2}$
for $\epsilon\in(0,1)$ such that the following holds: 

There is some $c_A^\epsilon\colon \Omega\to \R$, $\eps\in (0,1]$, depending only on $(\mathbf{v},p,\mu,\Gamma)$ such that, if $\left(\mathbf{v}^{\epsilon},p^{\epsilon},c^{\epsilon},\mu^{\epsilon}\right)$
are smooth solutions to \eqref{eq:StokesPart})-\eqref{eq:Dirichlet2}
with initial value
\begin{align}
c_{0}^{\epsilon}(x) & =c_{A}^{\epsilon}(x,0)+\psi_{0}^{\epsilon}(x)\quad \text{for all }x\in\Omega,\label{eq:canf}
\end{align}
then there are some $\epsilon_{0}\in(0,1]$,
$K>0$, $T\in(0,T_{0}]$ such that
\begin{subequations}\label{eq:Main}
\begin{align}
\left\Vert c^{\epsilon}-c_{A}^{\epsilon}\right\Vert _{L^{2}\left(0,T;L^{2}(\Omega)\right)}+\left\Vert \nabla^{\Gamma}\big(c^{\epsilon}-c_{A}^{\epsilon}\big)\right\Vert _{L^{2}\left(0,T;L^{2}\left(\Gamma_{t}(\delta)\right)\right)} & \le K\epsilon^{M-\frac{1}{2}},\label{eq:Main1}\\
\epsilon\left\Vert \nabla\big(c^{\epsilon}-c_{A}^{\epsilon}\big)\right\Vert _{L^{2}\left(0,T;L^{2}\left(\Omega\backslash\Gamma_{t}(\delta)\right)\right)}+\left\Vert c^{\epsilon}-c_{A}^{\epsilon}\right\Vert _{L^{2}\left(0,T;L^{2}\left(\Omega\backslash\Gamma_{t}(\delta)\right)\right)} & \leq K\epsilon^{M+\frac{1}{2}},\label{eq:Main2}\\
\epsilon^{\frac{3}{2}}\left\Vert \partial_{\mathbf{n}}\big(c^{\epsilon}-c_{A}^{\epsilon}\big)\right\Vert _{L^{2}\left(0,T;L^{2}\left(\Gamma_{t}(\delta)\right)\right)}+\left\Vert c^{\epsilon}-c_{A}^{\epsilon}\right\Vert _{L^{\infty}\left(0,T;H^{-1}(\Omega)\right)} & \leq K\epsilon^{M},\label{eq:Main3}\\
\int_{\Omega_{T}}\epsilon\left|\nabla\big(c^{\epsilon}-c_{A}^{\epsilon}\big)\right|^{2}+\tfrac1{\epsilon}f''(c_{A}^{\epsilon})\left(c^{\epsilon}-c_{A}^{\epsilon}\right)^{2}\d (x,t) & \leq K^{2}\epsilon^{2M},\label{eq:Main4}\\
\left\Vert \gamma\big(c^{\epsilon}-c_{A}^{\epsilon}\big)\right\Vert _{L^{\infty}\left(0,T;L^{2}(\Omega)\right)}+\epsilon^{\frac{1}{2}}\left\Vert \gamma\Delta\big(c^{\epsilon}-c_{A}^{\epsilon}\big)\right\Vert _{L^{2}(\Omega_{T})} & \leq K\epsilon^{M-\frac{1}{2}},\label{eq:Main5}\\
\left\Vert \gamma\nabla\big(c^{\epsilon}-c_{A}^{\epsilon}\big)\right\Vert _{L^{2}(\Omega_{T})}+\left\Vert \gamma\big(c^{\epsilon}-c_{A}^{\epsilon}\big)\nabla\big(c^{\epsilon}-c_{A}^{\epsilon}\big)\right\Vert _{L^{2}(\Omega_{T})} & \leq K\epsilon^{M},\label{eq:Main6}
\end{align}
\end{subequations} and for $q\in(1,2)$
\begin{equation}
\left\Vert \mathbf{v}^{\epsilon}-\mathbf{v}_{A}^{\epsilon}\right\Vert _{L^{1}\left(0,T;L^{q}(\Omega)\right)}\leq C(K,q)\epsilon^{M-\frac{1}{2}},\label{eq:Mainv}
\end{equation}
 hold for all $\epsilon\in(0,\epsilon_{0})$ and some $C(K,q)>0$.
Moreover, we have
\begin{align}
\lim_{\epsilon\rightarrow0}c_{A}^{\epsilon} & =\pm1\text{ in }L^{\infty}((s,t)\times \Omega')\label{eq:Maincconverge}
\end{align}
and
\begin{equation}
\lim_{\epsilon\rightarrow0}\mathbf{v}_{A}^{\epsilon}=\mathbf{v}^{\pm}\quad \text{ in }L^{6}\left((s,t);H^{2}(\Omega')^{2}\right)\label{eq:Mainvconverge}
\end{equation}
for every $(s,t)\times \Omega'\subset\subset\Omega_{T}^{\pm}$.
\end{thm}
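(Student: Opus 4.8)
The plan is to follow the by now classical scheme for rigorous sharp interface limits of phase-field models (de Mottoni--Schatzman, Alikakos--Bates--Chen, and the adaptation to two-phase flow by Abels and coauthors): first construct, by the method of matched asymptotic expansions, approximate solutions $(c_{A}^{\epsilon},\mu_{A}^{\epsilon},\mathbf{v}_{A}^{\epsilon},p_{A}^{\epsilon})$ which satisfy \eqref{eq:StokesPart}--\eqref{eq:Dirichlet2} up to a residual that is small of high order in $\epsilon$; then control the difference between a true solution and the approximation by an energy method whose essential ingredient is a spectral (Chen-type) estimate for the linearized Cahn--Hilliard operator; and finally read off \eqref{eq:Maincconverge}--\eqref{eq:Mainvconverge} from the explicit leading orders of the expansion. (The construction is the subject of the present paper; the error analysis and the continuation argument are carried out in \cite{NSCH1}.)

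\textbf{Construction.} First I would expand the unknowns in three overlapping regions, glued by the cut-offs $\xi$ and $\gamma$: an outer expansion $c_{A}^{\epsilon}\sim\pm1+\epsilon c_{1}^{\pm}+\cdots$ in $\Omega^{\pm}(t)$, whose coefficients satisfy the limit equations \eqref{eq:S-SAC1}--\eqref{eq:S-SAC3}; an inner expansion in the stretched variable $\rho=d_{\Gamma}/\epsilon$, with $\Gamma_{t}$ (equivalently, its height function over the limiting interface) itself expanded, whose leading term is the optimal profile $\theta_{0}(\rho)$ and whose higher coefficients solve linear ODEs of the form $-\partial_{\rho}^{2}u+f''(\theta_{0})u=(\text{known})$; and a boundary-layer expansion near $\partial\Omega$ enforcing \eqref{eq:StokesBdry}--\eqref{eq:Dirichlet2}. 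Matching the inner and outer expansions order by order fixes the constants of integration, the Fredholm solvability conditions for the inner ODEs (orthogonality to $\theta_{0}'$) precisely reproduce the interface conditions \eqref{eq:S-SAC5}--\eqref{eq:S-SAC6}, and adding $\psi_{0}^{\epsilon}$ to the leading data accounts for \eqref{eq:canf}. As stressed in the introduction, at a certain order the construction forces terms of fractional order in $\epsilon$; for structural reasons --- they are tied to the nonlocal character of the Stokes coupling along $\Gamma$ --- these cannot be bounded in arbitrarily strong norms uniformly in $\epsilon$.

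\textbf{Error estimate and convergence.} Inserting the approximation into the system produces residuals $\rs,\rdiv,\rc,\rh$, which by construction are small of high order in $\epsilon$ in the relevant norms and vanish to high order on $\Gamma$ and near $\partial\Omega$, the fractional-order pieces being the weak link. Writing $u^{\epsilon}:=c^{\epsilon}-c_{A}^{\epsilon}$ and $\mathbf{w}^{\epsilon}:=\mathbf{v}^{\epsilon}-\mathbf{v}_{A}^{\epsilon}$, one would test the difference of the Cahn--Hilliard equations with a suitable inverse-Laplacian multiplier of $u^{\epsilon}$ with zero Dirichlet data (so that $\|u^{\epsilon}\|_{H^{-1}(\Omega)}$ appears on the left-hand side), and the difference of the Stokes equations with $\mathbf{w}^{\epsilon}$ (after a solenoidal correction absorbing $\rdiv$), using Korn's inequality together with the non-negative boundary term from \eqref{eq:StokesBdry} to recover $\|\mathbf{w}^{\epsilon}\|_{H^{1}}$. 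Invoking the spectral estimate
\[
\int_{\Omega}\epsilon\,|\nabla\phi|^{2}+\tfrac{1}{\epsilon}f''(c_{A}^{\epsilon})\,\phi^{2}\,\d x\ \geq\ -C\,\|\phi\|_{H^{-1}(\Omega)}^{2}
\]
to absorb the quadratic and cubic remainders of $f$, and Young's inequality on the convective and forcing cross terms, I expect a differential inequality of the form $\tfrac{\d}{\d t}\|u^{\epsilon}\|_{H^{-1}}^{2}+(\text{coercive terms})\leq C\|u^{\epsilon}\|_{H^{-1}}^{2}+C(\text{residual})^{2}$. Gronwall's inequality together with $\|\psi_{0}^{\epsilon}\|_{C^{1}(\Omega)}\leq C_{\psi_{0}}\epsilon^{M}$ then yields \eqref{eq:Main1}--\eqref{eq:Main6} and \eqref{eq:Mainv} on an interval $[0,T]$ with $T\leq T_{0}$, where $T$ is obtained by a continuation argument: assume the bounds on a maximal subinterval, show they self-improve with a strictly smaller constant, and conclude they persist for some $T>0$ by continuity in time. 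The limits \eqref{eq:Maincconverge}--\eqref{eq:Mainvconverge} are then immediate, since away from $\Gamma$ the outer expansion has leading terms $\pm1$ and $\mathbf{v}^{\pm}$ while the corrections are $O(\epsilon)$ on compact subsets of $\Omega_{T}^{\pm}$.

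\textbf{Main obstacle.} The hard part will be reconciling the fractional-order terms of the construction with this energy estimate: because they are not bounded in strong norms uniformly in $\epsilon$, they cannot simply be dumped into the Gronwall right-hand side, and one must instead track exactly which power of $\epsilon$ multiplies each contribution, estimate it in the weakest norm the argument can afford --- precisely the $H^{-1}$- and $\epsilon$-weighted norms appearing in \eqref{eq:Main1}--\eqref{eq:Main6} --- and interpolate, the $\epsilon^{1/2}$ loss visible in \eqref{eq:Main1} and \eqref{eq:Main5} being exactly the available slack. Making this bookkeeping close is what forces the expansion to be carried to order $M\geq4$, and is, I expect, where the bulk of the technical work lies.
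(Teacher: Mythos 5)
Your outline correctly identifies the two-paper architecture: the present article constructs the approximate solutions $(c_A^\epsilon,\mu_A^\epsilon,\mathbf{v}_A^\epsilon,p_A^\epsilon)$ by matched asymptotics and proves the remainder estimates of Theorem \ref{thm:Main-Apprx-Structure}, while the proof of Theorem \ref{Main} itself --- spectral estimate, Gronwall, continuation, velocity estimate --- is carried out in \cite{NSCH1}, which invokes Theorem \ref{thm:Main-Apprx-Structure} as its key input. Up to that level of abstraction your picture is right, and there is nothing further in this paper to compare your error-analysis sketch against.

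What your sketch omits, however, is the single device around which the entire construction in this article revolves and without which the residual estimates of Theorem \ref{thm:Main-Apprx-Structure} would fail. The auxiliary velocity $\mathbf{w}_1^\epsilon = \epsilon^{-(M-\frac{1}{2})}\tilde{\mathbf{w}}_1^\epsilon$ is defined as the solution of the Stokes problem \eqref{eq:w1}--\eqref{eq:w13}, whose forcing $-\epsilon\,\operatorname{div}\bigl((\nabla c_A^{\epsilon,H}-\mathbf{h}^H)\otimes_{s}\nabla R^H\bigr)$ involves the very error $R^H=c^\epsilon-c_A^{\epsilon,H}$ that one ultimately wishes to control. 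The approximate Cahn--Hilliard equation \eqref{eq:Cahnapp} is deliberately perturbed by the extra transport term $\epsilon^{M-\frac{1}{2}}\,\mathbf{w}_1^\epsilon|_\Gamma\,\xi(d_\Gamma)\cdot\nabla c_A^\epsilon$, and the fractional-order height correction $h_{M-\frac{1}{2}}^\epsilon$ (with $\mu_{M-\frac{1}{2}}^{\pm}$, $\mathbf{v}_{M-\frac{1}{2}}^{\pm}$, $p_{M-\frac{1}{2}}^{\pm}$) is then obtained as the solution of a genuinely \emph{nonlinear fixed-point} problem, Theorem \ref{hM-1}: $h_{M-\frac{1}{2}}^\epsilon$ enters $c_A^{\epsilon,H}$ and hence $\mathbf{w}_1^\epsilon$, which in turn appears on the right-hand side of the evolution equation \eqref{eq:hmuv0,5sys-7} for $h_{M-\frac{1}{2}}^\epsilon$. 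This feedback loop --- not merely ``the nonlocal character of the Stokes coupling along $\Gamma$'' as you put it --- is what forces the fractional-order terms into the expansion, and the resulting loss of uniform-in-$\epsilon$ control in strong norms is compensated by the structural decomposition of $A^{M-\frac{1}{2}}$, $B^{M-\frac{1}{2}}$, $\mathbf{V}^{M-\frac{1}{2}}$ in Lemma \ref{ABstruc}. A proof proposal that does not identify this mechanism cannot explain why the remainder estimates close.

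A smaller imprecision: the spectral inequality you wrote with $\|\phi\|_{H^{-1}(\Omega)}^2$ on the right-hand side is not the one available; Chen-type spectral estimates bound the quadratic form from below by $-C\|\phi\|_{L^2(\Omega)}^2$ (possibly with additional positive tangential-gradient contributions), and the $H^{-1}$-control in \eqref{eq:Main3} is obtained from the evolution equation and interpolation, not from the spectral bound directly.
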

\begin{rem}
  Here $c_A^\epsilon$ is determined by formally matched asymptotic calcultations in the following proof. In highest we have
  \begin{equation*}
    c_A^\epsilon (x)= \theta_0\left(\tfrac{d_{\Gamma_0}(x)+\epsilon h_0(s,t)}\epsilon\right)+ O(\eps) \qquad \text{uniformly as }\epsilon\to 0
  \end{equation*}
  for some $h_0\colon \Gamma\to \R$, where $s=\operatorname{Pr}_{\Gamma_t}(x)$.
\end{rem}

It will be beneficial to the readability of many results throughout
this contribution to introduce the following set of assumptions, which
will be cited often later on.
\begin{assumption}
\label{assu:Main-est}Let $\gamma(x):=\xi\left(4d_{\mathbf{B}}(x)\right)$
for all $x\in\Omega$. We assume that $c_{A}:\Omega\times[0,T_0]\rightarrow\mathbb{R}$
is a smooth function and that there are $\epsilon_{0}\in(0,1)$,
$K\geq1$ and a family $\left(T_{\epsilon}\right)_{\epsilon\in(0,\epsilon_{0})}\subset(0,T_{0}]$
such that the following holds: if $c^{\epsilon}$ is given as in Theorem
\ref{Main} with $c_{0}^{\epsilon}(x)=c_{A}(x,0)$,
then it holds for $R:=c^{\epsilon}-c_{A}^{\epsilon}$

\begin{subequations}\label{eq:Main-est}\foreignlanguage{ngerman}{\vspace{-5mm}
}
\begin{align}
\left\Vert R\right\Vert _{L^{2}\left(\Omega_{T_{\epsilon}}\right)}+\left\Vert \nabla^{\Gamma}R\right\Vert _{L^{2}\left(0,T_{\epsilon};L^{2}\left(\Gamma_{t}(\delta)\right)\right)}+\left\Vert \left(\tfrac{1}{\epsilon}R,\nabla R\right)\right\Vert _{L^{2}\left(0,T_{\epsilon};L^{2}\left(\Omega\backslash\Gamma_{t}(\delta)\right)\right)} & \le K\epsilon^{M-\frac{1}{2}},\label{eq:Main-est-a}\\
\epsilon^{\frac{3}{2}}\left\Vert \partial_{\mathbf{n}}R\right\Vert _{L^{2}\left(0,T_{\epsilon};L^{2}\left(\Gamma_{t}(\delta)\right)\right)}+\left\Vert R\right\Vert _{L^{\infty}\left(0,T_{\epsilon};H^{-1}(\Omega)\right)} & \leq K\epsilon^{M},\label{eq:Main-est-b}\\
\int_{\Omega_{T_{\epsilon}}}\epsilon\left|\nabla R\right|^{2}+\frac{1}\epsilon f''\left(c_{A}^{\epsilon}\right)R^{2}\d(x,t) & \leq K^{2}\epsilon^{2M},\label{eq:Main-est-c}\\
\epsilon^{\frac{1}{2}}\left\Vert \gamma R\right\Vert _{L^{\infty}\left(0,T_{\epsilon};L^{2}(\Omega)\right)}+\left\Vert \left(\epsilon\gamma\Delta R,\gamma\nabla R,\gamma R\left(\nabla R\right)\right)\right\Vert _{L^{2}\left(\Omega_{T_{\epsilon}}\right)} & \leq K\epsilon^{M}\label{eq:Main-est-d}
\end{align}
\end{subequations}for all $\epsilon\in(0,\epsilon_{0})$.
\end{assumption}

It is the aim of this article to show the following theorem and to
provide the additional structural information gathered in \cite[Subsection 4.1]{NSCH1}.
\begin{thm}
\label{thm:Main-Apprx-Structure}For every $\epsilon\in(0,1)$
there are
$\mathbf{v}_{A}^{\epsilon},\mathbf{w}_{1}^{\epsilon}\colon \Omega_{T_{0}}\rightarrow\mathbb{R}^{2}$, $c_{A}^{\epsilon},\mu_{A}^{\epsilon},p_{A}^{\epsilon}\colon\Omega_{T_{0}}\rightarrow\mathbb{R}$
and $\rs\colon \Omega_{T_{0}}\rightarrow\mathbb{R}^{2}$, $\rdiv,\rc,\rh\colon \Omega_{T_{0}}\rightarrow\mathbb{R}$
such that
\begin{alignat}{2}
-\Delta\mathbf{v}_{A}^{\epsilon}+\nabla p_{A}^{\epsilon} & =\mu_{A}^{\epsilon}\nabla c_{A}^{\epsilon}+\rs&\quad &\text{in }\Omega_{T_{0}},\label{eq:Stokesapp}\\
\mathrm{div}\mathbf{v}_{A}^{\epsilon} & =\rdiv&\quad &\text{in }\Omega_{T_{0}},\label{eq:Divapp}\\
\partial_{t}c_{A}^{\epsilon}+\left(\mathbf{v}_{A}^{\epsilon}+\epsilon^{M-\frac{1}{2}}\left.\mathbf{w}_{1}^{\epsilon}\right|_{\Gamma}\xi\left(d_{\Gamma}\right)\right)\cdot\nabla c_{A}^{\epsilon} & =\Delta\mu_{A}^{\epsilon}+\rc&\quad &\text{in }\Omega_{T_{0}},\label{eq:Cahnapp}\\
\mu_{A}^{\epsilon} & =-\epsilon\Delta c_{A}^{\epsilon}+\epsilon^{-1}f'\left(c_{A}^{\epsilon}\right)+\rh&\quad &\text{in }\Omega_{T_{0}}.\label{eq:Hilliardapp}
\end{alignat}
Furthermore, the boundary conditions
\begin{equation}
c_{A}^{\epsilon}=-1,\quad \mu_{A}^{\epsilon}=0,\quad \left(-2D_{s}\mathbf{v}_{A}^{\epsilon}+p_{A}^{\epsilon}\mathbf{I}\right)\mathbf{n}_{\partial\Omega}=\alpha_{0}\mathbf{v}_{A}^{\epsilon}, \quad \rdiv=0\quad \text{on }\partial_{T_{0}}\Omega\label{eq:boundaryconditions}
\end{equation}
are satisfied. If additionally
Assumption \ref{assu:Main-est} holds for $\epsilon_{0}\in(0,1)$,
$K\geq1$ and a family $\left(T_{\epsilon}\right)_{\epsilon\in(0,\epsilon_{0})}\subset(0,T_{0}]$,
then there are some $\epsilon_{1}\in(0,\epsilon_{0}]$,
$C(K)>0$ depending on $K$ and $C_{K}\colon (0,T_{0}]\times(0,1]\rightarrow(0,\infty)$
(also depending on $K$), which satisfies $C_{K}(T,\epsilon)\rightarrow0$
as $(T,\epsilon)\rightarrow0$, such that
\begin{align}
\int_{0}^{T_{\epsilon}}\left|\int_{\Omega}\rc(x,t)\varphi(x,t)\d x\right|\d t & \leq C_{K}(T_{\epsilon},\epsilon)\epsilon^{M}\left\Vert \varphi\right\Vert _{L^{\infty}\left(0,T_{\epsilon};H^{1}(\Omega)\right)},\label{eq:remcahn}\\
\int_{0}^{T_{\epsilon}}\left|\int_{\Omega}\rh(x,t)\left(c^{\epsilon}(x,t)-c_{A}^{\epsilon}(x,t)\right)\d x\right|\d t & \leq C_{K}(T_{\epsilon},\epsilon)\epsilon^{2M},\label{eq:remhill}\\
\left\Vert \rs\right\Vert _{L^{2}\left(0,T_{\epsilon};\left(H^{1}(\Omega)\right)'\right)}+\left\Vert \rdiv\right\Vert _{L^{2}\left(\Omega_{T_{\epsilon}}\right)} & \leq C(K)\epsilon^{M},\label{eq:remstokes}\\
\left\Vert \rh\nabla c_{A}^{\epsilon}\right\Vert _{L^{2}\left(0,T_{\epsilon};\left(H^{1}(\Omega)^{2}\right)'\right)} & \leq C(K)C(T_{\epsilon},\epsilon)\epsilon^{M}\label{eq:rch2-nablacae}\\
\left\Vert \rc\right\Vert _{L^{2}\left(\partial_{T_{\epsilon}}\Omega\left(\frac{\delta}{2}\right)\right)} & \leq C(K)\epsilon^{M}\label{eq:rch1-rch2-Linfbdry}
\end{align}
for all $\epsilon\in(0,\epsilon_{1})$ and $\varphi\in L^{\infty}\left(0,T_{\epsilon};H^{1}(\Omega)\right)$.
\end{thm}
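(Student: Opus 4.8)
The construction rests on the method of matched asymptotic expansions, combining three local expansions: an \emph{outer expansion} valid in $\Omega_{T_0}^\pm$ away from $\Gamma$ and $\partial\Omega$; an \emph{inner expansion} in the tubular neighbourhood $\Gamma(2\delta)$, written in the stretched variable $\rho=\big(d_\Gamma(x,t)+\epsilon h^\epsilon(s,t)\big)/\epsilon$ with $s=\operatorname{Pr}_{\Gamma_t}(x)$ and a height correction $h^\epsilon=h_0+\epsilon h_1+\dots$; and a \emph{boundary layer expansion} near $\partial\Omega$ in the variable $z=d_{\mathbf B}(x)/\epsilon$. Every unknown is posited as a series in $\epsilon$, with the extra feature --- forced by the coupling with the Stokes system --- that half-integer powers $\epsilon^{k+\frac12}$ must be allowed at the highest relevant orders; the associated velocity correction localized at $\Gamma$ is what appears as $\epsilon^{M-\frac12}\wei|_\Gamma\,\xi(d_\Gamma)$ in \eqref{eq:Cahnapp}. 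Substituting these series into \eqref{eq:StokesPart}--\eqref{eq:CH-Part2}, expanding all geometric quantities in $\epsilon$ and collecting powers produces a hierarchy of problems which one solves successively up to a sufficiently high order (of the size of $M$) and then glues together using the cut-offs $\xi(d_\Gamma)$ and $\gamma$ from \eqref{eq:cut-off}; this defines $c_A^\epsilon,\mu_A^\epsilon,\mathbf v_A^\epsilon,p_A^\epsilon,\wei$ as smooth functions on $\Omega_{T_0}$, with $c_A^\epsilon$ having the leading form stated in the Remark after Theorem \ref{Main}.

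The expansion coefficients are determined order by order. At leading order the inner Cahn--Hilliard part reduces to \eqref{eq:optprofdef}, so the principal inner profile of $c_A^\epsilon$ is $\theta_0(\rho)$; at each higher order one obtains an equation $\mathcal L w_k:=-\partial_\rho^2 w_k+f''(\theta_0)\,w_k=F_k$, with $F_k$ assembled from lower-order data, and since $\ker\mathcal L=\mathbb R\,\theta_0'$ on the space of exponentially decaying functions, solvability forces $\int_{\mathbb R}F_k\,\theta_0'\,\d\rho=0$. These Fredholm conditions are exactly the equations of the sharp interface limit --- the bulk equations \eqref{eq:S-SAC1}--\eqref{eq:S-SAC3}, the jump relations \eqref{eq:S-SAC4}--\eqref{eq:S-SAC6}, and at higher order linear evolution problems on $\Gamma$ for the $h_j$ and for the coupled velocity and pressure corrections. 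The outer corrections to $(\mathbf v,p,\mu)$ solve the bulk Stokes and Laplace equations, and the boundary layer is chosen so that $c_A^\epsilon=-1$, $\mu_A^\epsilon=0$ and the Navier condition hold exactly on $\partial_{T_0}\Omega$, giving \eqref{eq:boundaryconditions}. The fractional term $\wei$ is introduced at order $\epsilon^{M-\frac12}$ to absorb a residual in the kinematic relation \eqref{eq:S-SAC6} that cannot be removed within the integer hierarchy without destroying $\epsilon^M$-accuracy; this is the source of the asymmetry between the $\epsilon$-powers appearing in Assumption \ref{assu:Main-est}.

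With the approximations in hand, the remainders $\rs,\rdiv,\rc,\rh$ are \emph{defined} as the residuals obtained upon inserting $c_A^\epsilon,\mu_A^\epsilon,\mathbf v_A^\epsilon,p_A^\epsilon$ (and the extra advection term) into \eqref{eq:Stokesapp}--\eqref{eq:Hilliardapp}, and the claimed components vanish on $\partial_{T_0}\Omega$ by construction. Each residual splits into truncation terms supported in $\Gamma(2\delta)$, with coefficients decaying like $e^{-\alpha|\rho|}$ in the stretched variable by \eqref{eq:optimopti}; cut-off commutator terms supported in the overlap region $\delta\le|d_\Gamma|\le 2\delta$, which are exponentially small in $\epsilon$ or again of truncation order; and boundary-layer residuals supported in $\partial\Omega(\delta)$. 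Accounting for the $\epsilon^{-1}$-scaling of $\epsilon\Delta$ in the inner region and integrating these profiles over the $O(\epsilon)$-thick layer yields \eqref{eq:remstokes} --- directly in $L^2$ for $\rdiv$, and by duality for $\rs$ after writing its worst parts as $\operatorname{div}$ of $O(\epsilon^M)$ tensors. For \eqref{eq:remhill} one pairs $\rh$ with $R=c^\epsilon-c_A^\epsilon$ and uses the $L^2$, weighted and $H^{-1}$ bounds on $R$ from \eqref{eq:Main-est-a}--\eqref{eq:Main-est-c}; \eqref{eq:rch2-nablacae} follows similarly by duality, transferring derivatives off $\nabla c_A^\epsilon$ (which is only $O(\epsilon^{-1})$ pointwise near $\Gamma$) onto the test functions, and \eqref{eq:rch1-rch2-Linfbdry} is read off from the boundary-layer analysis together with trace estimates.

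The main obstacle is \eqref{eq:remcahn}: the part of $\rc$ generated by the fractional term $\wei$ is \emph{not} $O(\epsilon^M)$ in any norm strong enough to allow a naive power count --- this is the ``problematic feature'' advertised in the abstract. To handle it one tests $\rc$ against $\varphi\in L^\infty(0,T_\epsilon;H^1(\Omega))$, integrates by parts to move derivatives onto $\varphi$ or onto the smooth geometric coefficients, and extracts from the remaining factor a quantity that is localized near $\Gamma$, carries one extra half power of $\epsilon$ beyond the crude estimate, and depends continuously on $t$ with value $0$ at $t=0$ (the interface evolution and the height corrections start from compatible data). Combining these features after integration over $[0,T_\epsilon]$ produces the prefactor $C_K(T_\epsilon,\epsilon)$ with $C_K(T,\epsilon)\to0$ as $(T,\epsilon)\to0$, which is exactly what Part I \cite{NSCH1} needs in order to close its Gronwall estimate by choosing $T_\epsilon$ small. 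Checking that \emph{every} contributing term admits such a decomposition --- carefully tracking which expansion coefficients are uniformly bounded in $\epsilon$ and which only obey the weaker $\epsilon$-dependent bounds --- is the technically heaviest step and occupies the bulk of the paper.
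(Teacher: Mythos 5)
The overall framework you describe---outer, inner and boundary-layer expansions, solvability (Fredholm) conditions at each order, the fractional-order terms, and defining the remainders as residuals---matches the paper's construction (Sections 3--4, Definition \ref{def:apprxsol}, Theorem \ref{remainder}, Lemma \ref{lem:Rem-More-Estimates}). But your sketch of how the estimates are closed, especially \eqref{eq:remcahn}, misses the specific mechanism the paper uses, and I do not believe your substitute would close the power count.

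The genuine difficulty is not the term $\epsilon^{M-\frac12}\mathbf{w}_1^\epsilon|_\Gamma\cdot\nabla c_A^\epsilon$ itself; $\mathbf{w}_1^\epsilon$ is under control via Lemma \ref{Wichtig} ($\|\tilde{\mathbf w}_1^\epsilon\|_{L^2 H^1}\leq C(K)\epsilon^{M-1/2}$) and that contribution is handled in Step~3 of Lemma \ref{rech1}. The truly dangerous contributions to $r_{\mathrm{CH1}}^\epsilon$, $r_{\mathrm{S}}^\epsilon$, $r_{\mathrm{CH2}}^\epsilon$ are $\epsilon^{M-\frac32}B^{M-\frac12}$, $\epsilon^{M-\frac32}\mathbf V^{M-\frac12}$ and $\epsilon^{M-\frac12}A^{M-\frac12}$: these are multiplied by powers of $\epsilon$ a full $\frac32$ (resp.\ $\frac12$) short of what is needed, and their coefficients $h_{M-\frac12}^\epsilon$, $\mu_{M-\frac12}^{\pm,\epsilon}$, $\mathbf v_{M-\frac12}^{\pm,\epsilon}$ are only bounded in $L^6(H^2)$-type norms, not uniformly in $L^\infty$. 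Your proposal to ``integrate by parts to move derivatives onto $\varphi$'' and to exploit a quantity that ``depends continuously on $t$ with value $0$ at $t=0$'' does not recover those missing powers. The paper's actual engine is Lemma \ref{ABstruc}: $A^{M-\frac12}$, $B^{M-\frac12}$, $\mathbf V^{M-\frac12}$, $W^{M-\frac12}$ factor on $\mathbb R\times\Gamma$ as finite sums $\sum_j \mathtt{X}_j^{1,\Gamma}(x,t)\,\mathtt{X}_j^{2,\Gamma}(\rho)$ whose $\rho$-profiles satisfy the zero-mean / exponential-decay conditions \eqref{eq:expabA}--\eqref{eq:expabV}, which are themselves a direct consequence of the compatibility conditions \eqref{eq:compcahnM}--\eqref{eq:compstokestM} that determined $h_{M-\frac12}^\epsilon,\mu_{M-\frac12}^{\pm,\epsilon},\dots$ in the first place. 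The gain of powers of $\epsilon$ then comes from two concrete sources in Lemma \ref{beste}: (i) the zero-mean property plus the Jacobian expansion $J^\epsilon=1+O(\epsilon\rho)$ \eqref{eq:Formel1}, which turns the naively $O(\epsilon)$ layer integral into $O(\epsilon^2)$ for the on-$\Gamma$ part; and (ii) a Taylor expansion in the normal variable, $\psi(r,\cdot)=\psi(0,\cdot)+\int_0^r\partial_{\mathbf n}\psi$ and likewise for the coefficients, paired with the exponential decay to gain an extra $\epsilon^{1/2}$ or $\epsilon^{3/2}$. Neither of these is an integration by parts in the usual sense, and neither is a continuity argument at $t=0$.

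Similarly, the $C_K(T_\epsilon,\epsilon)\to0$ prefactor in \eqref{eq:remcahn} and \eqref{eq:remhill} is not produced by ``continuity in $t$ with value $0$ at $t=0$.'' In the paper it arises from H\"older in time: $\varphi\in L^\infty(0,T_\epsilon;H^1)$ is paired with coefficients of the form $\mathtt{B}_k^1\in L^2(0,T_\epsilon;L^2)$ or $\mathtt{A}_k^1\in L^6(0,T_\epsilon;L^2)$ (these are the norms one actually has for the fractional-order terms via Theorem \ref{hM-1}.2), and the resulting bounds carry explicit $T_\epsilon^{1/2}$ or $T_\epsilon^{1/3}$ factors, supplemented by extra $\epsilon^{1/2}$ factors from the mechanisms above. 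Finally, one small factual correction: the stretched variable in the paper is $\rho=(d_\Gamma-\epsilon h^\epsilon)/\epsilon$, not $(d_\Gamma+\epsilon h^\epsilon)/\epsilon$, and $\mathbf w_1^\epsilon$ is not introduced merely to absorb a residual in the kinematic relation---it is the (rescaled) weak solution of a Stokes problem \eqref{eq:w1}--\eqref{eq:w13} whose right-hand side involves $\nabla R$, and it enters a nonlinear fixed-point system jointly with $h_{M-\frac12}^\epsilon$ (Theorem \ref{hM-1}); this coupling is why the fractional-order terms cannot be estimated in arbitrarily strong norms and why Lemma \ref{ABstruc} is indispensable.
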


This work is organized as follows: Section \ref{chap:Fundamentals}
gives a short overview over the needed mathematical tools, particularly
existence results for parabolic equations on $\Gamma$ and a short
summary of the differential geometric properties that will be needed
later on.

Section \ref{constrappr} is based on the approaches in \cite{nsac, abc,chenAC, schatz};
here we present results for the construction of inner, outer and boundary
terms of arbitrarily high order of the asymptotic expansions for solutions
of (\ref{eq:StokesPart})\textendash (\ref{eq:Dirichlet2}). Due to
constraints to the length of this contribution, many details 
are left out, but can be found in
\cite{ichPhD}. In Subsection \ref{sec:Estimating-the-error-vel},
we introduce the auxiliary function $\mathbf{w}_{1}^{\epsilon}$,
which turns out in \cite{NSCH1} to be a representation of the leading
term of the error in the velocity $\mathbf{v}_{A}^{\epsilon}-\mathbf{v}^{\epsilon}$.
Subsection \ref{sec:Constructing-the--M-0,5} is then concerned with
constructing fractional order terms in the asymptotic expansion, which
are defined with the help of solutions to a nonlinear evolution equation
involving $\mathbf{w}_{1}^{\epsilon}$ .

To rigorously justify that the ``approximate solutions'' constructed
in the work really are a good approximation of solutions,
it is necessary to estimate the remainder terms in Section~\ref{chap:Estimates-Remainder},
i.e., the functions $\rc$, $\rh$, $\rs$ and $\rdiv$ presented in
Theorem~\ref{thm:Main-Apprx-Structure}. Thus, in Section~\ref{chap:Estimates-Remainder},
we analyze these terms in detail, starting with a proper definition
of the involved approximate solutions and a subsequent structural
representation of $r_{CH1}^{\epsilon}$, etc. The facts that the terms
of fractional order may not be estimated uniformly in $\epsilon$
in arbitrarily strong norms and that there appear terms of relatively
low orders of $\epsilon$ in the representations of the remainder,
when discussing the region close to the interface, account for many
technical difficulties. The involved estimates rely heavily on Lemma~\ref{ABstruc},
which is a direct consequence of our construction scheme of the fractional
order terms. The actual proof for Theorem~\ref{thm:Main-Apprx-Structure}
is given at the end of this article.

\section{Preliminaries\label{chap:Fundamentals}}

\subsection{Differential-Geometric Background\label{sec:Diffgeo}}

The following overview was already given in \cite{NSCH1} in more
detail; due to the importance of the concepts in view of later considerations
in this article and for the sake of completeness, we give a brief
reminder.

We parameterize the curves $\left(\Gamma_{t}\right)_{t\in[0,T_0]}$
by choosing a family of smooth diffeomorphisms $X_{0}\colon \mathbb{T}^{1}\times[0,T_0]\rightarrow\Omega$
such that $\partial_{s}X_{0}(s,t)\neq0$ for all $s\in\mathbb{T}^{1}$,
$t\in[0,T_0]$. In particular $\bigcup_{t\in[0,T_0]}X_{0}\left(\mathbb{T}^{1}\times\left\{ t\right\} \right)\times\left\{ t\right\} =\Gamma$.
Moreover, we define the tangent and normal vectors on $\Gamma_{t}$
at $X_{0}(s,t)$ as 
\begin{equation}
\boldsymbol{\tau}(s,t):=\frac{\partial_{s}X_{0}(s,t)}{\left|\partial_{s}X_{0}(s,t)\right|}\text{ and }\mathbf{n}(s,t):=\left(\begin{array}{cc}
0 & -1\\
1 & 0
\end{array}\right)\boldsymbol{\tau}(s,t)\label{eq:ntau}
\end{equation}
for all $(s,t)\in\mathbb{T}^{1}\times[0,T_0]$.
We choose $X_{0}$ (and thereby the orientation of $\Gamma_{t}$)
such that $\mathbf{n}(.,t)$ is the exterior normal with
respect to $\Omega^{-}(t)$. Thus, for a point $p\in\Gamma_{t}$
with $p=X_{0}(s,t)$ it holds $\mathbf{n}_{\Gamma_{t}}(p)=\mathbf{n}(s,t)$.
Furthermore, $V(s,t):=V_{\Gamma_{t}}(X_{0}(s,t))$
and $H(s,t):=H_{\Gamma_{t}}(X_{0}(s,t))$
and $V(s,t)=\partial_{t}X_{0}(s,t)\cdot\mathbf{n}(s,t)$
for all $(s,t)\in\mathbb{T}^{1}\times[0,T_0]$
by definition of the normal velocity. We write for a function $\mathbf{v}\colon\Gamma\rightarrow\mathbb{R}^{d}$,
$d\in\mathbb{N}$, $\left(X_{0}^{*}\mathbf{v}\right)(s,t):=\mathbf{v}(X_{0}(s,t),t)$
for all $(s,t)\in\mathbb{T}^{1}\times[0,T_0]$
for a function $h\colon \mathbb{T}^{1}\times[0,T_0]\rightarrow\mathbb{R}$
we set $\big(X_{0}^{*,-1}h\big)(p):=h\big(X_{0}^{-1}(p)\big)$
for all $p\in\Gamma_{t}$, $t\in[0,T_0]$.

Choosing $\delta>0$ small enough, the orthogonal projection $\operatorname{Pr}_{\Gamma_{t}}\colon\Gamma_{t}\left(3\delta\right)\rightarrow\Gamma_{t}$
is well defined and smooth for all $t\in[0,T_0]$ and
the mapping $\phi_{t}(x)=\left(d_{\Gamma}(x,t),\operatorname{Pr}_{\Gamma_{t}}(x)\right)$
is a diffeomorphism from $\Gamma\left(3\delta\right)$ onto its image.
Its inverse is given by $\phi_{t}^{-1}\left(r,p\right)=p+r\mathbf{n}_{\Gamma_{t}}(p)$.
Although $\operatorname{Pr}_{\Gamma_{t}}$ and $\phi_{t}$ are well defined in $\Gamma_{t}(3\delta)$,
almost all computations later on are performed in $\Gamma_{t}(2\delta)$,
which is why, for the sake of readability, we work on $\Gamma_{t}(2\delta)$
in the following.

Combining $\phi_{t}^{-1}$ and $X_{0}$ we may define a diffeomorphism
\begin{align}
X(r,s,t) & =\left(\phi_{t}^{-1}(r,X_{0}(s,t)),t\right)=\left(X_{0}(s,t)+r\mathbf{n}(s,t),t\right)\label{eq:diffeo}
\end{align}
for $(r,s,t)\in\left(-2\delta,2\delta\right)\times\mathbb{T}^{1}\times[0,T_0]$
with inverse $X^{-1}(x,t)=\left(d_{\Gamma}(x,t),S(x,t),t\right)$
where we define 
\begin{equation}
S(x,t):=\left(X_{0}^{-1}(\operatorname{Pr}_{\Gamma_{t}}(x))\right)_{1}\label{eq:Sfett}
\end{equation}
for $(x,t)\in\Gamma(2\delta)$ and where $\left(.\right)_{1}$
signifies that we take the first component. In particular it holds
$S(x,t)=S(\operatorname{Pr}_{\Gamma_{t}}(x),t)$.
In the following we will write $\mathbf{n}(x,t):=\mathbf{n}(S(x,t),t)$
and $\boldsymbol{\tau}(x,t):=\boldsymbol{\tau}(S(x,t),t)$
for $(x,t)\in\Gamma(2\delta)$. 

For $(x,t)\in\Gamma(2\delta)$ it holds
\begin{equation}
\nabla d_{\Gamma}(x,t)=\mathbf{n}(x,t),\quad \left|\nabla d_{\Gamma}(x,t)\right|=1,\quad\nabla S(x,t)\cdot d_{\Gamma}(x,t)=0.\label{eq:Diff-Prop}
\end{equation}
In order to connect $d_{\Gamma}$ to the curvature and mean velocity,
we observe that for $s\in\mathbb{T}^{1}$, $r\in(-2\delta,2\delta)$
and $t\in[0,T_0]$ it holds $\Delta d_{\Gamma}(X_{0}(s,t),t)=-H(s,t)$
and $-\partial_{t}d_{\Gamma}(X(r,s,t))=V(s,t)$.

For a function $\phi\colon\Gamma(2\delta)\rightarrow\mathbb{R}$
we define $\tilde{\phi}(r,s,t):=\phi(X(r,s,t))$
and often write $\phi(r,s,t)$ instead of $\tilde{\phi}(r,s,t)$.
In the case that $\phi$ is twice continuously differentiable,
we introduce 
\begin{align}
\partial_{t}^{\Gamma}\tilde{\phi}(r,s,t) & :=\left(\partial_{t}+\partial_{t}S(X(r,s,t))\partial_{s}\right)\tilde{\phi}(r,s,t),\nonumber \\
\nabla^{\Gamma}\tilde{\phi}(r,s,t) & :=\nabla S(X(r,s,t))\partial_{s}\tilde{\phi}(r,s,t),\nonumber \\
\Delta^{\Gamma}\tilde{\phi}(r,s,t) & :=\left(\Delta S(X(r,s,t))\partial_{s}+\left(\nabla S\cdot\nabla S\right)(X(r,s,t))\partial_{ss}\right)\tilde{\phi}(r,s,t).\label{eq:surf-diff}
\end{align}
Similarly, if $\mathbf{v}\colon\Gamma(2\delta)\rightarrow\mathbb{R}^{2}$
is continuously differentiable, we will also write $\tilde{\mathbf{v}}(r,s,t):=\mathbf{v}\left(X(r,s,t)\right)$
and introduce
\begin{equation}
\mathrm{div}^{\Gamma}\tilde{\mathbf{v}}(r,s,t)=\nabla S\left(X(r,s,t)\right)\cdot\partial_{s}\tilde{\mathbf{v}}(r,s,t).\label{eq:divop}
\end{equation}
For later use we introduce
\begin{align*}
  \nabla^{\Gamma}\phi(x,t)&:=\nabla S(x,t)\partial_{s}\tilde{\phi}\left(d_{\Gamma}(x,t),S(x,t),t\right)\qquad \text{and}\\
\operatorname{div}^{\Gamma}\mathbf{v}(x,t)&:=\nabla S(x,t)\partial_{s}\tilde{\mathbf{v}}\left(d_{\Gamma}(x,t),S(x,t),t\right)
\end{align*}
for $(x,t)\in\Gamma(2\delta)$.

With these notations we have the decompositions 
\begin{align}
\nabla\phi(x,t) & =\partial_{\mathbf{n}}\phi(x,t)\mathbf{n}+\nabla^{\Gamma}\phi(x,t),\label{eq:surfgraddecomp}\\
\operatorname{div}\mathbf{v}(x,t) & =\partial_{\mathbf{n}}\mathbf{v}(x,t)\cdot\mathbf{n}+\operatorname{div}^{\Gamma}\mathbf{v}(x,t)\label{eq:surfdivdecomp}
\end{align}
for all $(x,t)\in\Gamma(2\delta)$, as 
\[
\frac{d}{dr}\left(\phi\circ X\right)|_{(r,s,t)=\left(d_{\Gamma}(x,t),S(x,t),t\right)}=\partial_{\mathbf{n}}\phi(x,t).
\]
\begin{rem}
\label{hnotations} If $h\colon\mathbb{T}^{1}\times[0,T_0]\rightarrow\mathbb{R}$
is a function that is independent of $r\in(-2\delta,2\delta)$,
the functions $\partial_{t}^{\Gamma}h,\nabla^{\Gamma}h$ and $\Delta^{\Gamma}h$
will nevertheless depend on $r$ via the derivatives of $S$. To connect
the presented concepts with the classical surface operators we introduce
the following notations:
\begin{align*}
D_{t,\Gamma}h(s,t) & :=\partial_{t}^{\Gamma}h(0,s,t),\quad
\nabla_{\Gamma}h(s,t)  :=\nabla^{\Gamma}h(0,s,t),\quad
\Delta_{\Gamma}h(s,t)  :=\Delta^{\Gamma}h(0,s,t).
\end{align*}
Later in this work (from Subsection \ref{subsec:The-Inner-Expansion}
on forward) we will often consider $h (S(x,t),t)$
and thus will write for simplicity
\begin{align}
\partial_{t}^{\Gamma}h(x,t) & :=\left(\partial_{t}+\partial_{t}S(x,t)\partial_{s}\right)h(S(x,t),t),\nonumber \\
\nabla^{\Gamma}h(x,t) & :=\left(\nabla S(x,t)\partial_{s}\right)h(S(x,t),t),\nonumber \\
\Delta^{\Gamma}h(x,t) & :=\left(\Delta S(x,t)\partial_{s}+\nabla S(x,t)\cdot\nabla S(x,t)\partial_{ss}\right)h(S(x,t),t)\label{eq:hsurf}
\end{align}
for $(x,t)\in\Gamma(2\delta)$. Using the definitions
and notations from this section we gain the identity
\begin{equation}
\partial_{t}^{\Gamma}h(x,t)=X_{0}^{*}\left(\partial_{t}^{\Gamma}h\right)(s,t)=\partial_{t}^{\Gamma}h(0,s,t)=D_{t,\Gamma}h(s,t)\label{eq:identities}
\end{equation}
for $(s,t)\in\mathbb{T}^{1}\times[0,T_0]$
and $\left(X_{0}(s,t),t\right)=(x,t)\in\Gamma$.
This might seem cumbersome but turns out to be convenient throughout
this work.
\end{rem}

In later parts of this article, we will introduce stretched coordinates
of the form 
\begin{equation}
\rho^{\epsilon}(x,t)=\frac{d_{\Gamma}(x,t)-\epsilon h(S(x,t),t)}{\epsilon}\label{eq:diffpartrho}
\end{equation}
for $(x,t)\in\Gamma(2\delta)$, $\epsilon\in\left(0,1\right)$
and for some smooth function $h\colon \mathbb{T}^{1}\times[0,T_0]\rightarrow\mathbb{R}$
(which will later on also depend on $\epsilon$). Writing $\rho=\rho^{\epsilon}$,
the relation between the regular and the stretched variables can be
expressed as
\begin{equation}
\hat{X}(\rho,s,t):=X(\epsilon\left(\rho+h(s,t)\right),s,t)=\left(X_{0}(s,t)+\epsilon\left(\rho+h(s,t)\right)\mathbf{n}(s,t),t\right).\label{eq:Xhat}
\end{equation}
\begin{lem}
\label{lem:surfright} Let $\phi\colon\mathbb{R}\times\Gamma(2\delta)\rightarrow\mathbb{R}$
be twice continuously differentiable and let $\rho$ be given
as in (\ref{eq:diffpartrho}). Then the following formulas hold for
$(x,t)\in\Gamma(2\delta)$ and $\epsilon\in\left(0,1\right)$
\begin{align*}
\partial_{t}\left(\phi(\rho(x,t),x,t)\right) & =\left(-\epsilon^{-1}V(S(x,t),t)-\partial_{t}^{\Gamma}h(x,t)\right)\partial_{\rho}\phi(\rho(x,t),x,t)+\partial_{t}\phi(\rho(x,t),x,t),\\
\nabla\left(\phi(\rho(x,t),x,t)\right) & =\left(\epsilon^{-1}\mathbf{n}(S(x,t),t)-\nabla^{\Gamma}h(x,t)\right)\partial_{\rho}\phi(\rho(x,t),x,t)+\nabla_{x}\phi(\rho(x,t),x,t),\\
\Delta\left(\phi(\rho(x,t),x,t)\right) & =\left(\epsilon^{-2}+\left|\nabla^{\Gamma}h(x,t)\right|^{2}\right)\partial_{\rho\rho}\phi(\rho(x,t),x,t)+\Delta_{x}\phi\left(\rho(x,t),x,t\right)\\
 & \quad+\left(\epsilon^{-1}\Delta d_{\Gamma}(x,t)-\Delta^{\Gamma}h(x,t)\right)\partial_{\rho}\phi(\rho(x,t),x,t)\\
 & \quad+2\left(\epsilon^{-1}\mathbf{n}(S(x,t),t)-\nabla^{\Gamma}h(x,t)\right)\cdot\nabla_{x}\partial_{\rho}\phi(\rho(x,t),x,t),
\end{align*}
Here $\nabla_{x}$ and $\Delta_{x}$ operate solely on the $x$-variable
of $\phi$.
\end{lem}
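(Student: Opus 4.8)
The plan is to prove Lemma~\ref{lem:surfright} by a careful, if bookkeeping-heavy, application of the chain rule: first differentiate the scalar $\rho$, then substitute into $\phi$.

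First I would compute the derivatives of $\rho(x,t)=\epsilon^{-1}\big(d_{\Gamma}(x,t)-\epsilon h(S(x,t),t)\big)$. Using \eqref{eq:Diff-Prop}, the identity $\partial_{t}d_{\Gamma}(x,t)=-V(S(x,t),t)$ and $\Delta d_{\Gamma}=-H$ recorded just after \eqref{eq:Diff-Prop}, together with the definitions \eqref{eq:hsurf} — which, as one checks immediately, say precisely that $\partial_{t}\big(h(S(x,t),t)\big)=\partial_{t}^{\Gamma}h(x,t)$, $\nabla\big(h(S(x,t),t)\big)=\nabla^{\Gamma}h(x,t)$ and $\Delta\big(h(S(x,t),t)\big)=\Delta^{\Gamma}h(x,t)$ — one obtains
\begin{align*}
\partial_{t}\rho & =-\epsilon^{-1}V(S(x,t),t)-\partial_{t}^{\Gamma}h(x,t),\\
\nabla\rho & =\epsilon^{-1}\mathbf{n}(S(x,t),t)-\nabla^{\Gamma}h(x,t),\\
\Delta\rho & =\epsilon^{-1}\Delta d_{\Gamma}(x,t)-\Delta^{\Gamma}h(x,t).
\end{align*}
At this point the crucial algebraic observation is $\mathbf{n}\cdot\nabla^{\Gamma}h=0$: indeed $\nabla^{\Gamma}h=\nabla S\,\partial_{s}h$ and $\nabla S\cdot\nabla d_{\Gamma}=0$ by \eqref{eq:Diff-Prop}, while $\nabla d_{\Gamma}=\mathbf{n}$ and $|\mathbf{n}|=1$; hence $|\nabla\rho|^{2}=\epsilon^{-2}+|\nabla^{\Gamma}h|^{2}$ with no cross term.

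Next I would apply the chain rule to $u(x,t):=\phi(\rho(x,t),x,t)$. The first two formulas are then immediate, $\partial_{t}u=\partial_{\rho}\phi\,\partial_{t}\rho+\partial_{t}\phi$ and $\nabla u=\partial_{\rho}\phi\,\nabla\rho+\nabla_{x}\phi$, where $\partial_{t}\phi$, $\nabla_{x}\phi$ denote the derivatives acting only on the explicit slots of $\phi$, all evaluated at $(\rho(x,t),x,t)$. For the Laplacian I would write $\Delta u=\operatorname{div}(\nabla u)=\operatorname{div}\big((\partial_{\rho}\phi)\nabla\rho\big)+\operatorname{div}\big(\nabla_{x}\phi\big)$ and expand each term, using that $\partial_{\rho}\phi$ and $\nabla_{x}\phi$ are again functions of the form $\psi(\rho(x,t),x,t)$, so that differentiating them reproduces a factor $\nabla\rho$. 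This gives $\operatorname{div}\big((\partial_{\rho}\phi)\nabla\rho\big)=(\Delta\rho)\partial_{\rho}\phi+|\nabla\rho|^{2}\partial_{\rho\rho}\phi+\nabla\rho\cdot\nabla_{x}\partial_{\rho}\phi$ and $\operatorname{div}\big(\nabla_{x}\phi\big)=\Delta_{x}\phi+\nabla\rho\cdot\nabla_{x}\partial_{\rho}\phi$; adding them, the two mixed terms combine into the factor $2$ in the statement, and inserting the expressions for $\nabla\rho$, $\Delta\rho$ and $|\nabla\rho|^{2}$ found above yields exactly the claimed identity.

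There is no genuine obstacle here; the only points requiring care are notational. One must keep the ``total'' derivatives in $x$ and $t$ cleanly separated from the partial derivatives $\partial_{t}\phi$, $\nabla_{x}\phi$, $\Delta_{x}\phi$ that see only the explicit arguments of $\phi$, and one must track that the mixed second-order term $\nabla\rho\cdot\nabla_{x}\partial_{\rho}\phi$ acquires its coefficient $2$ from two distinct sources — differentiating $\partial_{\rho}\phi$ inside the first divergence, and differentiating $\nabla_{x}\phi$ through its $\rho$-dependence in the second. Everything else is the substitution of \eqref{eq:Diff-Prop} and \eqref{eq:hsurf} together with the orthogonality $\mathbf{n}\cdot\nabla^{\Gamma}h=0$.
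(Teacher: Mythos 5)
Your proof is correct and is exactly the intended argument: the paper's own proof consists solely of the sentence "This follows from the chain rule, (\ref{eq:Diff-Prop}) and the notations introduced in Remark \ref{hnotations}," and you have filled in those chain-rule computations — including the key orthogonality $\mathbf{n}\cdot\nabla^{\Gamma}h=0$ that kills the cross term in $|\nabla\rho|^{2}$ and the bookkeeping that produces the factor $2$ — faithfully.
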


\begin{proof}
This follows from the chain rule, (\ref{eq:Diff-Prop}) and the notations
introduced in Remark \ref{hnotations}.
\end{proof}

\subsection{Remainder Terms \label{sec:Remainder}}

Most of the following was already discussed in \cite{NSCH1} and is
only presented for the convenience of the reader.

For $t\in[0,T_0]$ and $1\leq p<\infty$ we define
\[
L^{p,\infty}\left(\Gamma_{t}(2\delta)\right):=\left\{ \left.f:\Gamma_{t}(2\delta)\rightarrow\mathbb{R}\;\text{measurable}\right|\left\Vert f\right\Vert _{L^{p,\infty}\left(\Gamma_{t}(2\delta)\right)}<\infty\right\} ,
\]
where 
\[
\left\Vert f\right\Vert _{L^{p,\infty}\left(\Gamma_{t}(2\delta)\right)}:=\left(\int_{\mathbb{T}^{1}}\text{esssup}_{\left|r\right|\leq2\delta}\left|f\left(\left(X(r,s,t)\right)_{1}\right)\right|^{p}\d s\right)^{\frac{1}{p}}.
\]
Here $X_{1}(r,s,t):=X_{0}(s,t)+r\mathbf{n}(s,t)$
denotes the first component of $X$. Let $T\in[0,T_0]$, $1\leq p,q<\infty$ and $\alpha\in\left(0,3\delta\right)$
be given and let . Then we set 
\[
L^{q}\left(0,T;L^{p}\left(\Gamma_{t}\left(\alpha\right)\right)\right):=\left\{ \left.f:\Gamma\left(\alpha,T\right)\rightarrow\mathbb{R}\;\text{measurable}\right|\left\Vert f\right\Vert _{L^{q}\left(0,T;L^{p}\left(\Gamma_{t}\left(\alpha\right)\right)\right)}<\infty\right\} ,
\]
\[
\left\Vert f\right\Vert _{L^{q}\left(0,T;L^{p}\left(\Gamma_{t}\left(\alpha\right)\right)\right)}:=\left(\int_{0}^{T}\left(\int_{\Gamma_{t}\left(\alpha\right)}\left|f(x,t)\right|^{p}\d x\right)^{\frac{q}{p}}\d t\right)^{\frac{1}{q}}.
\]
Analogously, we define $L^{q}\left(0,T;L^{p}\left(\Omega\backslash\Gamma_{t}(\alpha)\right)\right)$ and $L^{q}\left(0,T;L^{p}\left(\Omega^\pm(t))\right)\right)$
and the corresponding norms. Furthermore, for $m\in\mathbb{N}_0$ we define for $U(t)= \Omega^\pm (t)$ or $U(t)= \Gamma_t(\alpha)$
\begin{align*}
  L^p(0,T;H^m(U(t)))&:=\{f\in L^p(0,T;L^2(\Omega^\pm(t))): \partial_x^\alpha f \in L^p(0,T;L^2(U(t)))\forall |\alpha|\leq m  \},\\
  \|f\|_{L^p(0,T;H^m(U(t)))} &:= \sum_{|\alpha|\leq m} \|\partial_x^\alpha f\|_{L^p(0,T;L^2(U(t)))}.
\end{align*}

The following embedding was already remarked in \cite[Subsection~2.5]{nsac}.
\begin{lem}
\label{L4inf} We have
$
H^{1}\left(\Gamma_{t}(2\delta)\right)\hookrightarrow L^{4,\infty}\left(\Gamma_{t}(2\delta)\right)
$
with operator norm uniformly bounded with respect to $t\in [0,T_0]$.
\end{lem}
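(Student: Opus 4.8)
\textbf{Proof plan for Lemma \ref{L4inf}.}

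The plan is to reduce the claimed embedding $H^1(\Gamma_t(2\delta))\hookrightarrow L^{4,\infty}(\Gamma_t(2\delta))$ to the one–dimensional Sobolev inequality in the normal variable combined with the standard two–dimensional embedding $H^1(\mathbb{T}^1)\hookrightarrow L^4(\mathbb{T}^1)$, using the diffeomorphism $X(r,s,t)$ from \eqref{eq:diffeo} to pass to the product coordinates $(r,s)\in(-2\delta,2\delta)\times\mathbb{T}^1$. First I would fix $t\in[0,T_0]$ and, given $f\colon\Gamma_t(2\delta)\to\mathbb{R}$, set $\tilde f(r,s):=f(X_1(r,s,t))$. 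Since $X(\cdot,\cdot,t)$ is a smooth diffeomorphism with Jacobian bounded above and below uniformly in $t$ (by compactness of $[0,T_0]$ and of $\mathbb{T}^1$, together with the fixed choice of $\delta$), the norms $\|f\|_{H^1(\Gamma_t(2\delta))}$ and $\|\tilde f\|_{H^1((-2\delta,2\delta)\times\mathbb{T}^1)}$ are equivalent with constants independent of $t$; likewise $\|f\|_{L^{4,\infty}(\Gamma_t(2\delta))}$ is comparable to $\big(\int_{\mathbb{T}^1}\operatorname{esssup}_{|r|<2\delta}|\tilde f(r,s)|^4\,\d s\big)^{1/4}$, directly by the definition of the $L^{p,\infty}$–norm given in Subsection \ref{sec:Remainder}.

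The core estimate is then for $g\in H^1((-2\delta,2\delta)\times\mathbb{T}^1)$: I want to bound $\big(\int_{\mathbb{T}^1}\operatorname{esssup}_{|r|<2\delta}|g(r,s)|^4\,\d s\big)^{1/4}$ by $C\|g\|_{H^1}$. Fix $s$ for which $g(\cdot,s)\in H^1((-2\delta,2\delta))$ (this holds for a.e.\ $s$ by Fubini). By the one–dimensional Sobolev embedding $H^1((-2\delta,2\delta))\hookrightarrow C^0([-2\delta,2\delta])$ we have, for a constant $C_\delta$ depending only on $\delta$,
\begin{equation*}
\operatorname{esssup}_{|r|<2\delta}|g(r,s)|^2 \;\le\; C_\delta\!\int_{-2\delta}^{2\delta}\!\big(|g(r,s)|^2+|\partial_r g(r,s)|^2\big)\,\d r.
\end{equation*}
Raising to the square and integrating in $s\in\mathbb{T}^1$ gives
\begin{equation*}
\int_{\mathbb{T}^1}\operatorname{esssup}_{|r|<2\delta}|g(r,s)|^4\,\d s \;\le\; C_\delta^2\int_{\mathbb{T}^1}\Big(\int_{-2\delta}^{2\delta}\!\big(|g|^2+|\partial_r g|^2\big)\,\d r\Big)^{2}\d s.
\end{equation*}
Now set $G(s):=\int_{-2\delta}^{2\delta}(|g(r,s)|^2+|\partial_r g(r,s)|^2)\,\d r$, so the right–hand side is $C_\delta^2\|G\|_{L^2(\mathbb{T}^1)}^2$. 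One checks $G\in W^{1,1}(\mathbb{T}^1)$ with $\|G\|_{L^1(\mathbb{T}^1)}\le\|g\|_{H^1}^2$ and $\|G'\|_{L^1(\mathbb{T}^1)}\le C\|g\|_{H^1}^2$ (differentiating under the integral sign in $s$ and using $2|g\,\partial_s g|\le |g|^2+|\partial_s g|^2$, similarly for the $\partial_r g$ term), hence by $W^{1,1}(\mathbb{T}^1)\hookrightarrow L^\infty(\mathbb{T}^1)$ we get $\|G\|_{L^2(\mathbb{T}^1)}^2\le\|G\|_{L^\infty}\|G\|_{L^1}\le C\|g\|_{H^1}^4$. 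Combining, $\big(\int_{\mathbb{T}^1}\operatorname{esssup}_{|r|<2\delta}|g|^4\,\d s\big)^{1/4}\le C\|g\|_{H^1((-2\delta,2\delta)\times\mathbb{T}^1)}$, and transporting back through $X$ yields the lemma with a $t$–independent operator norm.

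The only mild obstacle is bookkeeping the uniformity in $t$: one must note that the constants in the Jacobian bounds for $X(\cdot,\cdot,t)$, in the comparison of $H^1$– and $L^{4,\infty}$–norms under $X$, and in the one–dimensional embedding all depend only on $\delta$ and on $\sup_{t}\|X_0(\cdot,t)\|_{C^2}$ and $\inf_{t}|\partial_s X_0(\cdot,t)|$, which are finite and positive by the standing smoothness assumptions on $\Gamma$. Alternatively, one can cite the two–dimensional tensor Sobolev embedding $H^1\hookrightarrow L^{4,\infty}$ on the fixed product manifold $(-2\delta,2\delta)\times\mathbb{T}^1$ directly (this is the version stated in \cite[Subsection 2.5]{nsac}) and only carry out the diffeomorphism transport step; I would present it in the self–contained form above since it makes the $t$–uniformity transparent.
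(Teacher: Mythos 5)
Your overall scheme — pulling back via the diffeomorphism $X(\cdot,\cdot,t)$ to the fixed cylinder $(-2\delta,2\delta)\times\mathbb{T}^1$, identifying the $L^{4,\infty}$-norm there with the mixed $L^4_s L^\infty_r$-norm, and proving the corresponding anisotropic embedding — is the right one, and the transport step (uniform Jacobian bounds, equivalence of $H^1$-norms, $t$-uniformity by compactness) is handled correctly. The paper itself does not prove the lemma but simply refers to \cite[Subsection~2.5]{nsac}, so a self-contained argument is welcome; the only question is whether your concrete estimate is sound.

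It is not quite: there is a gap in the $W^{1,1}$-step. With $G(s):=\int_{-2\delta}^{2\delta}\big(|g|^2+|\partial_r g|^2\big)\,\d r$, differentiating under the integral sign gives
\begin{equation*}
G'(s)=\int_{-2\delta}^{2\delta}\big(2\,g\,\partial_s g + 2\,\partial_r g\,\partial_s\partial_r g\big)\,\d r,
\end{equation*}
and the second summand contains the mixed second derivative $\partial_s\partial_r g$, which is \emph{not} controlled by $\|g\|_{H^1}$. The claim $\|G'\|_{L^1(\mathbb{T}^1)}\le C\|g\|_{H^1}^2$ therefore fails, and the chain $\|G\|_{L^2}^2\le\|G\|_{L^\infty}\|G\|_{L^1}\le C\|g\|_{H^1}^4$ breaks down. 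Put differently, your argument would yield $\sup_s\int_{-2\delta}^{2\delta}|\partial_r g(r,s)|^2\,\d r\le C\|g\|_{H^1}^2$, which is false in general for $g\in H^1$.

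The fix is to keep $|\partial_r g|^2$ out of the function you feed into $W^{1,1}(\mathbb{T}^1)\hookrightarrow L^\infty(\mathbb{T}^1)$. From the fundamental theorem of calculus, averaging over the base point, one gets for a.e.\ $s$
\begin{equation*}
\operatorname{esssup}_{|r|<2\delta}|g(r,s)|^2 \le C_\delta\int_{-2\delta}^{2\delta}|g|^2\,\d r + 2\Big(\int_{-2\delta}^{2\delta}|g|^2\,\d r\Big)^{1/2}\Big(\int_{-2\delta}^{2\delta}|\partial_r g|^2\,\d r\Big)^{1/2}.
\end{equation*}
Set $F(s):=\int_{-2\delta}^{2\delta}|g(r,s)|^2\,\d r$ (only $|g|^2$, not $|\partial_r g|^2$). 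Squaring and integrating over $\mathbb{T}^1$ yields
\begin{equation*}
\int_{\mathbb{T}^1}\operatorname{esssup}_{|r|<2\delta}|g|^4\,\d s \le C\Big(\|F\|_{L^2(\mathbb{T}^1)}^2 + \|F\|_{L^\infty(\mathbb{T}^1)}\,\|\partial_r g\|_{L^2((-2\delta,2\delta)\times\mathbb{T}^1)}^2\Big).
\end{equation*}
Now $F'(s)=\int 2\,g\,\partial_s g\,\d r$ involves only first derivatives, so $\|F\|_{W^{1,1}(\mathbb{T}^1)}\le C\|g\|_{H^1}^2$; hence $\|F\|_{L^\infty}\le C\|g\|_{H^1}^2$ and $\|F\|_{L^2}^2\le\|F\|_{L^\infty}\|F\|_{L^1}\le C\|g\|_{H^1}^4$. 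Both summands on the right are then bounded by $C\|g\|_{H^1}^4$, which is exactly what you need; transporting back through $X$ finishes the proof with the $t$-independence you already argued. (Alternatively, one can show via Fourier series in $s$ that $H^1((-2\delta,2\delta)\times\mathbb{T}^1)\hookrightarrow L^\infty_r\big(H^{1/2}(\mathbb{T}^1)\big)$ and combine with $H^{1/2}(\mathbb{T}^1)\hookrightarrow L^4(\mathbb{T}^1)$; this avoids the $W^{1,1}$ device entirely.)
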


The following estimates will be frequently used:
\begin{lem}
\label{Linfeig}Let $h\colon \mathbb{T}^{1}\times[0,T_0]\rightarrow\mathbb{R}$
be continuous, $\epsilon\in (0,1)$, $t\in[0,T_0]$.
Then there are constants $C_{1},C_{2}>0$ independent of $h$, $\epsilon$
and $t$ such that 
\begin{enumerate}
\item for all $\psi\in L^{1,\infty}(\Gamma_{t}(2\delta))$,
$\eta\in L^{1}(\mathbb{R})$
\[
\left\Vert \eta\left(\tfrac{d_{\Gamma}(.,t)}{\epsilon}-h(S(.,t),t)\right)\psi\right\Vert _{L^{1}\left(\Gamma_{t}(2\delta)\right)}\!\leq C_{1}\epsilon\left\Vert \eta\right\Vert _{L^{1}(\mathbb{R})}\left\Vert \psi\right\Vert _{L^{1,\infty}\left(\Gamma_{t}(2\delta)\right)}.
\]
\item for all $\psi\in L^{2,\infty}(\Gamma_{t}(2\delta))$,
$\eta\in L^{2}(\mathbb{R})$ and $u\in L^{2}(\Gamma_{t}(2\delta))$
\[
\left\Vert \eta\left(\tfrac{d_{\Gamma}(.,t)}{\epsilon}-h(S(.,t),t)\right)\psi u\right\Vert _{L^{1}\left(\Gamma_{t}(2\delta)\right)}\leq C_{2}\epsilon^{\frac{1}{2}}\left\Vert \eta\right\Vert _{L^{2}(\mathbb{R})}\left\Vert \psi\right\Vert _{L^{2,\infty}\left(\Gamma_{t}(2\delta)\right)}\left\Vert u\right\Vert _{L^{2}\left(\Gamma_{t}(2\delta)\right)}.
\]
\end{enumerate}
\end{lem}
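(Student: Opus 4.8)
The plan is to transfer both inequalities to one-dimensional integrals in the stretched normal variable via the diffeomorphism $X(\cdot,\cdot,t)$ from \eqref{eq:diffeo}. For measurable $g\colon\Gamma_{t}(2\delta)\to\mathbb{R}$ write $\tilde g(r,s,t):=g(X_{1}(r,s,t))$, where $X_{1}(r,s,t)=X_{0}(s,t)+r\mathbf{n}(s,t)$. Since $(r,s)\mapsto X_{1}(r,s,t)$ is a smooth diffeomorphism from $(-2\delta,2\delta)\times\mathbb{T}^{1}$ onto $\Gamma_{t}(2\delta)$ whose Jacobian determinant is bounded above by some $C_{0}>0$ and below by some $c_{0}>0$, uniformly in $(r,s)$ and in $t\in[0,T_{0}]$, the change of variables gives
\[
\int_{\Gamma_{t}(2\delta)}|g|\,\d x\le C_{0}\int_{\mathbb{T}^{1}}\int_{-2\delta}^{2\delta}|\tilde g(r,s,t)|\,\d r\,\d s,\qquad \int_{\mathbb{T}^{1}}\int_{-2\delta}^{2\delta}|\tilde g(r,s,t)|^{2}\,\d r\,\d s\le c_{0}^{-1}\|g\|_{L^{2}(\Gamma_{t}(2\delta))}^{2}.
\]
The uniformity in $t$ holds because $\Gamma$ is a smoothly evolving compact hypersurface and $\delta$ is fixed so that $\operatorname{Pr}_{\Gamma_{t}}$ is smooth on $\Gamma_{t}(3\delta)$ for every $t$, hence $\partial_{(r,s)}X_{1}$ and the inverse map are continuous on the compact set $[-2\delta,2\delta]\times\mathbb{T}^{1}\times[0,T_{0}]$. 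By \eqref{eq:Sfett} we have $d_{\Gamma}(X_{1}(r,s,t),t)=r$ and $S(X_{1}(r,s,t),t)=s$, so after the change of variables the weight $\eta\big(\tfrac{d_{\Gamma}}{\epsilon}-h(S(\cdot,t),t)\big)$ becomes $\eta\big(\tfrac{r}{\epsilon}-h(s,t)\big)$.

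For part (1), inside the $r$-integral I would bound $|\tilde\psi(r,s,t)|\le\text{esssup}_{|r|\le2\delta}|\tilde\psi(r,s,t)|$ and substitute $\rho=\tfrac{r}{\epsilon}-h(s,t)$, $\d r=\epsilon\,\d\rho$, so that $\int_{-2\delta}^{2\delta}\big|\eta\big(\tfrac{r}{\epsilon}-h(s,t)\big)\big|\,\d r\le\epsilon\|\eta\|_{L^{1}(\mathbb{R})}$ for every fixed $s$. Integrating $\text{esssup}_{|r|\le2\delta}|\tilde\psi(r,s,t)|$ over $s\in\mathbb{T}^{1}$ reproduces precisely $\|\psi\|_{L^{1,\infty}(\Gamma_{t}(2\delta))}$, and the asserted estimate follows with $C_{1}=C_{0}$.

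For part (2), for each fixed $s$ I would apply the Cauchy–Schwarz inequality in $r$,
\[
\int_{-2\delta}^{2\delta}\Big|\eta\Big(\tfrac{r}{\epsilon}-h(s,t)\Big)\Big|\,|\tilde\psi|\,|\tilde u|\,\d r\le\Big(\text{esssup}_{|r|\le2\delta}|\tilde\psi|\Big)\Big(\int_{-2\delta}^{2\delta}\Big|\eta\Big(\tfrac{r}{\epsilon}-h(s,t)\Big)\Big|^{2}\d r\Big)^{1/2}\Big(\int_{-2\delta}^{2\delta}|\tilde u|^{2}\,\d r\Big)^{1/2},
\]
where the middle factor equals $\epsilon^{1/2}\|\eta\|_{L^{2}(\mathbb{R})}$ after the same substitution. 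Integrating over $s\in\mathbb{T}^{1}$ and applying Cauchy–Schwarz once more in $s$ splits off $\|\psi\|_{L^{2,\infty}(\Gamma_{t}(2\delta))}$ times $\big(\int_{\mathbb{T}^{1}}\int_{-2\delta}^{2\delta}|\tilde u|^{2}\,\d r\,\d s\big)^{1/2}\le c_{0}^{-1/2}\|u\|_{L^{2}(\Gamma_{t}(2\delta))}$, yielding the claim with $C_{2}=C_{0}c_{0}^{-1/2}$. The computation is routine; the only point needing a word of justification is the uniform two-sided Jacobian bound in $t\in[0,T_{0}]$, which is the (mild) main obstacle and is supplied by the compactness argument above, while $h$ causes no difficulty because Lebesgue measure is translation invariant, so continuity of $h$ is used only to ensure measurability of the composition.
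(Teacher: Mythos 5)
Your proof is correct and follows essentially the same route as the paper: change variables via the diffeomorphism $X(\cdot,\cdot,t)$, use the uniform boundedness of the Jacobian, substitute $\rho=\tfrac{r}{\epsilon}-h(s,t)$ to extract the factor $\epsilon$ (resp.\ $\epsilon^{1/2}$), and for part (2) apply Cauchy--Schwarz in $r$ and then in $s$, which is exactly what the paper leaves implicit when it says part (2) is shown ``in the same way.'' No gaps.
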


\begin{proof}
Ad 1.: With two changes of variables we obtain 
\begin{align*}
 & \left\Vert \eta\left(\tfrac{d_{\Gamma}(.,t)}{\epsilon}-h(S(.,t),t)\right)\psi\right\Vert _{L^{1}(\Gamma_{t}(2\delta))}\\
 & =\int_{\mathbb{T}^{1}}\int_{-2\delta}^{2\delta}\left|\eta\left(\tfrac{r}{\epsilon}-h(s,t)\right)\psi(X_{1}(r,s,t))\right|\left|\det\left(\nabla X_{1}(r,s,t)\right)\right|\d r\d s\\
 & \leq C\int_{\mathbb{T}^{1}}\left\Vert \psi\circ X_{1}\right\Vert _{L^{\infty}\left(-2\delta,2\delta\right)}\int_{-\frac{2\delta}{\epsilon}-h(s,t)}^{\frac{2\delta}{\epsilon}-h(s,t)}\epsilon\left|\eta(\rho)\right|\d\rho\d s \leq C\epsilon\left\Vert \psi\right\Vert _{L^{1,\infty}\left(\Gamma_{t}(\delta)\right)}\left\Vert \eta\right\Vert _{L^{1}(\mathbb{R})}.
\end{align*}
Here we used the uniform boundedness of $\left|\det(\nabla X_{1})\right|$
in $\left(-2\delta,2\delta\right)\times\mathbb{T}^{1}\times[0,T_0]$
in the second inequality.

Ad 2.: This can be shown in the same way as the first statement.
\end{proof}
For future use, we introduce the concept of remainder terms, similar
to \cite[Definition 2.5]{nsac}.
\begin{defn}
\label{def:Ralphadef}Let $n\in\mathbb{N}$, $\epsilon_{0}>0$. For
$\alpha>0$ let $\ra$ denote the vector space of all families $\left(\hat{r}_{\epsilon}\right)_{\epsilon\in\left(0,\epsilon_{0}\right)}$
of continuous functions $\hat{r}_{\epsilon}\colon\mathbb{R}\times\Gamma(2\delta)\rightarrow\mathbb{R}^{n}$
which satisfy
\[
\left|\hat{r}_{\epsilon}(\rho,x,t)\right|\leq Ce^{-\alpha\left|\rho\right|}\qquad \text{ for all }\rho\in\mathbb{R},(x,t)\in\Gamma(2\delta),\epsilon\in(0,1).
\]
Moreover, let $\mathcal{R}_{\alpha}^{0}$ be the subspace of all $\left(\hat{r}_{\epsilon}\right)_{\epsilon\in\left(0,\epsilon_{0}\right)}\in\mathcal{R}_{\alpha}$
such that 
\[
\hat{r}_{\epsilon}(\rho,x,t)=0\qquad\text{ for all }\rho\in\mathbb{R},(x,t)\in\Gamma.
\]
\end{defn}

\subsection{Parabolic Equations on Evolving Surfaces \label{sec:Parabolic-Equations-on}}

We introduce the space 
\begin{eqnarray}
X_{T} & = & L^{2}\big(0,T;H^{\frac{7}{2}}(\mathbb{T}^{1})\big)\cap H^{1}\big(0,T;H^{\frac{1}{2}}(\mathbb{T}^{1})\big)\label{eq:XT}
\end{eqnarray}
for $T\in(0,\infty)$, where we equip
$X_{T}$ with the norm 
\[
\left\Vert h\right\Vert _{X_{T}}=\left\Vert h\right\Vert _{L^{2}(0,T;H^{\frac{7}{2}}(\mathbb{T}^{1}))}+\left\Vert h\right\Vert _{H^{1}(0,T;H^{\frac{1}{2}}(\mathbb{T}^{1}))}+\left\Vert h|_{t=0}\right\Vert _{H^{2}(\mathbb{T}^{1})}.
\]
\begin{prop}
\label{embedding}Let $T\in\left(0,\infty\right)$. Then we have
\begin{enumerate}
\item $X_{T}\hookrightarrow C^{0}([0,T];H^{2}(\mathbb{T}^{1}))$
where the operator norm of the embedding is bounded independently
of $T$,
\item $X_{T}\hookrightarrow H^{\frac{1}{2}}(0,T;H^{2}(\mathbb{T}^{1}))\cap  H^{\frac{1}{3}}(0,T;H^{\frac{5}{2}}(\mathbb{T}^{1}))$.
\end{enumerate}
\end{prop}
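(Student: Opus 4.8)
The plan is to reduce both embeddings to elementary weighted $L^{2}$-estimates: for (1) via a direct energy identity on $\mathbb{T}^{1}$, and for (2) via the Fourier transform in the time variable after extending $h$ to all of $\mathbb{R}$. The one genuinely delicate point is the assertion in (1) that the embedding constant is independent of $T$; on a bounded interval the constant in the classical trace estimate $L^{2}(0,T;X_{0})\cap H^{1}(0,T;X_{1})\hookrightarrow C([0,T];[X_{0},X_{1}]_{1/2})$ degenerates as $T\to0$, and it is precisely the term $\|h|_{t=0}\|_{H^{2}(\mathbb{T}^{1})}$ in the norm of $X_{T}$ that compensates for this, which is why I would prove (1) by hand rather than quote the abstract result.

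For (1), set $\Lambda:=(1-\partial_{s}^{2})^{1/2}$ on $\mathbb{T}^{1}$ and, for smooth $h$, put $g(t):=\|\Lambda^{2}h(t)\|_{L^{2}(\mathbb{T}^{1})}^{2}=\|h(t)\|_{H^{2}(\mathbb{T}^{1})}^{2}$. Since $\Lambda$ is self-adjoint, $g'(t)=2\langle\Lambda^{2}\partial_{t}h(t),\Lambda^{2}h(t)\rangle_{L^{2}(\mathbb{T}^{1})}=2\langle\Lambda^{1/2}\partial_{t}h(t),\Lambda^{7/2}h(t)\rangle_{L^{2}(\mathbb{T}^{1})}$, whence by the Cauchy--Schwarz and Young inequalities $|g'(t)|\le\|\partial_{t}h(t)\|_{H^{1/2}(\mathbb{T}^{1})}^{2}+\|h(t)\|_{H^{7/2}(\mathbb{T}^{1})}^{2}\in L^{1}(0,T)$. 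Hence $g$ is absolutely continuous, $g(t)=g(0)+\int_{0}^{t}g'(\sigma)\,\d\sigma$, and $\sup_{t\in[0,T]}\|h(t)\|_{H^{2}(\mathbb{T}^{1})}^{2}\le\|h(0)\|_{H^{2}(\mathbb{T}^{1})}^{2}+\|h\|_{H^{1}(0,T;H^{1/2}(\mathbb{T}^{1}))}^{2}+\|h\|_{L^{2}(0,T;H^{7/2}(\mathbb{T}^{1}))}^{2}\lesssim\|h\|_{X_{T}}^{2}$ with a constant independent of $T$. Continuity $t\mapsto h(t)\in H^{2}(\mathbb{T}^{1})$ is obvious for smooth $h$; both the bound and the continuity then extend to all $h\in X_{T}$ by density, using that smooth functions are dense in $L^{2}(0,T;H^{7/2}(\mathbb{T}^{1}))\cap H^{1}(0,T;H^{1/2}(\mathbb{T}^{1}))$ and that the (non-uniform) trace theorem makes $h\mapsto h(0)$ continuous into $H^{2}(\mathbb{T}^{1})$, so that approximation in the first two summands of $\|\cdot\|_{X_{T}}$ already forces convergence of the initial values in $H^{2}(\mathbb{T}^{1})$, and a uniform limit of continuous curves is continuous.

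For (2), where no uniformity in $T$ is claimed, I would first extend $h$ in time. By (1) (or the classical trace theorem) $h(0),h(T)\in H^{2}(\mathbb{T}^{1})=[H^{7/2}(\mathbb{T}^{1}),H^{1/2}(\mathbb{T}^{1})]_{1/2}$, so $h$ can be glued, via a fixed time cut-off, with extensions across $t=0$ and $t=T$ furnished by the trace theorem to obtain $Eh$ on $\mathbb{R}$ with $Eh|_{(0,T)}=h$ and $\|Eh\|_{L^{2}(\mathbb{R};H^{7/2}(\mathbb{T}^{1}))\cap H^{1}(\mathbb{R};H^{1/2}(\mathbb{T}^{1}))}\le C(T)\|h\|_{X_{T}}$. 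Taking also the Fourier transform $t\mapsto\tau$, the squared norm of $Eh$ in $L^{2}(\mathbb{R};H^{7/2}(\mathbb{T}^{1}))\cap H^{1}(\mathbb{R};H^{1/2}(\mathbb{T}^{1}))$ is comparable to $\sum_{k\in\mathbb{Z}}\int_{\mathbb{R}}w(\tau,k)\,|\widehat{Eh}(\tau,k)|^{2}\,\d\tau$, where $w(\tau,k):=(1+k^{2})^{7/2}+(1+\tau^{2})(1+k^{2})^{1/2}$, whereas the squared norm in $H^{\theta}(\mathbb{R};H^{7/2-3\theta}(\mathbb{T}^{1}))$ carries the weight $(1+\tau^{2})^{\theta}(1+k^{2})^{7/2-3\theta}$. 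Since, for every $\theta\in[0,1]$,
\[
(1+\tau^{2})^{\theta}(1+k^{2})^{7/2-3\theta}=\big((1+\tau^{2})(1+k^{2})^{1/2}\big)^{\theta}\big((1+k^{2})^{7/2}\big)^{1-\theta}\le\theta(1+\tau^{2})(1+k^{2})^{1/2}+(1-\theta)(1+k^{2})^{7/2}\le w(\tau,k)
\]
by Young's inequality, restricting $Eh$ back to $(0,T)$ gives $X_{T}\hookrightarrow H^{\theta}(0,T;H^{7/2-3\theta}(\mathbb{T}^{1}))$; the choices $\theta=\tfrac12$ and $\theta=\tfrac13$ are exactly the claimed spaces $H^{1/2}(0,T;H^{2}(\mathbb{T}^{1}))$ and $H^{1/3}(0,T;H^{5/2}(\mathbb{T}^{1}))$.

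The main obstacle is the bookkeeping in (1): one must avoid estimating $\sup_{t}\|h(t)\|_{H^{2}(\mathbb{T}^{1})}$ through a Sobolev embedding in the time variable, which would cost a power of $T$, and instead use the fundamental theorem of calculus for $g(t)=\|h(t)\|_{H^{2}(\mathbb{T}^{1})}^{2}$ together with the prescribed control of the initial value. The remaining ingredients --- density of smooth functions, the time extension, and the elementary weight inequality --- are routine.
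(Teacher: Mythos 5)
Your argument is correct. The paper itself does not give a proof: it cites \cite[Lemma~A.8]{moser} for part~1 and \cite[Proposition~3.2]{schnaubelt} for part~2 (the latter being the general mixed-derivative embedding $X_{T}\hookrightarrow H^{\sigma}(0,T;H^{\frac12+3(1-\sigma)}(\mathbb{T}^1))$, specialized to $\sigma=\frac12$ and $\sigma=\frac13$). You supply a self-contained proof of exactly those two facts, by what are very likely the internals of the cited lemmas: for~1, the energy identity for $g(t)=\|h(t)\|_{H^{2}}^{2}$, with $g'$ absorbed into the $L^2(H^{7/2})$ and $H^1(H^{1/2})$ norms via Cauchy--Schwarz and Young, and the initial-value term of $\|\cdot\|_{X_T}$ used precisely to avoid a $T$-dependent trace constant; for~2, extension in time followed by Plancherel and the pointwise weight inequality $(1+\tau^2)^{\theta}(1+k^2)^{7/2-3\theta}\le(1+\tau^2)(1+k^2)^{1/2}+(1+k^2)^{7/2}$, which reproduces the schnaubelt scale for all $\theta\in[0,1]$, hence is in no way less general than the citation. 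The one point worth stressing, which you identify correctly, is that the $T$-uniformity in part~1 is not an artefact: without the $\|h|_{t=0}\|_{H^2}$ term in $\|\cdot\|_{X_T}$ the embedding constant would blow up as $T\to 0$, so the fundamental-theorem-of-calculus argument (rather than an off-the-shelf trace theorem) is essential. Two minor cosmetic remarks: the time extension in part~2 can be done more cheaply by even reflection and a fixed cut-off, which avoids invoking a trace theorem at $t=0$ and $t=T$ at all; and the density step at the end of part~1 could note explicitly that the bound $\|h_n-h_m\|_{C([0,T];H^2)}\lesssim\|h_n-h_m\|_{X_T}$ already proved for smooth functions is what makes $(h_n)$ Cauchy in $C([0,T];H^2)$. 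Neither affects correctness.
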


\begin{proof}

Ad 1.: See e.g.\ \cite[Lemma~A.8]{moser}.

Ad 2.: According to \cite[Proposition~3.2]{schnaubelt}  we have
$X_{T}\hookrightarrow H^{\sigma}\big(0,T;H^{\frac{1}{2}+\left(1-\sigma\right)3}(\mathbb{T}^{1})\big)$.
Thus the statement follows for $\sigma=\frac{1}{2}$ and $\sigma=\frac{1}{3}$.
\end{proof}
 The following result on solvability of a linearized Mullins-Sekerka/Stokes system is shown in \cite{ichPhD} and in a more general form in \cite{LinStMS} and will be important for the construction of the approximate solution.
 \begin{thm}
   \label{stokesthe}
  Let $T\in (0,T_{0}]$ and $t\in [0,T]$.
For every $\mathbf{f}\in L^{2}(\Omega)^{2}$, $\mathbf{s}\in H^{\frac{3}{2}}(\Gamma_{t})^{2}$, $\mathbf{a}\in H^{\frac{1}{2}}(\Gamma_{t})^{2}$
and $\mathbf{g}\in H^{\frac{1}{2}}\left(\partial\Omega\right)^{2}$
the system 
\begin{align}
-\Delta\mathbf{v}^{\pm}+\nabla p^{\pm} & =\mathbf{f} &  & \text{in }\Omega^{\pm}(t),\label{eq:zweiphasen1}\\
\mathrm{div}\mathbf{v}^{\pm} & =0 &  & \text{in }\Omega^{\pm}(t),\\
\left(-2D_{s}\mathbf{v}^{-}+p^{-}\mathbf{I}\right)\mathbf{n}_{\partial\Omega} & =\alpha_{0}\mathbf{v}^{-}+\mathbf{g} &  & \text{on }\partial\Omega,\\
\left[\mathbf{v}\right] & =\mathbf{s} &  & \text{on }\Gamma_{t},\\
\left[2D_{s}\mathbf{v}-p^{-}\mathbf{I}\right]\mathbf{n}_{\Gamma_{t}} & =\mathbf{a} &  & \text{on }\Gamma_{t}\label{eq:zweiphasen5}
\end{align}
has a unique solution $\left(\mathbf{v}^{\pm},p^{\pm}\right)\in H^{2}(\Omega^{\pm}(t))\times H^{1}(\Omega^{\pm}(t))$.
Moreover, there is a constant $C>0$ independent of $t\in[0,T_0]$
such that
\begin{equation}
\left\Vert \left(\mathbf{v},p\right)\right\Vert _{H^{2}(\Omega^{\pm}(t))\times H^{1}(\Omega^{\pm}(t))}\!\leq\!C\!\left(\left\Vert \mathbf{f}\right\Vert _{L^{2}(\Omega)}+\left\Vert \mathbf{s}\right\Vert _{H^{\frac{3}{2}}(\Gamma_{t})}+\left\Vert \mathbf{a}\right\Vert _{H^{\frac{1}{2}}(\Gamma_{t})}+\left\Vert \mathbf{g}\right\Vert _{H^{\frac{1}{2}}(\partial\Omega)}\right)\label{eq:instoab}
\end{equation}
holds.
\end{thm}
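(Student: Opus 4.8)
The plan is to prove existence and uniqueness by reducing the coupled two-phase Stokes/transmission system to a boundary-value problem for a Dirichlet-to-Neumann-type operator on $\Gamma_t$, and then to invoke the Lax–Milgram theorem (or Fredholm theory) together with elliptic regularity for the Stokes system in each subdomain. First I would observe that, by linearity, it suffices to treat two cases separately: the case $\mathbf{s}=0$, $\mathbf{a}=0$ with inhomogeneous $\mathbf{f}$ and $\mathbf{g}$ (which is genuinely two separate Stokes problems glued by continuity of velocity and traction — essentially a single Stokes problem with Navier-type boundary condition on $\partial\Omega$, solved by the classical theory for the Stokes system with Robin boundary conditions), and the case $\mathbf{f}=0$, $\mathbf{g}=0$ with inhomogeneous jump data $\mathbf{s}$, $\mathbf{a}$ on $\Gamma_t$. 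The first case is standard: the weak formulation on $H^1(\Omega)^2$ is coercive because of the term $\alpha_0\int_{\partial\Omega}|\mathbf v^-|^2$ together with Korn's inequality, so Lax–Milgram applies, and $H^2\times H^1$ regularity follows from the smoothness of $\Gamma_t$ and $\partial\Omega$ and standard Stokes regularity up to the interface and up to the boundary.

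For the second case I would introduce a reference solution (e.g.\ extend $\mathbf{s}$ to a divergence-free vector field supported near $\Gamma_t$) to reduce to $\mathbf{s}=0$, at the cost of modifying $\mathbf{f}$, $\mathbf{a}$; then one works with a globally continuous velocity $\mathbf v\in H^1_0$-type space (with the Navier condition on $\partial\Omega$) and the only remaining data is the traction jump $\mathbf a\in H^{1/2}(\Gamma_t)^2$. The weak formulation then reads: find $(\mathbf v,p)$ with $\mathbf v$ globally $H^1$, $\operatorname{div}\mathbf v=0$, such that
\begin{equation*}
2\int_\Omega D_s\mathbf v:D_s\boldsymbol\varphi\,\mathrm dx+\alpha_0\int_{\partial\Omega}\mathbf v\cdot\boldsymbol\varphi\,\mathrm d\mathcal H^{1}=-\int_{\Gamma_t}\mathbf a\cdot\boldsymbol\varphi\,\mathrm d\mathcal H^{1}
\end{equation*}
for all divergence-free test fields $\boldsymbol\varphi$, where the sign and the pressure are recovered via the de Rham lemma on each of $\Omega^\pm(t)$ separately. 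Coercivity again comes from Korn plus the boundary term, and the right-hand side is a bounded functional since $\mathbf a\in H^{1/2}(\Gamma_t)\hookrightarrow H^{-1/2}(\Gamma_t)$ pairs with the trace in $H^{1/2}(\Gamma_t)$; Lax–Milgram gives a unique weak solution, and the pressures $p^\pm$ are then fixed uniquely by requiring the correct normalization implicit in the traction condition.

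The main obstacle — and the step I would spend the most care on — is the elliptic regularity: promoting the weak solution to $(\mathbf v^\pm,p^\pm)\in H^2(\Omega^\pm(t))\times H^1(\Omega^\pm(t))$ with the estimate \eqref{eq:instoab}, uniformly in $t\in[0,T_0]$. One cannot directly cite interior-plus-boundary Stokes regularity because of the interface, so I would localize: away from $\Gamma_t$ and $\partial\Omega$ it is interior Stokes regularity; near $\partial\Omega$ it is Stokes regularity for the Navier/Robin boundary condition; near $\Gamma_t$ one straightens the interface and views the transmission problem as a Stokes system on a half-space with prescribed jump of the conormal derivative across a hyperplane, whose $H^2$-regularity is classical (the transmission problem for a second-order elliptic system with smooth coefficients). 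The uniformity in $t$ follows because $\Gamma_t$ varies smoothly on the compact interval $[0,T_0]$, so finitely many charts with uniformly controlled norms suffice; this is where I would be most explicit, since it is precisely the uniform-in-$t$ constant that is needed later in the construction of the approximate solutions. Uniqueness is immediate from coercivity: a solution with zero data has zero strain tensor, hence (with the boundary term) $\mathbf v\equiv 0$, and then $\nabla p^\pm=0$ in each subdomain with the traction condition forcing $p^\pm\equiv 0$.
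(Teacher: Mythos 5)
The paper does not give a proof of this theorem at all: the proof is a two-line reference to \cite[Theorem~2.36]{ichPhD} and \cite{LinStMS}, so there is no paper argument to compare against in detail. Your proposal is a reasonable sketch of what is almost certainly the intended argument (variational existence via Korn plus Lax--Milgram, pressure recovery, localized elliptic regularity, uniform-in-$t$ constants via compactness of $[0,T_0]$), so the overall route is sound. Two points deserve a remark, though.

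First, the reduction to $\mathbf{s}=0$ via a divergence-free lift ``supported near $\Gamma_t$'' does not work for arbitrary $\mathbf{s}\in H^{3/2}(\Gamma_t)^2$: if $\mathbf{v}_0^+$ is divergence free in $\Omega^+(t)$ and vanishes away from $\Gamma_t$, the divergence theorem forces $\int_{\Gamma_t}\mathbf{v}_0^+\cdot\mathbf{n}_{\Gamma_t}\,\mathrm d\mathcal H^1=0$, and the analogous statement on the $\Omega^-(t)$ side then forces $\int_{\Gamma_t}\mathbf{s}\cdot\mathbf{n}_{\Gamma_t}=0$, which the theorem does not assume. The system does admit nonzero $\int_{\Gamma_t}\mathbf{s}\cdot\mathbf{n}_{\Gamma_t}$ precisely because the Navier condition on $\partial\Omega$ allows flux through the outer boundary, so the lift has to be allowed to reach $\partial\Omega$ (e.g.\ take $\mathbf{v}_0^+\equiv0$ and construct a solenoidal $\mathbf{v}_0^-$ on all of $\Omega^-(t)$ with prescribed trace $-\mathbf{s}$ on $\Gamma_t$ and a compensating normal flux through $\partial\Omega$; in two dimensions this is conveniently done with a stream function). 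This is a fixable technicality, but as written the reduction step fails.

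Second, calling the near-interface $H^2\times H^1$ regularity ``classical transmission regularity for a second-order elliptic system'' is a bit hasty: the Stokes transmission problem carries the divergence constraint and the pressure, so it does not reduce to a scalar transmission problem and must be treated as an Agmon--Douglis--Nirenberg-type system after straightening $\Gamma_t$ (verifying the Lopatinskii--Shapiro condition for the two-phase Stokes operator with velocity- and traction-jump data). The result is indeed known, but the argument is more involved than for a scalar second-order equation, and it is precisely this part that the cited reference \cite{LinStMS} supplies. Your instinct that the uniform-in-$t$ constant is the delicate ingredient is right and matches the role this theorem plays later in the paper.
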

\begin{proof}
  See \cite[Theorem~2.36]{ichPhD} or \cite{LinStMS}.
\end{proof}
\begin{thm}
\label{scharfthe} Let $T\in (0,T_{0}]$. Let $\mathbf{b}\colon \mathbb{T}^{1}\times [0,T]\rightarrow\mathbb{R}^{2}$,
$b\colon \mathbb{T}^{1}\times [0,T]\rightarrow\mathbb{R}$, $a_{1}\colon \Omega\times [0,T]\rightarrow\mathbb{R}$,
$a_{2},a_{3},a_{5}\colon \Gamma\rightarrow\mathbb{R}$, $a_{4}\colon \partial_{T}\Omega\rightarrow\mathbb{R}$,
$\mathbf{a}_{1}\colon \Omega\times [0,T]\rightarrow\mathbb{R}^{2}$,
$\mathbf{a}_{2},\mathbf{a}_{3},\mathbf{a}_{4},\mathbf{a}_{5}\colon\Gamma\rightarrow\mathbb{R}^{2}$
and $\mathbf{a}_{6}\colon \partial_{T}\Omega\rightarrow\mathbb{R}^{2}$
be smooth given functions. For every $g\in L^{2}\big(0,T;H^{\frac{1}{2}}(\mathbb{T}^1)\big)$
and $h_{0}\in H^{2}(\mathbb{T}^1)$ there exists a unique
solution $h\in X_{T}$ of 
\begin{align*}
D_{t,\Gamma}h+\mathbf{b}\cdot\nabla_{\Gamma}h-bh+\tfrac{1}{2}X_{0}^{*}\big((\mathbf{v}^{+}+\mathbf{v}^{-})\cdot\mathbf{n}_{\Gamma_{t}}\big)+\tfrac{1}{2}X_{0}^{*}\big(\big[\partial_{\mathbf{n}_{\Gamma_{t}}}\mu\big]\big) & =g &  & \text{in }\mathbb{T}^{1}\times\left(0,T\right),\\
h\left(.,0\right) & =h_{0} &  & \text{in }\mathbb{T}^{1},
\end{align*}
where for every $t\in\left[0,T\right]$, the functions $\mathbf{v}^{\pm}=\mathbf{v}^{\pm}(x,t)$,
$p^{\pm}=p^{\pm}(x,t)$ and $\mu^{\pm}=\mu^{\pm}(x,t)$
for $(x,t)\in\Omega_{T}^{\pm}$ with $\mathbf{v}^{\pm}\in H^{2}(\Omega^{\pm}(t))$,
$p^{\pm}\in H^{1}(\Omega^{\pm}(t))$ and $\mu^{\pm}\in H^{2}(\Omega^{\pm}(t))$
are the unique solutions to 
\begin{align}
\Delta\mu^{\pm} & =a_{1} &  & \text{in }\Omega^{\pm}(t),\label{eq:laplmu}\\
\mu^{\pm} & = X_{0}^{*,-1}\big(\sigma\Delta_{\Gamma}h\pm a_{2}h\big)+a_{3} &  & \text{on }\Gamma_{t},\\
\mu^{-} & =a_{4} &  & \text{on }\partial\Omega,\\
-\Delta\mathbf{v}^{\pm}+\nabla p^{\pm} & =\mathbf{a}_{1} &  & \text{in }\Omega^{\pm}(t),\label{eq:stokes}\\
\operatorname{div}\mathbf{v}^{\pm} & =0 &  & \text{in }\Omega^{\pm}(t),\\
[\mathbf{v}] & =\mathbf{a}_{2} &  & \text{on }\Gamma_{t},\label{eq:stokesjmp}\\
\left[2D_{s}\mathbf{v}-p\mathbf{I}\right]\mathbf{n}_{\Gamma_{t}} & =X_{0}^{*,-1}\big(\mathbf{a}_{3}h+\mathbf{a}_{4}\Delta_{\Gamma}h+a_{5}\nabla_{\Gamma}h+\mathbf{a}_{5}\big) &  & \text{on }\Gamma_{t},\label{eq:stokesbdry}\\
\left(-2D_{s}\mathbf{v}^{-}+p^{-}\mathbf{I}\right)\mathbf{n}_{\partial\Omega} & =\alpha_{0}\mathbf{v}^{-}+\mathbf{a}_{6} &  & \text{on }\partial\Omega.\label{eq:stokesoutbdry}
\end{align}
Moreover, if $g$, $h_{0}$ and $b$, $\mathbf{b}$, $a_{i}$, and
$\mathbf{a}_{j}$ are smooth on their respective domains for $i\in\left\{ 1,\ldots,5\right\} $, $j\in\left\{ 1,\ldots,6\right\}$,
then $h$ is smooth and $p^{\pm}$, $\mathbf{v}^{\pm}$ and $\mu^{\pm}$
are smooth on $\Omega^{\pm}(t)$. 
\end{thm}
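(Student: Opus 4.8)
The plan is to decouple the coupled system into a fixed-point problem for $h$ alone. The key observation is that the elliptic systems for $(\mu^{\pm},\mathbf{v}^{\pm},p^{\pm})$ depend on $h$ only through the traces $h$, $\nabla_\Gamma h$ and $\Delta_\Gamma h$ on $\Gamma_t$, so once $h\in X_T$ is given, Theorem~\ref{stokesthe} and standard elliptic theory for the transmission problem \eqref{eq:laplmu}--\eqref{eq:stokesoutbdry} produce unique solutions $\mathbf{v}^{\pm}\in H^2(\Omega^\pm(t))$, $p^\pm\in H^1(\Omega^\pm(t))$ and $\mu^\pm\in H^2(\Omega^\pm(t))$, with norms controlled by $\|h(\cdot,t)\|_{H^{5/2}(\mathbb{T}^1)}$ (the regularity needed so that $\Delta_\Gamma h \in H^{1/2}(\Gamma_t)$ matches the data class of Theorem~\ref{stokesthe}). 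In particular the ``Mullins--Sekerka operator'' $h\mapsto \tfrac12 X_0^*\big((\mathbf{v}^++\mathbf{v}^-)\cdot\mathbf{n}_{\Gamma_t}\big)+\tfrac12 X_0^*\big([\partial_{\mathbf{n}_{\Gamma_t}}\mu]\big)$ is well-defined; the first step is to identify its principal part. Freezing coefficients and dropping lower-order terms, the jump $[\partial_{\mathbf{n}}\mu]$ with Dirichlet data $\sigma\Delta_\Gamma h$ on $\Gamma_t$ behaves like $\sigma|\nabla_\Gamma|^{3}h$ up to lower order (a Dirichlet-to-Neumann operator of order one composed with the surface Laplacian), i.e. the equation for $h$ is, to leading order, $D_{t,\Gamma}h + c(s,t)(-\Delta_\Gamma)^{3/2}h = (\text{lower order}) + g$ for some positive $c$.

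Granting that reduction, the second step is to treat the full $h$-equation as a perturbation of this principal linear parabolic equation of order three on $\mathbb{T}^1$. The principal operator $\partial_t + c(-\Delta_\Gamma)^{3/2}$ with $h|_{t=0}=h_0\in H^2(\mathbb{T}^1)$ has maximal regularity in exactly the scale defining $X_T$: $L^2$-in-time values in $H^{7/2}$ (two orders above the $H^{1/2}$ right-hand side, and $1/2+3=7/2$) and $H^1$-in-time values in $H^{1/2}$, with the trace space at $t=0$ being $H^2(\mathbb{T}^1)$ — which is precisely the definition of $X_T$ and the content of Proposition~\ref{embedding}. This is the classical theory of parabolic pseudodifferential equations on a compact manifold; one may invoke it directly or use \cite[Proposition~3.2]{schnaubelt}/\cite{moser} as already cited. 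So the linear problem with the principal part on the left and \emph{everything else} (the drift $\mathbf{b}\cdot\nabla_\Gamma h$, the zeroth-order $-bh$, and the difference between the true Mullins--Sekerka operator and its principal symbol) moved to the right-hand side is uniquely solvable in $X_T$, defining a solution operator $\mathcal{L}$.

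The third step is the fixed point: write the equation as $h = \mathcal{L}\big(g - \mathbf{b}\cdot\nabla_\Gamma h + bh - (\text{l.o.t. of the MS operator})(h)\big) =: \mathcal{F}(h)$. All the terms fed back are of order strictly less than three in space (the drift is order one, the correction to the Mullins--Sekerka operator is order $\le 2$ by the gain in elliptic regularity on the smooth domains), hence on a short time interval $[0,T']$ the map $\mathcal{F}$ is a contraction on a ball of $X_{T'}$ centered at $\mathcal{L}(g,h_0)$: the contraction constant picks up a small factor from the time interval because lower-order perturbations of a parabolic operator are relatively bounded with small relative bound after shrinking $T$, using $X_T\hookrightarrow C^0([0,T];H^2)$ from Proposition~\ref{embedding} to control $\|h\|$ uniformly. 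This yields existence and uniqueness of $h\in X_{T'}$; then a continuation argument — the a priori estimate is linear in $(g,h_0)$ with no blow-up mechanism since the equation is linear — extends the solution to the full interval $[0,T]$ and gives uniqueness there. Finally, smoothness: if all data are smooth, bootstrap. Differentiating the equation in $t$ and in $s$ and using parabolic regularity repeatedly (the elliptic systems propagate spatial smoothness of $h$ to smoothness of $\mathbf{v}^\pm,p^\pm,\mu^\pm$ by Theorem~\ref{stokesthe} and elliptic regularity, and then the $h$-equation upgrades $h$), one obtains $h\in C^\infty$ and hence $\mathbf{v}^\pm,p^\pm,\mu^\pm$ smooth on $\Omega^\pm(t)$.

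The main obstacle I expect is the first step — rigorously identifying the principal symbol of the nonlocal operator $h\mapsto \tfrac12 X_0^*([\partial_{\mathbf{n}_{\Gamma_t}}\mu]) + \tfrac12 X_0^*((\mathbf{v}^++\mathbf{v}^-)\cdot\mathbf{n}_{\Gamma_t})$ as $\sigma(-\Delta_\Gamma)^{3/2}$ plus a lower-order remainder, uniformly in $t\in[0,T_0]$, and verifying that this remainder genuinely has order $\le 2$ so that it is a subordinate perturbation. This requires care because the Stokes contribution $(\mathbf{v}^++\mathbf{v}^-)\cdot\mathbf{n}$ enters at the same differential order (three) as the Mullins--Sekerka jump — one must check that its symbol is also of the right sign/order (it is, since the Stokes-to-traction map is order one and is composed with the surface-Laplacian data $\mathbf{a}_4\Delta_\Gamma h$), and then combine the two into a single invertible principal part. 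Once that symbolic analysis is in place, the maximal-regularity and fixed-point arguments are standard. Full details are carried out in \cite{ichPhD} and, in greater generality, in \cite{LinStMS}; here we only sketch the structure.

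\begin{proof}
  See \cite[Chapter~3]{ichPhD} or \cite{LinStMS} for the full argument; we indicate the main steps. For given $h\in X_T$ the right-hand sides of \eqref{eq:laplmu}--\eqref{eq:stokesoutbdry} lie, for a.e.\ $t$, in the data classes of Theorem~\ref{stokesthe} and of standard elliptic transmission theory, so $(\mathbf{v}^\pm,p^\pm,\mu^\pm)$ are uniquely determined with norms bounded by $\|h(\cdot,t)\|_{H^{5/2}(\mathbb{T}^1)}$ plus the norms of the data. Writing the resulting operator $h\mapsto \tfrac12 X_0^*\big((\mathbf{v}^++\mathbf{v}^-)\cdot\mathbf{n}_{\Gamma_t}+[\partial_{\mathbf{n}_{\Gamma_t}}\mu]\big)$ as its principal part $\sigma(-\Delta_\Gamma)^{3/2}$ — coming from the Dirichlet-to-Neumann operator applied to $\sigma\Delta_\Gamma h$ in \eqref{eq:laplmu}, together with the analogous order-three Stokes contribution — plus a remainder of order at most two (uniformly in $t$), the $h$-equation becomes
  \[
    D_{t,\Gamma}h+\sigma(-\Delta_\Gamma)^{3/2}h = g - \mathbf{b}\cdot\nabla_\Gamma h + bh + \mathcal{B}[h],\qquad h(\cdot,0)=h_0,
  \]
  where $\mathcal{B}$ collects all perturbations of spatial order $\le 2$. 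The principal linear problem has maximal regularity in $X_T$: for data in $L^2(0,T;H^{1/2}(\mathbb{T}^1))$ and initial value in $H^2(\mathbb{T}^1)$ it has a unique solution in $X_T$ with norm bounded independently of $T$ on finite intervals (this is the parabolic theory used in Proposition~\ref{embedding}, cf.\ \cite{moser,schnaubelt}). Denote by $\mathcal{L}(g,h_0)$ this solution operator. Then $h=\mathcal{F}(h):=\mathcal{L}\big(g-\mathbf{b}\cdot\nabla_\Gamma h+bh+\mathcal{B}[h],\,h_0\big)$; since all fed-back terms have spatial order strictly less than three, standard subordination estimates together with the embedding $X_T\hookrightarrow C^0([0,T];H^2(\mathbb{T}^1))$ show that $\mathcal{F}$ is a contraction on a closed ball of $X_{T'}$ for $T'$ small, yielding a unique local solution. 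As the equation is linear, the a priori bound on $\|h\|_{X_T}$ in terms of $\|g\|_{L^2(0,T;H^{1/2})}+\|h_0\|_{H^2}$ is global in time, so the local solution extends uniquely to $[0,T]$. Finally, if $g$, $h_0$, $b$, $\mathbf{b}$, $a_i$, $\mathbf{a}_j$ are smooth, differentiating the $h$-equation in $s$ and $t$ and iterating parabolic and elliptic regularity (the latter via Theorem~\ref{stokesthe}) shows $h\in C^\infty$, and hence $\mathbf{v}^\pm,p^\pm,\mu^\pm$ are smooth on $\Omega^\pm(t)$.
\end{proof}
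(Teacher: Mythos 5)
The paper's own ``proof'' of Theorem~\ref{scharfthe} is a bare citation to \cite[Theorem~2.37]{ichPhD} and \cite{LinStMS}, so there is no written argument here to compare against; your sketch fills in a plausible outline, and the high-level strategy — freeze the principal pseudodifferential part, apply $L^2$ maximal regularity in exactly the scale $X_T$, absorb lower-order terms by a contraction on short time intervals, continue globally using linearity, and bootstrap for smoothness — is indeed the standard route and is consistent with the cited references.

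One technical misstatement is worth flagging, since you single it out as the ``main obstacle.'' You assert that the Stokes contribution $(\mathbf{v}^++\mathbf{v}^-)\cdot\mathbf{n}_{\Gamma_t}$ enters at the same differential order three in $h$ as the Mullins--Sekerka jump $[\partial_{\mathbf{n}_{\Gamma_t}}\mu]$, and therefore must be combined with it into a single invertible principal symbol. That count is off by two. By Theorem~\ref{stokesthe}, traction-jump data $\mathbf{a}\in H^{1/2}(\Gamma_t)$ produces $\mathbf{v}^\pm\in H^2(\Omega^\pm(t))$, hence a trace $\mathbf{v}^\pm|_{\Gamma_t}\in H^{3/2}(\Gamma_t)$: the traction-jump-to-velocity-trace map for the two-phase Stokes problem \eqref{eq:stokes}--\eqref{eq:stokesoutbdry} is of order $-1$. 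The highest-order $h$-dependence in the traction data \eqref{eq:stokesbdry} is $\mathbf{a}_4\Delta_\Gamma h$, of order two. Composing, the velocity contribution is of order at most \emph{one} in $h$, not three. This actually removes the obstacle you anticipated: the Stokes term belongs alongside the drift $\mathbf{b}\cdot\nabla_\Gamma h$ in the subordinate remainder $\mathcal{B}[h]$, the principal part is simply $\sigma(-\Delta_\Gamma)^{3/2}$ arising from the Dirichlet-to-Neumann map in \eqref{eq:laplmu} applied to $\sigma\Delta_\Gamma h$, and no symbolic combination or sign check for the Stokes part is needed. With this correction the rest of your argument — maximal regularity for the order-three principal part with $L^2_tH^{1/2}$ source and $H^2$ initial trace, short-time contraction, linear continuation, bootstrap — goes through as you describe.
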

\begin{proof}
  See \cite[Theorem~2.37]{ichPhD} or \cite{LinStMS}.
\end{proof}
We note that, if
$\mu|_{\Omega^{\pm}(t)}\in H^{2}(\Omega^{\pm}(t))$,
for $t\in[0,T]$, is determined by \begin{subequations}\label{eq:musystem}
\begin{align}
\Delta\mu^{\pm} & =0 &  & \text{in }\Omega^{\pm}(t),\label{eq:musystem1}\\
\mu^{\pm} & =X_{0}^{*,-1}(\sigma\Delta_{\Gamma}h\pm b_{2}h) &  & \text{on }\Gamma_{t},\label{eq:musystem2}\\
\mu^{-} & =0 &  & \text{on }\partial\Omega,\label{eq:musystem3}
\end{align}
\end{subequations}
then by standard results for elliptic equations the estimate 
\begin{align}
  \sum_{\pm}\left\Vert \mu^{\pm}\right\Vert _{L^{2}\left(0,T;H^{2}\left(\Omega^{\pm}(t)\right)\right)\cap L^{6}\left(0,T;H^{1}\left(\Omega^{\pm}(t)\right)\right)} & \leq C\left\Vert h\right\Vert _{X_{T}},\label{eq:mumaxab}
\end{align}
holds true for some constant $C>0$ independent of $\mu$ and $h$.

\subsection{Spectral Theory}

In order to be able to access the results from \cite{NSCH1}, Subsection
2.4, we will need to show that our approximate solution $c_{A}^{\epsilon}$
has certain properties. For the readability of presentation, we repeat
these assumptions here.
\begin{assumption}
\label{assu:Spektral}Let $\epsilon\in (0,\epsilon_{0})$,
$T\in (0,T_{0}]$ and $\xi$ be a cut-off function satisfying
\eqref{eq:cut-off}. We assume that $c_{A}^{\epsilon}\colon \Omega_{T}\rightarrow\mathbb{R}$
is a smooth function, which has the structure 
\begin{align}
c_{A}^{\epsilon}(x,t) & =\xi(d_{\Gamma}(x,t))\left(\theta_{0}(\rho(x,t))+\epsilon p^{\epsilon}(\operatorname{Pr}_{\Gamma_{t}}(x),t)\theta_{1}(\rho(x,t))\right)+\xi(d_{\Gamma}(x,t))\epsilon^{2}q^{\epsilon}(x,t)\nonumber \\
 & \quad+\left(1-\xi(d_{\Gamma}(x,t))\right)\Big(c_{A}^{\epsilon,+}(x,t)\chi_{\Omega_{T_{0}}^{+}}(x,t)+c_{A}^{\epsilon,-}(x,t)\chi_{\Omega_{T_{0}}^{-}}(x,t)\Big)\label{eq:fie}
\end{align}
for all $(x,t)\in\Omega_{T}$, where $\rho(x,t):=\frac{d_{\Gamma}(x,t)}{\epsilon}-h^{\epsilon}(S(x,t),t)$.
The occurring functions are supposed to be smooth and satisfy for
some $C^{*}>0$ the following properties:
$\theta_{1}\colon \mathbb{R}\rightarrow\mathbb{R}$ is a bounded function
satisfying 
\begin{equation}
\int_{\mathbb{R}}\theta_{1}(\rho)\left(\theta_{0}'(\rho)\right)^{2}f^{\left(3\right)}(\theta_{0}(\rho))\d\rho=0.\label{eq:teta0}
\end{equation}
 Furthermore, $p^{\epsilon}\colon \Gamma\rightarrow\mathbb{R}$, $q^{\epsilon}\colon\Gamma(2\delta)\rightarrow\mathbb{R}$
satisfy 
\begin{equation}
\sup_{\epsilon\in\left(0,\epsilon_{0}\right)}\sup_{(x,t)\in\Gamma\left(2\delta;T\right)}\Big(\left|p^{\epsilon}(\operatorname{Pr}_{\Gamma_{t}}(x),t)\right|+\frac{\epsilon}{\epsilon+\left|d_{\Gamma}(x,t)-\epsilon h^{\epsilon}(S(x,t),t)\right|}\left|q^{\epsilon}(x,t)\right|\Big)\leq C^{*},\label{eq:pqab}
\end{equation}
$h^{\epsilon}\colon \mathbb{T}^{1}\times\left[0,T\right]\rightarrow\mathbb{R}$
fulfills
\begin{equation}
\sup_{\epsilon\in\left(0,\epsilon_{0}\right)}\sup_{(s,t)\in\mathbb{T}^{1}\times\left[0,T\right]}\left(\left|h^{\epsilon}(s,t)\right|+\left|\partial_{s}h^{\epsilon}(s,t)\right|\right)\leq C^{*}\label{eq:hbes}
\end{equation}
and $c_{A}^{\epsilon,\pm}\colon \Omega_{T}^{\pm}\rightarrow\mathbb{R}$
satisfy 
\begin{equation}
\pm c_{A}^{\epsilon,\pm}>0\text{ in }\Omega_{T}^{\pm}.\label{eq:caepm}
\end{equation}
 Additionally, we suppose that 
\begin{align}
&\sup_{\epsilon\in\left(0,\epsilon_{0}\right)}\left(\sup_{(x,t)\in\Omega_{T}}\left|c_{A}^{\epsilon}(x,t)\right|+\sup_{x\in\Gamma\left(\delta\right)}\left|\nabla^{\Gamma}c_{A}^{\epsilon}(x,t)\right|\right)\leq C^{*},\label{eq:obfi}\\
&\inf_{\epsilon\in\left(0,\epsilon_{0}\right)}\inf_{(x,t)\in\Omega_{T}\backslash\Gamma\left(\delta;T\right)}f''\left(c_{A}^{\epsilon}(x,t)\right)\geq\frac{1}{C^{*}}\label{eq:fnachunten}
\end{align}
holds. 
\end{assumption}

\section{Construction of Approximate Solutions\label{constrappr}}

In the following we use the method of matched asymptotic expansions
to construct approximate solutions $\left(c_{A}^{\epsilon},\mu_{A}^{\epsilon},\mathbf{v}_{A}^{\epsilon},p_{A}^{\epsilon}\right)$
of (\ref{eq:StokesPart})\textendash (\ref{eq:Dirichlet2}). Throughout
this chapter the formalism ``$\approx$'' will represent a formal
asymptotic expansion ansatz, that is, writing $u^{\epsilon}\approx\sum_{k\geq\text{0}}\epsilon^{k}u_{k}$
means that for every integer $K\in\mathbb{N}$ we have 
\begin{equation}
u^{\epsilon}=\sum_{k=0}^{K}\epsilon^{k}u_{k}+\tilde{u}_{K+1}\epsilon^{K+1},\label{eq:asympansatz}
\end{equation}
 where $\tilde{u}_{K+1}$ is uniformly bounded in $\epsilon$. 

\subsection{The First $M+1$ Terms\label{sec:The-first-M-1}}

Many of the following steps are based on ideas taken from \cite{abc},
\cite{nsac} and \cite{chenAC}. In order to present the results for
the construction of terms of arbitrarily high order in Lemmata \ref{zeroorder}
and \ref{k-thorder}, we devise an inductive scheme similar to the
approach in \cite{abc}. However, in favor of the brevity of presentation,
we did not include this scheme in this article and simply state the
results. For the background of the construction and the proofs, see
\cite{ichPhD}.

\subsubsection{The Outer Expansion \label{subsec:The-Outer-Expansion}}

We assume that in $\Omega_{T_{0}}^{\pm}\backslash\Gamma(2\delta)$
the solutions of (\ref{eq:StokesPart})\textendash (\ref{eq:Dirichlet2})
have the expansions
\begin{align}
c^{\epsilon}(x,t) & \approx\sum_{k\geq0}\epsilon^{k}c_{k}^{\pm}(x,t), & \mu^{\epsilon}(x,t) & \approx\sum_{k\geq0}\epsilon^{k}\mu_{k}^{\pm}(x,t),\nonumber \\
\mathbf{v}^{\epsilon}(x,t) & \approx\sum_{k\geq0}\epsilon^{k}\mathbf{v}_{k}^{\pm}(x,t), & p^{\epsilon}(x,t) & \approx\sum_{k\geq0}\epsilon^{k}p_{k}^{\pm}(x,t),\label{eq:OutExp}
\end{align}
where $c_{k}^{\pm}$, $\mu_{k}^{\pm}$, $\mathbf{v}_{k}^{\pm}$
and $p_{k}^{\pm}$ are smooth functions defined in $\Omega_{T_{0}}^{\pm}$.
Plugging this ansatz into (\ref{eq:StokesPart}), (\ref{eq:StokesPart2}),
(\ref{eq:CH-Part1}) and (\ref{eq:CH-Part2}) yields
\begin{align}
-\sum_{k\geq0}\epsilon^{k}\Delta\mathbf{v}_{k}^{\pm}+\sum_{k\geq0}\epsilon^{k}\nabla p_{k}^{\pm} & =\sum_{k,j\geq0}\epsilon^{k}\mu_{k}^{\pm}\epsilon^{j}\nabla c_{j}^{\pm},\label{eq:Stokes-Outer1}\\
\sum_{k\geq0}\epsilon^{k}\mbox{div}\mathbf{v}_{k}^{\pm}&=0,\label{eq:Stokes-Outer2}\\
\sum_{k\geq0}\epsilon^{k}\partial_{t}c_{k}^{\pm}+\Big(\sum_{k\geq0}\epsilon^{k}\mathbf{v}_{k}^{\pm}\Big)\cdot\Big(\sum_{k\geq0}\epsilon^{k}\nabla c_{k}^{\pm}\Big) & =\Big(\sum_{k\geq0}\epsilon^{k}\Delta\mu_{k}^{\pm}\Big),\label{eq:CH-Outer1}
\end{align}
and 
\begin{align}
\sum_{k\geq0}\epsilon^{k}\mu_{k}^{\pm} & =-\epsilon\sum_{k\geq0}\epsilon^{k}\Delta c_{k}^{\pm}+\frac{1}{\epsilon}f'(c_{0}^{\pm})+f''\left(c_{0}^{\pm}\right)\sum_{k\geq1}\epsilon^{k-1}c_{k}^{\pm} +\sum_{k\geq1}\epsilon^{k}f_{k}(c_{0}^{\pm},\ldots,c_{k}^{\pm}),\label{eq:CH-Outer2f}
\end{align}
where for fixed $c_{0}^{\pm}$ the functions $f_{k}$ are polynomials
in $(c_{1}^{\pm},\ldots,c_{k}^{\pm})$ and are the result
of a Taylor expansion. Moreover, $f_{k}(c_{0}^{\pm},\ldots,c_{k}^{\pm})$
are chosen such that they do not depend on $\epsilon$.
 Matching the $\mathcal{O}(\epsilon^{-1})$ terms yields
$f'(c_{0}^{\pm})=0$ and in view of the Dirichlet boundary
data for $c^{\epsilon}$ we set 
\begin{equation}
c_{0}^{\pm}=\pm1.\label{eq:c0out}
\end{equation}
Comparing the higher order terms $\mathcal{O}\left(\epsilon^{k}\right)$,
where $k\geq1$, yields:
\begin{alignat}{2}
  c_{k}^{\pm}&=\frac{\mu_{k-1}^{\pm}+\Delta c_{k-2}^{\pm}-f_{k-1}(c_{0}^{\pm},\ldots,c_{k-1}^{\pm})}{f''(\pm1)}&\qquad&\text{in }\Omega_{T_{0}}^{\pm},\label{eq:Outercdefine}\\
  \Delta\mu_{k}^{\pm}&=\partial_{t}c_{k}^{\pm}+\sum_{j=0}^{k}\mathbf{v}_{j}^{\pm}\cdot\nabla c_{k-j}^{\pm}&&\text{in }\Omega_{T_{0}}^{\pm},\label{eq:Outermudefine}\\
  -\Delta\mathbf{v}_{k}^{\pm}+\nabla p_{k}^{\pm} & =\sum_{j=0}^{k-1}\mu_{j}^{\pm}\nabla c_{k-j}^{\pm} &  & \text{in }\Omega_{T_{0}}^{\pm}, \\
\operatorname{div}\mathbf{v}_{k}^{\pm} & =0 &  & \text{in }\Omega_{T_{0}}^{\pm}.\label{eq:Outervpdefine}
\end{alignat}
\begin{rem}
\label{Outer-Rem}
\begin{enumerate}
\item As we will only construct $c_{0}^{\pm},\ldots,c_{M+1}^{\pm}$, we
need to consider the remainder of the Taylor expansion of $f'$. In
this case, we choose to expand $f'$ up to order $M+2$ and get 
\begin{align*}
f'\Big(\sum_{k=0}^{M+1}\epsilon^{k}c_{k}^{\pm}\Big) & =f'(c_{0}^{\pm})+\epsilon f''(c_{0}^{\pm})\sum_{k=1}^{M+1}\epsilon^{k-1}c_{k}^{\pm}+\epsilon^{2}\sum_{k=1}^{M}\epsilon^{k-1}f_{k}(c_{0}^{\pm},\ldots,c_{k}^{\pm})\\
 & \quad+\epsilon^{M+2}\tilde{f}_{\epsilon}(c_{0}^{\pm},\ldots,c_{M+1}^{\pm}).
\end{align*}
Here $\tilde{f}_{\epsilon}(c_{0}^{\pm},\ldots,c_{M+1}^{\pm})$
consists of polynomials in $(c_{1}^{\pm},\ldots,c_{M+1}^{\pm})$,
which may be of even higher order in $\epsilon$ and which are either
multiplied by $f^{\left(j\right)}(c_{0}^{\pm})$ for $j\in\left\{ 2,\ldots,M+1\right\} $
or by $f^{\left(M+2\right)}\big(\xi(c_{0}^{\pm},\ldots,c_{M+1}^{\pm})\big)$
for suitable $\xi\in\big[c_{0}^{\pm},\sum_{k=0}^{M+1}\epsilon^{k}c_{k}^{\pm}\big]$.
If $c_{k}^{\pm}\in L^{\infty}(\Omega_{T_{0}}^{\pm})$ for
all $k\in\left\{ 0,\ldots,M+1\right\} $, it holds
\[
\Vert \tilde{f}_{\epsilon}(c_{0}^{\pm},\ldots,c_{M+1}^{\pm})\Vert _{L^{\infty}(\Omega_{T_{0}}^{\pm})}\leq C\quad \text{for all }\epsilon\in (0,1).
\]
\item We will need $(c_{k}^{\pm},\mu_{k}^{\pm},\mathbf{v}_{k}^{\pm},p_{k}^{\pm})$,
for $k\geq0$, to not only be defined in $\Omega_{T_{0}}^{\pm}$,
but we have to extend them onto $\Omega_{T_{0}}^{\pm}\cup\Gamma(2\delta;T_{0})$.
For $\mu_{k}^{\pm}$ and $p_{k}^{\pm}$ we may use any smooth extension. One possibility is to use the extension operator defined in \cite[Part VI, Theorem 5]{stein},
. It is trivial to extend $c_{0}^{\pm}$ and if
all $(c_{i}^{\pm},\mu_{i}^{\pm})$ for $i\leq k-1$ have
been defined on $\Omega_{T_{0}}^{\pm}\cup\Gamma(2\delta;T_{0})$,
then $c_{k}^{\pm}$ is as well, by (\ref{eq:Outercdefine}). For $\mathbf{v}_{k}^{\pm}$
we employ the same extension operator and then use the Bogovskii operator
to ensure that the extension is divergence free in $\Gamma_{t}(2\delta)$.
In particular we may construct a divergence free extension $\text{\ensuremath{\mathcal{E}}}^{\pm}(\mathbf{v}_{k}^{\pm})$
such that $\text{\ensuremath{\mathcal{E}}}^{\pm}(\mathbf{v}_{k}^{\pm})|_{\Omega^{\pm}(t)}=\mathbf{v}_{k}^{\pm}$
in $\Omega^{\pm}(t)$ and 
\begin{equation}
\left\Vert \text{\ensuremath{\mathcal{E}}}^{\pm}(\mathbf{v}_{k}^{\pm})\right\Vert _{H^{2}(\Omega^{\pm}(t)\cup\Gamma_{t}(2\delta))}\leq C\Vert \mathbf{v}_{k}^{\pm}\Vert _{H^{2}(\Omega^{\pm}(t))}.\label{eq:sp=0000E4terwichtig}
\end{equation}
\end{enumerate}
\end{rem}

For later use we define 
\begin{align}
U_{k}^{\pm}(x,t) & =\Delta\mu_{k}^{\pm}(x,t)-\partial_{t}c_{k}^{\pm}(x,t)-\sum_{j=0}^{k}\mathbf{v}_{j}^{\pm}(x,t)\cdot\nabla c_{k-j}^{\pm}(x,t), \quad U^{\pm}  =\sum_{k\geq0}\epsilon^{k}U_{k}^{\pm},\label{eq:Uk}\\
\mathbf{W}_{k}^{\pm}(x,t) & =-\Delta\mathbf{v}_{k}^{\pm}(x,t)+\nabla p_{k}^{\pm}(x,t)-\sum_{j=0}^{k-1}\mu_{j}^{\pm}(x,t)\nabla c_{k-j}^{\pm}(x,t),\quad \mathbf{W}^{\pm}  =\sum_{k\geq0}\epsilon^{k}\mathbf{W}_{k}^{\pm},\label{eq:Wk}
\end{align}
for $(x,t)\in\Omega_{T_{0}}^{\pm}\cup\Gamma(2\delta)$.
Note that by (\ref{eq:Outermudefine}) and (\ref{eq:Outervpdefine})
we have $\mathbf{W}_{k}^{\pm}(x,t)=U_{k}^{\pm}(x,t)=0$
for all $(x,t)\in\overline{\Omega_{T_{0}}^{\pm}}.$

\subsubsection{The Inner Expansion \label{subsec:The-Inner-Expansion}}

Close to the interface $\Gamma$ we introduce a stretched variable
\begin{equation}
\rho^{\epsilon}(x,t):=\frac{d_{\Gamma}(x,t)-\epsilon h^{\epsilon}(S(x,t),t)}{\epsilon}\qquad \text{for all }(x,t)\in\Gamma(2\delta)\label{eq:rhoeps}
\end{equation}
for $\epsilon\in\left(0,1\right)$. Here $h^{\epsilon}\colon \mathbb{T}^{1}\times [0,T_{0}]\rightarrow\mathbb{R}$
is a given smooth function and can heuristically be interpreted as
the distance of the zero level set of $c^{\epsilon}$ to $\Gamma$,
see also \cite[Chapter 4.2]{chenAC}. In the following, we will often
drop the $\epsilon$\textendash dependence and write $\rho(x,t)=\rho^{\epsilon}(x,t)$. 

Now assume that, in $\Gamma(2\delta)$, the identities
\begin{align}
c^{\epsilon}(x,t) & =\tilde{c}^{\epsilon}\big(\tfrac{d_{\Gamma}(x,t)}{\epsilon}-h^{\epsilon}(S(x,t),t),x,t\big), & \mu^{\epsilon}(x,t)&=\tilde{\mu}^{\epsilon}\big(\tfrac{d_{\Gamma}(x,t)}{\epsilon}-h^{\epsilon}(S(x,t),t),x,t\big),\nonumber \\
p^{\epsilon}(x,t) & =\tilde{p}^{\epsilon}\big(\tfrac{d_{\Gamma}(x,t)}{\epsilon}-h^{\epsilon}(S(x,t),t),x,t\big), & \mathbf{v}^{\epsilon}(x,t)&=\tilde{\mathbf{v}}^{\epsilon}\big(\tfrac{d_{\Gamma}(x,t)}{\epsilon}-h^{\epsilon}(S(x,t),t),x,t\big)\label{eq:Ansatz-Inner}
\end{align}
hold for the solutions of (\ref{eq:StokesPart})\textendash (\ref{eq:Dirichlet2})
and some smooth functions $\tilde{c}^{\epsilon},\tilde{\mu}^{\epsilon},\tilde{p}^{\epsilon}\colon \mathbb{R}\times\Gamma(2\delta)\rightarrow\mathbb{R}$,
$\tilde{\mathbf{v}}^{\epsilon}\colon \mathbb{R}\times\Gamma(2\delta)\rightarrow\mathbb{R}^{2}$.
Furthermore, we assume that we have the expansions 
\begin{align}
\tilde{c}^{\epsilon}(\rho,x,t) & \approx\sum_{k\geq0}\epsilon^{k}c_{k}(\rho,x,t), & \tilde{\mu}^{\epsilon}(\rho,x,t) & \approx\sum_{k\geq0}\epsilon^{k}\mu_{k}(\rho,x,t),\nonumber \\
\tilde{p}^{\epsilon}(\rho,x,t) & \approx\sum_{k\geq0}\epsilon^{k}p_{k}(\rho,x,t), & \tilde{\mathbf{v}}^{\epsilon}(\rho,x,t) & \approx\sum_{k\geq0}\epsilon^{k}\mathbf{v}_{k}(\rho,x,t)\label{eq:Ansatz}
\end{align}
for all $(\rho,x,t)\in\mathbb{R}\times\Gamma(2\delta)$
and also 
\begin{equation}
h^{\epsilon}(s,t)\approx\sum_{k\geq0}\epsilon^{k}h_{k+1}(s,t),\label{eq:Ansatz2}
\end{equation}
where $c_{k},\mu_{k},p_{k}\colon \mathbb{R}\times\Gamma(2\delta)\rightarrow\mathbb{R}$,
$\mathbf{v}_{k}\colon \mathbb{R}\times\Gamma(2\delta)\rightarrow\mathbb{R}^{2}$
and $h_{k}\colon \mathbb{T}^{1}\times [0,T_{0}]\rightarrow\mathbb{R}$
are smooth functions for all $k\geq0$. When referring to $\tilde{c},\tilde{\mu},\tilde{p},\tilde{\mathbf{v}}$
and the expansion terms we write $\nabla=\nabla_{x}$ and $\Delta=\Delta_{x}$.
The expressions $\partial_{t}^{\Gamma}h^{\epsilon}(x,t)$,
$\nabla^{\Gamma}h^{\epsilon}(x,t)$, $\Delta^{\Gamma}h^{\epsilon}(x,t)$
and $D_{\Gamma}^{2}h^{\epsilon}(x,t)$ are for $(x,t)\in\Gamma(2\delta)$
to be understood in the sense of Remark \ref{hnotations}.

In order to match the inner and outer expansions, we require that
for all $k$ the so-called \emph{inner-outer matching conditions}
\begin{align}
\sup_{(x,t)\in\Gamma(2\delta)}\left|\partial_{x}^{m}\partial_{t}^{n}\partial_{\rho}^{l}\left(\varphi\left(\pm\rho,x,t\right)-\varphi^{\pm}(x,t)\right)\right| & \leq Ce^{-\alpha\rho},\label{eq:matchcon}
\end{align}
where $\varphi=c_{k},\mu_{k},\mathbf{v}_{k},p_{k}$ and $k\geq0$
hold for constants $\alpha,C>0$ and all $\rho>0$, $m,n,l\geq0$. 
\begin{rem}
We will only use the matching conditions for $m,n,l\in \{ 0,1,2\} $.
However, since the ordinary differential equations for $(c_{k},c_{k}^{\pm},\mu_{k},\mu_{k}^{\pm},\mathbf{v}_{k},\mathbf{v}_{k}^{\pm},p_{k},p_{k}^{\pm})$
(cf.\ (\ref{eq:Stokesk}), (\ref{eq:Divk}), (\ref{eq:Cahnk}), (\ref{eq:Hilliardk}))
are dependent on derivatives of lower order terms, it is necessary
and sufficient for the matching conditions to hold for $m,n,l\in\{ 0,\ldots,C(M)\} $
for some $C(M)\in\mathbb{N}$ depending on the general
number of terms in the expansion.
\end{rem}

We interpret $\{ (x,t)\in\Gamma(2\delta)|d_{\Gamma}(x,t)=\epsilon h^{\epsilon}(S(x,t),t)\}$
as an approximation of the $0$-level set of $c^{\epsilon}$. Thus, we normalize $c^{k}$
such that 
\[
  c^{k}(0,x,t)=0\qquad \text{for all }(x,t)\in\Gamma(2\delta), k\geq 0.
\]
 Similarly as in \cite{abc}, we introduce
auxiliary functions $g^{\epsilon}(x,t)$, $j^{\epsilon}(x,t)$
and $l^{\epsilon}(x,t)$ as well as $\mathbf{u}^{\epsilon}(x,t)$
and $\mathbf{q}^{\epsilon}(x,t)$ for $(x,t)\in\Gamma(2\delta)$.
As a rough guideline, the functions $g^{\epsilon},j^{\epsilon}$,
and $\mathbf{q}^{\epsilon}$ will enable us to fulfill the compatibility
conditions in $\Gamma(2\delta)\backslash\Gamma$. $l^{\epsilon}$
and $\mathbf{u}^{\epsilon}$ on the other hand are of importance when
it comes to fulfilling the matching conditions in $\Gamma(2\delta)\backslash\Gamma$.
Moreover we choose $\eta\colon \mathbb{R}\rightarrow[0,1]$ such
that $\eta=0$ in $(-\infty,-1]$, $\eta=1$ in $[1,\infty)$
and $\eta'\geq0$ in $\mathbb{R}$ and such that
\begin{equation}
\int_{\mathbb{R}}\Big(\eta(\rho)-\frac{1}{2}\Big)\theta_{0}'(\rho)\d\rho=0\label{eq:etateta}
\end{equation}
is satisfied. For later use we also define
\[
\eta^{C,\pm}(\rho)=\eta(-C\pm\rho)
\]
for an arbitrary constant $C>0$ and $\rho\in\mathbb{R}$.

Now we may rewrite (\ref{eq:StokesPart})-(\ref{eq:CH-Part2}) as
\begin{align}
-\partial_{\rho\rho}\vte & =\epsilon\big(\partial_{\rho}\vte\Delta d_{\Gamma}+2(\nabla\partial_{\rho}\tilde{\mathbf{v}}^{\epsilon})^{T}\mathbf{n}+\tilde{\mu}^{\epsilon}\partial_{\rho}\tilde{c}^{\epsilon}\mathbf{n}\big)\nonumber \\
 & \quad+\epsilon^{2}\Big(-2(\nabla\partial_{\rho}\tilde{\mathbf{v}}^{\epsilon})^{T}\cdot\nabla^{\Gamma}h^{\epsilon}+\partial_{\rho\rho}\vte|\nabla^{\Gamma}h^{\epsilon}|^{2}-\partial_{\rho}\vte\Delta^{\Gamma}h^{\epsilon}-\tilde{\mu}^{\epsilon}\partial_{\rho}\tilde{c}^{\epsilon}\nabla^{\Gamma}h^{\epsilon}\nonumber \\
 & \quad\quad+\partial_{\rho}\pte\nabla^{\Gamma}h^{\epsilon}+\Delta\vte-\nabla\pte+\tilde{\mu}^{\epsilon}\nabla\tilde{c}^{\epsilon})-\mathbf{u}^{\epsilon}\eta''(\rho)\big(d_{\Gamma}-\epsilon(\rho+h^{\epsilon})\Big)\nonumber \\
 & \quad+\mathbf{q}^{\epsilon}\eta'(\rho)\big(d_{\Gamma}-\epsilon (\rho+h^{\epsilon})\big)+\epsilon^{2}(\mathbf{W}^{+}\eta^{C_{S},+}+\mathbf{W}^{-}\eta^{C_{S},-}),\label{eq:expx-stokes}\\
\partial_{\rho}\vte\cdot\mathbf{n} & =\epsilon\partial_{\rho}\vte\nabla^{\Gamma}h^{\epsilon}-\epsilon\di\mathbf{\vte}+\big(\mathbf{u^{\epsilon}}\cdot (\mathbf{n}-\epsilon\nabla^{\Gamma}h^{\epsilon})\big)\eta'(\rho)\big(d_{\Gamma}-\epsilon (\rho+h^{\epsilon})\big),\label{eq:expx-div}\\
  \partial_{\rho\rho}\cte-f'(c^{\epsilon}) & =\epsilon\big(-\mte-\partial_{\rho}\cte\Delta d_{\Gamma}-2\nabla\partial_{\rho}\tilde{c}^{\epsilon}\cdot\mathbf{n}\big)+g^{\epsilon}\eta'(\rho)\left(d_{\Gamma}-\epsilon\left(\rho+h^{\epsilon}\right)\right)\nonumber \\
                         & \quad+\epsilon^{2}\big(-\partial_{\rho\rho}\cte\left|\nabla^{\Gamma}h^{\epsilon}\right|^{2}+\partial_{\rho}\cte\Delta^{\Gamma}h^{\epsilon}+2\nabla\partial_{\rho}\tilde{c}^{\epsilon}\cdot\nabla^{\Gamma}h^{\epsilon}-\Delta\cte\big)\label{eq:expx-cahn}
\\
\partial_{\rho\rho}\mte & =\epsilon\big(\partial_{\rho}\cte(\partial_{t}d_{\Gamma}+\vte\cdot\mathbf{n})-\partial_{\rho}\mte\Delta d_{\Gamma}-2\nabla\partial_{\rho}\tilde{\mu}^{\epsilon}\cdot\mathbf{n}\big)\nonumber \\
 & \quad+\epsilon^{2}\Big(-\partial_{\rho}\cte(\partial_{t}^{\Gamma}h^{\epsilon}+\vte\cdot\nabla^{\Gamma}h^{\epsilon})+\partial_{\rho}\mte\Delta^{\Gamma}h^{\epsilon}\nonumber \\
 & \quad\quad-\partial_{\rho\rho}\mte|\nabla^{\Gamma}h^{\epsilon}|^{2}+2\nabla\partial_{\rho}\tilde{\mu}^{\epsilon}\cdot\nabla^{\Gamma}h^{\epsilon}+\vte\cdot\nabla\cte+\partial_{t}\cte-\Delta\mte\Big)\nonumber \\
 & \quad+(l^{\epsilon}\eta''(\rho)+j^{\epsilon}\eta'(\rho))\big(d_{\Gamma}-\epsilon (\rho+h^{\epsilon})\big)+\epsilon^{2}\big(U^{+}\eta^{C_{S},+}+U^{-}\eta^{C_{S},-}\big),\label{eq:expx-hilliard}
\end{align}
where the equalities are only assumed to hold in
\[
S^{\epsilon}:=\big\{ (\rho,x,t)\in\mathbb{R}\times\Gamma(2\delta)|\rho=\tfrac{d_{\Gamma}(x,t)}{\epsilon}-h^{\epsilon}(S(x,t),t)\big\} ,
\]
but we consider them as ordinary differential equations in $\rho\in\mathbb{R}$,
where $(x,t)\in \Gamma(2\delta)$ are seen as fixed parameters. Thus we assume from now on that  (\ref{eq:expx-stokes})\textendash (\ref{eq:expx-hilliard})
are fulfilled in $\mathbb{R}\times\Gamma(2\delta)$. The
terms $U^{\pm}$ and $\mathbf{W}^{\pm}$ (cf.\ (\ref{eq:Uk}), (\ref{eq:Wk}))
are used here in order to ensure the exponential decay of the right
hand sides; in this context $C_{S}>0$ is a constant which will be
determined later on (see Remark \ref{WU}). We assume that the auxiliary
functions have expansions of the form 
\begin{alignat}{3}
\mathbf{u}^{\epsilon}(x,t) & \approx\sum_{k\geq0}\mathbf{u}_{k}(x,t)\epsilon^{k},&\quad  l^{\epsilon}(x,t) & \approx\sum_{k\geq0}l_{k}(x,t)\epsilon^{k},\nonumber &\quad
\mathbf{q}^{\epsilon}(x,t) & \approx\sum_{k\geq0}\mathbf{q}_{k}(x,t)\epsilon^{k+1}, \\
j^{\epsilon}(x,t) & \approx\sum_{k\geq0}j_{k}(x,t)\epsilon^{k+1},&\quad
g^{\epsilon}(x,t) & \approx\sum_{k\geq0}g_{k}(x,t)\epsilon^{k+1},\label{eq:zusatzerme}
\end{alignat}
for $(x,t)\in\Gamma(2\delta)$. Matching the
$\epsilon$-orders, we gain the following ordinary differential equations
in $\rho$:
 From (\ref{eq:expx-stokes}) and (\ref{eq:expx-div}) we get 
\begin{align}
-\partial_{\rho\rho}(\mathbf{v}_{0}-\mathbf{u}_{0}\eta d_{\Gamma}) & =0,\label{eq:Stokes0}\\
-\partial_{\rho\rho}\big(\mathbf{v}_{k}-(\mathbf{u}_{k}d_{\Gamma}-\mathbf{u}_{0}h_{k})\eta\big) & +\partial_{\rho}p_{k-1}\mathbf{n}=\mathbf{V}^{k-1}\label{eq:Stokesk}
\end{align}
and 
\begin{align}
\partial_{\rho}\big(\mathbf{v}_{0}\cdot\mathbf{n}-\mathbf{u}_{0}\cdot\mathbf{n}d_{\Gamma}\eta\big) & =0,\label{eq:Div0}\\
\partial_{\rho}\big(\mathbf{v}_{k}\cdot\mathbf{n}-(\mathbf{u}_{k}d_{\Gamma}-\mathbf{u}_{0}h_{k})\cdot\mathbf{n}\eta\big) & =W^{k-1}+\nabla^{\Gamma}h_{k}\cdot (\partial_{\rho}\mathbf{v}_{0}-\mathbf{u}_{0}d_{\Gamma}\eta'),\label{eq:Divk}
\end{align}
respectively, for  $\rho\in\mathbb{R},$ $(x,t)\in\Gamma(2\delta)$ and $k\geq1$, where  $\mathbf{V}^{k-1}=\mathbf{V}^{k-1}(\rho,x,t)$ and $W^{k-1}=W^{k-1}(\rho,x,t)$ are defined
below. Similaly,  from (\ref{eq:expx-cahn}) and (\ref{eq:expx-hilliard})  we get
\begin{align}
\partial_{\rho\rho}c_{0}-f'(c_{0}) & =0,\label{eq:Cahn0}\\
\partial_{\rho\rho}c_{k}-f''(c_{0})c_{k} & =A^{k-1}\label{eq:Cahnk}
\end{align}
and
\begin{align}
\partial_{\rho\rho}\big(\mu_{0}-l_{0}\eta d_{\Gamma}\big) & =0,\label{eq:Hilliard0}\\
\partial_{\rho\rho}\big(\mu_{k}-\left(l_{k}d_{\Gamma}-l_{0}h_{k}\right)\eta\big) & =B^{k-1}\label{eq:Hilliardk}
\end{align}
respectively, for $\rho\in\mathbb{R}$, $(x,t)\in\Gamma(2\delta)$ and $k\geq1$, where $A^{k-1}=A^{k-1}(\rho,x,t)$ and $B^{k-1}=B^{k-1}(\rho,x,t)$ are defined
below. Here we used 
\begin{align}
\mathbf{V}^{k-1} & =\partial_{\rho}\mathbf{v}_{k-1}\Delta d_{\Gamma}+2(\nabla\partial_{\rho}\mathbf{v}_{k-1})^{T}\mathbf{n}-2(\nabla\partial_{\rho}\mathbf{v}_{0})^{T}\nabla^{\Gamma}h_{k-1}-\partial_{\rho}\mathbf{v}_{0}\Delta^{\Gamma}h_{k-1}\nonumber \\
 & \quad+\partial_{\rho}p_{0}\nabla^{\Gamma}h_{k-1}+\beta_{2}^{k}2\partial_{\rho\rho}\mathbf{v}_{0}\nabla^{\Gamma}h_{k-1}\cdot\nabla^{\Gamma}h_{1}+\beta_{1}^{k}(\mu_{0}\partial_{\rho}c_{k-1}+\mu_{k-1}\partial_{\rho}c_{0})\mathbf{n}\nonumber \\
 & \quad-\mu_{0}\partial_{\rho}c_{0}\nabla^{\Gamma}h_{k-1}+\mathbf{q}_{k-1}\eta'd_{\Gamma}-\mathbf{q}_{0}\eta'h_{k-1}+(\rho+\delta_{1}^{k}h_{1})\mathbf{u}_{k-1}\eta''+\mathbf{u}_{1}\eta''h_{k-1}\nonumber \\
 & \quad+\Delta\mathbf{v}_{k-2}-\nabla p_{k-2}+\sum_{i=0}^{k-2}\mu_{i}\nabla c_{k-2-i}+\mathbf{W}_{k-2}^{+}\eta^{C_{S},+}+\mathbf{W}_{k-2}^{-}\eta^{C_{S},-}+\mathcal{V}^{k-2},\label{eq:Vk-1}\\
W^{k-1} & =\delta_{1}^{k}\partial_{\rho}\mathbf{v}_{k-1}\nabla^{\Gamma}h_{1}+\partial_{\rho}\mathbf{v}_{1}\nabla^{\Gamma}h_{k-1}-\operatorname{div}\mathbf{v}_{k-1}-\mathbf{u}_{k-1}\cdot\mathbf{n}\eta'\rho-\delta_{1}^{k}\mathbf{u}_{k-1}h_{1}\cdot\mathbf{n}\eta'\nonumber \\
 & \quad-\mathbf{u}_{1}\cdot\mathbf{n}\eta'h_{k-1}-\delta_{1}^{k}(\mathbf{u}_{k-1}\cdot\nabla^{\Gamma}h_{1}+\mathbf{u}_{1}\cdot\nabla^{\Gamma}h_{k-1})d_{\Gamma}\eta'\nonumber \\
 & \quad+\mathbf{u}_{0}\cdot\big(\nabla^{\Gamma}h_{k-1}\rho+\beta_{2}^{k}(\nabla^{\Gamma}h_{k-1}h_{1}+\nabla^{\Gamma}h_{1}h_{k-1})\big)\eta'+\mathcal{W}^{k-2},\label{eq:Wk-1}\\
A^{k-1} & =-\mu_{k-1}-\partial_{\rho}c_{k-1}\Delta d_{\Gamma}-2\nabla\partial_{\rho}c_{k-1}\cdot\mathbf{n}+f_{k-1}(c_{0},\ldots,c_{k-1})+g_{k-1}\eta'd_{\Gamma}\nonumber \\
 & \quad-\beta_{2}^{k}2\partial_{\rho\rho}c_{0}\nabla^{\Gamma}h_{k-1}\cdot\nabla^{\Gamma}h_{1}+\partial_{\rho}c_{0}\Delta^{\Gamma}h_{k-1}+2\nabla\partial_{\rho}c_{0}\cdot\nabla^{\Gamma}h_{k-1}-g_{0}h_{k-1}\eta'\nonumber \\
 & \quad-\Delta c_{k-2}+\mathcal{A}^{k-2},\label{eq:Ak-1}
\end{align}
and 
\begin{align}
B^{k-1} & =\partial_{\rho}c_{k-1}\partial_{t}d_{\Gamma}+\beta_{1}^{k}(\partial_{\rho}c_{k-1}\mathbf{v}_{0}+\partial_{\rho}c_{0}\mathbf{v}_{k-1})\cdot\mathbf{n}-\partial_{\rho}\mu_{k-1}\Delta d_{\Gamma}-2\nabla\partial_{\rho}\mu_{k-1}\cdot\mathbf{n}\nonumber \\
 & \quad-l_{k-1}\eta''\rho-\delta_{1}^{k}l_{k-1}h_{1}\eta''+j_{k-1}\eta'd_{\Gamma}-\partial_{\rho}c_{0}\mathbf{v}_{0}\cdot\nabla^{\Gamma}h_{k-1}-\partial_{\rho}c_{0}\partial_{t}^{\Gamma}h_{k-1}\nonumber \\
 & \quad-\beta_{2}^{k}2\partial_{\rho\rho}\mu_{0}\nabla^{\Gamma}h_{k-1}\cdot\nabla^{\Gamma}h_{1}+\partial_{\rho}\mu_{0}\Delta^{\Gamma}h_{k-1}+2\nabla\partial_{\rho}\mu_{0}\cdot\nabla^{\Gamma}h_{k-1}-l_{1}h_{k-1}\eta''-j_{0}h_{k-1}\eta'\nonumber \\
 & \quad+\partial_{t}c_{k-2}-\Delta\mu_{k-2}+\sum_{i=0}^{k-2}\mathbf{v}_{i}\nabla c_{k-2-i}+U_{k-2}^{+}\eta^{C_{S},+}+U_{k-2}^{-}\eta^{C_{S},-}+\mathcal{B}^{k-2}.\label{eq:Bk-1}
\end{align}
Here $\mathcal{V}^{k-2}$, $\mathcal{W}^{k-2}$, $\mathcal{A}^{k-2}$,
and $\mathcal{B}^{k-2}$ denote terms of order $k-2$ or lower which
are unimportant in the following - the detailed structure of
these terms can be found in \cite[Subsection~5.1.2]{ichPhD}. In all
of the above identities we used the following conventions:
\begin{notation}
\label{nota:innerM-1}$\quad$
\begin{enumerate}
\item All functions with negative index are supposed to be zero. In particular
$\mathcal{V}^{-1}=\mathcal{W}^{-1}=\mathcal{A}^{-1}=\mathcal{B}^{-1}=0$.
Moreover, $h_{0}:=0$.
\item We introduced the notation
\[
\beta_{i}^{k}=\begin{cases}
\frac{1}{2} & \text{if }i=k,\\
1 & \text{else}
\end{cases}
\]
and $\delta_{i}^{k}$ is an ``inverse'' Kronecker delta, i.e.\ 
\[
\delta_{i}^{k}=\begin{cases}
0 & \text{if }i=k,\\
1 & \text{else}.
\end{cases}
\]
\item $f_{k-1}(c_{0},\ldots,c_{k-1})$ (appearing in
(\ref{eq:Ak-1})) are terms from a Taylor expansion defined in the same way as in Remark \ref{Outer-Rem}. 
In particular, we will later on also use a remainder term $\tilde{f}$
as discussed in Remark \ref{Outer-Rem} for the inner solutions.
Moreover, we use the convention $f_{0}(c_{0})=0$.
\end{enumerate}
\end{notation}

We will see after the construction of the zeroth order terms that
the term $h_{k}$ appearing on the right hand side of (\ref{eq:Divk})
is actually multiplied by $0$.
\begin{rem}
\label{WU}Note that $\mathbf{W}^{\pm}$ and $U^{\pm}$, which we
inserted in (\ref{eq:expx-stokes}) and (\ref{eq:expx-hilliard}),
are not multiplied by terms of the kind $\left(d_{\Gamma}-\epsilon\left(\rho+h^{\epsilon}\right)\right)$.
So we have to make sure they vanish on the set $S^{\epsilon}$. This
is accomplished by choosing the constant $C_{S}>0$ in a suitable
way.

In particular we set
\[
C_{S}:=\left\Vert h_{1}\right\Vert _{C^{0}\left(\mathbb{T}^{1}\times[0,T_0]\right)}+2
\]
and assume that
\begin{equation}
\Big|\sum_{k\geq1}\epsilon^{k}h_{k+1}(S(x,t),t)\Big|\leq1\label{eq:remhglm}
\end{equation}
holds for all $\epsilon>0$ small enough. It turns out that $h_{1}$
does not depend on the term $\epsilon^{2}(U^{+}\eta^{C_{S},+}+U^{-}\eta^{C_{S},-})$
and $\epsilon^{2}(\mathbf{W}^{+}\eta^{C_{S},+}+\mathbf{W}^{-}\eta^{C_{S},-})$,
so this choice of $C_{S}$ does not cause problems. Choosing $C_{S}$
in this way, it is possible to show (see \cite[Remark 4.2 (2)]{abc})
that for $\rho=\frac{d_{\Gamma}(x,t)}{\epsilon}-h^{\epsilon}\left(S(x,t),t\right)$
and $(x,t)\in\Gamma(2\delta)$ such that $d_{\Gamma}(x,t)\geq0$
it follows $\rho\geq-C_{S}+1$. Thus, $\eta^{C_{S},-}(\rho)=0$
and since $(x,t)\in\overline{\Omega^{+}}$ we have $\mathbf{W}^{+}(x,t)=U^{+}(x,t)=0$
and so 
\[
\epsilon^{2}\left(U^{+}\eta^{C_{S},+}+U^{-}\eta^{C_{S},-}\right)=\epsilon^{2}\left(\mathbf{W}^{+}\eta^{C_{S},+}+\mathbf{W}^{-}\eta^{C_{S},-}\right)=0.
\]
A similar statement holds when $d_{\Gamma}(x,t)<0$.
\end{rem}

\subsubsection{The Boundary Layer Expansion\label{subsec:The-Boundary-Layer}}

To be able to guarantee that the approximate solutions satisfy boundary
conditions akin to (\ref{eq:StokesBdry})\textendash (\ref{eq:Dirichlet2}),
we also need to consider a separate expansion close to the boundary
of $\Omega$. In the following we write $\mathbf{n}_{\partial\Omega}(x):=\mathbf{n}_{\partial\Omega}\left(\operatorname{Pr}_{\partial\Omega}(x)\right)$
and $\boldsymbol{\tau}_{\partial\Omega}(x):=\boldsymbol{\tau}_{\partial\Omega}\left(\operatorname{Pr}_{\partial\Omega}(x)\right)$
for $x\in\partial\Omega\left(\delta\right)$.

We assume that for $(x,t)\in\overline{\partial_{T}\Omega\left(\delta\right)}$
the identities 
\begin{alignat}{2}
c^{\epsilon}(x,t) & =c_{\mathbf{B}}^{\epsilon}\big(\tfrac{d_{\mathbf{B}}(x)}{\epsilon},x,t\big), &\qquad  \mu^{\epsilon}(x,t) & =\mu_{\mathbf{B}}^{\epsilon}\big(\tfrac{d_{\mathbf{B}}(x)}{\epsilon},x,t\big),\nonumber \\
p^{\epsilon}(x,t) & =p_{\mathbf{B}}^{\epsilon}\big(\tfrac{d_{\mathbf{B}}(x)}{\epsilon},x,t\big), & \mathbf{v}^{\epsilon}(x,t) & =\mathbf{v}_{\mathbf{B}}^{\epsilon}\big(\tfrac{d_{\mathbf{B}}(x)}{\epsilon},x,t\big)\label{eq:Ansatz-outer}
\end{alignat}
hold for the solutions of (\ref{eq:StokesPart})\textendash (\ref{eq:Dirichlet2})
and smooth functions $c_{\mathbf{B}}^{\epsilon},\mu_{\mathbf{B}}^{\epsilon},p_{\mathbf{B}}^{\epsilon}:\mathbb{R}\times\overline{\partial_{T_{0}}\Omega\left(\delta\right)}\rightarrow\mathbb{R}$,
$\mathbf{v}_{\mathbf{B}}^{\epsilon}:\mathbb{R}\times\overline{\partial_{T_{0}}\Omega\left(\delta\right)}\rightarrow\mathbb{R}^{2}$.
Furthermore, we assume that the expansions 
\begin{alignat}{2}
c_{\mathbf{B}}^{\epsilon}\left(z,x,t\right) & \approx-1+\sum_{k\geq1}\epsilon^{k}c_{k}^{\mathbf{B}}\left(z,x,t\right), &\qquad \mu_{\mathbf{B}}^{\epsilon}\left(z,x,t\right) & \approx\sum_{k\geq0}\epsilon^{k}\mu_{k}^{\mathbf{B}}\left(z,x,t\right),\nonumber \\
p_{\mathbf{B}}^{\epsilon}\left(z,x,t\right) & \approx\sum_{k\geq0}\epsilon^{k}p_{k}^{\mathbf{B}}\left(z,x,t\right), & \mathbf{v}_{\mathbf{B}}^{\epsilon}\left(z,x,t\right) & \approx\sum_{k\geq0}\epsilon^{k}\mathbf{v}_{k}^{\mathbf{B}}\left(z,x,t\right)\label{eq:Ansatz-outer-expansion}
\end{alignat}
are given for all $\left(z,x,t\right)\in\left(-\infty,0\right]\times\overline{\partial_{T_{0}}\Omega\left(\delta\right)}$.
As in the case of the inner expansion, we also assume that the \emph{outer-boundary
matching conditions}
\begin{align}
\sup_{(x,t)\in\overline{\partial_{T_{0}}\Omega\left(\delta\right)}}\big|\partial_{x}^{m}\partial_{t}^{n}\partial_{z}^{l}(\varphi_{k}^{\mathbf{B}}(z,x,t)-\varphi_{k}^{-}(x,t))\big| & \leq Ce^{\alpha z},\label{eq:matchcon-bdry}
\end{align}
hold for $\varphi=c,\mu,\mathbf{v},p$ and some constants $\alpha,C>0$
and all $z\leq0$, $m,n,l\geq0$. Plugging the assumed form of the
exact solutions (\ref{eq:Ansatz-outer}) into the equations (\ref{eq:StokesPart})\textendash (\ref{eq:CH-Part2})
we obtain for $(x,t)\in\overline{\partial_{T_{0}}\Omega\left(\delta\right)}$
and $z=\frac{d_{\mathbf{B}}(x)}{\epsilon}$ the identities
\begin{align*}
-\partial_{zz}\mathbf{v}_{\mathbf{B}}^{\epsilon} & +\partial_{z}p_{\mathbf{B}}^{\epsilon}\nabla d_{\mathbf{B}}=\epsilon\left(2\partial_{z}D\mathbf{v}_{\mathbf{B}}^{\epsilon}\nabla d_{\mathbf{B}}+\partial_{z}\mathbf{v}_{\mathbf{B}}^{\epsilon}\Delta d_{\mathbf{B}}+\mu^{\epsilon}\partial_{z}c_{\mathbf{B}}^{\epsilon}\nabla d_{\mathbf{B}}\right)\\
 & \quad+\epsilon^{2}\left(\Delta\mathbf{v}_{\mathbf{B}}^{\epsilon}-\nabla p_{\mathbf{B}}^{\epsilon}+\mu_{\mathbf{B}}^{\epsilon}\nabla c_{\mathbf{B}}^{\epsilon}\right),\\
\partial_{z}\mathbf{v_{B}^{\epsilon}}\cdot\nabla d_{\mathbf{B}} & =-\epsilon\operatorname{div}\mathbf{v_{\mathbf{B}}^{\epsilon}},
\end{align*}
where the differential operator $\nabla=\nabla_x$, $\operatorname{div}=\operatorname{div}_x$, $\Delta=\Delta_x$ act only on the variable $x$ and not on $z$. In the calculations we used $|\nabla d_{\mathbf{B}}|^{2}=1$
for $(x,t)\in\overline{\partial_{T_{0}}\Omega\left(\delta\right)}$. 

Moreover, we have 
\begin{align*}
\partial_{zz}c_{\mathbf{B}}^{\epsilon}-f'\left(c_{\mathbf{B}}^{\epsilon}\right) & =-\epsilon\left(\mu_{\mathbf{B}}^{\epsilon}+2\partial_{z}\nabla c_{\mathbf{B}}^{\epsilon}\cdot\nabla d_{\mathbf{B}}+\partial_{z}c_{\mathbf{B}}^{\epsilon}\Delta d_{\mathbf{B}}\right)-\epsilon^{2}\Delta c_{\mathbf{B}}^{\epsilon},\\
\partial_{zz}\mu_{\mathbf{B}}^{\epsilon} & =\epsilon\left(-2\partial_{z}\nabla\mu_{\mathbf{B}}^{\epsilon}\cdot\nabla d_{\mathbf{B}}-\partial_{z}\mu_{\mathbf{B}}^{\epsilon}\Delta d_{\mathbf{B}}+\mathbf{v}_{\mathbf{B}}^{\epsilon}\cdot\nabla d_{\mathbf{B}}\partial_{z}c_{\mathbf{B}}^{\epsilon}\right)\\
 & \quad+\epsilon^{2}\left(\partial_{t}c_{\mathbf{B}}^{\epsilon}+\mathbf{v}^{\epsilon}\cdot\nabla c_{\mathbf{B}}^{\epsilon}-\Delta\mu_{\mathbf{B}}^{\epsilon}\right).
\end{align*}
Using  (\ref{eq:Ansatz-outer-expansion})
and equating same orders of $\epsilon$,  we get 
\begin{alignat}{2}
-\partial_{zz}\mathbf{v}_{k}^{\mathbf{B}}+\partial_{z}p_{k-1}^{\mathbf{B}}\nabla d_{\mathbf{B}} & =\mathbf{V}_{\mathbf{B}}^{k-1} &\qquad & \text{for } k\geq0,\label{eq:stokes-bdry}\\
\partial_{z}\mathbf{v}_{k}^{\mathbf{B}}\cdot\nabla d_{\mathbf{B}} & =-\operatorname{div}\mathbf{v}_{k-1}^{\mathbf{B}} & & \text{for } k\geq0,\label{eq:div-bdry}\\
\partial_{zz}c_{k}^{\mathbf{B}}-f''\left(-1\right)c_{k}^{\mathbf{B}} & =A_{\mathbf{B}}^{k-1} & & \text{for } k\geq1,\label{eq:cahn-bdry}\\
\partial_{zz}\mu_{k}^{\mathbf{B}} & =B_{\mathbf{B}}^{k-1} && \text{for }  k\geq0 \label{eq:hilliard-bdry}
\end{alignat}
for all $\left(z,x,t\right)\in(-\infty,0]\times\overline{\partial_{T_{0}}\Omega(\delta)}$,
where $\mathbf{V}_{\mathbf{B}}^{k-1}=\mathbf{V}_{\mathbf{B}}^{k-1}(z,x,t)$,
$A_{\mathbf{B}}^{k-1}=A_{\mathbf{B}}^{k-1}(z,x,t)$ and
$B_{\mathbf{B}}^{k-1}=B_{\mathbf{B}}^{k-1}(z,x,t)$. In
detail, we have
\begin{align}
\mathbf{V}_{\mathbf{B}}^{k-1} & :=2\partial_{z}D\mathbf{v}_{k-1}^{\mathbf{B}}\nabla d_{\mathbf{B}}+\partial_{z}\mathbf{v}_{k-1}^{\mathbf{B}}\Delta d_{\mathbf{B}}+\mu_{0}^{\mathbf{B}}\partial_{z}c_{k-1}^{\mathbf{B}}\nabla d_{\mathbf{B}}\nonumber \\
 & \quad+\Delta\mathbf{v}_{k-2}^{\mathbf{B}}-\nabla p_{k-2}^{\mathbf{B}}+\sum_{i=0}^{k-2}\mu_{i}^{\mathbf{B}}\nabla c_{k-2-i}^{\mathbf{B}},\label{eq:bdryV}\\
A_{\mathbf{B}}^{k-1}&:=-\mu_{k-1}^{\mathbf{B}}-2\partial_{z}\nabla c_{k-1}^{\mathbf{B}}\cdot\nabla d_{\mathbf{B}}-\partial_{z}c_{k-1}^{\mathbf{B}}\Delta d_{\mathbf{B}}-\Delta c_{k-2}^{\mathbf{B}}+f_{k-1}(c_{0}^{\mathbf{B}},\ldots,c_{k-1}^{\mathbf{B}}),\label{eq:bdryA}\\
B_{\mathbf{B}}^{k-1} & :=-2\partial_{z}\nabla\mu_{k-1}^{\mathbf{B}}\cdot\nabla d_{\mathbf{B}}-\partial_{z}\mu_{k-1}^{\mathbf{B}}\Delta d_{\mathbf{B}}+\sum_{i+j=k-1}\mathbf{v}_{i}^{\mathbf{B}}\cdot\nabla d_{\mathbf{B}}\partial_{z}c_{j}^{\mathbf{B}}+\partial_{t}c_{k-2}^{\mathbf{B}}\nonumber \\
 & \quad+\sum_{i+j=k-2}\mathbf{v}_{i}^{\mathbf{B}}\cdot\nabla c_{j}^{\mathbf{B}}-\Delta\mu_{k-2}^{\mathbf{B}}.\label{eq:bdryB}
\end{align}
We used the convention that all terms with negative index are supposed
to be zero, i.e., $\mu_{-2}=\mu_{-1}=0$. 

To ensure the Dirichlet boundary condition we suppose that 
\begin{align}
c_{k}^{\mathbf{B}}(0,x,t) & =\frac{\mu_{k-1}^{\mathbf{B}}(0,x,t)}{f''(-1)} &  & \text{for all }(x,t)\in\overline{\partial_{T_{0}}\Omega(\delta)},k\geq1,\label{eq:cDir}\\
\mu_{k}^{\mathbf{B}}(0,x,t) & =0 &  & \text{for all }(x,t)\in\partial_{T_{0}}\Omega,k\geq0.\label{eq:muDir}
\end{align}
Regarding the boundary condition of the Stokes system we calculate
\begin{align*}
2D_{s}\Big(\mathbf{v}_{k}^{\mathbf{B}}\big(\tfrac{d_{\mathbf{B}}(x)}{\epsilon},x,t\big)\Big)\mathbf{n}_{\partial\Omega}(x) & =\frac{1}{\epsilon}\left(\mathbf{I}+\mathbf{n}_{\partial\Omega}(x)\otimes\mathbf{n}_{\partial\Omega}(x)\right)\partial_{z}\mathbf{v}_{k}^{\mathbf{B}}\big(\tfrac{d_{\mathbf{B}}(x)}{\epsilon},x,t\big)\\
 & \quad+2D_{s}\mathbf{v}_{k}^{\mathbf{B}}\big(\tfrac{d_{\mathbf{B}}(x)}{\epsilon},x,t\big)\mathbf{n}_{\partial\Omega}(x)
\end{align*}
and thus impose 
\begin{align}
-\left(\mathbf{I}+\mathbf{n}_{\partial\Omega}(x)\otimes\mathbf{n}_{\partial\Omega}(x)\right)\partial_{z}\mathbf{v}_{k}^{\mathbf{B}}(0,x,t) & =2D_{s}\mathbf{v}_{k-1}^{\mathbf{B}}(0,x,t)\mathbf{n}_{\partial\Omega}(x)\nonumber \\
 & \quad-p_{k-1}^{\mathbf{B}}(0,x,t)\mathbf{n}_{\partial\Omega}(x)+\alpha_{0}\mathbf{v}_{k-1}^{\mathbf{B}}(0,x,t)\label{eq:vnavier}
\end{align}
for all $(x,t)\in\partial_{T_{0}}\Omega,k\geq0$.
\begin{rem}
\label{cor:bdrycond}It can be shown that by choosing (\ref{eq:cDir}),
the unique solution $c_{1}^{\mathbf{B}}$ to (\ref{eq:cDir}) satisfies
$c_{1}^{\mathbf{B}}(z,x,t)=c_{1}^{-}(x,t)$
for all $(z,x,t)\in\left(-\infty,0\right]\times\overline{\partial_{T_{0}}\Omega(\delta)}.$
\end{rem}

\subsubsection{Existence of Expansion Terms}

For the proofs of the statements in this subsection we refer to \cite[Subsection~5.1.6]{ichPhD}.
\begin{lem}[The zeroth order terms]
 \label{zeroorder}~\\Let $(\mathbf{v}^{\pm},p^{\pm},\mu^{\pm})$
be extended to $\Omega_{T_{0}}^{\pm}\cup\Gamma(2\delta;T_{0})$ as in Remark~\ref{Outer-Rem}.2.
We define the terms of the outer expansion $(c_{0}^{\pm},\mu_{0}^{\pm},\mathbf{v}_{0}^{\pm},p_{0}^{\pm})$
for $(x,t)\in\Omega_{T_{0}}^{\pm}\cup\Gamma(2\delta;T_{0})$
as 
\begin{equation}
c_{0}^{\pm}(x,t)=\pm1,\;\mu_{0}^{\pm}(x,t)=\mu^{\pm}(x,t),\;\mathbf{v}_{0}^{\pm}(x,t)=\mathbf{v}^{\pm}(x,t),\;p_{0}^{\pm}(x,t)=p^{\pm}(x,t),\label{eq:0outdef}
\end{equation}
the terms of the inner expansion $(c_{0},\mu_{0},\mathbf{v}_{0})$
as
\begin{align}
c_{0}(\rho,x,t) & =\theta_{0}(\rho),\label{eq:c0def}\\
\mu_{0}(\rho,x,t) & =\mu_{0}^{+}(x,t)\eta(\rho)-\mu_{0}^{-}(x,t)\left(\eta(\rho)-1\right),\label{eq:mu0def}\\
\mathbf{v}_{0}(\rho,x,t) & =\mathbf{v}_{0}^{+}(x,t)\eta(\rho)-\mathbf{v}_{0}^{-}(x,t)\left(\eta(\rho)-1\right),\label{eq:v0def}
\end{align}
for all $(\rho,x,t)\in\mathbb{R}\times\Gamma(2\delta;T_{0})$ and the terms of the boundary expansion $\left(c_{0}^{\mathbf{B}},\mu_{0}^{\mathbf{B}},\mathbf{v}_{0}^{\mathbf{B}},p_{0}^{\mathbf{B}}\right)$
as 
\[
c_{0}^{\mathbf{B}}(z,x,t)=-1,\;\mu_{0}^{\mathbf{B}}(z,x,t)=\mu_{0}^{-}(x,t),\;\mathbf{v}_{0}^{\mathbf{B}}(z,x,t)=\mathbf{v}_{0}^{-}(x,t),\;p_{0}^{\mathbf{B}}(z,x,t)=p_{0}^{-}(x,t)
\]
for all $(z,x,t)\in\left(-\infty,0\right]\times\overline{\partial_{T_{0}}\Omega(\delta)}$.
Then there are smooth and bounded $l_{0}, j_{0}, g_{0}\colon \Gamma(2\delta)\rightarrow\mathbb{R}$,
and $\mathbf{u}_{0}$, $\mathbf{q}_{0}\colon \Gamma(2\delta)\rightarrow\mathbb{R}^{2}$
such that the outer equations \eqref{eq:c0out}, \eqref{eq:Outermudefine},
\eqref{eq:Outervpdefine} (for $k=0$), the inner equations \eqref{eq:Stokes0},
\eqref{eq:Div0}, \eqref{eq:Cahn0}, \eqref{eq:Hilliard0}, the boundary
equations \eqref{eq:stokes-bdry}\textendash \eqref{eq:hilliard-bdry}
(for $k=0$), the inner-outer matching conditions \eqref{eq:matchcon}
the outer-boundary matching conditions \eqref{eq:matchcon-bdry} and
the boundary conditions \eqref{eq:muDir} and \eqref{eq:vnavier}
(for $k=0$) are satisfied.
\end{lem}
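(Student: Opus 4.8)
The plan is to verify each item in the list directly for the explicit ansatz \eqref{eq:0outdef}--\eqref{eq:v0def}, the only non‑formal point being the smoothness and boundedness of the five auxiliary functions. First I would dispatch the outer equations: since $c_0^\pm=\pm1$ is constant, \eqref{eq:f} gives $f'(c_0^\pm)=0$ and the right‑hand sides of \eqref{eq:Outermudefine}, \eqref{eq:Outervpdefine} at $k=0$ collapse to $0$ (empty, resp.\ negative‑index, sums), so these equations are exactly \eqref{eq:S-SAC1}--\eqref{eq:S-SAC3} for $(\mathbf v^\pm,p^\pm,\mu^\pm)$ and hold by assumption. The inner Cahn equation \eqref{eq:Cahn0} together with the normalisation $c_0(0,\cdot)=0$ is precisely the profile ODE \eqref{eq:optprofdef}, so $c_0=\theta_0$ works. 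As the boundary layer functions are chosen $z$‑independent, all $z$‑derivatives occurring in \eqref{eq:stokes-bdry}--\eqref{eq:hilliard-bdry} vanish, and since the right‑hand sides there carry negative indices those equations and \eqref{eq:vnavier} at $k=0$ hold trivially, while \eqref{eq:muDir} at $k=0$ holds because $\mu_0^{\mathbf B}(0,\cdot)=\mu^-|_{\partial_{T_0}\Omega}=0$ by the Dirichlet condition on $\mu$. For the matching conditions, \eqref{eq:matchcon} is trivial for $\mu_0,\mathbf v_0$ since by \eqref{eq:mu0def}, \eqref{eq:v0def} the differences $\varphi_0(\pm\rho,x,t)-\varphi_0^\pm(x,t)$ vanish identically for $\rho\ge1$, and for $c_0=\theta_0$ it follows from $|\theta_0(\pm\rho)\mp1|\le C|\theta_0^2(\pm\rho)-1|$ and \eqref{eq:optimopti} (all $x$‑ and $t$‑derivatives vanishing); \eqref{eq:matchcon-bdry} is immediate, as $c_0^{\mathbf B}-c_0^-$, $\mu_0^{\mathbf B}-\mu_0^-$, $\mathbf v_0^{\mathbf B}-\mathbf v_0^-$, $p_0^{\mathbf B}-p_0^-$ vanish identically.

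The first real point is the construction of $l_0$ and $\mathbf u_0$. From \eqref{eq:mu0def}, \eqref{eq:v0def} one gets $\partial_{\rho\rho}\mu_0=(\mu_0^+-\mu_0^-)\eta''$ and $\partial_{\rho\rho}\mathbf v_0=(\mathbf v_0^+-\mathbf v_0^-)\eta''$, so, since $\eta''\not\equiv0$, the equations \eqref{eq:Hilliard0}, \eqref{eq:Stokes0} force $l_0\,d_\Gamma=\mu_0^+-\mu_0^-$ and $\mathbf u_0\,d_\Gamma=\mathbf v_0^+-\mathbf v_0^-$. Here I would use that $\mu^+=\mu^-=\sigma H_{\Gamma_t}$ on $\Gamma_t$ by \eqref{eq:S-SAC5} and $[\mathbf v]=0$ on $\Gamma_t$, so both right‑hand sides vanish on $\Gamma=\{d_\Gamma=0\}$; as $|\nabla d_\Gamma|=1$ on $\Gamma(2\delta)$, a Hadamard‑type factorization then produces smooth bounded $l_0,\mathbf u_0$ with exactly these identities. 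With this $\mathbf u_0$ one has $\partial_\rho\mathbf v_0-\mathbf u_0\,d_\Gamma\eta'=(\mathbf v_0^+-\mathbf v_0^-)\eta'-(\mathbf v_0^+-\mathbf v_0^-)\eta'=0$, which immediately yields \eqref{eq:Div0} and explains the cancellation of the $h_k$‑term in \eqref{eq:Divk} mentioned after Notation~\ref{nota:innerM-1}.

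The functions $j_0,g_0,\mathbf q_0$ do not appear in the order‑$0$ equations at all; they enter only via $A^0,B^0,\mathbf V^0$, and I would fix them by imposing the Fredholm solvability conditions for the first order inner equations \eqref{eq:Cahnk}, \eqref{eq:Hilliardk}, \eqref{eq:Stokesk} at $k=1$, which is what the inductive step (Lemma~\ref{k-thorder}) subsequently needs. Concretely $-\partial_{\rho\rho}+f''(\theta_0)$ has one‑dimensional kernel spanned by $\theta_0'$, so $\int_{\mathbb R}A^0\theta_0'\,\mathrm d\rho=0$ is required; evaluating this with $\int_{\mathbb R}\theta_0'\,\mathrm d\rho=2$, $\int_{\mathbb R}(\theta_0')^2\,\mathrm d\rho=2\sigma$ and \eqref{eq:etateta} turns it into an affine equation for $g_0\,d_\Gamma$ whose inhomogeneity equals, modulo $d_\Gamma$‑multiples, $\tfrac12(\mu_0^++\mu_0^-)-\sigma H_{\Gamma_t}$, and this vanishes on $\Gamma$ precisely because $\mu=\sigma H_{\Gamma_t}$ there, so Hadamard again yields a smooth bounded $g_0$; the solvability conditions for \eqref{eq:Hilliardk} and \eqref{eq:Stokesk} fix $j_0$ and $\mathbf q_0$ in the same way, the required vanishing on $\Gamma$ being supplied by \eqref{eq:S-SAC6} and \eqref{eq:S-SAC4}. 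The hard part is exactly this last step: showing that these solvability obstructions really do vanish on $\Gamma$, so that $g_0,j_0,\mathbf q_0$ — and in the induction all their higher‑order analogues — stay bounded instead of blowing up like $1/d_\Gamma$. This is the only place where the precise structure of the limiting interface conditions \eqref{eq:S-SAC4}--\eqref{eq:S-SAC6} is genuinely used, and it needs careful bookkeeping of the numerous lower‑order contributions (including the fact that every term carrying $h_0$ drops out).
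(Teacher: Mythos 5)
Your proposal is essentially correct and follows the standard matched-asymptotics construction of Alikakos--Bates--Chen \cite{abc} and Chen--Hilhorst--Logak \cite{chenAC}. The paper itself defers the proof of this lemma to \cite[Subsection~5.1.6]{ichPhD} and gives no argument here, so a line-by-line comparison is not possible; but your strategy is the same one the present paper carries out explicitly at the $(M-\tfrac12)$-th order (Corollary~\ref{compchM}, Lemma~\ref{lem-constrM-0,5}), and the two points you isolate as the real content are indeed the right ones. Namely: (i) the equations \eqref{eq:Hilliard0}, \eqref{eq:Stokes0} force $l_0\,d_\Gamma=\mu_0^+-\mu_0^-$ and $\mathbf u_0\,d_\Gamma=\mathbf v_0^+-\mathbf v_0^-$ (since only the $\eta''$--terms survive), and a Hadamard division produces smooth bounded $l_0,\mathbf u_0$ because $[\mu]=0$ and $[\mathbf v]=0$ on $\Gamma$ by \eqref{eq:S-SAC5} and the no-jump condition; and (ii) $g_0,j_0,\mathbf q_0$ are fixed by the Fredholm/solvability constraints of the order-$1$ equations, which vanish on $\Gamma$ precisely by \eqref{eq:S-SAC4}--\eqref{eq:S-SAC6} (your computation with $\int\theta_0'=2$, $\int(\theta_0')^2=2\sigma$, \eqref{eq:etateta} and $\Delta d_\Gamma|_\Gamma=-H_{\Gamma_t}$ is the correct one, and the analogous computation for $B^0$ reduces to \eqref{eq:S-SAC6} once one uses $l_0|_\Gamma=[\partial_{\mathbf n}\mu]$, $\partial_t d_\Gamma|_\Gamma=-V_{\Gamma_t}$). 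Two very minor points of precision: your phrase ``modulo $d_\Gamma$-multiples'' should simply read ``on $\Gamma$'', since the identity $\Delta d_\Gamma=-H_{\Gamma_t}$ is only valid on $\Gamma$; and the cancellation $\partial_\rho\mathbf v_0-\mathbf u_0d_\Gamma\eta'=0$ that makes \eqref{eq:Div0} trivial is exactly what the paper records in Remark~\ref{rem:zeroprop}, so your observation there is consistent with what the paper later uses.
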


\begin{rem}
\label{rem:zeroprop}$\quad$
 As a consequence of (\ref{eq:mu0def}), (\ref{eq:0outdef}), the equation
for $\mu_{0}^{\pm}$ on $\Gamma_{t}$ (\ref{eq:S-SAC5}) and $\Delta d_{\Gamma}(x,t)=-H_{\Gamma_{t}}(x)$
for $(x,t)\in\Gamma$, we have 
\begin{equation}
\mu_{0}(\rho,x,t)=-\sigma\Delta d_{\Gamma}(x,t)\label{eq:mu0b}
\end{equation}
for $(\rho,x,t)\in\mathbb{R}\times\Gamma$. Moreover, it holds 
\begin{equation}
\mathbf{u}_{0}=0\;\text{on }\Gamma\label{eq:u00}
\end{equation}
and $\partial_{\rho}\mathbf{v}_{0}=\mathbf{u}_{0}d_{\Gamma}\eta'$
in $\mathbb{R}\times\Gamma (2\delta;T_{0})$.
\end{rem}

\begin{lem}[The $k$-th order terms]
 \label{k-thorder}~\\
Let $k\in\{ 1,\ldots,M+1\} $ be given. Then there are
smooth functions
\[
\mathbf{v}_{k},\mathbf{v}_{k}^{\pm},\mathbf{v}_{k}^{\mathbf{B}},\mathbf{u}_{k},\mathbf{q}_{k},\mu_{k},\mu_{k}^{\pm},\mu_{k}^{\mathbf{B}},c_{k},c_{k}^{\pm},c_{k}^{\mathbf{B}},h_{k},l_{k},j_{k},g_{k},p_{k-1},p_{k}^{\pm},p_{k-1}^{\mathbf{B}}
\]
which are bounded on their respective domains, such that for $k$-th
order the outer equations \eqref{eq:Outercdefine}, \eqref{eq:Outermudefine}
and \eqref{eq:Outervpdefine}, the inner equations \eqref{eq:Stokesk},
\eqref{eq:Divk}, \eqref{eq:Cahnk} and \eqref{eq:Hilliardk}, the
boundary equations \eqref{eq:stokes-bdry}\textendash \eqref{eq:hilliard-bdry},
the inner-outer matching conditions \eqref{eq:matchcon}, the outer-boundary
matching conditions \eqref{eq:matchcon-bdry} and the boundary conditions
\eqref{eq:cDir}\textendash \eqref{eq:vnavier} are satisfied. Additionally,
it holds $h_{k}(s,0)=0$ for all $s\in\mathbb{T}^{1}$.
Here $\mathbf{v}_{k}^{\pm}$, $\mu_{k}^{\pm}$, $c_{k}^{\pm}$ and
$p_{k}^{\pm}$ are considered to be extended onto $\Omega_{T_{0}}^{\pm}\cup\Gamma(2\delta;T_{0})$ as in Remark~\ref{Outer-Rem}.2.
\end{lem}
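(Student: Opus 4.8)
The plan is an induction on $k$, the base case $k=0$ being Lemma~\ref{zeroorder}. Assume that all expansion terms of order $\le k-1$, together with the auxiliary functions of order $\le k-1$, have been constructed and satisfy the corresponding outer, inner and boundary equations as well as the matching conditions \eqref{eq:matchcon} and \eqref{eq:matchcon-bdry}. The outer term $c_k^\pm$ is then already fixed by the algebraic relation \eqref{eq:Outercdefine} and extended onto $\Gamma(2\delta;T_0)$ as in Remark~\ref{Outer-Rem}.2. First I would record the solvability theory for the order-$k$ inner equations, regarded as ordinary differential equations in $\rho\in\R$ with $(x,t)\in\Gamma(2\delta)$ a parameter. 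Differentiating \eqref{eq:optprofdef} shows that the operator $\mathcal L:=\partial_{\rho\rho}-f''(\theta_0)$ occurring in \eqref{eq:Cahnk} has, on the space of bounded functions, the one-dimensional kernel $\R\theta_0'$; hence $\partial_{\rho\rho}c_k-f''(\theta_0)c_k=A^{k-1}$ admits a bounded solution --- which in fact converges exponentially to an explicit affine function as $\rho\to\pm\infty$ --- exactly when the Fredholm condition $\int_\R A^{k-1}(\rho,x,t)\,\theta_0'(\rho)\,\d\rho=0$ holds for every $(x,t)\in\Gamma(2\delta)$, the solution being then uniquely normalised by $c_k(0,x,t)=0$. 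The operators in \eqref{eq:Stokesk}, \eqref{eq:Divk}, \eqref{eq:Hilliardk} are $\partial_{\rho\rho}$, resp.\ $\partial_\rho$, so these equations are solved by direct integration in $\rho$, the only obstruction to a solution compatible with \eqref{eq:matchcon} being an unwanted linear growth in $\rho$.

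Next I would turn these conditions into a closed system for the still-undetermined order-$k$ quantities $h_k,\mu_k^\pm,\mathbf{v}_k^\pm,p_k^\pm$. The Fredholm condition for the inner Cahn equation one order higher, $\int_\R A^{k}\,\theta_0'\,\d\rho=0$, together with $\int_\R(\theta_0')^2=2\sigma$, the identity \eqref{eq:mu0b} and the lower-order matching conditions, reduces to an elliptic identity on $\Gamma_t$ that prescribes the traces of $\mu_k^\pm$ as $X_0^{*,-1}\!\big(\sigma\Delta_\Gamma h_k\pm(\text{lower order})\big)$ --- the linearisation of $\mu=\sigma H_{\Gamma_t}$ --- while the corresponding no-growth condition for the inner $\mu$-equation one order higher, combined with the jump of the inner $\mu_k$ at $\rho\to\pm\infty$, yields a transport equation of the form $D_{t,\Gamma}h_k+\mathbf{b}\cdot\nabla_\Gamma h_k-bh_k+\tfrac12 X_0^*\big((\mathbf{v}_k^++\mathbf{v}_k^-)\cdot\mathbf{n}_{\Gamma_{t}}\big)+\tfrac12 X_0^*\big([\partial_{\mathbf{n}}\mu_k]\big)=g$ with $\mathbf{b},b,g$ depending only on lower-order data --- the linearisation of \eqref{eq:S-SAC6}. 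Likewise, the no-growth conditions for \eqref{eq:Stokesk}, \eqref{eq:Divk} prescribe the jump $[\mathbf{v}_k]$ and the stress jump $[2D_s\mathbf{v}_k-p_k\mathbf{I}]\mathbf{n}_{\Gamma_{t}}$ on $\Gamma_t$ in terms of $h_k,\Delta_\Gamma h_k,\nabla_\Gamma h_k$ and lower-order data. Adjoining the outer equations \eqref{eq:Outermudefine}, \eqref{eq:Outervpdefine} in $\Omega_{T_{0}}^\pm$ and the boundary conditions on $\partial\Omega$ (namely $\mu_k^-=0$ from \eqref{eq:muDir} and the Navier condition from \eqref{eq:vnavier}) one obtains precisely a system of the type treated in Theorem~\ref{scharfthe}. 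Applying it with zero initial datum $h_0=0$ produces a unique $h_k\in X_{T_{0}}$ with $h_k(\cdot,0)=0$, together with $\mu_k^\pm,\mathbf{v}_k^\pm,p_k^\pm$; these are smooth by the smoothness assertion of Theorem~\ref{scharfthe} since the data are smooth by the induction hypothesis, and the needed regularity bounds come from Theorem~\ref{stokesthe} and \eqref{eq:mumaxab}. Finally $\mathbf{v}_k^\pm$ is extended divergence-freely onto $\Gamma(2\delta;T_0)$ as in Remark~\ref{Outer-Rem}.2.

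With $h_k$ and all order-$k$ traces fixed so that the solvability conditions of the order-$k$ inner equations hold (the relevant relations among the lower-order traces and $h_{k-1}$ having been imposed at the previous step), I would then solve \eqref{eq:Cahnk}, \eqref{eq:Hilliardk}, \eqref{eq:Stokesk}, \eqref{eq:Divk} for $c_k,\mu_k,\mathbf{v}_k,p_{k-1}$ by the integration described above. The auxiliary functions $g_k,j_k,l_k,\mathbf{u}_k,\mathbf{q}_k$ enter the right-hand sides multiplied by $\eta',\eta''$ and by $d_\Gamma$, and I would choose them to cancel the non-decaying contributions and, via Remark~\ref{WU}, the outer source terms $U^\pm,\mathbf{W}^\pm$; this makes $\mathbf{V}^{k-1},W^{k-1},A^{k-1},B^{k-1}$ decay exponentially in $\rho$ uniformly in $(x,t)$ (cf.\ Definition~\ref{def:Ralphadef}), so that the matching conditions \eqref{eq:matchcon} hold and boundedness and smoothness of $c_k,\mu_k,\mathbf{v}_k,p_{k-1}$ on $\R\times\Gamma(2\delta)$ follow from the explicit variation-of-constants formulas for these constant-coefficient (in $\rho$) equations. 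For the boundary layer I would solve \eqref{eq:stokes-bdry}--\eqref{eq:hilliard-bdry} on $(-\infty,0]$: here the Cahn operator is $\partial_{zz}-f''(-1)$ with $f''(-1)>0$ by \eqref{eq:f}, hence boundedly invertible on $(-\infty,0]$ with exponentially decaying solutions and \emph{no} Fredholm obstruction, so $c_k^{\mathbf{B}}$ is determined by \eqref{eq:cahn-bdry} with the Dirichlet value \eqref{eq:cDir} at $z=0$, and $\mu_k^{\mathbf{B}},\mathbf{v}_k^{\mathbf{B}},p_{k-1}^{\mathbf{B}}$ are obtained by integration, the integration constants and lower-order source corrections being fixed by \eqref{eq:muDir}, \eqref{eq:vnavier} and the outer--boundary matching \eqref{eq:matchcon-bdry}. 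The identities $h_k(s,0)=0$ and all boundary conditions in the statement then hold by construction, cf.\ Remark~\ref{cor:bdrycond} and \eqref{eq:c0out}.

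The substantive part of this argument is the combinatorial bookkeeping in the second and third paragraphs: verifying that the Fredholm and no-growth conditions, after all previously constructed terms have been inserted, genuinely decouple into the Stokes/Mullins--Sekerka/transport system covered by Theorem~\ref{scharfthe}, and that the five auxiliary functions $\mathbf{u}_k,\mathbf{q}_k,l_k,j_k,g_k$ carry exactly enough freedom to simultaneously enforce exponential decay of the inner and boundary right-hand sides, the matching conditions \eqref{eq:matchcon} and \eqref{eq:matchcon-bdry}, and the Dirichlet and Navier boundary conditions. I expect this to be the main obstacle; it is carried out in detail in \cite[Subsection~5.1.6]{ichPhD}.
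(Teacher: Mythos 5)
The paper itself does not present a proof of Lemma~\ref{k-thorder}: it simply refers to \cite[Subsection~5.1.6]{ichPhD}, so there is no in-text argument against which to compare. Your sketch, however, is a faithful reconstruction of the standard matched-asymptotics bootstrap that this chain of papers (and the cited thesis) uses, and it is consistent with the structural clues the paper does give: the induction on $k$; $c_k^\pm$ determined algebraically from \eqref{eq:Outercdefine}; the Fredholm kernel $\R\theta_0'$ for $\mathcal L=\partial_{\rho\rho}-f''(\theta_0)$; the conditions $\int_\R A^k\theta_0'\,\d\rho=0$, $\int_\R B^k\,\d\rho=0$, $\int_\R\mathbf V^k\,\d\rho=0$ on $\Gamma$ producing the traces of $\mu_k^\pm$, the transport equation for $h_k$, and the velocity/stress jumps; the closed system solved via Theorem~\ref{scharfthe}; the auxiliary functions $g_k,j_k,l_k,\mathbf u_k,\mathbf q_k$ handling solvability off $\Gamma$; and the boundary-layer equations being unobstructed because $f''(-1)>0$. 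This is precisely visible in the paper's own treatment of the $(M-\tfrac12)$-th order (Corollary~\ref{compchM}, system \eqref{eq:hmuv0,5sysgen}, Theorem~\ref{hM-1}, Lemma~\ref{lem-constrM-0,5}), which replays the same scheme for a single fractional order. One small imprecision: $U^\pm,\mathbf W^\pm$ are not cancelled by the auxiliary functions; they are inserted multiplied by $\eta^{C_S,\pm}$ so that they decay as $\rho\to\pm\infty$, and Remark~\ref{WU} only shows that they vanish on the physical set $S^\epsilon$ for a suitable $C_S$. This is a wording issue and does not affect the argument. Your closing paragraph honestly flags that the combinatorial bookkeeping is not carried out, which is exactly what the paper also delegates to \cite{ichPhD}; as a plan of proof your proposal is correct and matches the intended route.
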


\begin{rem}
\label{rem:noslipnosol} Let us remark upon the difficulties that
would arise if we considered e.g.\ no-slip boundary conditions for
$\mathbf{v}^{\epsilon}$. In that case, we would demand for $\mathbf{v}_{A}^{\epsilon}$
to also satisfy $\mathbf{v}_{A}^{\epsilon}=0$ on $\partial_{T_{0}}\Omega$,
which may be achieved by suitable changes to the presented boundary
layer expansion. As a consequence, the outer solution would need to
satisfy (among other equations) 
\begin{align*}
\operatorname{div}\mathbf{v}_{k}^{\pm} & =0 &  & \text{in }\Omega_{T_{0}}^{\pm},\\
\left[\mathbf{v}_{k}\right] & =\mathbf{a}_{1} &  & \text{on }\Gamma,\\
\mathbf{v}_{k}^{-} & =\mathbf{a}_{2} &  & \text{on }\partial_{T_{0}}\Omega,
\end{align*}
where $\mathbf{a}_{1}$, $\mathbf{a}_{2}$ are smooth functions,
depending only on lower order terms. As a consequence, the divergence
theorem implies 
\begin{align*}
0 & =\int_{\Omega^{+}(t)}\operatorname{div}\mathbf{v}_{k}^{+}\d x+\int_{\Omega^{-}(t)}\operatorname{div}\mathbf{v}_{k}^{-}\d x=-\int_{\Gamma_{t}}\mathbf{a}_{1}\cdot\mathbf{n}_{\Gamma_{t}}\d\mathcal{\mathcal{H}}^{1}(p)+\int_{\partial\Omega}\mathbf{a}_{2}\cdot\mathbf{n}_{\partial\Omega}\d\mathcal{H}^{1}(p)
\end{align*}
for $t\in[0,T_0]$. However, this equality does not have
to be satisfied for arbitrary $k$. To avoid this difficulty,
we restricted ourselves to the case of the boundary condition (\ref{eq:StokesBdry}).
\end{rem}

Now we ``glue'' together the inner and outer expansions of $c^{\epsilon}$
in order to get an approximate solution. We will repeat this later
for approximate solutions of $\mu^{\epsilon},\mathbf{v}^{\epsilon},p^{\epsilon}$,
cf.\ Definition \ref{def:apprxsol}.
\begin{defn}[A First Approximate Solution]
 \label{firstapprx}~\\ Let $\mathfrak{S}_{0},\ldots,\mathfrak{S}_{M+1}$
be the expansions up to order $M+1$ as given in Lemmata \ref{zeroorder}
and \ref{k-thorder}. Let furthermore some $\epsilon_{0}>0$, $T'\in (0,T_{0}]$
and $(\tilde{h}^{\epsilon})_{\epsilon\in(0,\epsilon_{0})}\subset X_{T'}$
 with $\tilde{h}^{\epsilon}|_{t=0}=0$ be given (cf.\ (\ref{eq:XT})
for the definition of $X_{T'}$). In the following, we write $H:=(\tilde{h}^{\epsilon})_{\epsilon\in (0,\epsilon_{0})}$. 

We define
\begin{equation}
h_{A}^{\epsilon,H}(s,t):=\sum_{i=0}^{M}\epsilon^{i}h_{i+1}(s,t)+\epsilon^{M-\frac{3}{2}}\tilde{h}^{\epsilon}(s,t)\label{eq:ha}
\end{equation}
for $(s,t)\in\mathbb{T}^{1}\times [0,T']$. Note
that $h^{\epsilon}(s,t)$ is well-defined for all $(s,t)\in\mathbb{T}^{1}\times [0,T']$
since $X_{T'}\hookrightarrow C^{0}([0,T'];C^{1}(\mathbb{T}^1))$
due to Proposition~\ref{embedding}.2 and Sobolev embeddings.
Furthermore, we set 
\begin{align}
\tilde{c}_{I}(\rho,x,t) & :=\sum_{i=0}^{M+1}\epsilon^{i}c_{i}(\rho,x,t),\label{eq:ctilde}\qquad
c_{I}^{H}(x,t)  :=\sum_{i=0}^{M+1}\epsilon^{i}c_{i}(\rho^{H}(x,t),x,t)
\end{align}
for $\rho\in\mathbb{R}$, $(x,t)\in\Gamma(2\delta;T')$
and 
\begin{equation}
\rho^{H}(x,t):=\frac{d_{\Gamma}(x,t)}{\epsilon}-h_{A}^{\epsilon,H}(S(x,t),t).\label{eq:roha}
\end{equation}
For the outer part we set
\[
c_{O}(x,t):=\sum_{i=0}^{M+1}\epsilon^{i}\left(c_{i}^{+}(x,t)\chi_{\overline{\Omega^{+}}}(x,t)+c_{i}^{-}(x,t)\chi_{\Omega^{-}}(x,t)\right)
\]
for $(x,t)\in\Omega_{T'}$ and for the boundary part we
define 
\[
c_{\mathbf{B}}(x,t):=\sum_{i=0}^{M+1}\epsilon^{i}c_{i}^{\mathbf{B}}\big(\tfrac{d_{\mathbf{B}}(x,t)}{\epsilon},x,t\big)
\]
for $(x,t)\in\overline{\partial_{T'}\Omega(\delta)}$.

Let $\xi\in C^{\infty}(\mathbb{R})$ satisfy (\ref{eq:cut-off}). We now define the approximate solution
\begin{equation}
c_{A}^{\epsilon,H}:=\xi (d_{\Gamma})c_{I}^{H}+(1-\xi(d_{\Gamma}))(1-\xi(2d_{\mathbf{B}}))c_{O}+\xi(2d_{\mathbf{B}})c_{\mathbf{B}}\quad \text{in }\Omega_{T'}.\label{eq:caquer} 
\end{equation}
\end{defn}

Later on, the family $H$ will be replaced by the terms of correct
order $h_{M-\frac{1}{2}}^{\epsilon}$, which will then depend on $\epsilon$.
But in order to find those terms we need some preparations first,
which will turn out to be more flexible and notationally consistent
when they are done with an arbitrary family of functions $H$.

\subsection{A First Estimate of the Error in the Velocity\label{sec:Estimating-the-error-vel}}

Let the assumptions and notations of Definition \ref{firstapprx}
hold throughout this subsection. Moreover, we denote
\begin{equation*}
V_{0}:=\overline{\{\boldsymbol{\varphi}\in C^\infty(\overline{\Omega})^2: \operatorname{div}\boldsymbol{\varphi}=0\}}^{H^{1}(\Omega)}
\end{equation*}
and $\mathbf{a}\otimes_{s}\mathbf{b}:=\mathbf{a}\otimes\mathbf{b}+\mathbf{b}\otimes\mathbf{a}$
for $\mathbf{a},\mathbf{b}\in\mathbb{R}^{n}$.

For $T\in(0,T_{0}]$, $\epsilon\in(0,\epsilon_{0})$
and $H=(\tilde{h}^{\epsilon})_{\epsilon\in (0,\epsilon_{0})}\subset X_{T}$
with $\tilde{h}^{\epsilon}|_{t=0}=0$ we consider weak solutions $\tilde{\mathbf{w}}_{1}^{\epsilon,H}\colon \Omega_{T}\rightarrow\mathbb{R}^{2}$
and $q_{1}^{\epsilon,H}\colon \Omega_{T}\rightarrow\mathbb{R}$ of 
\begin{align}
-\Delta\tilde{\mathbf{w}}_{1}^{\epsilon,H}+\nabla q_{1}^{\epsilon,H} & =-\epsilon\operatorname{div}\big((\nabla c_{A}^{\epsilon,H}-\mathbf{h}^{H})\otimes_{s}\nabla R^{H}\big) &  & \text{in }\Omega_{T},\label{eq:w1}\\
\operatorname{div}\tilde{\mathbf{w}}_{1}^{\epsilon,H} & =0 &  & \text{in }\Omega_{T},\label{eq:w12}\\
\big(-2D_{s}\tilde{\mathbf{w}}_{1}^{\epsilon,H}+q_{1}^{\epsilon,H}\mathbf{I}\big)\cdot\mathbf{n}_{\partial\Omega} & =\alpha_{0}\tilde{\mathbf{w}}_{1}^{\epsilon,H} &  & \text{on }\partial_{T}\Omega\label{eq:w13}
\end{align}
in the sense of \cite[Subsection~2.1]{NSCH1}. Here we denote
\begin{equation*}
  R^{H}:=c^{\epsilon}-c_{A}^{\epsilon,H},
\end{equation*}
where $c^{\epsilon}\colon \Omega_{T_{0}}\rightarrow\mathbb{R}$ is a smooth
solution to (\ref{eq:StokesPart})\textendash (\ref{eq:Dirichlet2})
with $c_{0}^{\epsilon}$ defined as in (\ref{eq:canf}), for $c_{A}^{\epsilon}=c_{A}^{\epsilon,H}$
and fixed $\psi_{0}^{\epsilon}$. Note that $c^{\epsilon}$ does not
depend on $H$, as 
\[
c_{I}^{H}(x,0)=\sum_{i=0}^{M+1}\epsilon^{i}c_{i}\big(\rho^{H}(x,0),x,0\big)=\sum_{i=0}^{M+1}\epsilon^{i}c_{i}\big(\tfrac{d_{\Gamma}(x,0)}{\epsilon},x,t\big)
\]
due to $h_{i}|_{t=0}=0$ by construction for $i\in \{ 1,\ldots,M+1\} $
and $\tilde{h}^{\epsilon}|_{t=0}=0$. Moreover, we define $\mathbf{h}^{H}$
by 
\begin{equation}
\mathbf{h}^{H}(x,t):=-\xi(d_{\Gamma}(x,t))\partial_{\rho}\tilde{c}_{I}\big(\rho^{H}(x,t),x,t\big)\epsilon^{M-\frac{3}{2}}\nabla^{\Gamma}\tilde{h}^{\epsilon}(x,t)\label{eq:hh}
\end{equation}
and calculate 
\begin{align}
 & \big(\nabla c_{A}^{\epsilon,H}-\mathbf{h}^{H}\big)(\rho^{H}(x,t),x,t)\nonumber \\
 & =\xi'(d_{\Gamma}(x,t))\nabla d_{\Gamma}(x,t)c_{I}^{H}(x,t)+\xi(d_{\Gamma}(x,t))\nabla\tilde{c}_{I}(\rho^{H}(x,t),x,t)\nonumber \\
 & \quad+\xi(d_{\Gamma}(x,t))\partial_{\rho}\tilde{c}_{I}(\rho^{H}(x,t),x,t)\Big(\tfrac{1}{\epsilon}\nabla d_{\Gamma}(x,t)-\sum_{i=0}^{M}\epsilon^{i}\nabla^{\Gamma}h_{i+1}(x,t)\Big)\nonumber \\
 & \quad+\nabla\big((1-\xi(d_{\Gamma}(x,t)))(1-\xi(2d_{\mathbf{B}}(x,t)))c_{O}(x,t)+\xi(2d_{\mathbf{B}}(x,t))c_{\mathbf{B}}(x,t)\big)\label{eq:ca-h}
\end{align}
for $(x,t)\in\Omega_{T}$. We understand the right hand
side of equation (\ref{eq:w1}) as a functional in $\left(V_{0}\right)'$
given by
\begin{equation}
\mathbf{f}^{\epsilon,H}(\psi):=\int_{\Omega}\epsilon\left(\big(\nabla c_{A}^{\epsilon,H}-\mathbf{h}^{H}\big)\otimes\nabla R^{H}+\nabla R^{H}\otimes\big(\nabla c_{A}^{\epsilon,H}-\mathbf{h}^{H}\big)\right):\nabla\psi\d x\label{eq:fepsh}
\end{equation}
for all $\psi\in V_{0}$ and fixed $t\in\left[0,T\right]$. As $H\subset X_{T}$,
\cite[Theorem~2.1]{NSCH1} implies the existence of a unique weak
solution. The following technical proposition is a key
element in the proof of existence for the $\left(M-\frac{1}{2}\right)$-th
order of the expansion of $h^{\epsilon}$, cf.\ Theorem \ref{hM-1} below.
\begin{prop}
\label{w1ab} Let $\epsilon_{0}\in (0,1)$ and $T'\in (0,T_{0}]$
be fixed. Furthermore, let for a given family $H=(\tilde{h}^{\epsilon})_{\epsilon\in (0,\epsilon_{0})}\subset X_{T'}$
with $\tilde{h}^{\epsilon}|_{t=0}=0$ the function $\tilde{\mathbf{w}}_{1}^{\epsilon,H}$
be defined as the weak solution to \eqref{eq:w1}\textendash \eqref{eq:w13}
for $\epsilon\in (0,\epsilon_{0})$. Then the following
statements hold:
\begin{enumerate}
\item For all $\epsilon\in (0,\epsilon_{0})$, there exists a
constant $C (\epsilon)>0$ such that 
\[
\Vert \tilde{\mathbf{w}}_{1}^{\epsilon,H}\Vert _{L^{2}(0,T';H^{1}(\Omega))}\leq C(\epsilon)\big((T')^{\frac{1}{2}}+\Vert \tilde{h}^{\epsilon}\Vert _{L^{2}(0,T';H^{1}(\mathbb{T}^1))}\big).
\]
\item Let $H_{1}=(h_{1}^{\epsilon})_{\epsilon\in(0,\epsilon_{0})},H_{2}=(h_{2}^{\epsilon})_{\epsilon\in (0,\epsilon_{0})}\subset X_{T'}$
be given. For all $\epsilon\in (0,\epsilon_{0})$, there
exists a constant $\tilde{C}(\epsilon)>0$ such that 
\[
\Vert \tilde{\mathbf{w}}_{1}^{\epsilon,H_{1}}-\tilde{\mathbf{w}}_{1}^{\epsilon,H_{2}}\Vert_{L^{2}(0,T';H^{1}(\Omega))}\leq\tilde{C}(\epsilon)(T')^{\frac{1}{2}}\big(1+\Vert h_{2}^{\epsilon}\Vert _{X_{T'}}\big)\Vert h_{1}^{\epsilon}-h_{2}^{\epsilon}\Vert _{X_{T'}}.
\]
\end{enumerate}
\end{prop}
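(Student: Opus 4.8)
The plan is to reduce both statements to the well-posedness estimate for the Stokes system with Navier boundary conditions from \cite[Theorem~2.1]{NSCH1}, which gives, for every $t\in(0,T')$,
\[
\big\Vert\tweh(\cdot,t)\big\Vert_{H^{1}(\Omega)}\leq C\big\Vert\mathbf{f}^{\epsilon,H}(\cdot,t)\big\Vert_{(V_{0})'},
\]
with $C$ independent of $\epsilon$, $t$ and $H$ and $\mathbf{f}^{\epsilon,H}$ the functional from \eqref{eq:fepsh}. So it suffices to control $\Vert\mathbf{f}^{\epsilon,H}(\cdot,t)\Vert_{(V_{0})'}$ and its dependence on $H$. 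Since $C(\epsilon)$ is allowed to degenerate as $\epsilon\to0$, no sharp $\epsilon$-powers are required and all factors $\epsilon^{-1}$ arising from the stretched variable may be absorbed into $C(\epsilon)$.

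First I would record two pointwise-in-$t$ facts. (i) \emph{The field $\nabla c_{A}^{\epsilon,H}-\mathbf{h}^{H}$ is uniformly bounded in $L^{\infty}(\Omega)$ by $C(\epsilon)$}: in \eqref{eq:ca-h} the term $\mathbf{h}^{H}$ from \eqref{eq:hh} exactly cancels the only contribution containing $\nabla^{\Gamma}\tilde h^{\epsilon}$, so the remaining $\tilde h^{\epsilon}$-dependence enters only through $\rho^{H}$ inside the bounded functions $c_{i}$, $\partial_{\rho}c_{i}$, $\nabla_{x}c_{i}$ (bounded, and exponentially decaying in the normal direction by Lemma~\ref{k-thorder} and the matching conditions \eqref{eq:matchcon}). (ii) \emph{$\Vert\nabla R^{H}(\cdot,t)\Vert_{L^{2}(\Omega)}\leq C(\epsilon)\big(1+\Vert\tilde h^{\epsilon}(\cdot,t)\Vert_{H^{1}(\mathbb{T}^{1})}\big)$}: write $\nabla R^{H}=\nabla c^{\epsilon}-\nabla c_{A}^{\epsilon,H}$, with $\nabla c^{\epsilon}$ bounded by $C(\epsilon)$ as $c^{\epsilon}$ is a fixed smooth solution; in $\nabla c_{A}^{\epsilon,H}$ the only term not bounded uniformly in $H$ is $\xi(d_{\Gamma})\partial_{\rho}\tilde c_{I}(\rho^{H},\cdot)\,\epsilon^{M-\frac32}\nabla^{\Gamma}\tilde h^{\epsilon}$, whose $L^{2}(\Gamma_{t}(2\delta))$-norm, by the exponential decay of $\partial_{\rho}c_{i}$ and Lemma~\ref{Linfeig}, is bounded by $C(\epsilon)\Vert\partial_{s}\tilde h^{\epsilon}(\cdot,t)\Vert_{L^{2}(\mathbb{T}^{1})}$. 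For Part~1 I then bound, for $\psi\in V_{0}$,
\[
|\mathbf{f}^{\epsilon,H}(\psi)|\leq C\epsilon\big\Vert(\nabla c_{A}^{\epsilon,H}-\mathbf{h}^{H})(\cdot,t)\big\Vert_{L^{\infty}}\big\Vert\nabla R^{H}(\cdot,t)\big\Vert_{L^{2}}\Vert\nabla\psi\Vert_{L^{2}}\leq C(\epsilon)\big(1+\Vert\tilde h^{\epsilon}(\cdot,t)\Vert_{H^{1}}\big)\Vert\psi\Vert_{H^{1}},
\]
so $\Vert\tweh(\cdot,t)\Vert_{H^{1}}\leq C(\epsilon)(1+\Vert\tilde h^{\epsilon}(\cdot,t)\Vert_{H^{1}})$; squaring, integrating over $(0,T')$ and using subadditivity of the square root gives the claim.

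For Part~2 I would exploit linearity: $\tilde{\mathbf{w}}_{1}^{\epsilon,H_{1}}-\tilde{\mathbf{w}}_{1}^{\epsilon,H_{2}}$ solves \eqref{eq:w1}--\eqref{eq:w13} with right-hand side $\mathbf{f}^{\epsilon,H_{1}}-\mathbf{f}^{\epsilon,H_{2}}$. Setting $G_{i}:=\nabla c_{A}^{\epsilon,H_{i}}-\mathbf{h}^{H_{i}}$ and using $\nabla R^{H_{1}}-\nabla R^{H_{2}}=-\nabla(c_{A}^{\epsilon,H_{1}}-c_{A}^{\epsilon,H_{2}})$ (the solution $c^{\epsilon}$ is the same in both cases), I telescope
\[
G_{1}\otimes\nabla R^{H_{1}}-G_{2}\otimes\nabla R^{H_{2}}=(G_{1}-G_{2})\otimes\nabla R^{H_{2}}+G_{1}\otimes(\nabla R^{H_{1}}-\nabla R^{H_{2}}),
\]
the split being chosen so that the ``large'' factor is $\nabla R^{H_{2}}$, controlled by $\Vert h_{2}^{\epsilon}\Vert_{X_{T'}}$ via~(ii). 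It then remains to prove the Lipschitz-in-$H$ bounds
\[
\big\Vert(G_{1}-G_{2})(\cdot,t)\big\Vert_{L^{\infty}(\Omega)}\leq C(\epsilon)\Vert h_{1}^{\epsilon}-h_{2}^{\epsilon}\Vert_{X_{T'}},\qquad
\big\Vert\nabla(c_{A}^{\epsilon,H_{1}}-c_{A}^{\epsilon,H_{2}})(\cdot,t)\big\Vert_{L^{2}(\Omega)}\leq C(\epsilon)\big(1+\Vert h_{2}^{\epsilon}\Vert_{X_{T'}}\big)\Vert h_{1}^{\epsilon}-h_{2}^{\epsilon}\Vert_{X_{T'}}.
\]
Both follow from the mean value theorem in the $\rho$-variable together with $\rho^{H_{1}}-\rho^{H_{2}}=-\epsilon^{M-\frac32}(\tilde h_{1}^{\epsilon}-\tilde h_{2}^{\epsilon})(S(\cdot,\cdot),\cdot)$ and $\nabla\rho^{H_{1}}-\nabla\rho^{H_{2}}=-\epsilon^{M-\frac32}\nabla^{\Gamma}(\tilde h_{1}^{\epsilon}-\tilde h_{2}^{\epsilon})$: the exponential decay of $\partial_{\rho}c_{i}$, $\partial_{\rho\rho}c_{i}$ lets one apply Lemma~\ref{Linfeig} to every term supported near $\Gamma$, and $X_{T'}\hookrightarrow C^{0}([0,T'];C^{1}(\mathbb{T}^{1}))$ (Proposition~\ref{embedding}, Sobolev embedding) bounds $\sup_{t}\Vert\tilde h_{i}^{\epsilon}(\cdot,t)\Vert_{C^{1}}$ and $\sup_{t}\Vert\partial_{s}\tilde h_{i}^{\epsilon}(\cdot,t)\Vert_{L^{2}}$ by $\Vert h_{i}^{\epsilon}\Vert_{X_{T'}}$; as in Part~1 the split that keeps $\nabla\rho^{H_{2}}$ rather than $\nabla\rho^{H_{1}}$ produces the factor $\Vert h_{2}^{\epsilon}\Vert_{X_{T'}}$ only. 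Combining with (i), (ii) gives $\Vert(\tilde{\mathbf{w}}_{1}^{\epsilon,H_{1}}-\tilde{\mathbf{w}}_{1}^{\epsilon,H_{2}})(\cdot,t)\Vert_{H^{1}}\leq C(\epsilon)(1+\Vert h_{2}^{\epsilon}\Vert_{X_{T'}})\Vert h_{1}^{\epsilon}-h_{2}^{\epsilon}\Vert_{X_{T'}}$ uniformly in $t$, and integrating the square over $(0,T')$ produces the factor $(T')^{\frac12}$.

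The main obstacle is the careful bookkeeping in~(ii) and in the two Lipschitz estimates: one must track precisely which contributions to $\nabla c_{A}^{\epsilon,H}$ carry an exponentially decaying factor in $\rho^{H}$ — so that Lemma~\ref{Linfeig} converts their $L^{2}(\Gamma_{t}(2\delta))$-norm into a surface $L^{2}(\mathbb{T}^{1})$-norm of $\tilde h^{\epsilon}$ or $\partial_{s}\tilde h^{\epsilon}$ — and organize the telescoping in Part~2 so that the growing factor is attached to $H_{2}$, not $H_{1}$. The cancellation built into $\mathbf{h}^{H}$ in \eqref{eq:hh}, which makes estimate~(i) hold uniformly in $H$, is the structural point on which everything hinges.
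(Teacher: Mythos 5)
Your proof is correct and follows essentially the same route as the paper: both parts reduce to the a priori estimate for the weak Stokes solution from [Theorem~2.1, NSCH1], use the $H$-uniform $L^\infty$-bound on $\nabla c_A^{\epsilon,H}-\mathbf{h}^H$ (which rests on the same cancellation built into $\mathbf{h}^H$), and control the $H$-dependence of $\nabla R^H$ via the single surviving $\nabla^\Gamma\tilde h^\epsilon$ term together with $X_{T'}\hookrightarrow C^0([0,T'];C^1(\mathbb{T}^1))$ and a Taylor expansion in $\rho^H$. Your Part 2 telescopes $G_1\otimes\nabla R^{H_1}-G_2\otimes\nabla R^{H_2}$ slightly differently from the paper's grouping around $(G_1-G_2)\otimes\nabla c^\epsilon$, but the two decompositions are interchangeable and both correctly attach the growing factor to $\Vert h_2^\epsilon\Vert_{X_{T'}}$.
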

\begin{proof}
Ad 1.: By \cite[Theorem 2.1]{NSCH1} there is a constant $C>0$ such
that 
\begin{equation}
\Vert \tilde{\mathbf{w}}_{1}^{\epsilon,H}\Vert _{L^{2}(0,T',H^{1}(\Omega))}\leq C\epsilon\big\Vert (\nabla c_{A}^{\epsilon,H}-\mathbf{h}^{H})\otimes_{s}\nabla R^{H}\big\Vert _{L^{2}\left(0,T';L^{2}(\Omega)\right)}.\label{eq:w1recht}
\end{equation}
 Now in order to estimate the right hand side, we first note
that 
\begin{equation}
\sup_{(x,t)\in\Omega\times\left(0,T'\right)}\left|\nabla c_{A}^{\epsilon,H}(x,t)-\mathbf{h}^{H}(x,t)\right|\leq\frac{C}{\epsilon},\label{eq:linf1}
\end{equation}
with a constant $C>0$ that does not depend on $H$. This can be deduced
from the representation (\ref{eq:ca-h}) and the fact that $c_{\mathbf{B}}$
and its appearing derivatives are in $L^{\infty}\left(\partial_{T_{0}}\Omega(\delta)\right)$,
$c_{O}$ and its derivatives are in $L^{\infty}\left(\Omega_{T_{0}}\right)$
and $\tilde{c}_{I}$ and its appearing derivatives are in $L^{\infty}\left(\mathbb{R}\times\Gamma\left(2\delta;T_{0}\right)\right)$.
So we obtain
\begin{align*}
\left\Vert \epsilon(\nabla c_{A}^{\epsilon,H}-\mathbf{h}^{H})\otimes_s\nabla R^{H}\right\Vert _{L^{2}\left(\Omega_{T'}\right)} & \leq C_{1}(\epsilon)(T')^{\frac{1}{2}}+C_{2}(\epsilon)\Vert \nabla^{\Gamma}\tilde{h}^{\epsilon}\Vert _{L^{2}(\Gamma(2\delta;T'))}\\
 & \leq C(\epsilon)\big((T')^{\frac{1}{2}}+\Vert h^{\epsilon}\Vert _{\left(L^{2}\left(0,T';H^{1}(\mathbb{T}^1)\right)\right)}\big),
\end{align*}
where we used that $c^{\epsilon}$ is a known function and thus 
\begin{equation}
\sup_{t\in\left(0,T'\right)}\left\Vert \nabla c^{\epsilon}\right\Vert _{L^{2}(\Omega)}\leq C(\epsilon)\label{eq:linf2}
\end{equation}
holds for some $\epsilon$-dependent constant $C(\epsilon)$.
An analoguous estimate for $\nabla R^{H}\otimes(\nabla c_{A}^{\epsilon,H}-\mathbf{h}^{H})$
yields the first part of the proposition.

Ad 2.: We write $\mathbf{f}^{\epsilon,H}:=\epsilon\big(\nabla c_{A}^{\epsilon,H}-\mathbf{h}^{H}\big)\otimes_{s}\nabla\big(c^{\epsilon}-c_{A}^{\epsilon,H}\big)$
and get using \cite[Theorem 2.1]{NSCH1} that
\begin{equation}
\Vert \tilde{\mathbf{w}}_{1}^{\epsilon,H_{1}}-\tilde{\mathbf{w}}_{1}^{\epsilon,H_{2}}\Vert _{L^{2}(0,T';H^{1}(\Omega))}\leq C\Vert \mathbf{f}^{\epsilon,H_{1}}-\mathbf{f}^{\epsilon,H_{2}}\Vert _{L^{2}(0,T';L^{2}(\Omega))}.\label{eq:w1lip}
\end{equation}
Now in order to show the statement we first note that 
\[
D_{\rho}^{k}D_{x}^{l}\big(\tilde{c}_{I}\big(\rho^{H_{1}}(x,t),x,t\big)-\tilde{c}_{I}\big(\rho^{H_{2}}(x,t),x,t\big)\big)=D_{\rho}^{k+1}D_{x}^{l}c_{I}\big(\xi(x,t),x,t\big)\epsilon^{M-\frac{3}{2}}(h_{2}^{\epsilon}-h_{1}^{\epsilon})
\]
for all $(x,t)\in\Gamma(2\delta,T')$ and $k,l\in\{ 0,1\} $
due to Taylor's theorem. Here $\xi\colon\Gamma (2\delta,T')\rightarrow\mathbb{R}$
is a suitable function depending on $H_{1}$ and $H_{2}$. Since all
the terms which do not depend on $H_{1}$, $H_{2}$ cancel, we may
estimate
\[
\epsilon\big\Vert \big((\nabla c_{A}^{\epsilon,H_{1}}-\mathbf{h}^{H_{1}})-(\nabla c_{A}^{\epsilon,H_{2}}-\mathbf{h}^{H_{2}})\big)\otimes_s\nabla c^{\epsilon}\big\Vert _{L^{2}(\Omega_{T'})}\leq C(\epsilon)(T')^{\frac{1}{2}}\Vert h_{1}^{\epsilon}-h_{2}^{\epsilon}\Vert _{X_{T'}}
\]
by (\ref{eq:linf2}), a Taylor expansion and $X_{T'}\hookrightarrow C^{0}([0,T'];C^{1}(\mathbb{T}^1))$.
With the help of a similar argumentation the other terms in $\mathbf{f}^{\epsilon,H_{1}}-\mathbf{f}^{\epsilon,H_{2}}$
may be treated, yielding the claim.
\end{proof}

\subsection{Constructing the $\left(M-\frac{1}{2}\right)$-th Terms\label{sec:Constructing-the--M-0,5}}

Our goal is to construct approximate solutions $(\mathbf{v}_{A}^{\epsilon},p_{A}^{\epsilon},c_{A}^{\epsilon},\mu_{A}^{\epsilon})$
which fulfill (\ref{eq:Stokesapp})\textendash (\ref{eq:Hilliardapp})
in $\Omega_{T_{0}}$, where $\mathbf{r}_{\text{S}}^{\epsilon},r_{\operatorname{div}}^{\epsilon},r_{\text{CH}1}^{\epsilon}$
and $r_{\text{CH}2}^{\epsilon}$ are suitable error terms, which will
be discussed in detail in Chapter \ref{chap:Estimates-Remainder}.
In (\ref{eq:Cahnapp}) we consider
\begin{equation}
\mathbf{w}_{1}^{\epsilon,H}=\frac{\tilde{\mathbf{w}}_{1}^{\epsilon,H}}{\epsilon^{M-\frac{1}{2}}}\label{eq:w1eh}
\end{equation}
instead of $\mathbf{w}_{1}^{\epsilon}$, where $\tilde{\mathbf{w}}_{1}^{\epsilon,H}$
is the weak solution to (\ref{eq:w1})\textendash (\ref{eq:w13}).
Moreover, we write
\begin{equation*}
\mathbf{w}_{1}^{\epsilon,H}|_{\Gamma}(x,t):=\mathbf{w}_{1}^{\epsilon,H}\left(\operatorname{Pr}_{\Gamma_{t}}(x),t\right)  \quad\text{for }(x,t)\in\Gamma(2\delta;T_{0})
\end{equation*}
 and we
use a suitable family $H=(\tilde{h}^{\epsilon})_{\epsilon\in (0,\epsilon_{0})}\subset X_{T_{0}}$.
Due to this appearance of a non-integer order term, it is natural
to also consider non-integer order terms in the expansion of $(c^{\epsilon},\mu^{\epsilon},\mathbf{v}^{\epsilon},p^{\epsilon})$.
More precisely, we assume that terms $\epsilon^{M-\frac{1}{2}}(\mathbf{v}_{M-\frac{1}{2}}^{\pm},p_{M-\frac{1}{2}}^{\pm},c_{M-\frac{1}{2}}^{\pm},\mu_{M-\frac{1}{2}}^{\pm})$ (defined in $\Omega_{T_{0}}^{\pm}$)
appear in the outer expansion
and that terms $\epsilon^{M-\frac{1}{2}}(\mathbf{v}_{M-\frac{1}{2}},p_{M-\frac{1}{2}},c_{M-\frac{1}{2}},\mu_{M-\frac{1}{2}})$ (defined in $\mathbb{R}\times\Gamma(2\delta;T_{0})$)
appear in the inner expansion. 

Moreover, we assume that there is a term $\epsilon^{M-\frac{3}{2}}h_{M-\frac{1}{2}}\colon \mathbb{T}^{1}\times[0,T_{0}]\rightarrow\mathbb{R}$
appearing in the expansion of $h^{\epsilon}$ (and we sometimes write
$h_{M-\frac{1}{2}}^{\epsilon}=h_{M-\frac{1}{2}}$) and further that
there are $\epsilon^{M-\frac{1}{2}}\mathbf{u}_{M-\frac{1}{2}}$ and
$\epsilon^{M-\frac{1}{2}}l_{M-\frac{1}{2}}$ appearing in the expansions
of $\mathbf{u}^{\epsilon}$ and $l^{\epsilon}$. We assume that all
these functions are smooth in their respective domains; thus we can
also consider $\mathbf{w}_{1}^{\epsilon,H}$ and $\mathbf{w}_{2}^{\epsilon,H}$
to be smooth, due to regularity theory. Note that we do not introduce
$\mathbf{q}_{M-\frac{1}{2}}$, $j_{M-\frac{1}{2}}$ or $g_{M-\frac{1}{2}}$.
In the following, we will fix $\overline{H}=\left(h_{M-\frac{1}{2}}^{\epsilon}\right)_{\epsilon\in\left(0,\epsilon_{0}\right)}$
and drop the explicit dependence on a family $H$ in the notations
when referring to $\overline{H}$, i.e.\ we write $\mathbf{h}=\mathbf{h}^{\overline{H}}$,
$\twe=\tilde{\mathbf{w}}_{1}^{\epsilon,\overline{H}}$ and so forth. 

In the following, we only assume that the zeroth and first order terms
have been constructed with the help of Lemmata \ref{zeroorder} and
\ref{k-thorder}.

\subsubsection{The Outer Expansion}

Using a Taylor expansion in (\ref{eq:CH-Part2}) as before, we explicitly
get in $\Omega_{T_{0}}^{\pm}$
\begin{equation}
c_{M-\frac{1}{2}}^{\pm}=0,\label{eq:co0,5}
\end{equation}
which can be derived similarly to (\ref{eq:Outercdefine}). From (\ref{eq:StokesPart})\textendash (\ref{eq:StokesPart2}),
we deduce that the equations 
\begin{alignat}{2}
-\Delta\mathbf{v}_{M-\frac{1}{2}}^{\pm}+\nabla p_{M-\frac{1}{2}}^{\pm} & =0 & \qquad & \text{in }\Omega_{T_{0}}^{\pm},\label{eq:sto0,5-1}\\
\operatorname{div}\mathbf{v}_{M-\frac{1}{2}}^{\pm} & =0 &  & \text{in }\Omega_{T_{0}}^{\pm},\label{eq:sto0,5-2}
\end{alignat}
have to hold, as $\nabla c_{M-\frac{1}{2}}^{\pm}=\nabla c_{0}^{\pm}=0$. 
Using $c_{M-\frac{1}{2}}^{\pm}=0$ in (\ref{eq:CH-Part1}), we get
\begin{alignat}{2}
\Delta\mu_{M-\frac{1}{2}}^{\pm} & =\partial_{t}c_{M-\frac{1}{2}}^{\pm}+\mathbf{v}_{M-\frac{1}{2}}^{\pm}\cdot\nabla c_{0}^{\pm}+\mathbf{v}_{0}^{\pm}\cdot\nabla c_{M-\frac{1}{2}}^{\pm}=0 &\quad & \text{in }\Omega_{T_{0}}^{\pm}.\label{eq:muo0,5}
\end{alignat}
We get corresponding boundary conditions for (\ref{eq:sto0,5-1})\textendash (\ref{eq:sto0,5-2})
and (\ref{eq:muo0,5}) on $\Gamma$ from the inner expansion. These
boundary conditions will turn out to  be non-trivial. But note that,
since $c_{M-\frac{1}{2}}^{\pm}=0$, we do not have to construct a
boundary layer expansion, as we may explicitly prescribe the boundary
values 
\[
\big(-2D_{s}\mathbf{v}_{M-\frac{1}{2}}^{-}+p_{M-\frac{1}{2}}^{-}\mathbf{I}\big)\mathbf{n}_{\partial\Omega}=\alpha_{0}\mathbf{v}_{M-\frac{1}{2}}^{-}\quad\text{ on }\partial_{T_{0}}\Omega
\]
for (\ref{eq:sto0,5-1})\textendash (\ref{eq:sto0,5-2}) and the Dirichlet
datum
$
\mu_{M-\frac{1}{2}}^{-}=0
$
for (\ref{eq:muo0,5}).

In the following, we assume that $\big(\mathbf{v}_{M-\frac{1}{2}}^{\pm},p_{M-\frac{1}{2}}^{\pm},c_{M-\frac{1}{2}}^{\pm},\mu_{M-\frac{1}{2}}^{\pm}\big)$
are smoothly extended to $\Omega_{T_{0}}^{\pm}\cup\Gamma(2\delta;T_{0})$,
as discussed in Remark \ref{Outer-Rem} for the integer order terms.

\subsubsection{The Inner Expansion}

We assume that the matching conditions (\ref{eq:matchcon}) hold for
the inner terms $\mathbf{v}_{M-\frac{1}{2}}$, $p_{M-\frac{1}{2}}$,
$c_{M-\frac{1}{2}}$, $\mu_{M-\frac{1}{2}}$. As these are the first
terms of fractional order which we introduce, the following identities
can be derived from (\ref{eq:expx-stokes})\textendash (\ref{eq:expx-hilliard}):
\begin{align}
-\partial_{\rho\rho}\big(\mathbf{v}_{M-\frac{1}{2}}-(\mathbf{u}_{M-\frac{1}{2}}d_{\Gamma}-\mathbf{u}_{0}h_{M-\frac{1}{2}})\eta\big) & =0,\label{eq:sti0,5}\\
\partial_{\rho}\big(\mathbf{v}_{M-\frac{1}{2}}\cdot\mathbf{n}-\big(\mathbf{u}_{M-\frac{1}{2}}d_{\Gamma}-\mathbf{u}_{0}h_{M-\frac{1}{2}}\big)\cdot\mathbf{n}\eta\big) & =0,\label{eq:divi0,5}\\
\partial_{\rho\rho}c_{M-\frac{1}{2}}-f''\left(c_{0}\right)c_{M-\frac{1}{2}} & =0,\label{eq:ci0,5}\\
\partial_{\rho\rho}\big(\mu_{M-\frac{1}{2}}-(l_{M-\frac{1}{2}}d_{\Gamma}-l_{0}h_{M-\frac{1}{2}})\eta\big) & =0\label{eq:mui0,5}
\end{align}
in $\mathbb{R}\times\Gamma (2\delta;T_{0})$. Note that
we have used $\nabla^{\Gamma}h_{M-\frac{1}{2}}\cdot (\partial_{\rho}\mathbf{v}_{0}-\mathbf{u}_{0}d_{\Gamma}\eta')=0$
in $\mathbb{R}\times\Gamma(2\delta;T_{0})$ as stated in
Remark \ref{rem:zeroprop}. As before, we complement (\ref{eq:ci0,5})
with the normalization $c_{M-\frac{1}{2}}(0,x,t)=0$ for
all $(x,t)\in\Gamma(2\delta;T_{0})$. Then we
immediately find that $c_{M-\frac{1}{2}}=0$ is the unique solution
to (\ref{eq:ci0,5}).

Now we introduce terms $V^{M-\frac{1}{2}}$, $W^{M-\frac{1}{2}}$,
$A^{M-\frac{1}{2}}$, $B^{M-\frac{1}{2}}$ which correspond to the
respective terms in (\ref{eq:Stokesk})\textendash (\ref{eq:Hilliardk})
for order $k=M+\frac{1}{2}$, i.e., right hand sides for fictive
terms $\big(\mathbf{v}_{M+\frac{1}{2}},p_{M+\frac{1}{2}},c_{M+\frac{1}{2}},\mu_{M+\frac{1}{2}}\big)$
which we will not construct. These are given by
\begin{align}
A^{M-\frac{1}{2}}&=-\mu_{M-\frac{1}{2}}-2\partial_{\rho\rho}c_{0}\nabla^{\Gamma}h_{M-\frac{1}{2}}\cdot\nabla^{\Gamma}h_{1}+\partial_{\rho}c_{0}\Delta^{\Gamma}h_{M-\frac{1}{2}}-g_{0}h_{M-\frac{1}{2}}\eta',\label{eq:AM-0,5}\\
B^{M-\frac{1}{2}} & =\partial_{\rho}c_{0}\mathbf{v}_{M-\frac{1}{2}}\cdot\mathbf{n}-\partial_{\rho}\mu_{M-\frac{1}{2}}\Delta d_{\Gamma}-2\nabla\partial_{\rho}\mu_{M-\frac{1}{2}}\cdot\mathbf{n}-l_{M-\frac{1}{2}}\eta''\left(\rho+h_{1}\right)\nonumber \\
 & \quad-\partial_{\rho}c_{0}\mathbf{v}_{0}\cdot\nabla^{\Gamma}h_{M-\frac{1}{2}}-2\partial_{\rho\rho}\mu_{0}\nabla^{\Gamma}h_{M-\frac{1}{2}}\cdot\nabla h_{1}-\partial_{\rho}c_{0}\partial_{t}^{\Gamma}h_{M-\frac{1}{2}}+\partial_{\rho}\mu_{\text{0}}\Delta^{\Gamma}h_{M-\frac{1}{2}}\nonumber \\
 & \quad+2\nabla\partial_{\rho}\mu_{0}\cdot\nabla^{\Gamma}h_{M-\frac{1}{2}}-h_{M-\frac{1}{2}}\left(l_{1}\eta''+j_{0}\eta'\right)+\mathbf{w}_{1}^{\epsilon}|_{\Gamma}\cdot\mathbf{n}\partial_{\rho}c_{0},\label{eq:BM-0,5}\\
\mathbf{V}^{M-\frac{1}{2}} & =\partial_{\rho}\mathbf{v}_{M-\frac{1}{2}}\Delta d_{\Gamma}+2\big((\nabla\partial_{\rho}\mathbf{v}_{M-\frac{1}{2}})^{T}\mathbf{n}-(\nabla\partial_{\rho}\mathbf{v}_{0})^{T}\nabla^{\Gamma}h_{M-\frac{1}{2}}\big)-\partial_{\rho}\mathbf{v}_{0}\Delta^{\Gamma}h_{M-\frac{1}{2}}\nonumber \\
 & \quad-\partial_{\rho}p_{M-\frac{1}{2}}\mathbf{n}+\partial_{\rho}p_{0}\nabla^{\Gamma}h_{M-\frac{1}{2}}+2\partial_{\rho\rho}\mathbf{v}_{0}\nabla^{\Gamma}h_{M-\frac{1}{2}}\cdot\nabla^{\Gamma}h_{1}+\mu_{M-\frac{1}{2}}\partial_{\rho}c_{0}\mathbf{n}\nonumber \\
 & \quad-\mu_{0}\partial_{\rho}c_{0}\nabla^{\Gamma}h_{M-\frac{1}{2}}+\left(\rho+h_{1}\right)\mathbf{u}_{M-\frac{1}{2}}\eta''+h_{M-\frac{1}{2}}(\mathbf{u}_{1}\eta''-\mathbf{q}_{0}\eta')\label{eq:VM-0,5}
\end{align}
and 
\begin{align}
W^{M-\frac{1}{2}} & =\partial_{\rho}\mathbf{v}_{M-\frac{1}{2}}\nabla^{\Gamma}h_{1}+\partial_{\rho}\mathbf{v}_{1}\nabla^{\Gamma}h_{M-\frac{1}{2}}-\operatorname{div}\mathbf{v}_{M-\frac{1}{2}}-\mathbf{u}_{M-\frac{1}{2}}\cdot\mathbf{n}\eta'\left(\rho+h_{1}\right)\nonumber \\
 & \quad-\mathbf{u}_{1}\cdot\mathbf{n}\eta'h_{M-\frac{1}{2}}-(\mathbf{u}_{M-\frac{1}{2}}\cdot\nabla^{\Gamma}h_{1}+\mathbf{u}_{1}\cdot\nabla^{\Gamma}h_{M-\frac{1}{2}})d_{\Gamma}\eta'\nonumber \\
 & \quad+\mathbf{u}_{0}\cdot\big(\nabla^{\Gamma}h_{M-\frac{1}{2}}\rho+(\nabla^{\Gamma}h_{M-\frac{1}{2}}h_{1}+\nabla^{\Gamma}h_{1}h_{M-\frac{1}{2}})\big)\eta'.\label{eq:WM-0,5}
\end{align}
 Note the appearance of $\mathbf{w}_{1}^{\epsilon}|_{\Gamma}\cdot\mathbf{n}\partial_{\rho}c_{0}$
in (\ref{eq:BM-0,5}) which is due to the fact that we want to approximate
(\ref{eq:Cahnapp}). In the following corollary we use the notation
$$
\left[u_{k}\right]:=u_{k}^{+}-u_{k}^{-}
$$
for terms $u_{k}^{\pm}$
of the asymptotic expansion.
\begin{cor}
\label{compchM}Let $\epsilon>0$, the zeroth and first order terms
be given as in Lemmata \ref{zeroorder} and \ref{k-thorder} and assume
that $\big(\mathbf{v}_{M-\frac{1}{2}},p_{M-\frac{1}{2}},c_{M-\frac{1}{2}},\mu_{M-\frac{1}{2}}\big)$
satisfy the matching conditions \eqref{eq:matchcon} for $k=M-\frac{1}{2}$.
Then it holds 
\begin{enumerate}
\item $\int_{\mathbb{R}}A^{M-\frac{1}{2}}\theta_{0}'\d\rho=0$ for all
$(x,t)\in\Gamma$ if and only if
\begin{equation}
\tfrac{1}{2}\int_{\mathbb{R}}\mu_{M-\frac{1}{2}}\theta_{0}'\d\rho=\sigma\Delta^{\Gamma}h_{M-\frac{1}{2}}-g_{0}h_{M-\frac{1}{2}}\tfrac{1}{2}\int_{\mathbb{R}}\eta'\theta_{0}'\d\rho\quad\text{ on }\Gamma\label{eq:compcahnM}
\end{equation}
where $\sigma$ is given as in \eqref{eq:sigma}
\item $\int_{\mathbb{R}}B^{M-\frac{1}{2}}\d\rho=0$ for all $(x,t)\in\Gamma$
if and only if
\begin{align}
0 & =\int_{\mathbb{R}}\theta_{0}'\big(\mathbf{v}_{M-\frac{1}{2}}\cdot\mathbf{n}-\mathbf{v}_{0}\cdot\nabla^{\Gamma}h_{M-\frac{1}{2}}\big)\d\rho-\big[\mu_{M-\frac{1}{2}}\big]\Delta d_{\Gamma}-2\big[\nabla\mu_{M-\frac{1}{2}}\big]\cdot\mathbf{n}+l_{M-\frac{1}{2}}\nonumber \\
 & \quad-2\partial_{t}^{\Gamma}h_{M-\frac{1}{2}}+\left[\mu_{0}\right]\Delta^{\Gamma}h_{M-\frac{1}{2}}+2\left[\nabla\mu_{0}\right]\cdot\nabla^{\Gamma}h_{M-\frac{1}{2}}-j_{0}h_{M-\frac{1}{2}}+2\mathbf{w}_{1}^{\epsilon}|_{\Gamma}\cdot\mathbf{n}\label{eq:comphilliM}
\end{align}
on $\Gamma$.
\item $\int_{\mathbb{R}}\mathbf{V}^{M-\frac{1}{2}}\cdot\mathbf{n}\d\rho=0$
for all $(x,t)\in\Gamma$ if and only if
\begin{align}
0 & =-\big[p_{M-\frac{1}{2}}\big]+\big[\mathbf{v}_{M-\frac{1}{2}}\big]\cdot\mathbf{n}\Delta d_{\Gamma}+2\Big(\big[\nabla\mathbf{v}_{M-\frac{1}{2}}\big]^{T}\mathbf{n}-[\nabla\mathbf{v}_{0}]^{T}\nabla^{\Gamma}h_{M-\frac{1}{2}}\Big)\cdot\mathbf{n}\nonumber \\
 & \quad+\int_{\mathbb{R}}\mu_{M-\frac{1}{2}}\theta_{0}'\d\rho-\mathbf{q}_{0}\cdot\mathbf{n}h_{M-\frac{1}{2}}-\mathbf{u}_{M-\frac{1}{2}}\cdot\mathbf{n} \quad\text{ on }\Gamma.\label{eq:compstokesM}
\end{align}
\item $\int_{\mathbb{R}}\mathbf{V}^{M-\frac{1}{2}}\cdot\boldsymbol{\tau}\d\rho=0$
for all $(x,t)\in\Gamma$ if and only if
\begin{align}
0 & =\big[\mathbf{v}_{M-\frac{1}{2}}\big]\cdot\boldsymbol{\tau}\Delta d_{\Gamma}+2\Big(\big[\nabla\mathbf{v}_{M-\frac{1}{2}}\big]^{T}\mathbf{n}-\big[\nabla\mathbf{v}_{0}\big]^{T}\nabla^{\Gamma}h_{M-\frac{1}{2}}\Big)\cdot\boldsymbol{\tau} +\left[p_{0}\right]\nabla^{\Gamma}h_{M-\frac{1}{2}}\cdot\boldsymbol{\tau}\nonumber \\
 & +2\sigma\Delta d_{\Gamma}\nabla^{\Gamma}h_{M-\frac{1}{2}}\cdot\boldsymbol{\tau}-\mathbf{q}_{0}\cdot\boldsymbol{\tau} h_{M-\frac{1}{2}}-\mathbf{u}_{M-\frac{1}{2}}\cdot\boldsymbol{\tau}\quad\text{ on }\Gamma.\label{eq:compstokestM}
\end{align}
\end{enumerate}
\end{cor}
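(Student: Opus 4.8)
The plan is to observe that Corollary~\ref{compchM} contains no genuine analysis: each of the four equivalences is obtained by inserting the explicit expressions \eqref{eq:AM-0,5}--\eqref{eq:WM-0,5} into the relevant integral over $\rho\in\R$ and evaluating term by term. First I would collect the elementary identities used throughout: $\int_{\R}\theta_0'\,\d\rho=\theta_0(\infty)-\theta_0(-\infty)=2$ and $\int_{\R}(\theta_0')^2\,\d\rho=2\sigma$ by \eqref{eq:sigma}; $\int_{\R}\theta_0''\theta_0'\,\d\rho=0$ and $\int_{\R}\partial_{\rho\rho}\mu_0\,\d\rho=\int_{\R}\partial_{\rho\rho}\mathbf v_0\,\d\rho=0$, since $\theta_0'$, $\partial_\rho\mu_0$, $\partial_\rho\mathbf v_0$ vanish at $\pm\infty$ (by \eqref{eq:optimopti}, resp.\ \eqref{eq:matchcon}); $\int_{\R}\eta'\,\d\rho=1$, $\int_{\R}\eta''\,\d\rho=0$, $\int_{\R}\rho\,\eta''(\rho)\,\d\rho=-1$ (integration by parts), hence $\int_{\R}(\rho+h_1)\eta''\,\d\rho=-1$; the matching conditions \eqref{eq:matchcon}, which give $\int_{\R}\partial_\rho\varphi_{M-\frac12}\,\d\rho=[\varphi_{M-\frac12}]$ and $\int_{\R}\nabla\partial_\rho\varphi_{M-\frac12}\,\d\rho=[\nabla\varphi_{M-\frac12}]$ for $\varphi=\mu,\mathbf v$, and likewise $\int_{\R}\partial_\rho\mu_0\,\d\rho=[\mu_0]$; and finally the restriction to $\Gamma$, on which $\mathbf u_0=0$ (cf.\ \eqref{eq:u00}), $\partial_\rho\mathbf v_0=\mathbf u_0 d_\Gamma\eta'=0$ (Remark~\ref{rem:zeroprop}), $[\mathbf v_0]=[\mathbf v]=0$ by the jump condition on $\Gamma_t$, and $\mu_0=-\sigma\Delta d_\Gamma$ is independent of $\rho$ by \eqref{eq:mu0b} so that $\int_{\R}\mu_0\theta_0'\,\d\rho=-2\sigma\Delta d_\Gamma$; together with the pointwise orthogonalities $\nabla^\Gamma h\cdot\mathbf n=0$, $\boldsymbol\tau\cdot\mathbf n=0$, $|\mathbf n|=1$.

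For part~(1) I would recall that $\partial_{\rho\rho}-f''(\theta_0)$ annihilates $\theta_0'$ (differentiate \eqref{eq:optprofdef}), so that the solvability condition for the Cahn-type equation \eqref{eq:Cahnk} at the fictitious order $k=M+\frac12$ is $\int_{\R}A^{M-\frac12}\theta_0'\,\d\rho=0$. Substituting \eqref{eq:AM-0,5}, the term $-\mu_{M-\frac12}$ contributes $-\int_{\R}\mu_{M-\frac12}\theta_0'\,\d\rho$, the term $\partial_\rho c_0\,\Delta^\Gamma h_{M-\frac12}=\theta_0'\Delta^\Gamma h_{M-\frac12}$ contributes $2\sigma\,\Delta^\Gamma h_{M-\frac12}$, the term $-2\partial_{\rho\rho}c_0\,\nabla^\Gamma h_{M-\frac12}\cdot\nabla^\Gamma h_1$ contributes $0$, and $-g_0h_{M-\frac12}\eta'$ contributes $-g_0h_{M-\frac12}\int_{\R}\eta'\theta_0'\,\d\rho$. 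Setting the sum to zero and dividing by $2$ gives exactly \eqref{eq:compcahnM}.

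For parts~(2)--(4) I would note that, integrating the Hilliard-type equation \eqref{eq:Hilliardk} and the Stokes-type equation \eqref{eq:Stokesk} (tested once with $\mathbf n$ and once with $\boldsymbol\tau$) at order $k=M+\frac12$ over $\rho\in\R$ and using the matching conditions, solvability of the fictitious terms reduces to $\int_{\R}B^{M-\frac12}\,\d\rho=0$, $\int_{\R}\mathbf V^{M-\frac12}\cdot\mathbf n\,\d\rho=0$ and $\int_{\R}\mathbf V^{M-\frac12}\cdot\boldsymbol\tau\,\d\rho=0$. Substituting \eqref{eq:BM-0,5} and \eqref{eq:VM-0,5} and evaluating with the identities above: in $B^{M-\frac12}$ the $\partial_{\rho\rho}\mu_0$-term drops, $-l_{M-\frac12}\eta''(\rho+h_1)$ yields $+l_{M-\frac12}$, the $\partial_\rho c_0$-terms with $\rho$-independent coefficients pick up the factor $2$ (while $\int_{\R}\theta_0'(\mathbf v_{M-\frac12}\cdot\mathbf n-\mathbf v_0\cdot\nabla^\Gamma h_{M-\frac12})\,\d\rho$ is kept as a genuine integral, its integrand depending on $\rho$), $\partial_\rho\mu_{M-\frac12}$ and $\nabla\partial_\rho\mu_{M-\frac12}$ produce $[\mu_{M-\frac12}]$ and $[\nabla\mu_{M-\frac12}]$, $-h_{M-\frac12}(l_1\eta''+j_0\eta')$ yields $-j_0h_{M-\frac12}$, and $\mathbf w_1^\epsilon|_\Gamma\cdot\mathbf n\,\partial_\rho c_0$ yields $2\mathbf w_1^\epsilon|_\Gamma\cdot\mathbf n$; this is \eqref{eq:comphilliM}. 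For $\mathbf V^{M-\frac12}\cdot\mathbf n$ one additionally uses $\nabla^\Gamma h_{M-\frac12}\cdot\mathbf n=0$ and $[\mathbf v_0]=0$ on $\Gamma$ to kill the $\partial_\rho p_0$-, $\mu_0\partial_\rho c_0$- and $\partial_\rho\mathbf v_0\Delta^\Gamma h_{M-\frac12}$-contributions, and $|\mathbf n|^2=1$ for the $-\partial_\rho p_{M-\frac12}$ and $\mu_{M-\frac12}\theta_0'$ terms, obtaining \eqref{eq:compstokesM}; for $\mathbf V^{M-\frac12}\cdot\boldsymbol\tau$ one uses $\boldsymbol\tau\cdot\mathbf n=0$ to drop the $\mathbf n$-proportional terms, $[\mathbf v_0]=0$, and $\int_{\R}\mu_0\theta_0'\,\d\rho=-2\sigma\Delta d_\Gamma$ on $\Gamma$ (which turns $-\mu_0\partial_\rho c_0\,\nabla^\Gamma h_{M-\frac12}\cdot\boldsymbol\tau$ into $2\sigma\Delta d_\Gamma\,\nabla^\Gamma h_{M-\frac12}\cdot\boldsymbol\tau$), while the remaining $\nabla^\Gamma h_{M-\frac12}$-proportional terms (the $[\nabla\mathbf v_0]$- and $[p_0]$-contributions) survive, obtaining \eqref{eq:compstokestM}.

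I expect the only real obstacle to be careful bookkeeping rather than any analytic difficulty: one must (i) keep straight which factors depend on $\rho$ — $c_0=\theta_0(\rho)$, $\mathbf v_0(\rho,\cdot)$, $\mathbf v_{M-\frac12}(\rho,\cdot)$, $\mu_{M-\frac12}(\rho,\cdot)$ and the $\eta$-profiles do, whereas $\mathbf n$, $\boldsymbol\tau$, $h_{M-\frac12}$, $\nabla^\Gamma h_{M-\frac12}$, $\Delta^\Gamma h_{M-\frac12}$, $\partial_t^\Gamma h_{M-\frac12}$ and (on $\Gamma$) $\mu_0$ do not — so that products of the latter type factor out of the $\rho$-integral while the genuine integrals over $\rho$-dependent products are retained; (ii) track signs through the integration-by-parts identities for $\eta$ and $\theta_0$; and (iii) recall that all identities are read on $\Gamma$, where $\mathbf u_0$, $\partial_\rho\mathbf v_0$ and $[\mathbf v_0]$ vanish and $\mu_0$ collapses to $-\sigma\Delta d_\Gamma$. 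Beyond these, the argument is routine.
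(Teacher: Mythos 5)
Your proposal is correct and takes the same approach as the paper, which simply states ``This can be shown by direct calculations.'' You have carried out precisely those calculations: inserting \eqref{eq:AM-0,5}--\eqref{eq:WM-0,5} into the $\rho$-integrals, using the matching conditions to convert $\int_{\R}\partial_\rho\varphi\,\d\rho$ and $\int_{\R}\nabla\partial_\rho\varphi\,\d\rho$ into jumps, and exploiting the elementary identities for $\theta_0$, $\eta$ together with the facts that on $\Gamma$ one has $\mathbf u_0=0$, $\partial_\rho\mathbf v_0=\partial_{\rho\rho}\mathbf v_0=0$, $[\mathbf v_0]=0$, $\nabla^\Gamma h\cdot\mathbf n=0$, and $\mu_0=-\sigma\Delta d_\Gamma$ is $\rho$-independent.
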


\begin{proof}
This can be shown by direct calculations. 
\end{proof}

\subsubsection{Construction of Expansion Terms}

Considering the conditons (\ref{eq:compcahnM})\textendash (\ref{eq:compstokestM}),
it can be reasoned (see \cite[Subsection 5.3.3]{ichPhD}) that $\big(\mathbf{v}_{M-\frac{1}{2}}^{\pm},\mu_{M-\frac{1}{2}}^{\pm},p_{M-\frac{1}{2}}^{\pm},h_{M-\frac{1}{2}}\big)$
need to satisfy \begin{subequations}\label{eq:hmuv0,5sysgen}\foreignlanguage{ngerman}{\vspace{-5mm}
}
\begin{alignat}{2}
\Delta\mu_{M-\frac{1}{2}}^{\pm} & =0 & \qquad & \text{in }\Omega_{T_{0}}^{\pm},\label{eq:hmuv0,5sys}\\
-\Delta\mathbf{v}_{M-\frac{1}{2}}^{\pm}+\nabla p_{M-\frac{1}{2}}^{\pm} & =0 &  & \text{in }\Omega_{T_{0}}^{\pm},\label{eq:hmuv0,5sys-1}\\
\operatorname{div}\mathbf{v}_{M-\frac{1}{2}}^{\pm} & =0 &  & \text{in }\Omega_{T_{0}}^{\pm},\label{eq:hmuv0,5sys-2}\\
\mu_{M-\frac{1}{2}}^{-} & =0 &  & \text{on }\partial_{T_{0}}\Omega,\label{eq:mu0,5bdry}\\
\big(-2D_{s}\mathbf{v}_{M-\frac{1}{2}}^{-}+p_{M-\frac{1}{2}}^{-}\mathbf{I}\big)\mathbf{n}_{\partial\Omega} & =\alpha_{0}\mathbf{v}_{M-\frac{1}{2}}^{-} &  & \text{on }\partial_{T_{0}}\Omega,\label{eq:v0,5bdry}
\end{alignat}
coupled to
\begin{alignat}{2}
\mu_{M-\frac{1}{2}}^{\pm} & =\sigma\Delta^{\Gamma}h_{M-\frac{1}{2}}+(\mp\tfrac{1}{2}l_{0}-\tilde{\eta}g_{0})h_{M-\frac{1}{2}} \quad   \text{on }\Gamma,\label{eq:hmuv0,5sys-4}\\
\big[2D_{s}\mathbf{v}_{M-\frac{1}{2}}-p_{M-\frac{1}{2}}\big]\mathbf{n} & =\nabla\mathbf{u}_{0}\mathbf{n}h_{M-\frac{1}{2}}-[p_{0}]\nabla^{\Gamma}h_{M-\frac{1}{2}}+\mathbf{q}_{0}h_{M-\frac{1}{2}}+2[\nabla\mathbf{v}_{0}]\nabla^{\Gamma}h_{M-\frac{1}{2}}\nonumber \\
 & \ \ -2\sigma\Delta d_{\Gamma}\nabla^{\Gamma}h_{M-\frac{1}{2}}-2\big(\sigma\Delta^{\Gamma}h_{M-\frac{1}{2}}-g_{0}h_{M-\frac{1}{2}}\tilde{\eta}\big)\mathbf{n}\quad \text{on }\Gamma,\label{eq:hmuv0,5sys-5}\\
\big[\mathbf{v}_{M-\frac{1}{2}}\big] & =0 \quad \text{on }\Gamma,\label{eq:hmuv0,5sys-6}\\
\partial_{t}^{\Gamma}h_{M-\frac{1}{2}} & =\tfrac{1}{2}\left(l_{0}\Delta d_{\Gamma}-j_{0}+\partial_{\mathbf{n}}l_{0}\right)h_{M-\frac{1}{2}}+\mathbf{w}_{1}^{\epsilon}\cdot\mathbf{n}+\tfrac{1}{2}\big(\mathbf{v}_{M-\frac{1}{2}}^{+}+\mathbf{v}_{M-\frac{1}{2}}^{-}\big)\cdot\mathbf{n}\nonumber \\
 & \ \ -\mathbf{v}_{0}\cdot \nabla^{\Gamma}h_{M-\frac{1}{2}}-\tfrac{1}{2}\big(\partial_{\mathbf{n}}\mu_{M-\frac{1}{2}}^{+}-\partial_{\mathbf{n}}\mu_{M-\frac{1}{2}}^{-}\big)\quad \text{on }\Gamma,\label{eq:hmuv0,5sys-7}\\
h_{M-\frac{1}{2}}\big|_{t=0} & =0 \quad \text{on }\Gamma_{0},\label{eq:hmuv0,5sys-8}
\end{alignat}
\end{subequations} at the interface, where $\tilde{\eta}= \frac12\int_\R \eta'(\rho)\theta_0'(\rho)\, d\rho$. Before we may show existence
of solutions together with suitable estimates, we need the following
lemmata.
\begin{lem}
\label{lem:spekholds}Let $\epsilon_{0}>0$, $T\in (0,T_{0}]$
and a family $(T_{\epsilon})_{\epsilon\in (0,\epsilon_{0})}\subset(0,T]$
be given. We assume that there is some $\bar{C}>0$ such that
\begin{equation}
\sup_{\epsilon\in (0,\epsilon_{0})}\big\Vert h_{M-\frac{1}{2}}^{\epsilon}\big\Vert _{X_{T_{\epsilon}}}\leq\bar{C}\label{eq:vorhs}
\end{equation}
holds. Then there is $\epsilon_{1}\in(0,\epsilon_{0}]$
such that $c_{A}^{\epsilon}(.,t)$ satisfies Assumption
\ref{assu:Spektral} for all $t\in [0,T_{\epsilon}]$ and
$\epsilon\in (0,\epsilon_{1})$, where the appearing constant
$C^{*}$ does not depend on $\epsilon$, $T_{\epsilon}$, $h_{M-\frac{1}{2}}^{\epsilon}$
or $\bar{C}$.
\end{lem}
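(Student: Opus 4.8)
The plan is to verify, one by one, the structural and quantitative requirements of Assumption~\ref{assu:Spektral} directly from the explicit construction. First recall that $c_{M-\frac12}^\pm=0$ by \eqref{eq:co0,5} and $c_{M-\frac12}=0$ by \eqref{eq:ci0,5} together with its normalisation, so the approximate solution $c_A^\epsilon$ is exactly $c_A^{\epsilon,\overline H}$ from Definition~\ref{firstapprx} for the family $\overline H=(h_{M-\frac12}^\epsilon)_{\epsilon\in(0,\epsilon_0)}$; the only footprint of the $(M-\tfrac12)$-th order construction is that the stretched variable is $\rho=\rho^{\overline H}$ and the phase is $h^\epsilon=h_A^{\epsilon,\overline H}$ from \eqref{eq:ha}, \eqref{eq:roha}. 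Splitting the glued expression \eqref{eq:caquer} and using $\operatorname{dist}(\Gamma_t,\partial\Omega)>5\delta$ (so $\xi(2d_{\mathbf B})$ is supported away from $\Gamma(2\delta)$, where $1-\xi(d_\Gamma)=1$, and inside $\Omega^-$) one reads off the decomposition \eqref{eq:fie} with $c_A^{\epsilon,+}:=c_O|_{\Omega^+}$ and $c_A^{\epsilon,-}:=(1-\xi(2d_{\mathbf B}))c_O+\xi(2d_{\mathbf B})c_{\mathbf B}$ on $\Omega^-$; since their leading terms are $+1$, resp.\ $-1$, and the remaining terms are $O(\epsilon)$ uniformly on $\overline{\Omega_{T_0}^\pm}$, the sign condition \eqref{eq:caepm} holds once $\epsilon$ is small.

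Next I would isolate the $\theta_1$-term. From \eqref{eq:Ak-1} with $k=1$, using $h_0=0$, $c_0=\theta_0(\rho)$ and $\mu_0=-\sigma\Delta d_\Gamma$ on $\Gamma$ (see \eqref{eq:mu0b}), the inner right-hand side satisfies $A^0|_\Gamma=\Delta d_\Gamma(x,t)\,(\sigma-\theta_0'(\rho))$; hence, by uniqueness of the $\rho$-ODE with the normalisation $c_1(0,\cdot)=0$ and the matching conditions, $c_1(\rho,x,t)=\Delta d_\Gamma(x,t)\,\theta_1(\rho)$ for $(x,t)\in\Gamma$, where $\theta_1$ is the bounded solution of $\mathcal L_0\theta_1=\sigma-\theta_0'$, $\theta_1(0)=0$, with $\mathcal L_0:=\partial_{\rho\rho}-f''(\theta_0)$. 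Set $p^\epsilon(\operatorname{Pr}_{\Gamma_t}(x),t):=\Delta d_\Gamma(\operatorname{Pr}_{\Gamma_t}(x),t)$; then the remainder $r_1:=c_1(\rho,x,t)-p^\epsilon(\operatorname{Pr}_{\Gamma_t}(x),t)\theta_1(\rho)$ vanishes on $\{d_\Gamma=0\}$, so by Hadamard's lemma and the matching conditions for $c_1$ it factors as $r_1=d_\Gamma\,\hat r_1$ with $\hat r_1$ bounded on $\mathbb R\times\Gamma(2\delta)$. Since $d_\Gamma=\epsilon(\rho+h^\epsilon)$, the contribution $\epsilon r_1$ is of the form $\epsilon^2(\rho+h^\epsilon)\hat r_1$, so it goes into $\epsilon^2 q^\epsilon$ in \eqref{eq:fie} with a summand of size $O(|\rho|+|h^\epsilon|)$, which together with the bounded terms $\sum_{i=2}^{M+1}\epsilon^{i-2}c_i(\rho,x,t)$ yields precisely the weighted bound \eqref{eq:pqab}. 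The orthogonality \eqref{eq:teta0} then follows from the identity $\mathcal L_0\theta_0''=(\theta_0')^2f^{(3)}(\theta_0)$ (a consequence of $\theta_0''=f'(\theta_0)$): integrating by parts twice, with boundary terms vanishing by the exponential decay \eqref{eq:optimopti}, gives $\int_{\mathbb R}\theta_1(\theta_0')^2f^{(3)}(\theta_0)\,\d\rho=\int_{\mathbb R}(\mathcal L_0\theta_1)\theta_0''\,\d\rho=\int_{\mathbb R}(\sigma-\theta_0')\theta_0''\,\d\rho=0$.

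It remains to obtain the uniform bounds with constants independent of $\bar C$ and $T_\epsilon$. For \eqref{eq:hbes} write $h^\epsilon=\sum_{i=0}^{M}\epsilon^i h_{i+1}+\epsilon^{M-\frac32}h_{M-\frac12}^\epsilon$: the $h_{i+1}$ are fixed smooth functions on $\mathbb T^1\times[0,T_0]$, and by Proposition~\ref{embedding}.1 (whose embedding constant is $T_\epsilon$-independent), $H^2(\mathbb T^1)\hookrightarrow C^1(\mathbb T^1)$ and \eqref{eq:vorhs} one gets $\|h_{M-\frac12}^\epsilon\|_{C^0([0,T_\epsilon];C^1(\mathbb T^1))}\le C\bar C$; since $M\ge4$ we have $\epsilon^{M-\frac32}C\bar C\le1$ for $\epsilon\le\epsilon_1:=\min\{(C\bar C)^{-1/(M-\frac32)},\epsilon_0\}$, so \eqref{eq:hbes}, and likewise the boundedness parts of \eqref{eq:pqab} and of \eqref{eq:obfi} (for the surface gradient on $\Gamma(\delta)$ one uses $\nabla^\Gamma d_\Gamma=0$, so $\nabla^\Gamma\rho^{\overline H}=-\nabla^\Gamma h^\epsilon$, together with boundedness of $\partial_\rho c_i$ and $\nabla^\Gamma c_i$), hold with a $C^*$ built only from the fixed integer-order terms. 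For \eqref{eq:fnachunten} one notes that on $\Omega_T\setminus\Gamma(\delta;T)$ one has $|\rho^{\overline H}|\ge\delta/\epsilon-C^*\to\infty$, so by \eqref{eq:optimopti} and the matching conditions $c_A^\epsilon=\pm1+O(\epsilon)+(\text{a term exponentially small in }\epsilon^{-1})$ there, whence $f''(c_A^\epsilon)\ge\tfrac12\min\{f''(1),f''(-1)\}$ by \eqref{eq:f} for $\epsilon$ small; shrinking $\epsilon_1$ finitely many times completes the verification.

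I expect the main obstacle to be the bookkeeping of the second paragraph: casting $c_A^\epsilon$ into the precise normal form \eqref{eq:fie}, that is, extracting $\theta_1$ and showing the $O(\epsilon)$ off-interface discrepancy of $\epsilon c_1$ can be absorbed into $\epsilon^2 q^\epsilon$ while still respecting the \emph{weighted} estimate \eqref{eq:pqab} — this is exactly where the factorisation $r_1=d_\Gamma\hat r_1$ and the identity $d_\Gamma=\epsilon(\rho+h^\epsilon)$ are essential — and, throughout, keeping track that every constant $C^*$ is genuinely independent of $\bar C$ and $T_\epsilon$, which rests on the $T$-uniform embedding in Proposition~\ref{embedding}.1 and on letting only $\epsilon_1$ depend on $\bar C$. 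Everything else reduces to properties of the expansion terms already supplied by Lemmata~\ref{zeroorder} and \ref{k-thorder} together with Remark~\ref{Outer-Rem}.
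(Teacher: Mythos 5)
Your proof is correct and follows essentially the same route as the paper's: verify \eqref{eq:caepm} from the outer/boundary structure, isolate $\theta_1$ by evaluating the $k=1$ inner ODE on $\Gamma$ using \eqref{eq:mu0b}, establish \eqref{eq:teta0} via the identity $\mathcal L_0\theta_0''=f^{(3)}(\theta_0)(\theta_0')^2$ and integration by parts, control $q^\epsilon$ through the vanishing of $c_1-p^\epsilon\theta_1$ on $\Gamma$, and obtain \eqref{eq:hbes}, \eqref{eq:obfi}, \eqref{eq:fnachunten} from \eqref{eq:vorhs} together with the $T$-uniform embedding $X_T\hookrightarrow C^0([0,T];H^2(\mathbb T^1))$ once $\epsilon_1$ is chosen small depending on $\bar C$. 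The only cosmetic difference is that where the paper estimates $\tilde q^\epsilon(x,t)=\epsilon^{-1}\bigl(c_1(\rho,x,t)-c_1(\rho,\operatorname{Pr}_{\Gamma_t}(x),t)\bigr)$ directly by the mean value theorem, you package the same computation as a Hadamard factorisation $r_1=d_\Gamma\hat r_1$ followed by $d_\Gamma=\epsilon(\rho+h^\epsilon)$; both yield the same weighted bound \eqref{eq:pqab}.
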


\begin{proof}
First of all, we note that there exists $\epsilon_{1}\in (0,\epsilon_{0}]$,
which depends on $\bar{C}$, such that
\begin{equation}
\Big|\frac{d_{\Gamma}(x,t)}{\epsilon}-h_{A}^{\epsilon}(S(x,t),t)\Big|\geq\frac{\delta}{2\epsilon}\label{eq:epsab1}
\end{equation}
for all $(x,t)\in\Gamma(2\delta;T_{\epsilon})\backslash\Gamma(\delta;T_{\epsilon})$
and $\epsilon\in (0,\epsilon_{1})$. This is due to the
fact that $X_{T}\hookrightarrow C^{0}([0,T];C^{1}(\mathbb{T}^1))$
and that (\ref{eq:vorhs}) holds. After possibly choosing $\epsilon_{1}>0$
smaller, we may ensure that
\begin{equation}
\left|\theta_{0}(\rho(x,t))-\chi_{\Omega^{+}}(x,t)+\chi_{\Omega^{-}}(x,t)\right|+\left|\theta_{0}'(\rho(x,t))\right|\leq C_{1}e^{-C_{2}\frac{\delta}{2\epsilon}}\label{eq:expab}
\end{equation}
holds for all $(x,t)\in\Gamma(2\delta;T_{\epsilon})\backslash\Gamma (\delta;T_{\epsilon})$
and $\epsilon\in (0,\epsilon_{1})$, as a consequence of
(\ref{eq:optimopti}), where $C_{1},C_{2}>0$ can be chosen independently
of $\epsilon_{1}$. As a last condition on $\epsilon_{1}$ we impose
that $\epsilon_{1}^{M-\frac{3}{2}}\leq\frac{1}{\bar{C}}$ such that
\begin{equation}
\epsilon^{M-\frac{3}{2}}\big\Vert h_{M-\frac{1}{2}}^{\epsilon}\big\Vert _{X_{T_{\epsilon}}}\leq1\quad \text{for all }\epsilon\in(0,\epsilon_{1}).\label{eq:epsab2}
\end{equation}
  In particular,
this implies 
\begin{equation}
\left\Vert h_{A}^{\epsilon}\right\Vert _{C^{0}\left(\left[0,T_{\epsilon}\right];C^{1}(\mathbb{T}^1)\right)}\leq C^{*}\label{eq:hHbes}
\end{equation}
 for all $\epsilon\in (0,\epsilon_{1})$, where $C^{*}$
is independent of $\bar{C}$, $h_{M-\frac{1}{2}}^{\epsilon}$ and
$T_{\epsilon}$ since the operator norm of the embedding $X_{T_{\epsilon}}\hookrightarrow C^{0}([0,T_{\epsilon}];H^{2}(\mathbb{T}^1))$
is independent of $T_{\epsilon}$, cf.\ Proposition \ref{embedding}
and since $h_{i}\in C^{0}([0,T_{0}];C^{1}(\mathbb{T}^1))$.
Thus, assumption (\ref{eq:hbes}) follows. (\ref{eq:caepm}) follows
directly from the definition of $c_{A}^{\epsilon}$ and (\ref{eq:obfi})
is a consequence of (\ref{eq:hHbes}). Similarly, (\ref{eq:fnachunten})
follows when taking (\ref{eq:expab}) into account.

Next, we show 
\[
c_{I}=\theta_{0}(\rho)+\epsilon p^{\epsilon}\big(\operatorname{Pr}_{\Gamma_{t}}\big)\theta_{1}(\rho)+\epsilon^{2}q^{\epsilon},
\]
where $\theta_{1}$ satisfies (\ref{eq:teta0}) and $p^{\epsilon}$,
$q^{\epsilon}$ satisfy (\ref{eq:pqab}). As $c_{0}=\theta_{0}$ by
Lemma \ref{zeroorder} and $c_{2},\ldots,c_{M+1}\in L^{\infty}(\mathbb{R}\times\Gamma(2\delta))$,
the only thing we need to show is that $c_{1}$ can be decomposed
suitably. By (\ref{eq:Cahnk}) and (\ref{eq:Ak-1}) $c_{1}$ satisfies
\[
\partial_{\rho\rho}c_{1}-f''(\theta_{0})c_{1}=-\mu_{0}-\theta_{0}'\Delta d_{\Gamma}+g_{0}\eta'd_{\Gamma}\qquad \text{ for all }(\rho,x,t)\in\mathbb{R}\times\Gamma(2\delta).
\]
Thus we find by (\ref{eq:mu0b}) that $\partial_{\rho\rho}c_{1}-f''(\theta_{0})c_{1}=\Delta d_{\Gamma}(\sigma-\theta_{0}')$
for all $(\rho,x,t)\in\mathbb{R}\times\Gamma$.
Hence, $c_{1}(\rho,x,t)=\Delta d_{\Gamma}(x,t)\theta_{1}(\rho)$
for all $(\rho,x,t)\in\mathbb{R}\times\Gamma$, where $\theta_{1}$
is the unique solution to 
\begin{align*}
\theta_{1}''-f''\left(\theta_{0}\right)\theta_{1} & =\sigma-\theta_{0}^{'}\qquad \text{in }\R,\quad
\theta_{1}\left(0\right)  =0
\end{align*}
with $\theta_{1}\in L^{\infty}(\mathbb{R})$.
$\theta_{1}$ exists since $\int_{\mathbb{R}}(\sigma-\theta_{0}')\theta_{0}'\d\rho=0$
by the definition of $\sigma$ (cf.\ \cite[Lemma 4.1]{abc}) . Moreover,
we have 
\begin{align*}
0 & =\int_{\mathbb{R}}\theta_{0}''\big(\sigma-\theta_{0}'\big)\d\rho=\int_{\mathbb{R}}\theta_{0}''\big(\theta_{1}''-f''(\theta_{0})\theta_{1}\big)\d\rho =\int_{\mathbb{R}}f^{\left(3\right)}(\theta_{0})(\theta_{0}')^{2}\theta_{1}\d\rho,
\end{align*}
as a consequence of (\ref{eq:optprofdef}). Thus $\theta_{1}$ satisfies
(\ref{eq:teta0}). Setting $p^{\epsilon}=\Delta d_{\Gamma}$ in $\Gamma(2\delta)$
and 
\[
\tilde{q}^{\epsilon}(x,t):=\tfrac{1}{\epsilon}\left(c_{1}(\rho(x,t),x,t)-p^{\epsilon}\big(\operatorname{Pr}_{\Gamma_{t}}(x),t\big)\theta_{1}(\rho(x,t))\right),
\]
we can write $c_{1}(\rho(x,t),x,t)=p^{\epsilon}\big(\operatorname{Pr}_{\Gamma_{t}}(x),t\big)\theta_{1}(\rho(x,t))+\epsilon\tilde{q}^{\epsilon}(x,t)$.
Now we estimate 
\begin{align*}
\epsilon\left|\tilde{q}^{\epsilon}(x,t)\right| & =\left|c_{1}(\rho(x,t),x,t)-c_{1}\big(\rho(x,t),\operatorname{Pr}_{\Gamma_{t}}(x),t\big)\right|\\
 & =\left|\nabla_{x}c_{1}\big(\rho(x,t),\xi(x),t\big)\cdot\big(x-\operatorname{Pr}_{\Gamma_{t}}(x)\big)\right| \leq\epsilon\left(C |\rho(x,t)|+C^{*}\right),
\end{align*}
where we used a Taylor expansion in the second line and the definition
of $\rho$ as well as (\ref{eq:hHbes}) in the last line. Here $C>0$
only depends on $c_{1}$, as $\left|\nabla_{x}c_{1}\right|\in L^{\infty}\left(\mathbb{R}\times\Gamma(2\delta)\right)$.
This shows assumption (\ref{eq:pqab}).
\end{proof}
\begin{lem}
\label{Wichtig} Let $\epsilon_{0}>0$, $T'\in\left(0,T_{0}\right]$
and a family $\left(T_{\epsilon}\right)_{\epsilon\in\left(0,\epsilon_{0}\right)}\subset\left(0,T'\right]$
be given. Let Assumption \ref{assu:Main-est} hold for $c_{A}=c_{A}^{\epsilon}$
and we assume that there is some $\bar{C}\geq1$ such that
\begin{equation}
\sup_{\epsilon\in\left(0,\epsilon_{0}\right)}\big\Vert h_{M-\frac{1}{2}}^{\epsilon}\big\Vert _{X_{T_{\epsilon}}}\leq\bar{C}\label{eq:bes}
\end{equation}
holds. Then there exists a constant $C(K)>0$, which is
independent of $\epsilon$, $T_{\epsilon}$, $h_{M-\frac{1}{2}}^{\epsilon}$
and $\bar{C}$, and some $\epsilon_{1}\in\left(0,\epsilon_{0}\right)$
such that 
\begin{align}
\left\Vert \tilde{\mathbf{w}}_{1}^{\epsilon}\right\Vert _{L^{2}\left(0,T;H^{1}(\Omega)\right)} & \leq C(K)\epsilon^{M-\frac{1}{2}}\quad \text{for all }\epsilon\in(0,\epsilon_{1}), T\in (0,T_{\epsilon}].\label{eq:w1epsab}
\end{align}
\end{lem}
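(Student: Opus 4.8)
The plan is to carry out the argument from the proof of Proposition~\ref{w1ab}.1, but with the $\epsilon$-dependent control of $\nabla c^{\epsilon}$ used there replaced by the quantitative bounds of Assumption~\ref{assu:Main-est} for $R=c^{\epsilon}-c_{A}^{\epsilon}$. By \cite[Theorem~2.1]{NSCH1}, and since the operator norm of the relevant solution operator is independent of $T$,
\[
\Vert\tilde{\mathbf{w}}_1^{\epsilon}\Vert_{L^{2}(0,T;H^{1}(\Omega))}\le C\,\Vert\mathbf{f}^{\epsilon}\Vert_{L^{2}(0,T;(V_{0})')},\qquad \mathbf{f}^{\epsilon}(\psi)=\int_{\Omega}\epsilon\,(\nabla c_{A}^{\epsilon}-\mathbf{h})\otimes_s\nabla R:\nabla\psi\,\d x ,
\]
for every $T\le T_{\epsilon}$. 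As $\Vert\cdot\Vert_{L^{2}(0,T;H^{1})}$ is monotone in $T$ and the bounds \eqref{eq:Main-est-a}--\eqref{eq:Main-est-d} are stated over $\Omega_{T_{\epsilon}}$, it suffices to estimate $\mathbf{f}^{\epsilon}(t)(\psi)$, for a.e.\ $t\in(0,T_{\epsilon})$ and $\psi\in V_{0}$, by $C(K)\epsilon^{M-\frac12}\Vert\psi\Vert_{H^{1}(\Omega)}$. Hypothesis \eqref{eq:bes} enters only through the choice of $\epsilon_{1}\le\epsilon_{0}$ small enough that \eqref{eq:linf1} holds with a constant independent of the fixed family $\overline{H}=(h_{M-\frac12}^{\epsilon})_{\epsilon}$, exactly as in the proof of Lemma~\ref{lem:spekholds}.

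Inserting the representation \eqref{eq:ca-h} one writes, on $\Gamma(2\delta;T_{\epsilon})$ (and with $\rho=\rho^{\overline{H}}$),
\[
\epsilon\,(\nabla c_{A}^{\epsilon}-\mathbf{h})=\xi(d_{\Gamma})\,\partial_{\rho}\tilde{c}_{I}(\rho,x,t)\,\mathbf{n}+\epsilon\,\mathbf{g}^{\epsilon},\qquad \Vert\mathbf{g}^{\epsilon}\Vert_{L^{\infty}(\Omega_{T_{\epsilon}})}\le C ,
\]
where $(\partial_{\rho}\tilde{c}_{I})_{\epsilon}\in\mathcal{R}_{\alpha}$ by the matching conditions and the $L^{\infty}$-bound is uniform in $\epsilon$ and in $\overline{H}$ by \eqref{eq:linf1}; off $\Gamma(2\delta;T_{\epsilon})$ only the $\epsilon\mathbf{g}^{\epsilon}$-part survives. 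Decomposing $\nabla R=\partial_{\mathbf{n}}R\,\mathbf{n}+\nabla^{\Gamma}R$ splits $\mathbf{f}^{\epsilon}$ into four contributions, three of which come out at the target order: the part $\epsilon\mathbf{g}^{\epsilon}\otimes_s\nabla R$ is $\le C\epsilon\Vert\nabla R\Vert_{L^{2}(\Omega)}\le C\epsilon K(\epsilon^{M-\frac12}+\epsilon^{M-\frac32})$ after separating $\Gamma_{t}(\delta)$ from its complement and invoking \eqref{eq:Main-est-a} and \eqref{eq:Main-est-b}, and the part $\xi\partial_{\rho}\tilde{c}_{I}\,\mathbf{n}\otimes_s\nabla^{\Gamma}R$ is $\le C\Vert\nabla^{\Gamma}R\Vert_{L^{2}(\Gamma_{t}(\delta))}\le CK\epsilon^{M-\frac12}$ by \eqref{eq:Main-est-a}. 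The remaining, critical contribution is
\[
I(\psi)=\int_{\Gamma_{t}(2\delta)}\xi(d_{\Gamma})\,\partial_{\rho}\tilde{c}_{I}(\rho,x,t)\,\partial_{\mathbf{n}}R\;(2\,\mathbf{n}\otimes\mathbf{n}):\nabla\psi\,\d x ,
\]
the integration being extended to $\Gamma_{t}(2\delta)$ since $\xi(d_{\Gamma})$ provides the localization.

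The main obstacle is to gain a full power of $\epsilon$ in $I(\psi)$: the naive bound $\vert I(\psi)\vert\le C\Vert\partial_{\mathbf{n}}R\Vert_{L^{2}(\Gamma_{t}(\delta))}\Vert\psi\Vert_{H^{1}}$ only yields $C\epsilon^{M-\frac32}\Vert\psi\Vert_{H^{1}}$, because by \eqref{eq:Main-est-b} the normal derivative $\partial_{\mathbf{n}}R$ near the interface is controlled only at order $\epsilon^{M-\frac32}$, one order worse than the tangential one. The key is that $\psi$ is divergence free: writing $2\mathbf{n}\otimes\mathbf{n}=\mathbf{I}+(\mathbf{n}\otimes\mathbf{n}-\boldsymbol{\tau}\otimes\boldsymbol{\tau})$ in $2$D, the $\mathbf{I}$-part contributes $\int\xi\partial_{\rho}\tilde{c}_{I}\,\partial_{\mathbf{n}}R\,\operatorname{div}\psi=0$, while $(2\mathbf{n}\otimes\mathbf{n}):\nabla\psi=-2\operatorname{div}^{\Gamma}\psi$ modulo $\operatorname{div}\psi=0$; thus the problematic pure-normal direction becomes a tangential first-order operator acting on $\psi$. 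Next one removes the normal derivative on $R$ by writing $\partial_{\mathbf{n}}R=\operatorname{div}(R\mathbf{n})-R\operatorname{div}\mathbf{n}$ with $\mathbf{n}=\nabla d_{\Gamma}$ and integrating the $\operatorname{div}(R\mathbf{n})$-term by parts (no boundary terms, since $\xi(d_{\Gamma})$ vanishes near $\partial\Gamma_{t}(2\delta)$): the term $-R\operatorname{div}\mathbf{n}$ is bounded by $C\Vert R\Vert_{L^{2}(\Gamma_{t}(2\delta))}\Vert\psi\Vert_{H^{1}}\le CK\epsilon^{M-\frac12}\Vert\psi\Vert_{H^{1}}$ via \eqref{eq:Main-est-a}, while in the leftover integral a derivative falls on $\xi\,\partial_{\rho}\tilde{c}_{I}(\rho)$ (whose normal, resp.\ tangential, derivative is $O(\epsilon^{-1})$, resp.\ $O(1)$, times an $\mathcal{R}_{\alpha}$-profile) and/or on $\operatorname{div}^{\Gamma}\psi$, and is estimated by combining the exponential localization of these profiles (Lemma~\ref{Linfeig}, which gains a factor $\epsilon^{1/2}$ or $\epsilon$), the embedding $H^{1}(\Gamma_{t}(2\delta))\hookrightarrow L^{4,\infty}(\Gamma_{t}(2\delta))$ applied to $\psi$ (Lemma~\ref{L4inf}), and $\Vert R\Vert_{L^{2}(\Gamma_{t}(2\delta))}\le K\epsilon^{M-\frac12}$; further tangential integrations by parts keep derivatives off $\psi$ by exploiting $\operatorname{div}^{\Gamma}(g\,\mathbf{n})=g\operatorname{div}^{\Gamma}\mathbf{n}$ and $\nabla^{\Gamma}(\cdot)\cdot\mathbf{n}=0$. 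This bookkeeping — extracting the missing power of $\epsilon$ from the concentration of the interfacial profile while never producing a second derivative of $\psi$ or a second normal derivative of $R$ — is the technically delicate point.

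Collecting the four contributions, integrating in $t$ over $(0,T)$ for $T\le T_{\epsilon}$, and substituting into the Stokes bound of the first step yields $\Vert\tilde{\mathbf{w}}_1^{\epsilon}\Vert_{L^{2}(0,T;H^{1}(\Omega))}\le C(K)\epsilon^{M-\frac12}$ for all $\epsilon\in(0,\epsilon_{1})$, with $C(K)$ independent of $\epsilon$, $T_{\epsilon}$, $h_{M-\frac12}^{\epsilon}$ and $\overline{C}$.
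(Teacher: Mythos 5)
The framework you set up is sound: the Stokes estimate $\Vert\twe\Vert_{L^2(0,T;H^1)}\lesssim\Vert\mathbf{f}^\epsilon\Vert_{L^2(0,T;(V_0)')}$, the split of $\epsilon(\nabla c_A^\epsilon-\mathbf{h})$ into the localized profile $\xi\partial_\rho\tilde c_I\,\mathbf{n}$ plus a uniformly bounded $\epsilon\mathbf{g}^\epsilon$, the decomposition $\nabla R=\partial_\mathbf{n}R\,\mathbf{n}+\nabla^\Gamma R$, and the identification of the critical contribution $I(\psi)$ are all correct, as is the algebra showing that $\operatorname{div}\psi=0$ turns $(2\mathbf{n}\otimes\mathbf{n}):\nabla\psi$ into $-2\operatorname{div}^\Gamma\psi$.

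The problem is the step you yourself flag as ``the technically delicate point'': it does not close with the tools you list. After the integration by parts that removes $\partial_\mathbf{n}$ from $R$, the term $2\int R(\partial_\mathbf{n}a)\operatorname{div}^\Gamma\psi$ with $a=\xi\partial_\rho\tilde c_I$ produces $\tfrac{2}{\epsilon}\int\xi R\,\partial_{\rho\rho}\tilde c_I\operatorname{div}^\Gamma\psi$. The gain you propose to extract from the exponential concentration of $\partial_{\rho\rho}\tilde c_I$ via Lemma~\ref{Linfeig} is exactly $\epsilon^{1/2}$, and it can only be cashed in by putting the \emph{test function} $\psi$ (not $\operatorname{div}^\Gamma\psi$, which is merely $L^2$) into $L^{2,\infty}$ via Lemma~\ref{L4inf}; after a further tangential integration by parts to pass the $\partial_s$ from $\operatorname{div}^\Gamma\psi$ onto $R$ or onto the profile, one lands on bounds of the form $\tfrac{1}{\epsilon}\,\epsilon^{1/2}\,\Vert\nabla^\Gamma R\Vert_{L^2}\,\Vert\psi\Vert_{H^1}\lesssim K\epsilon^{M-1}\Vert\psi\Vert_{H^1}$, i.e.\ $\epsilon^{M-1}$ rather than the required $\epsilon^{M-1/2}$. (Trying instead to keep $\operatorname{div}^\Gamma\psi$ and put $R\in L^{2,\infty}$ is worse, since $\Vert R\Vert_{L^{2,\infty}(\Gamma_t(\delta))}\sim K\epsilon^{M-1}$, not $K\epsilon^{M-1/2}$.) The second normal integration by parts you would then need to reuse the trick returns the original integral $I(\psi)$ up to sign and the same bounded error terms, so the identity is circular and yields no extra power. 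In short, integration by parts and exponential localization alone account for $\epsilon^{1/2}$ of the missing factor $\epsilon$, not all of it.

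The paper's own proof is deferred to \cite[Lemma~4.4]{NSCH1}, but the surrounding material makes clear what the missing ingredient is: the refined structural decomposition $R=\epsilon^{-1/2}Z(s,t)\big(\beta\theta_0'(\rho)+F_1^{\mathbf{R}}\big)+F_2^{\mathbf{R}}$ from \cite[Proposition~3.3]{NSCH1}, quoted here in the proof of Proposition~\ref{RZerlprop} and used in the same way in Lemma~\ref{beste}. With it, the leading part of $\partial_\mathbf{n}R$ is $\epsilon^{-3/2}Z\beta\,\theta_0''(\rho)$, and pairing against $\partial_\rho\tilde c_I\approx\theta_0'(\rho)$ produces the factor $\theta_0'\theta_0''=\tfrac12(\theta_0'^2)'$, whose $\rho$-integral vanishes; this orthogonality, combined with a Taylor expansion of the remaining factor in $\rho$, is what supplies the additional $\epsilon^{1/2}$ that your argument is missing. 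Your proposal neither invokes this decomposition nor identifies any substitute cancellation, so the critical step remains a genuine gap.
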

\begin{proof}
  See \cite[Lemma~4.4]{NSCH1}.
\end{proof}

Now we show an existence result for the fractional order terms.
\begin{thm}
\label{hM-1} Let $\epsilon_{0}\in\left(0,1\right)$.
\begin{enumerate}
\item There exist unique solutions $h_{M-\frac{1}{2}}^{\epsilon}\in X_{T_{0}}$,
$\mu_{M-\frac{1}{2}}^{\pm,\epsilon}\in L^{2}(0,T_{0};H^{2}(\Omega^{\pm}(t)))$
and
\[
\big(\mathbf{v}_{M-\frac{1}{2}}^{\pm,\epsilon},p_{M-\frac{1}{2}}^{\pm,\epsilon}\big)\in L^{2}(0,T_{0};H^{2}(\Omega^{\pm}(t)))\times L^{2}(0,T_{0};H^{1}(\Omega^{\pm}(t)))
\]
 of \eqref{eq:hmuv0,5sys}\textendash \eqref{eq:hmuv0,5sys-8} for
all $\epsilon\in(0,\epsilon_{0})$, where 
$
\epsilon^{M-\frac{1}{2}}\mathbf{w}_{1}^{\epsilon}=\tilde{\mathbf{w}}_{1}^{\epsilon}\in L^{2}(0,T_{0};V_{0})
$
 is the weak solution of \eqref{eq:w1}\textendash \eqref{eq:w13}
with $H=\big(h_{M-\frac{1}{2}}^{\epsilon}\big)_{\epsilon\in (0,\epsilon_{0})}$.
\item If Assumption \ref{assu:Main-est} holds true for $c_{A}=c_{A}^{\epsilon,H}$,
there exist $\epsilon_{1}\in (0,\epsilon_{0}]$ and a constant
$C(K)>0$ independent of $\epsilon$ such that 
\begin{equation}
\big\Vert h_{M-\frac{1}{2}}^{\epsilon}\big\Vert _{X_{T_{\epsilon}}}\leq C(K)\label{eq:heps0,5}
\end{equation}
and, writing $Z_{T_{\epsilon}}:=L^{2}(0,T_{\epsilon};H^{2}(\Omega^{\pm}(t)))\cap L^{6}(0,T_{\epsilon};H^{1}(\Omega^{\pm}(t)))$,
\begin{equation}
\big\Vert \mu_{M-\frac{1}{2}}^{\pm,\epsilon}\big\Vert _{Z_{T_{\epsilon}}}+\big\Vert \mathbf{v}_{M-\frac{1}{2}}^{\pm,\epsilon}\big\Vert _{L^{6}(0,T_{\epsilon};H^{2}(\Omega^{\pm}(t)))}+\big\Vert p_{M-\frac{1}{2}}^{\pm,\epsilon}\big\Vert _{L^{6}(0,T_{\epsilon};H^{1}(\Omega^{\pm}(t)))}\leq C(K)\label{eq:muv0,5est}
\end{equation}
for all $\epsilon\in(0,\epsilon_{1})$. 
\end{enumerate}
\end{thm}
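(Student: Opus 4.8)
The plan is to treat the coupled system \eqref{eq:hmuv0,5sys}--\eqref{eq:hmuv0,5sys-8} as a fixed-point problem for $h=h_{M-\frac{1}{2}}^\epsilon$ in $X_{T_0}$ (resp.\ $X_{T_\epsilon}$), where the nonlinearity enters only through the term $\mathbf{w}_1^{\epsilon}\cdot\mathbf{n}$ appearing on the right-hand side of the evolution equation \eqref{eq:hmuv0,5sys-7}, since $\twe=\tilde{\mathbf{w}}_1^{\epsilon,H}$ depends on $H$ through $R^H=c^\epsilon-c_A^{\epsilon,H}$ and $c_A^{\epsilon,H}$ depends on $h$. First I would observe that, for a \emph{fixed} family $H=(\tilde h^\epsilon)\subset X_{T'}$, the elliptic subsystems \eqref{eq:hmuv0,5sys}--\eqref{eq:v0,5bdry} together with the interface conditions \eqref{eq:hmuv0,5sys-4}--\eqref{eq:hmuv0,5sys-6} are precisely of the form covered by Theorem~\ref{stokesthe} and the Mullins--Sekerka/Stokes solvability result, so that $(\mathbf{v}_{M-\frac{1}{2}}^\pm,p_{M-\frac{1}{2}}^\pm,\mu_{M-\frac{1}{2}}^\pm)$ are determined linearly and boundedly by $h$ (with the estimate \eqref{eq:mumaxab} for $\mu$), and then \eqref{eq:hmuv0,5sys-7}--\eqref{eq:hmuv0,5sys-8} together with the lower-order terms $l_0,j_0,g_0,\mathbf{v}_0$ is a linear parabolic equation on $\Gamma$ of exactly the type solved in Theorem~\ref{scharfthe}. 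This yields, for each fixed $\twe$ viewed as a given forcing in $L^2(0,T';H^{1/2}(\mathbb{T}^1))$ (via its trace on $\Gamma$ and the embedding $H^1(\Omega)\hookrightarrow H^{1/2}(\Gamma_t)$), a unique $h\in X_{T'}$. So the map $\Phi\colon H\mapsto h$ is well-defined; a fixed point of $\Phi$ (with $H=\overline H=(h_{M-\frac{1}{2}}^\epsilon)$) gives the desired solution of part~(1).

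For part~(1) I would run the fixed point on $X_{T_0}$ for each fixed $\epsilon\in(0,\epsilon_0)$, using the $\epsilon$-dependent estimates of Proposition~\ref{w1ab}: part~1 gives the growth bound $\|\tilde{\mathbf{w}}_1^{\epsilon,H}\|_{L^2(0,T';H^1(\Omega))}\le C(\epsilon)((T')^{1/2}+\|\tilde h^\epsilon\|_{L^2(0,T';H^1)})$ and part~2 gives the Lipschitz bound $\|\tilde{\mathbf{w}}_1^{\epsilon,H_1}-\tilde{\mathbf{w}}_1^{\epsilon,H_2}\|\le \tilde C(\epsilon)(T')^{1/2}(1+\|h_2^\epsilon\|_{X_{T'}})\|h_1^\epsilon-h_2^\epsilon\|_{X_{T'}}$. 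Composing with the bounded linear solution operator of Theorem~\ref{scharfthe} (whose operator norm is controlled independently of $T'$ by Proposition~\ref{embedding}, or at worst by an $\epsilon$-independent constant), one gets that $\Phi$ is a contraction on a ball of $X_{T'}$ for $T'=T'(\epsilon)$ small, hence a local-in-time fixed point. To extend to the full interval $[0,T_0]$ one notes that the problem is \emph{linear} in $(h,\mathbf{v}_{M-\frac12},p_{M-\frac12},\mu_{M-\frac12},\tilde{\mathbf w}_1^\epsilon)$ once $\epsilon$ is fixed — the only nonlinearity was the product structure, but here every coupling is affine — so a standard a priori energy estimate for the linear parabolic equation on $\Gamma$ combined with \eqref{eq:w1recht} closes a Gronwall argument and rules out blow-up, giving $h_{M-\frac12}^\epsilon\in X_{T_0}$ and $\tilde{\mathbf w}_1^\epsilon\in L^2(0,T_0;V_0)$; smoothness of all terms then follows from the smoothness assertions in Theorems~\ref{stokesthe}, \ref{scharfthe} and elliptic/parabolic regularity, since all data are smooth.

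For part~(2), the point is uniformity in $\epsilon$ under Assumption~\ref{assu:Main-est}. Here I would \emph{not} use Proposition~\ref{w1ab} (its constants blow up in $\epsilon$) but instead Lemma~\ref{Wichtig}, which gives the crucial bound $\|\tilde{\mathbf w}_1^\epsilon\|_{L^2(0,T;H^1(\Omega))}\le C(K)\epsilon^{M-\frac12}$ for all $T\le T_\epsilon$, \emph{provided} Assumption~\ref{assu:Spektral} holds — and that in turn is guaranteed by Lemma~\ref{lem:spekholds} once we know an a priori bound $\|h_{M-\frac12}^\epsilon\|_{X_{T_\epsilon}}\le\bar C$. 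The circularity is broken by a bootstrap: one first derives, from the linear parabolic estimate for \eqref{eq:hmuv0,5sys-7} (with $\epsilon$-independent lower-order coefficients and $h_{M-\frac12}^\epsilon|_{t=0}=0$) together with $\mathbf{w}_1^\epsilon=\tilde{\mathbf w}_1^\epsilon/\epsilon^{M-\frac12}$, an inequality of the schematic form $\|h_{M-\frac12}^\epsilon\|_{X_{T_\epsilon}}\le C(K)\big(1+\|\tilde{\mathbf w}_1^\epsilon\|_{L^2(0,T_\epsilon;H^1)}/\epsilon^{M-\frac12}\big)$; one then shows that $C(K)$ and the implicit $\bar C$ required by Lemma~\ref{lem:spekholds}/\ref{Wichtig} can be chosen consistently (e.g.\ fix $\bar C:=2C(K)+1$, invoke \ref{lem:spekholds} and \ref{Wichtig} with that $\bar C$ to get $\|\tilde{\mathbf w}_1^\epsilon\|/\epsilon^{M-\frac12}\le C(K)$, feed back to obtain $\|h_{M-\frac12}^\epsilon\|_{X_{T_\epsilon}}\le 2C(K)\le\bar C-1$, which is a strict improvement). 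A continuity-in-$T_\epsilon$ argument (the bound is an open condition and holds at $T_\epsilon=0$) then promotes this to a genuine a priori estimate, giving \eqref{eq:heps0,5}. Finally \eqref{eq:muv0,5est} follows by plugging \eqref{eq:heps0,5} into the estimate \eqref{eq:mumaxab} of Theorem~\ref{scharfthe} for $\mu_{M-\frac12}^\pm$ and into the estimate \eqref{eq:instoab} of Theorem~\ref{stokesthe} for $(\mathbf{v}_{M-\frac12}^\pm,p_{M-\frac12}^\pm)$, using $X_{T_\epsilon}\hookrightarrow L^6(0,T_\epsilon;H^{5/2})$ (Proposition~\ref{embedding}.2, Sobolev) to control the $H^{3/2}(\Gamma_t)$ and $H^{1/2}(\Gamma_t)$ traces of the curvature-type right-hand sides $\sigma\Delta^\Gamma h_{M-\frac12}$, $\nabla^\Gamma h_{M-\frac12}$ appearing in \eqref{eq:hmuv0,5sys-4}--\eqref{eq:hmuv0,5sys-5} in the $L^6$-in-time norm.

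**Main obstacle.** The delicate part is the self-consistent bootstrap in part~(2): Lemma~\ref{Wichtig} needs Assumption~\ref{assu:Spektral}, which via Lemma~\ref{lem:spekholds} needs an a priori bound on $\|h_{M-\frac12}^\epsilon\|_{X_{T_\epsilon}}$, which is exactly what one is trying to prove. Making the constants in this loop explicit and ordering the implications so that no circular dependence remains — and checking that the $\epsilon_1$ coming from Lemma~\ref{lem:spekholds} is compatible with the one from the contraction argument — is where the real work lies; everything else is an application of Theorems~\ref{stokesthe} and \ref{scharfthe} plus the embedding properties of $X_T$.
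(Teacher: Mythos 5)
Your proposal follows the same two-step architecture as the paper's proof. For part (1), both reduce the coupled system to a fixed-point equation $h=S_T(h)$ in $X_T$ via Theorem~\ref{scharfthe} (using Theorem~\ref{stokesthe} for the bulk Stokes subsystem) and solve it with the $\epsilon$-dependent growth and Lipschitz estimates of Proposition~\ref{w1ab}. For part (2), both run a self-consistent bootstrap: assume $\|h_{M-\frac12}^\epsilon\|_{X_{T}}\le\bar C$ on an initial interval, apply Lemma~\ref{lem:spekholds} and Lemma~\ref{Wichtig} to bound $\tilde{\mathbf w}_1^\epsilon$, feed this back through the parabolic estimate from Theorem~\ref{scharfthe} to strictly improve the bound, and close via a continuity-in-$T$ argument. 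The paper does exactly this by introducing $T_\epsilon'$ as a supremum with threshold $\hat c(K)=2C_1C(K)$; your choice $\bar C=2C(K)+1$ and the conclusion $\|h_{M-\frac12}^\epsilon\|_{X_{T_\epsilon}}\le 2C(K)$ are the same idea, except that you tacitly fold the constant $C_1$ (from the chain $\|h\|_{X_{T'}}\le C_1\epsilon^{-(M-\frac12)}\|\tilde{\mathbf w}_1^\epsilon\|_{L^2(0,T';H^1(\Omega))}$) into $C(K)$; this is harmless but should be stated, since the bootstrap only works when the output constant is strictly below the input threshold. The derivation of \eqref{eq:muv0,5est} from \eqref{eq:mumaxab}, \eqref{eq:instoab} and the embedding $X_{T_\epsilon}\hookrightarrow L^6(0,T_\epsilon;H^{5/2}(\mathbb{T}^1))$ also matches the paper.

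There is, however, one concretely wrong assertion in your part (1). The problem is \emph{not} linear (or affine) in $h$ once $\epsilon$ is fixed. Although the elliptic/parabolic subsystems are linear in $(\mathbf v_{M-\frac12}^\pm,p_{M-\frac12}^\pm,\mu_{M-\frac12}^\pm)$ for given $h$, the map $h\mapsto\tilde{\mathbf w}_1^{\epsilon,H}$ (with $\tilde h^\epsilon=h$) is genuinely nonlinear: the right-hand side of \eqref{eq:w1} involves $\nabla c_A^{\epsilon,H}-\mathbf{h}^H$ and $R^H=c^\epsilon-c_A^{\epsilon,H}$, and $c_A^{\epsilon,H}$ depends on $h$ through the composition $c_i\big(\rho^H(x,t),x,t\big)$ with $\rho^H=d_\Gamma/\epsilon-h_A^{\epsilon,H}$, where the profiles $c_i$ (e.g.\ $c_0=\theta_0$) are nonlinear in $\rho$. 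This is precisely why Proposition~\ref{w1ab}.2 needs Taylor's theorem to establish Lipschitz continuity; an affine map would have had that property trivially. So ``linearity rules out blow-up'' is not a valid justification for extending the local-in-time fixed point to all of $[0,T_0]$. The extension still works, but the correct mechanism is the growth estimate of Proposition~\ref{w1ab}.1 combined with Theorem~\ref{scharfthe} and the embedding $X_T\hookrightarrow C^0([0,T];H^2(\mathbb{T}^1))$, which together produce a Gronwall-type integral inequality for $\|h\|_{X_T}$ independently of any linearity of the coupling.
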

\begin{proof}
It is important to be aware that $\wei$ depends on $h_{M-\frac{1}{2}}^{\epsilon}$
since it is a solution to (\ref{eq:w1}), where $c_{A}^{\epsilon}$
depends on 
\[
\rho(x,t)=\frac{d_{\Gamma}(x,t)}{\epsilon}-h_{A}^{\epsilon}\left(S(x,t),t\right)
\]
inside of $\Gamma(2\delta)$ and $h_{M-\frac{1}{2}}^{\epsilon}$
is a summand in $h_{A}^{\epsilon}$, see (\ref{eq:ha}). Here $\tilde{h}^{\epsilon}:=h_{M-\frac{1}{2}}^{\epsilon}$.
Hence we can use Theorem \ref{scharfthe} to reduce (\ref{eq:hmuv0,5sysgen})
to a fixed point equation 
\[
h_{M-\frac{1}{2}}^{\epsilon}=S_{T}\left(h_{M-\frac{1}{2}}^{\epsilon}\right)\text{ in }X_{T}.
\] 
This equation can be uniquely solved for an $h_{M-\frac{1}{2}}^{\epsilon}\in X_{T_{0}}$
by the same argumentation as in \cite[Proof of Lemma~4.2]{nsac}. The
necessary ingredients in the present case are the existence result
for the linear system, Theorem \ref{scharfthe}, and the estimate
and Lipschitz-continuity of the nonlinearity presented in Proposition
\ref{w1ab}. Details are omitted and can be found in \cite[proof of Theorem~5.32]{ichPhD}. 

It remains to prove the second statement. Let $T_{\epsilon}>0$ be
given for $\epsilon\in (0,\epsilon_{0})$ as in the assumptions.
As a consequence of Theorem \ref{scharfthe}, we have
\begin{equation}
\big\Vert h_{M-\frac{1}{2}}^{\epsilon}\big\Vert _{X_{T'}}\leq C\big\Vert X_{0}^{*}(\mathbf{w}_{1}^{\epsilon,H}\cdot\mathbf{n})\big\Vert _{L^{2}(0,T';H^{\frac{1}{2}}(\mathbb{T}^1))}\leq C_{1}\frac{1}{\epsilon^{M-\frac{1}{2}}}\big\Vert \tilde{\mathbf{w}}_{1}^{\epsilon,H}\big\Vert _{L^{2}(0,T';H^{1}(\Omega))}\label{eq:hwab-1}
\end{equation}
for all $T'\in(0,T_{0})$. Here $H:=\big(h_{M-\frac{1}{2}}^{\epsilon}\big)_{\epsilon\in\big(0,\epsilon_{0}\big)}$
and $C_{1}$ can be chosen independently of $T'$. Now we choose $C(K)$
as in Lemma \ref{Wichtig} (note that this constant is independent
of the choice of $\tilde{h}^{\epsilon}$ in the lemma) and define
$\hat{c}(K):=2C_{1}C(K)$. Then we find that
\[
T_{\epsilon}':=\sup\Big\{ t\in\left(0,T_{\epsilon}\right)\Big|\big\Vert h_{M-\frac{1}{2}}^{\epsilon}\big\Vert _{X_{t}}\leq\hat{c}(K)\Big\} 
\]
satisfies $T_{\epsilon}'>0$, due to the continuity of the norm $\Vert .\Vert _{X_{t}}$
in $t>0$ and since $h_{M-\frac{1}{2}}^{\epsilon}|_{t=0}=0$ in $H^{2}(\mathbb{T}^1)$. 

Using Lemma \ref{Wichtig} again (with $T_{\epsilon}'$ instead of
$T_{\epsilon}$), we get the existence of $\epsilon_{1}\in (0,\epsilon_{0}]$
such that 
\[
\big\Vert \tilde{\mathbf{w}}_{1}^{\epsilon,H}\big\Vert _{L^{2}(0,T_{\epsilon}';H^{1}(\Omega))}\leq C(K)\epsilon^{M-\frac{1}{2}}
\]
for all $\epsilon\in(0,\epsilon_{1})$ with the same constant
$C(K)$ as above. Thus, by (\ref{eq:hwab-1}) we have
\[
\big\Vert h_{M-\frac{1}{2}}^{\epsilon}\big\Vert _{X_{T_{\epsilon}'}}\leq\frac{\hat{c}(K)}{2}<\hat{c}(K)
\]
for all $\epsilon\in (0,\epsilon_{1})$. By the definition
of $T_{\epsilon}'$ this already implies $T_{\epsilon}'=T_{\epsilon}$. 

Finally, (\ref{eq:muv0,5est}) follows from (\ref{eq:mumaxab}) and (\ref{eq:instoab})
taken together with the embedding $H^{\frac{1}{3}}(0,T_{\epsilon};Y)\hookrightarrow L^{6}(0,T_{\epsilon};Y)$
for a Banach space $Y$ and Proposition \ref{embedding}.
\end{proof}
\begin{rem}
\label{extension} Let $\big(h_{M-\frac{1}{2}}^{\epsilon},\mathbf{v}_{M-\frac{1}{2}}^{\pm,\epsilon},p_{M-\frac{1}{2}}^{\pm,\epsilon},\mu_{M-\frac{1}{2}}^{\pm,\epsilon}\big)$
be as in Theorem \ref{hM-1} for some $\epsilon\in (0,\epsilon_{0})$. 
\begin{enumerate}
\item Since $h_{M-\frac{1}{2}}^{\epsilon}\in X_{T_{0}}$ the right hand side
of (\ref{eq:w1}) is already in $L^{2}(\Omega_{T_{0}})$,
so by regularity theory and a bootstrap argumentation, we see that
$h_{M-\frac{1}{2}}^{\epsilon}$ and $\twe$ are smooth functions,
which transfers to $\big(\mathbf{v}_{M-\frac{1}{2}}^{\pm,\epsilon},p_{M-\frac{1}{2}}^{\pm,\epsilon},\mu_{M-\frac{1}{2}}^{\pm,\epsilon}\big)$.
So the true difficulty in the following is not the missing regularity,
but the missing control of higher norms uniformly in $\epsilon$.
\item As for lower order terms, we may also extend $\mu_{M-\frac{1}{2}}^{\pm,\epsilon},$
$p_{M-\frac{1}{2}}^{\pm,\epsilon}$, $\mathbf{v}_{M-\frac{1}{2}}^{\pm,\epsilon}$
onto $\Omega_{T_{0}}^{\pm}\cup\Gamma(2\delta)$ by using
the same extension as discussed in Remark \ref{Outer-Rem}. As the
extension operator $\mathfrak{E}^{\pm}\colon W_{p}^{k}(\Omega^{\pm}(t))\rightarrow W_{p}^{k}(\mathbb{R}^{2})$
is continuous, we get in particular
\[
  \Big\Vert \mathfrak{E}^{\pm}\Big(\mu_{M-\frac{1}{2}}^{\pm,\epsilon}\Big)\Big\Vert _{H^{k}(\Omega^{\pm}(t)\cup\Gamma_{t}(2\delta))}\leq C\Big\Vert \mu_{M-\frac{1}{2}}^{\pm,\epsilon}\Big\Vert _{H^{k}(\Omega^{\pm}(t))}
\]
for $k\in\mathbb{N}$, where we can choose $C$ independently of $t\in[0,T_{0}]$.
Similar estimates hold for $p_{M-\frac{1}{2}}^{\epsilon}$ and $\mathbf{v}_{M-\frac{1}{2}}^{\epsilon}$ (for the latter see (\ref{eq:sp=0000E4terwichtig})).
\item In the following we write $c_{A}^{\epsilon}:=c_{A}^{\epsilon,H}$
for $H=\big(h_{M-\frac{1}{2}}^{\epsilon}\big)_{\epsilon\in(0,\epsilon_{0})}$,
where $c_{A}^{\epsilon,H}$ is defined in Definition \ref{firstapprx}.
\end{enumerate}
\end{rem}

\begin{lem}[The $\left(M-\frac{1}{2}\right)$\textendash th order terms]
\label{lem-constrM-0,5}~\\Let the zeroth and first order terms be given
as in Lemmata \ref{zeroorder} and \ref{k-thorder}, and let $\epsilon\in\left(0,1\right)$. 

Then we define the terms of the outer expansion $\big(h_{M-\frac{1}{2}}^{\epsilon},\mathbf{v}_{M-\frac{1}{2}}^{\pm,\epsilon},p_{M-\frac{1}{2}}^{\pm,\epsilon},\mu_{M-\frac{1}{2}}^{\pm,\epsilon}\big)$
as the unique solution to \eqref{eq:hmuv0,5sysgen} as given by Theorem~\ref{hM-1}.1 and we consider $\mathbf{v}_{M-\frac{1}{2}}^{\pm,\epsilon}$,
$p_{M-\frac{1}{2}}^{\pm,\epsilon}$, $\mu_{M-\frac{1}{2}}^{\pm,\epsilon}$
to be extended onto $\Omega_{T_{0}}^{\pm}\cup\Gamma(2\delta;T_{0})$
(cf.\ Remark \ref{extension}). Moreover, we set $c_{M-\frac{1}{2}}^{\pm,\epsilon}\equiv0$
in $\Omega_{T_{0}}^{\pm}$. We define the terms of the inner expansion
given by the functions 
$
\big(c_{M-\frac{1}{2}}^{\epsilon},\mu_{M-\frac{1}{2}}^{\epsilon},\mathbf{v}_{M-\frac{1}{2}}^{\epsilon},p_{M-\frac{1}{2}}^{\epsilon},h_{M-\frac{1}{2}}^{\epsilon}\big)
$
as $c_{M-\frac{1}{2}}^{\epsilon} \equiv0$ and
\begin{align}
\mu_{M-\frac{1}{2}}^{\epsilon}(\rho,x,t) & :=\mu_{M-\frac{1}{2}}^{+,\epsilon}(x,t)\eta(\rho)+\mu_{M-\frac{1}{2}}^{-,\epsilon}(x,t)\left(1-\eta(\rho)\right),\label{eq:mueps0,5}\\
\mathbf{v}_{M-\frac{1}{2}}^{\epsilon}(\rho,x,t) & :=\mathbf{v}_{M-\frac{1}{2}}^{+,\epsilon}(x,t)\eta(\rho)+\mathbf{v}_{M-\frac{1}{2}}^{-,\epsilon}(x,t)\left(1-\eta(\rho)\right),\label{eq:veps0,5}\\
p_{M-\frac{1}{2}}^{\epsilon}(\rho,x,t) & :=p_{M-\frac{1}{2}}^{+,\epsilon}(x,t)\eta(\rho)+p_{M-\frac{1}{2}}^{-,\epsilon}(x,t)\left(1-\eta(\rho)\right)\label{eq:peps0,5}
\end{align}
for all $(\rho,x,t)\in\mathbb{R}\times\Gamma(2\delta;T_{0})$.
Furthermore, we define 
\begin{align}
l_{M-\frac{1}{2}} & :=\begin{cases}
  \tfrac{1}{d_{\Gamma}}\big(\mu_{M-\frac{1}{2}}^{+}-\mu_{M-\frac{1}{2}}^{-}+l_{0}h_{M-\frac{1}{2}}\big) & \text{in }\Gamma (2\delta;T_{0})\backslash\Gamma,\\
\nabla d_{\Gamma}\cdot\nabla\big(\mu_{M-\frac{1}{2}}^{+}-\mu_{M-\frac{1}{2}}^{-}+l_{0}h_{M-\frac{1}{2}}\big) & \text{on }\Gamma,
\end{cases}\label{eq:lcon0,5}\\
  \mathbf{u}_{M-\frac{1}{2}} & :=\begin{cases}
\tfrac{1}{d_{\Gamma}}\big(\mathbf{v}_{M-\frac{1}{2}}^{+}-\mathbf{v}_{M-\frac{1}{2}}^{-}+\mathbf{u}_{0}h_{M-\frac{1}{2}}\big) & \text{in }\Gamma (2\delta;T_{0})\backslash\Gamma,\\
\nabla d_{\Gamma}\cdot\nabla\big(\mathbf{v}_{M-\frac{1}{2}}^{+}-\mathbf{v}_{M-\frac{1}{2}}^{-}+\mathbf{u}_{0}h_{M-\frac{1}{2}}\big) & \text{on }\Gamma.
\end{cases}\label{eq:ucon0,5}
\end{align}
Then the outer equations \eqref{eq:sto0,5-1}\textendash \eqref{eq:muo0,5},
the inner equations \eqref{eq:sti0,5}\textendash \eqref{eq:mui0,5}
and the identities \eqref{eq:compcahnM})\textendash \eqref{eq:compstokestM}
are all satisfied.
\end{lem}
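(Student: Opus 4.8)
The plan is to verify each group of equations by direct substitution of the definitions given in the statement of Lemma~\ref{lem-constrM-0,5}, exactly as one does for the integer order terms in Lemmata~\ref{zeroorder} and \ref{k-thorder}. First I would treat the outer equations \eqref{eq:sto0,5-1}--\eqref{eq:muo0,5}: these hold by construction, since $\big(\mathbf{v}_{M-\frac{1}{2}}^{\pm,\epsilon},p_{M-\frac{1}{2}}^{\pm,\epsilon},\mu_{M-\frac{1}{2}}^{\pm,\epsilon}\big)$ are defined in Theorem~\ref{hM-1}.1 precisely as the solution of \eqref{eq:hmuv0,5sys}--\eqref{eq:hmuv0,5sys-2}, which are \eqref{eq:sto0,5-1}--\eqref{eq:muo0,5}; the extension onto $\Gamma(2\delta;T_0)$ is harmless as these equations are only asserted in $\Omega_{T_0}^\pm$.

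Next I would turn to the inner equations \eqref{eq:sti0,5}--\eqref{eq:mui0,5}. Since $c_{M-\frac12}^\epsilon\equiv 0$ and $c_{M-\frac12}^{\pm,\epsilon}\equiv 0$, equation \eqref{eq:ci0,5} is trivial and the normalization $c_{M-\frac12}(0,x,t)=0$ holds. For \eqref{eq:sti0,5}, \eqref{eq:divi0,5}, \eqref{eq:mui0,5}, I would plug in the ansätze \eqref{eq:mueps0,5}--\eqref{eq:peps0,5}: because $\mu_{M-\frac12}^{\pm}$, $\mathbf{v}_{M-\frac12}^\pm$ are independent of $\rho$ and $\eta''$ appears only through $\partial_{\rho\rho}$, one computes $\partial_{\rho\rho}\mu_{M-\frac12}^\epsilon = (\mu_{M-\frac12}^{+}-\mu_{M-\frac12}^{-})\eta''$, and the defining formulas \eqref{eq:lcon0,5}, \eqref{eq:ucon0,5} for $l_{M-\frac12}$, $\mathbf{u}_{M-\frac12}$ are arranged exactly so that $(l_{M-\frac12}d_\Gamma - l_0 h_{M-\frac12})\eta$ and $(\mathbf{u}_{M-\frac12}d_\Gamma - \mathbf{u}_0 h_{M-\frac12})\eta$ have matching second $\rho$-derivative; here one must check that the two branches in \eqref{eq:lcon0,5}, \eqref{eq:ucon0,5} glue to a smooth function across $\Gamma$, which follows since $\mu_{M-\frac12}^{+}-\mu_{M-\frac12}^{-}+l_0 h_{M-\frac12}$ vanishes on $\Gamma$ by \eqref{eq:hmuv0,5sys-4} (and analogously $\mathbf{v}_{M-\frac12}^{+}-\mathbf{v}_{M-\frac12}^{-}+\mathbf{u}_0 h_{M-\frac12}$ vanishes on $\Gamma$ by \eqref{eq:hmuv0,5sys-6} together with $\mathbf{u}_0=0$ on $\Gamma$, cf.\ Remark~\ref{rem:zeroprop}), so that division by $d_\Gamma$ produces the normal derivative in the limit. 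The divergence equation \eqref{eq:divi0,5} follows similarly, using in addition $\operatorname{div}\mathbf{v}_{M-\frac12}^\pm = 0$ and the remark that $\nabla^\Gamma h_{M-\frac12}\cdot(\partial_\rho\mathbf{v}_0 - \mathbf{u}_0 d_\Gamma\eta') = 0$ in $\mathbb{R}\times\Gamma(2\delta;T_0)$.

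Finally, for the solvability identities \eqref{eq:compcahnM}--\eqref{eq:compstokestM}, I would invoke Corollary~\ref{compchM}: these conditions are equivalent to $\int_\mathbb{R} A^{M-\frac12}\theta_0'\,\d\rho = 0$, $\int_\mathbb{R} B^{M-\frac12}\,\d\rho = 0$, $\int_\mathbb{R}\mathbf{V}^{M-\frac12}\cdot\mathbf{n}\,\d\rho = 0$, $\int_\mathbb{R}\mathbf{V}^{M-\frac12}\cdot\boldsymbol{\tau}\,\d\rho = 0$ on $\Gamma$, and it remains to show that the interface system \eqref{eq:hmuv0,5sys-4}--\eqref{eq:hmuv0,5sys-7}, which $\big(h_{M-\frac12}^\epsilon,\mathbf{v}_{M-\frac12}^{\pm},p_{M-\frac12}^{\pm},\mu_{M-\frac12}^{\pm}\big)$ satisfies by Theorem~\ref{hM-1}.1, is exactly the reformulation of these four scalar conditions after evaluating the $\rho$-integrals (using $\int_\mathbb{R}\theta_0'{}^2\,\d\rho = 2\sigma$, $\int_\mathbb{R}\mu_{M-\frac12}\theta_0'\,\d\rho = (\mu_{M-\frac12}^+ + \mu_{M-\frac12}^-)\sigma$ via \eqref{eq:etateta}, $\int_\mathbb{R}\eta'\theta_0'\,\d\rho = 2\tilde\eta$, $[\nabla\mu_0]\cdot\mathbf{n} = \partial_\mathbf{n}\mu_{M-\frac12}^+ - \partial_\mathbf{n}\mu_{M-\frac12}^-$ rewritten via $l_0$, and so forth), together with $\mathbf{w}_1^\epsilon|_\Gamma\cdot\mathbf{n}$ entering \eqref{eq:comphilliM} and \eqref{eq:compstokesM}. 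This last bookkeeping — matching the six interface equations to the four compatibility conditions and tracking all the curvature, $l_0$, $j_0$, $g_0$, $\mathbf{q}_0$ terms through the identifications $p^\epsilon = \Delta d_\Gamma$ etc.\ established in Lemma~\ref{lem:spekholds} — is the main obstacle; it is routine but long, and the detailed derivation (as for \eqref{eq:hmuv0,5sysgen} itself) is the content of \cite[Subsection~5.3.3]{ichPhD}, to which I would refer for the bulk of the algebra.
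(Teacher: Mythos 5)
Your proposal follows the same three-step structure as the paper's proof: outer equations by construction from Theorem~\ref{hM-1}.1; inner equations because the definitions \eqref{eq:lcon0,5}--\eqref{eq:ucon0,5} make the combinations in \eqref{eq:sti0,5}--\eqref{eq:mui0,5} $\rho$-independent (the paper verifies this by computing $\mu_{M-\frac{1}{2}}^{\epsilon}-(l_{M-\frac{1}{2}}^{\epsilon}d_{\Gamma}-l_{0}h_{M-\frac{1}{2}}^{\epsilon})\eta=\mu_{M-\frac{1}{2}}^{-,\epsilon}$ rather than differentiating twice, but these are equivalent); and compatibility identities reduced to \eqref{eq:hmuv0,5sys-4}--\eqref{eq:hmuv0,5sys-7} upon evaluating $\rho$-integrals. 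One small algebraic slip in your parenthetical list: $\int_{\mathbb{R}}\mu_{M-\frac{1}{2}}^{\epsilon}\theta_{0}'\,\d\rho=\mu_{M-\frac{1}{2}}^{+,\epsilon}+\mu_{M-\frac{1}{2}}^{-,\epsilon}$, with no factor of $\sigma$, since $\int_{\mathbb{R}}\theta_{0}'\,\d\rho=2$ and \eqref{eq:etateta} annihilates the $\eta-\tfrac{1}{2}$ contribution.
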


\begin{proof}
As $\big(h_{M-\frac{1}{2}}^{\epsilon},\mathbf{v}_{M-\frac{1}{2}}^{\pm,\epsilon},p_{M-\frac{1}{2}}^{\pm,\epsilon},\mu_{M-\frac{1}{2}}^{\pm,\epsilon}\big)$
solves (\ref{eq:hmuv0,5sys})\textendash (\ref{eq:hmuv0,5sys-8})
it is immediately clear that the outer equations (\ref{eq:sto0,5-1})\textendash (\ref{eq:muo0,5})
are satisfied.

Concerning (\ref{eq:compcahnM}), we compute 
\[
\tfrac{1}{2}\int_{\mathbb{R}}\theta_{0}'\mu_{M-\frac{1}{2}}^{\epsilon}\d\rho=\tfrac{1}{2}\int_{\mathbb{R}}\theta_{0}'\tfrac{1}{2}\big(\mathbf{\mu}_{M-\frac{1}{2}}^{+,\epsilon}+\mu_{M-\frac{1}{2}}^{-,\epsilon}\big)\d\rho=\sigma\Delta^{\Gamma}h_{M-\frac{1}{2}}^{\epsilon}-g_{0}h_{M-\frac{1}{2}}^{\epsilon}\tfrac{1}{2}\int_{\mathbb{R}}\eta'\theta_{0}'\d\rho,
\]
where we used (\ref{eq:etateta}) in the first equality and (\ref{eq:hmuv0,5sys-4})
in the second. The validity of (\ref{eq:compstokesM}), (\ref{eq:compstokestM})
and (\ref{eq:comphilliM}) then follow by the properties of $\theta_{0}$,
$\eta$ and the definition of the zeroth order terms.

Regarding the inner equations (\ref{eq:sti0,5})\textendash (\ref{eq:mui0,5})
we compute exemplarily 
\begin{align*}
\mu_{M-\frac{1}{2}}^{\epsilon}-\big(l_{M-\frac{1}{2}}^{\epsilon}d_{\Gamma}-l_{0}h_{M-\frac{1}{2}}^{\epsilon}\big)\eta & =\mu_{M-\frac{1}{2}}^{-,\epsilon} &  & \text{ in }\Gamma(2\delta)\backslash\Gamma,\\
\mu_{M-\frac{1}{2}}^{\epsilon}+l_{0}h_{M-\frac{1}{2}}^{\epsilon}\eta & =\mu_{M-\frac{1}{2}}^{-,\epsilon} &  & \text{ on }\Gamma,
\end{align*}
where we used the definition of $l_{M-\frac{1}{2}}^{\epsilon}$ in
the first equality and $\big[\mu_{M-\frac{1}{2}}^{\epsilon}\big]=-l_{0}h_{M-\frac{1}{2}}^{\epsilon}$
on $\Gamma$ in the second equality. The latter is a consequence of
(\ref{eq:hmuv0,5sys-4}). This implies (\ref{eq:mui0,5}). Equations
(\ref{eq:sti0,5}) and (\ref{eq:divi0,5}) follow in the same way,
remarking $\mathbf{u}_{0}=0$ on $\Gamma$ (see (\ref{eq:u00})) and
$\big[\mathbf{v}_{M-\frac{1}{2}}^{\epsilon}\big]=0$ on $\Gamma$.
\end{proof}
\begin{notation}
\label{nota:M0,5}For simplicity, we often write $\mathbf{v}_{M-\frac{1}{2}}=\mathbf{v}_{M-\frac{1}{2}}^{\epsilon}$,
$\mathbf{v}_{M-\frac{1}{2}}^{\pm,\epsilon}=\mathbf{v}_{M-\frac{1}{2}}^{\pm}$
etc., especially if we consider fractional and integer expansion orders
together, as in Section \ref{sec:The-Structure-Rem}.
\end{notation}

The following lemma is a key ingredient in order to estimate the remainder
terms properly.
\begin{lem}
  \label{ABstruc}Let the $\left(M-\tfrac{1}{2}\right)$\textendash th
order terms be given as in Lemma \ref{lem-constrM-0,5}, let the
assumptions of Theorem \ref{hM-1}.2 hold and let $\epsilon\in (0,\epsilon_{1})$. 
\begin{enumerate}
\item There are $L_{1},L_{2}\in\mathbb{N}$ such that 
  \begin{alignat*}{2}
    A^{M-\frac{1}{2}}(\rho,x,t)&=\sum_{k=1}^{L_{1}}\mathtt{A}_{k}^{1}(x,t)\mathtt{A}_{k}^{2}(\rho)&\quad &\text{ for }(\rho,x,t)\in\mathbb{R}\times\Gamma(2\delta)\text{ and}\\
A^{M-\frac{1}{2}}(\rho,x,t)&=\sum_{j=1}^{L_{2}}\mathtt{A}_{j}^{1,\Gamma}(x,t)\mathtt{A}_{j}^{2,\Gamma}(\rho)&\qquad &\text{ for }(\rho,x,t)\in\mathbb{R}\times\Gamma,
  \end{alignat*}
where $\big\Vert \mathtt{A}_{k}^{2}\big\Vert _{L^{\infty}(\mathbb{R})}+\big\Vert \mathtt{A}_{j}^{2,\Gamma}\big\Vert _{L^{\infty}(\mathbb{R})}\leq C$
for some $C>0$ independent of $\epsilon$, and 
\begin{equation}
\big\Vert \mathtt{A}_{k}^{1}\big\Vert _{L^{6}(0,T_{\epsilon};L^{2}(\Gamma_{t}(2\delta)))}+\big\Vert \mathtt{A}_{j}^{1,\Gamma}\big\Vert _{L^{6}(0,T_{\epsilon};L^{2}(\Gamma_{t}))}\leq C(K)\label{eq:a1est}
\end{equation}
for all $k\in \{ 1,\ldots,L_{1}\} $, $j\in\{ 1,\ldots,L_{2}\} $.
Moreover, there are $C,\alpha>0$ independent of $\epsilon$ such
that
\begin{equation}
\left|\int_{-\tau_{1}}^{\tau_{2}}\mathtt{A}_{j}^{2,\Gamma}\theta_{0}'\d\rho\right|\leq Ce^{-\alpha\min\left\{ \tau_{1},\tau_{2}\right\} }\label{eq:expabA}
\end{equation}
for $\tau_{1},\tau_{2}>0$ large enough and all $j\in\big\{ 1,\ldots,L_{2}\big\} $.
\item There are $K_{1},K_{2}\in\mathbb{N}$ such that 
  \begin{alignat*}{2}
B^{M-\frac{1}{2}}(\rho,x,t)&=\sum_{k=1}^{K_{1}}\mathtt{B}_{k}^{1}(x,t)\mathtt{B}_{k}^{2}(\rho)&\quad&\text{for }(\rho,x,t)\in\mathbb{R}\times\Gamma(2\delta)\backslash\Gamma\text{ and}
\\
B^{M-\frac{1}{2}}(\rho,x,t)&=\sum_{j=1}^{K_{2}}\mathtt{B}_{j}^{1,\Gamma}(x,t)\mathtt{B}_{j}^{2,\Gamma}(\rho)&\qquad &\text{for }(\rho,x,t)\in\mathbb{R}\times\Gamma,
  \end{alignat*}
where $\mathtt{B}_{k}^{2},\mathtt{B}_{j}^{2,\Gamma}\in\mathcal{O}(e^{-\alpha\left|\rho\right|})$
for $\rho\rightarrow\pm\infty$ and
\begin{equation}
\big\Vert \mathtt{B}_{k}^{1}\big\Vert _{L^{2}(\Gamma(2\delta;T_{\epsilon}))}+\big\Vert \mathtt{B}_{j}^{1,\Gamma}\big\Vert _{L^{2}(0,T_{\epsilon};L^{2}(\Gamma_{t}))}\leq C(K)\label{eq:b1est}
\end{equation}
for all $k\in\{ 1,\ldots,K_{1}\} $, $j\in\{ 1,\ldots,K_{2}\}$.
Moreover, there are $C,\alpha>0$ independent of $\epsilon$ such
that
\begin{equation}
\left|\int_{-\tau_{1}}^{\tau_{2}}\mathtt{B}_{j}^{2,\Gamma}\d\rho\right|\leq Ce^{-\alpha\min\left\{ \tau_{1},\tau_{2}\right\} }\label{eq:expabB}
\end{equation}
for $\tau_{1},\tau_{2}>0$ large enough and all $j\in\left\{ 1,\ldots,K_{2}\right\} $.
\item There are $N_{1},N_{2}\in\mathbb{N}$ such that 
  \begin{alignat*}{2}
\mathbf{V}^{M-\frac{1}{2}}(\rho,x,t)&=\sum_{k=1}^{N_{1}}\mathtt{V}_{k}^{1}(x,t)\mathtt{V}_{k}^{2}(\rho,x,t)&\qquad&\text{for }(\rho,x,t)\in\mathbb{R}\times\Gamma(2\delta)\text{ and}\\
\mathbf{V}^{M-\frac{1}{2}}(\rho,x,t)&=\sum_{j=1}^{N_{2}}\mathtt{V}_{j}^{1,\Gamma}(x,t)\mathtt{V}_{j}^{2,\Gamma}(\rho,x,t)&\qquad&\text{for }(\rho,x,t)\in\mathbb{R}\times\Gamma,
  \end{alignat*}
where $\mathtt{V}_{k}^{2},\mathtt{V}_{j}^{2,\Gamma}\in\mathcal{R}_{\alpha}$
and
\[
\big\Vert \mathtt{V}_{k}^{1}\big\Vert_{L^{2}(\Gamma(2\delta;T_{\epsilon}))}+\big\Vert \mathtt{V}_{j}^{1,\Gamma}\big\Vert _{L^{2}(0,T_{\epsilon};L^{2}(\Gamma_{t}))}\leq C(K)
\]
for all $k\in\{ 1,\ldots,N_{1}\} $, $j\in\{ 1,\ldots,N_{2}\}$.
Moreover, there are $C,\alpha>0$ independent of $\epsilon$ such
that
\begin{equation}
\sup_{(x,t)\in\Gamma}\left|\int_{-\tau_{1}}^{\tau_{2}}\mathtt{V}_{j}^{2,\Gamma}\d\rho\right|\leq Ce^{-\alpha\min\left\{ \tau_{1},\tau_{2}\right\} }\label{eq:expabV}
\end{equation}
for $\tau_{1},\tau_{2}>0$ large enough and all $j\in\left\{ 1,\ldots,N_{2}\right\} $.
\end{enumerate}
\end{lem}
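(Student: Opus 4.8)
The statement of Lemma~\ref{ABstruc} is, in essence, a bookkeeping result: it asserts that the right-hand sides $A^{M-\frac12}$, $B^{M-\frac12}$ and $\mathbf{V}^{M-\frac12}$ from \eqref{eq:AM-0,5}--\eqref{eq:VM-0,5} inherit a product structure in which the $\rho$-dependent factors are bounded (resp.\ exponentially decaying, resp.\ in $\mathcal{R}_\alpha$) uniformly in $\epsilon$, while the $(x,t)$-dependent factors carry all the $\epsilon$-dependence and are controlled in $L^6$ resp.\ $L^2$ in time by the bounds on the $\big(M-\tfrac12\big)$-th order terms obtained in Theorem~\ref{hM-1}.2. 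The plan is therefore to go term by term through the explicit formulas \eqref{eq:AM-0,5}, \eqref{eq:BM-0,5}, \eqref{eq:VM-0,5} and \eqref{eq:WM-0,5}, using the explicit representations \eqref{eq:mueps0,5}--\eqref{eq:peps0,5} of the inner fractional terms together with the definitions \eqref{eq:lcon0,5}, \eqref{eq:ucon0,5} of $l_{M-\frac12}$ and $\mathbf{u}_{M-\frac12}$, and to read off the desired decomposition directly.

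\textbf{Step 1: the $\rho$-structure.} First I would observe that every inner fractional term has the schematic form ``$(x,t)$-function times $\eta(\rho)$ or $1-\eta(\rho)$'' (by \eqref{eq:mueps0,5}--\eqref{eq:peps0,5}), that $c_{M-\frac12}\equiv 0$, and that the zeroth order inner terms appearing as coefficients (namely $\theta_0$, $\theta_0'$, $\mu_0$, $\mathbf v_0$ and their $\rho$-derivatives) are either smooth bounded functions of $\rho$ or lie in $\mathcal O(e^{-\alpha|\rho|})$ by \eqref{eq:optimopti} and Lemma~\ref{zeroorder}/Remark~\ref{rem:zeroprop}. Hence: in $A^{M-\frac12}$ every summand is a product of an $L^\infty$ function of $(x,t)$ (times possibly $\epsilon$-dependent factors, see Step~2) with one of $\eta'(\rho)$, $\theta_0'(\rho)$, $\partial_{\rho\rho}c_0(\rho)$, $\eta(\rho)$, $1-\eta(\rho)$ — all of which are in $L^\infty(\mathbb R)$ — giving the first claimed decomposition of $A^{M-\frac12}$. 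On $\Gamma$ the same applies, and the improved decay \eqref{eq:expabA} follows because on $\Gamma$ the terms not already decaying (the $\eta$, $1-\eta$ pieces) combine, via the compatibility identity \eqref{eq:compcahnM} and the explicit $\mu_{M-\frac12}$, into expressions whose $\rho$-integral against $\theta_0'$ is the integral of an exponentially localized function; this is the same mechanism as in \cite{abc} and \cite{nsac} and I would cite \cite[Subsection~5.3]{ichPhD}. For $B^{M-\frac12}$ and $\mathbf V^{M-\frac12}$ the $\rho$-factors involve $\theta_0'$, $\partial_\rho\mu_{M-\frac12}$ (which is $\eta'$ times an $(x,t)$-function, hence $\mathcal O(e^{-\alpha|\rho|})$), $\eta''$, $\eta'$ and $\partial_{\rho\rho}\mu_0$ — and the $\eta''$, $\eta'$ pieces are compactly supported, so all $\rho$-factors of $B^{M-\frac12}$ are in $\mathcal O(e^{-\alpha|\rho|})$ and all $\rho$-factors of $\mathbf V^{M-\frac12}$ lie in $\mathcal R_\alpha$. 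The ``on $\Gamma$'' estimates \eqref{eq:expabB}, \eqref{eq:expabV} again follow by invoking the corresponding compatibility conditions \eqref{eq:comphilliM}, \eqref{eq:compstokesM}, \eqref{eq:compstokestM}, which force the non-decaying combinations to vanish on $\Gamma$, plus the exponential localization of the remaining pieces.

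\textbf{Step 2: the $(x,t)$-estimates.} Here I would collect the $(x,t)$-coefficients $\mathtt A_k^1$, $\mathtt B_k^1$, $\mathtt V_k^1$ and bound them in the asserted norms. Each such coefficient is built from: the zeroth/first order terms $h_1$, $g_0$, $j_0$, $l_0$, $l_1$, $\mathbf u_0$, $\mathbf u_1$, $\mathbf q_0$, $\mathbf v_0$, $\mu_0$, $\Delta d_\Gamma$, $\nabla^\Gamma h_1$, etc.\ (all smooth and bounded on $\Gamma(2\delta;T_0)$ by Lemmata~\ref{zeroorder}, \ref{k-thorder}), multiplied by one of the fractional quantities $h_{M-\frac12}$, $\nabla^\Gamma h_{M-\frac12}$, $\Delta^\Gamma h_{M-\frac12}$, $\partial_t^\Gamma h_{M-\frac12}$, $\mathbf v_{M-\frac12}^\pm$, $\nabla\mathbf v_{M-\frac12}^\pm$, $p_{M-\frac12}^\pm$, $\mu_{M-\frac12}^\pm$, $\nabla\mu_{M-\frac12}^\pm$, or by $\mathbf w_1^\epsilon|_\Gamma$. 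For the $h_{M-\frac12}$-type factors I would use \eqref{eq:heps0,5} together with the embedding $X_{T_\epsilon}\hookrightarrow C^0([0,T_\epsilon];H^2(\mathbb T^1))\hookrightarrow L^6(0,T_\epsilon;H^{5/2}(\mathbb T^1))$ and $X_{T_\epsilon}\hookrightarrow H^{1/3}(0,T_\epsilon;H^{5/2}(\mathbb T^1))\hookrightarrow L^6(0,T_\epsilon;H^{5/2}(\mathbb T^1))$ from Proposition~\ref{embedding}, which bounds $h_{M-\frac12}$, $\nabla_\Gamma h_{M-\frac12}$, $\Delta_\Gamma h_{M-\frac12}$ (and, via the evolution equation \eqref{eq:hmuv0,5sys-7}, also $\partial_t^\Gamma h_{M-\frac12}$) in $L^6(0,T_\epsilon;L^2(\Gamma_t))$ by $C(K)$; restricting to $\Gamma_t(2\delta)$ via the trace/extension along $\operatorname{Pr}_{\Gamma_t}$ gives the $L^6(0,T_\epsilon;L^2(\Gamma_t(2\delta)))$ bound for the $A$-coefficients. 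For the $\mathbf v_{M-\frac12}^\pm$, $\nabla\mathbf v_{M-\frac12}^\pm$, $p_{M-\frac12}^\pm$, $\mu_{M-\frac12}^\pm$, $\nabla\mu_{M-\frac12}^\pm$ factors I would use \eqref{eq:muv0,5est} (which gives $L^6(0,T_\epsilon;H^2)$ for $\mathbf v^\pm$ and $\mu^\pm$, hence $L^6(0,T_\epsilon;H^1)$ for the gradients and $L^6$ trace bounds on $\Gamma_t$) together with the extension estimates of Remark~\ref{extension}.2. For $\mathbf w_1^\epsilon|_\Gamma$ appearing in \eqref{eq:BM-0,5}, \eqref{eq:comphilliM} I would use Lemma~\ref{Wichtig}: $\|\tilde{\mathbf w}_1^\epsilon\|_{L^2(0,T;H^1(\Omega))}\le C(K)\epsilon^{M-1/2}$, hence $\|\mathbf w_1^\epsilon\|_{L^2(0,T;H^1(\Omega))}\le C(K)$ and by trace $\|\mathbf w_1^\epsilon|_\Gamma\|_{L^2(0,T_\epsilon;L^2(\Gamma_t))}\le C(K)$ — which accounts for the fact that the $B$- and $\mathbf V$-coefficients are only controlled in $L^2$, not $L^6$, in time. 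Adding up the finitely many summands (with $L_1,L_2,K_1,K_2,N_1,N_2$ just the number of terms produced) yields \eqref{eq:a1est}, \eqref{eq:b1est} and the $\mathbf V$-coefficient bound.

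\textbf{Main obstacle.} The genuinely delicate point is not any single estimate but making precise the ``on $\Gamma$ versus on $\Gamma(2\delta)$'' dichotomy and the improved exponential bounds \eqref{eq:expabA}, \eqref{eq:expabB}, \eqref{eq:expabV}: one must verify that, restricted to $\Gamma$, the slowly-decaying $\eta$/$1-\eta$ combinations in $A^{M-\frac12}$, $B^{M-\frac12}$, $\mathbf V^{M-\frac12}$ really do cancel against the prescribed jump/interface data \eqref{eq:hmuv0,5sys-4}--\eqref{eq:hmuv0,5sys-7}, leaving only terms whose $\rho$-integrals (against $\theta_0'$, resp.\ against $1$) are exponentially small in $\min\{\tau_1,\tau_2\}$. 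This is exactly the solvability/compatibility analysis of Corollary~\ref{compchM} read backwards, and I would carry it out by substituting the explicit inner fractional terms into \eqref{eq:AM-0,5}--\eqref{eq:WM-0,5}, using \eqref{eq:compcahnM}--\eqref{eq:compstokestM} to eliminate the non-integrable contributions, and then estimating the leftover integrands by \eqref{eq:optimopti} and the compact support of $\eta'$, $\eta''$; the full computation is the one recorded in \cite[Subsection~5.3.3]{ichPhD}, which I would reference for the details rather than reproduce.
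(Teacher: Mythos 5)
Your proposal follows essentially the same route as the paper's proof: plug the explicit representations \eqref{eq:mueps0,5}--\eqref{eq:peps0,5}, \eqref{eq:lcon0,5}, \eqref{eq:ucon0,5} into \eqref{eq:AM-0,5}--\eqref{eq:WM-0,5}, read off the separated $(x,t)$-times-$\rho$ structure, verify on $\Gamma$ that each $\rho$-factor has vanishing integral against $\theta_0'$ (resp.\ against $1$) via \eqref{eq:sigma}, \eqref{eq:etateta}, \eqref{eq:hmuv0,5sys-4}--\eqref{eq:hmuv0,5sys-7}, and bound the $(x,t)$-coefficients through \eqref{eq:heps0,5}, \eqref{eq:muv0,5est}, Proposition~\ref{embedding} and the trace/extension operators. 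One small slip: $C^0([0,T];H^2(\mathbb T^1))$ does \emph{not} embed into $L^6(0,T;H^{5/2}(\mathbb T^1))$ (no gain in spatial regularity); fortunately the $H^{1/3}(0,T;H^{5/2})\hookrightarrow L^6(0,T;H^{5/2})$ chain you also cite is correct and in any case $L^6(0,T;H^2)$ from part~1 of the proposition already suffices, since the highest $h_{M-\frac12}^\epsilon$-derivative occurring in the $\mathtt A_k^1$ is $\partial_s^2 h_{M-\frac12}^\epsilon$.
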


\begin{proof}
Ad 1.: Plugging the explicit structure of $\mu_{M-\frac{1}{2}}^{\epsilon}$
as given in (\ref{eq:mueps0,5}) into the definition of $A^{M-\frac{1}{2}}$
(see (\ref{eq:AM-0,5})) we get 
\begin{align}
A^{M-\frac{1}{2}} & =-\tfrac{1}{2}\big(\mu_{M-\frac{1}{2}}^{+,\epsilon}+\mu_{M-\frac{1}{2}}^{-,\epsilon}\big)-\big(\mu_{M-\frac{1}{2}}^{+,\epsilon}-\mu_{M-\frac{1}{2}}^{-,\epsilon}\big)(\eta-\tfrac{1}{2})\nonumber \\
 & \quad-2\partial_{\rho\rho}c_{0}\nabla^{\Gamma}h_{M-\frac{1}{2}}^{\epsilon}\cdot\nabla^{\Gamma}h_{1}+\partial_{\rho}c_{0}\Delta^{\Gamma}h_{M-\frac{1}{2}}^{\epsilon}-g_{0}h_{M-\frac{1}{2}}^{\epsilon}\eta'\label{eq:teil1}\\
 & =\Delta^{\Gamma}h_{M-\frac{1}{2}}^{\epsilon}\left(\partial_{\rho}c_{0}-\sigma\right)+g_{0}h_{M-\frac{1}{2}}^{\epsilon}(-\eta'+\tilde{\eta})-\big(\mu_{M-\frac{1}{2}}^{+,\epsilon}-\mu_{M-\frac{1}{2}}^{-,\epsilon}\big)(\eta-\tfrac{1}{2})\nonumber \\
 & \quad-2\partial_{\rho\rho}c_{0}\nabla^{\Gamma}h_{M-\frac{1}{2}}^{\epsilon}\cdot\nabla^{\Gamma}h_{1}\label{eq:teil2}
\end{align}
on $\mathbb{R}\times\Gamma$, where we used (\ref{eq:hmuv0,5sys-4})
in the second line. Since (\ref{eq:teil1}) also holds on $\mathbb{R}\times\Gamma(2\delta)$,
we immediately get the first decomposition, noting that $c_{0}(\rho,x,t)=\theta_{0}(\rho)$. 

Setting $\mathtt{A}_{1}^{1,\Gamma}=\Delta^{\Gamma}h_{M-\frac{1}{2}}^{\epsilon}$,
$\mathtt{A}_{1}^{2,\Gamma}=\partial_{\rho}c_{0}-\sigma$,
etc.\ we get the desired splitting on $\Gamma$ (with $L_{2}=4$).
It is clear by the properties of $c_{0}$ and $\eta$ that all terms
$\mathtt{A}_{k}^{2}$, $\mathtt{A}_{j}^{2,\Gamma}$ are bounded on
$\mathbb{R}$. Now 
\[
  \int_{\mathbb{R}}\Big(\partial_{\rho}c_{0}-\sigma+\Big(-\eta'+\tfrac{1}{2}\int_{\mathbb{R}}\eta'\theta_{0}'\d\rho\Big)-(\eta-\tfrac{1}{2})-\partial_{\rho\rho}c_{0}\Big)\theta_{0}'\,\d\rho=0
\]
by (\ref{eq:sigma}), (\ref{eq:etateta}) and the fact that $\partial_{\rho\rho}c_{0}\theta_{0}'=\frac{1}{2}\frac{d}{d\rho}\left(\theta_{0}'\right)^{2}$.
Since $\theta_{0}'$ has exponential decay by (\ref{eq:optimopti})
we get (\ref{eq:expabA}). 

Now note that by the definition in Remark~\ref{hnotations} we have e.g.\ 
\[
\Delta^{\Gamma}h_{M-\frac{1}{2}}^{\epsilon}(x,t)=\big(\Delta S(x,t)\partial_{s}+|\nabla S(x,t)|^{2}\partial_{ss}\big)h_{M-\frac{1}{2}}^{\epsilon}(S(x,t),t),
\]
where $S$ is a smooth function $\Gamma(2\delta;T_{0})$ with
bounded derivatives. Thus, by (\ref{eq:heps0,5}) and Proposition
\ref{embedding}.3 it follows
\[
\big\Vert \Delta^{\Gamma}h_{M-\frac{1}{2}}^{\epsilon}+g_{0}h_{M-\frac{1}{2}}^{\epsilon}-2\nabla^{\Gamma}h_{M-\frac{1}{2}}^{\epsilon}\cdot\nabla^{\Gamma}h_{1}\big\Vert _{L^{6}(0,T_{\epsilon};L^{2}(\Gamma_{t}(2\delta)))}\leq C(K)
\]
and the same estimate also holds true if we exchange $\Gamma_{t}(2\delta)$
for $\Gamma_{t}$. On the other hand, the $L^{6}(L^{2})$ estimate
for $\mu_{M-\frac{1}{2}}^{\pm,\epsilon}$ follows from the continuity
of the trace operator and the extension operator as discussed in Remark
\ref{extension}, together with (\ref{eq:muv0,5est}).

Ad 2.: We have by definition of $B^{M-\frac{1}{2}}$ in (\ref{eq:BM-0,5})
\begin{align*}
B^{M-\frac{1}{2}} & =\partial_{\rho}c_{0}\Big(\big(\tfrac{1}{2}\big(\mathbf{v}_{M-\frac{1}{2}}^{+,\epsilon}+\mathbf{v}_{M-\frac{1}{2}}^{-,\epsilon}\big)+\big(\mathbf{v}_{M-\frac{1}{2}}^{+,\epsilon}-\mathbf{v}_{M-\frac{1}{2}}^{-,\epsilon}\big)(\eta-\tfrac{1}{2})\big)\cdot\mathbf{n}\Big)-l_{M-\frac{1}{2}}^{\epsilon}\eta''\rho\\
                  & \quad+\partial_{\rho}c_{0}\big(-\mathbf{v}_{0}\cdot\nabla^{\Gamma}h_{M-\frac{1}{2}}^{\epsilon}-\partial_{t}^{\Gamma}h_{M-\frac{1}{2}}^{\epsilon}+\mathbf{w}_{1}^{\epsilon}|_{\Gamma}\cdot\mathbf{n}\big) +\eta''\big(-l_{M-\frac{1}{2}}^{\epsilon}h_{1}-h_{M-\frac{1}{2}}^{\epsilon}l_{1}\big)\\
 & \quad-\eta'\Big(\Delta d_{\Gamma}\big(\mu_{M-\frac{1}{2}}^{+,\epsilon}-\mu_{M-\frac{1}{2}}^{-,\epsilon}\big)+2\partial_{\mathbf{n}}\big(\mu_{M-\frac{1}{2}}^{+,\epsilon}-\mu_{M-\frac{1}{2}}^{-,\epsilon}\big)+j_{0}h_{M-\frac{1}{2}}^{\epsilon}\Big)\\
                  & \quad-2\partial_{\rho\rho}\mu_{0}\nabla^{\Gamma}h_{M-\frac{1}{2}}^{\epsilon}\cdot\nabla^{\Gamma}h_{1}+\partial_{\rho}\mu_{0}\Delta^{\Gamma}h_{M-\frac{1}{2}}^{\epsilon}+2\nabla\partial_{\rho}\mu_{0}\cdot\nabla^{\Gamma}h_{M-\frac{1}{2}}^{\epsilon}
\end{align*}
on $\Gamma(2\delta)$, where we used (\ref{eq:veps0,5})
and (\ref{eq:mueps0,5}). This makes the decomposition on $\mathbb{R}\times\Gamma(2\delta)$
obvious if we note that by (\ref{eq:mu0def}) we have 
\[
\nabla_{x}^{i}\partial_{\rho}^{l}\mu_{0}=\left(\nabla_{x}^{i}\left[\mu_{0}\right]\right)\partial_{\rho}^{l}\eta\qquad \text{in }\mathbb{R}\times\Gamma(2\delta), i\in\{ 0,1\}, l\in\{ 1,2\} 
\]
and it is again clear by the properties of $c_{0}=\theta_{0}$ and
$\eta$ that all terms $\mathtt{B}_{k}^{2}$ exhibit exponential decay. 

Now for the decomposition on $\Gamma$: As a consequence of (\ref{eq:hmuv0,5sys-7}),
we find
\begin{align*}
  B^{M-\frac{1}{2}} & =\big(\mathbf{v}_{M-\frac{1}{2}}^{+,\epsilon}-\mathbf{v}_{M-\frac{1}{2}}^{-,\epsilon}\big)\cdot\mathbf{n}(\eta-\tfrac{1}{2})\partial_{\rho}c_{0}+j_{0}h_{M-\frac{1}{2}}^{\epsilon}(\tfrac{1}{2}\partial_{\rho}c_{0}-\eta')\\
 & \quad-\eta'\Delta d_{\Gamma}\big(\mu_{M-\frac{1}{2}}^{+,\epsilon}-\mu_{M-\frac{1}{2}}^{-,\epsilon}\big)-\tfrac{1}{2}l_{0}\Delta d_{\Gamma}\partial_{\rho}c_{0}h_{M-\frac{1}{2}}^{\epsilon}+\partial_{\mathbf{n}}l_{0}h_{M-\frac{1}{2}}^{\epsilon}\left(-\eta''\rho-\tfrac{1}{2}\partial_{\rho}c_{0}\right)\\
 & \quad+\partial_{\mathbf{n}}\big(\mu_{M-\frac{1}{2}}^{+,\epsilon}-\mu_{M-\frac{1}{2}}^{-,\epsilon}\big)\left(\tfrac{1}{2}\partial_{\rho}c_{0}-\eta''\rho-2\eta'\right)+\eta''\left(-l_{M-\frac{1}{2}}^{\epsilon}h_{1}-h_{M-\frac{1}{2}}^{\epsilon}l_{1}\right)\\
 & \quad-2\partial_{\rho\rho}\mu_{0}\nabla^{\Gamma}h_{M-\frac{1}{2}}^{\epsilon}\cdot\nabla^{\Gamma}h_{1}+\partial_{\rho}\mu_{0}\Delta^{\Gamma}h_{M-\frac{1}{2}}^{\epsilon}+2\nabla\partial_{\rho}\mu_{0}\cdot\nabla^{\Gamma}h_{M-\frac{1}{2}}^{\epsilon}
\end{align*}
on $\mathbb{R}\times\Gamma$, where we used the structure of $l_{M-\frac{1}{2}}^{\epsilon}$
on $\Gamma$ as given in (\ref{eq:lcon0,5}). Using $\mu_{M-\frac{1}{2}}^{+,\epsilon}-\mu_{M-\frac{1}{2}}^{-,\epsilon}=-l_{0}h_{M-\frac{1}{2}}^{\epsilon}$
on $\Gamma$ due to (\ref{eq:hmuv0,5sys-4}), $\partial_{\rho\rho}\mu_{0}=\partial_{\rho}\mu_{0}=0$
on $\Gamma$ due to (\ref{eq:mu0def}) and 
\[
\nabla^{\Gamma}h_{M-\frac{1}{2}}^{\epsilon}\cdot\nabla\partial_{\rho}\mu_{0}=\nabla^{\Gamma}h_{M-\frac{1}{2}}^{\epsilon}\cdot\mathbf{n}\partial_{\mathbf{n}}\left[\mu_{0}\right]\eta'=0
\]
 on $\Gamma$ by (\ref{eq:mu0def}) and $\nabla^{\Gamma}\partial_{\rho}\mu_{0}=0$,
we arrive at
\begin{align*}
B^{M-\frac{1}{2}} & =\big(\mathbf{v}_{M-\frac{1}{2}}^{+,\epsilon}-\mathbf{v}_{M-\frac{1}{2}}^{-,\epsilon}\big)\cdot\mathbf{n}\left(\eta(\rho)-\tfrac{1}{2}\right)\partial_{\rho}c_{0}+h_{M-\frac{1}{2}}^{\epsilon}\left(j_{0}-l_{0}\Delta d_{\Gamma}\right)\left(\tfrac{1}{2}\partial_{\rho}c_{0}-\eta'\right)\\
 & \quad+\partial_{\mathbf{n}}\big(\mu_{M-\frac{1}{2}}^{+,\epsilon}-\mu_{M-\frac{1}{2}}^{-,\epsilon}\big)\left(\tfrac{1}{2}\partial_{\rho}c_{0}-\eta''\rho-2\eta'\right)+\partial_{\mathbf{n}}l_{0}h_{M-\frac{1}{2}}^{\epsilon}\left(-\eta''\rho-\tfrac{1}{2}\partial_{\rho}c_{0}\right)\\
 & \quad-\big(l_{M-\frac{1}{2}}^{\epsilon}h_{1}+h_{M-\frac{1}{2}}^{\epsilon}l_{1}\big)\eta''
\end{align*}
on $\mathbb{R}\times\Gamma$. This implies the desired decomposition
 if we set $\mathtt{B}_{1}^{1,\Gamma}=\left(\mathbf{v}_{M-\frac{1}{2}}^{+,\epsilon}-\mathbf{v}_{M-\frac{1}{2}}^{-,\epsilon}\right)\cdot\mathbf{n}$,
$\mathtt{B}_{1}^{2,\Gamma}=\left(\eta(\rho)-\frac{1}{2}\right)\partial_{\rho}c_{0}$,
etc. As before the $\mathtt{B}_{k}^{2,\Gamma}$ terms possess exponential
decay. The integral over the $\mathtt{B}_{k}^{2,\Gamma}$ terms has
exponential decay due to the properties of $\eta$ and $c_{0}$ since
e.g.\ 
\[
\int_{\mathbb{R}}\left(\tfrac{1}{2}\partial_{\rho}c_{0}-\eta''\rho-2\eta'\right)\d\rho=1+\int_{\mathbb{R}}\eta'\d\rho-2=0,\;\int_{\mathbb{R}}\eta''\d\rho=0.
\]
This implies (\ref{eq:expabB}).

The $L^{2}(L^{2})$ estimate for the terms of kind $\mathtt{B}_{k}^{1,\Gamma}$
and $\mathtt{B}_{k}^{1}$ now follows from (\ref{eq:heps0,5}), (\ref{eq:muv0,5est})
and the continuity of the trace operator $H^{1}(\Omega^{\pm}(t))\rightarrow L^{2}(\Gamma_{t})$
as well as from the continuity of the extension operators for $\mu_{M-\frac{1}{2}}^{\pm,\epsilon}$
and $\mathbf{v}_{M-\frac{1}{2}}^{\pm,\epsilon}$. 

Ad 3.: Follows in a similar fashion as the proof of the second part and is left to the reader.
\end{proof}
\begin{rem}
\label{rem:We-will-not}$\quad$We will not construct terms of order
$M+\frac{1}{2}$ as the right hand sides of the according ordinary
differential equations (similar to (\ref{eq:sti0,5})\textendash (\ref{eq:mui0,5}))
would depend on derivatives of the kind $\partial_{t}^{\Gamma}h_{M-\frac{1}{2}}^{\epsilon}$
and $\Delta h_{M-\frac{1}{2}}^{\epsilon}$ among others. As a result,
the already tenuous control (independent of $\epsilon$) we have over
the terms of order $M-\frac{1}{2}$ would only get worse for terms
of order $M+\frac{1}{2}$. On the other hand, terms like $\Delta\mu_{M+\frac{1}{2}},\partial_{t}\mathbf{v}_{M+\frac{1}{2}}$,
etc. would appear in the remainder and have to be estimated suitably,
which prohibit the missing estimates.
\end{rem}

\section{Estimates for the Remainder Terms\label{chap:Estimates-Remainder}}

In this section we will prove that the constructed approximate
solutions solve the original system (\ref{eq:StokesPart})\textendash (\ref{eq:Dirichlet2}) upto  error terms of a suitable order in $\eps$.
Throughout this section we write
\begin{equation}
I_{q}^{k}:=\left\{ 0,\ldots,k\right\} \cup\left\{ q\right\} \label{eq:Ikq}
\end{equation}
 for $k\in\mathbb{N}$ and $q\in\mathbb{R}$. The following definition
is central for the following.
\begin{defn}[The approximate solutions]
 \label{def:apprxsol}~\\ Let $\epsilon\in (0,1)$ and let
$\xi$ satisfy (\ref{eq:cut-off}). We define
\[
h_{A}^{\epsilon}(s,t):=\sum_{i\in I_{M-\frac{3}{2}}^{M}}\epsilon^{i}h_{i+1}(s,t)
\]
for $(s,t)\in\mathbb{T}^{1}\times [0,T_{0}]$
and $\rho(x,t):=\frac{d_{\Gamma}(x,t)}{\epsilon}-h_{A}^{\epsilon}(S(x,t),t)$
for $(x,t)\in\Gamma(2\delta)$, as well as $z(x,t):=\tfrac{d_{\mathbf{B}}(x,t)}{\epsilon}$
for $(x,t)\in\overline{\partial_{T_{0}}\Omega(\delta)}$.

 We define the \emph{inner solutions} as
\begin{alignat*}{2}
c_{I}(x,t) & :=\sum_{i=0}^{M+1}\epsilon^{i}c_{i}(\rho(x,t),x,t), & \mu_{I}(x,t) & :=\sum_{i\in I_{M-\frac{1}{2}}^{M+1}}\epsilon^{i}\mu_{i}(\rho(x,t),x,t),\\
\mathbf{v}_{I}(x,t) & :=\sum_{i\in I_{M-\frac{1}{2}}^{M+1}}\epsilon^{i}\mathbf{v}_{i}(\rho(x,t),x,t), &\qquad p_{I}(x,t) & :=\sum_{i\in I_{M-\frac{1}{2}}^{M}}\epsilon^{i}p_{i}(\rho(x,t),x,t),
\end{alignat*}
for all $(x,t)\in\Gamma(2\delta)$ and write
\begin{align}
c_{I,k}(x,t) & :=c_{k}\left(\rho(x,t),x,t\right) \qquad \text{for all }(x,t)\in\Gamma(2\delta)\label{eq:cik}
\end{align}
and analoguously for $\mu_{I,k}$, $\mathbf{v}_{I,k}$, $p_{I,k}$. 
We write
\begin{align}
  c_{O,k}(x,t) & :=c_{k}^{+}(x,t)\chi_{\overline{\Omega_{T_{0}}^{+}}}(x,t)+c_{k}^{-}(x,t)\chi_{\Omega_{T_{0}}^{-}}(x,t) \qquad \text{for all }(x,t)\in\Omega_{T_{0}}\label{eq:cok}
\end{align}
and analoguously for $\mu_{O,k}$, $\mathbf{v}_{O,k}$, $p_{O,k}$ and define the \emph{outer solutions} as 
\begin{alignat*}{2}
c_{O}(x,t) & :=\sum_{i=0}^{M+1}\epsilon^{i}c_{O,k}(x,t), &\qquad \mu_{O}(x,t) & :=\sum_{i\in I_{M-\frac{1}{2}}^{M+1}}\epsilon^{i}\mu_{O,k}(x,t)\\
\mathbf{v}_{O}(x,t) & :=\sum_{i\in I_{M-\frac{1}{2}}^{M+1}}\epsilon^{i}\mathbf{v}_{O,k}(x,t), & p_{O}(x,t) & :=\sum_{i\in I_{M-\frac{1}{2}}^{M}}\epsilon^{i}p_{O,k}(x,t)
\end{alignat*}
for $(x,t)\in\Omega_{T_{0}}$. We define the \emph{boundary solutions} as
\begin{alignat*}{2}
  c_{\mathbf{B}}(x,t) & :=-1+\sum_{i=1}^{M+1}\epsilon^{i}c_{i}^{\mathbf{B}}(z(x,t),x,t), \qquad \mu_{\mathbf{B}}(x,t)  :=\sum_{i\in I_{M-\frac{1}{2}}^{M+1}}\epsilon^{i}\mu_{i}^{\mathbf{B}}(z(x,t),x,t),\\
\mathbf{v}_{\mathbf{B}}(x,t)&:=\sum_{i\in I_{M-\frac{1}{2}}^{M+1}}\epsilon^{i}\mathbf{v}_{i}^{\mathbf{B}}(z(x,t),x,t)-\epsilon^{M+1}\mathbf{v}_{M+1}^{\mathbf{B}}(0,x,t),
\end{alignat*}
and $p_{\mathbf{B}}(x,t)  :=\sum_{i\in I_{M-\frac{1}{2}}^{M}}\epsilon^{i}p_{i}^{\mathbf{B}}(z(x,t),x,t)$
for $(x,t)\in\overline{\partial_{T_{0}}\Omega(\delta)}$,
where we set
\begin{align}
\mu_{M-\frac{1}{2}}^{\mathbf{B}}:=\mu_{M-\frac{1}{2}}^{-},\quad \mathbf{v}_{M-\frac{1}{2}}^{\mathbf{B}}:=\mathbf{v}_{M-\frac{1}{2}}^{\mathbf{-}},\quad p_{M-\frac{1}{2}}^{\mathbf{B}}:=p_{M-\frac{1}{2}}^{\mathbf{-}}\quad \text{in }(-\infty,0]\times\overline{\partial_{T_{0}}\Omega(\delta)}
\label{eq:M0,5bdrydef}
\end{align}
and write 
\begin{alignat}{2}
c_{\mathbf{B},k}(x,t) & :=c_{k}^{\mathbf{B}}\left(z(x,t),x,t\right)&\quad &\text{for all } (x,t)\in\overline{\partial_{T_{0}}\Omega(\delta)}\label{eq:cbk}
\end{alignat}
and similarly $\mu_{\mathbf{B},k}$, $\mathbf{v}_{\mathbf{B},k}$, $p_{\mathbf{B},k}$ with the only exception that
\[
\mathbf{v}_{\mathbf{B},M+1}(x,t)=\mathbf{v}_{M+1}^{\mathbf{B}}\left(z(x,t),x,t\right)-\mathbf{v}_{M+1}^{\mathbf{B}}(0,x,t).
\]
 We define the \emph{approximate solutions }
\begin{align}
c_{A}^{\epsilon} & :=\xi(d_{\Gamma})c_{I}+(1-\xi(d_{\Gamma}))(1-\xi(2d_{\mathbf{B}}))c_{O}+\xi (2d_{\mathbf{B}})c_{\mathbf{B}},\label{eq:apprxsol}
\end{align}
in $\Omega_{T_{0}}$ and write 
\begin{align}
c_{A,k}(x,t) & :=\xi(d_{\Gamma})c_{I,k}+\left(1-\xi(d_{\Gamma})\right)\left(1-\xi(2d_{\mathbf{B}})\right)c_{O,k}+\xi(2d_{\mathbf{B}})c_{\mathbf{B},k}\label{eq:cak}
\end{align}
for all $(x,t)\in\Omega_{T_{0}}$ . Analoguously  we define $\mu_{A}^{\epsilon}$, $\mathbf{v}_{A}^{\epsilon}$, $p_{A}^{\epsilon}$ and
$\mu_{A,k}$, $\mathbf{v}_{A,k}$, $p_{A,k}$.
\end{defn}

This definition implies in particular $\mu_{A,M-\frac{1}{2}}=\xi(d_{\Gamma})\mu_{I,M-\frac{1}{2}}+\left(1-\xi(d_{\Gamma})\right)\mu_{O,M-\frac{1}{2}}$
and a similar structure for $\mathbf{v}_{A,M-\frac{1}{2}}$, $p_{A,M-\frac{1}{2}}$. 
\begin{assumption}
\label{assu:remainder}Throughout this section we assume that Assumption
\ref{assu:Main-est} holds for $c_{A}=c_{A}^{\epsilon}$ and $\epsilon_{0}\in\left(0,1\right)$,
the family $(T_{\epsilon})_{\epsilon\in (0,\epsilon_{0})}$
and $K\geq1$. Moreover, we assume $\epsilon_{1}\in (0,\epsilon_{0}]$
is given as in Theorem \ref{hM-1}.2 and such that \eqref{eq:w1epsab}
holds for $\twe$, the weak solution to \eqref{eq:w1}\textendash \eqref{eq:w13}
with $H=\big(h_{M-\frac{1}{2}}^{\epsilon}\big)_{\epsilon\in (0,\epsilon_{0})}$.
\end{assumption}

Note in particular that the assumptions of Lemma \ref{lem:spekholds}
are satisfied in this situation. Addtitionally, there is some $C>0$
such that
\begin{equation}
\left\Vert \nabla c_{A}^{\epsilon}\right\Vert _{L^{\infty}(\Omega_{T_{0}}\backslash\Gamma(2\delta))}\leq C\epsilon\label{eq:nabla-caeps-Linf}
\end{equation}
for all $\epsilon\in (0,1)$ small enough. This is the case
since $c_{0}^{\pm}=\pm1$ in $\Omega_{T_{0}}^{\pm}$ (cf.\ (\ref{eq:c0out}))
and since $c_{0}^{\mathbf{B}}=-1$ and $c_{1}^{\mathbf{B}}=c_{1}^{-}$
in $\overline{\partial_{T_{0}}\Omega(\delta)}$ due to
Corollary (\ref{cor:bdrycond}). Moreover, it holds 
\begin{equation}
  \sup_{0\leq t\leq T_\eps }\left\Vert h_{A}^{\epsilon}(t)\right\Vert _{C^{1}(\Gamma_{t}(2\delta))}\le C(K)\label{eq:haepglm}
\end{equation}
for some $C(K)>0$ and all $\epsilon\in (0,\epsilon_{1})$.
This is a consequence of the uniform boundedness of $h_{k}$, $k\in\left\{ 1,\ldots,M+1\right\} $,
and (\ref{eq:heps0,5}) for $h_{M-\frac{1}{2}}^{\epsilon}$.
\begin{rem}
At this point, we want to remark about the shortened statements in
\cite[Subsection 3.1]{NSCH1}. Concerning the definitions of $c_{I}$,
$c_{O}$, $\mu_{I}$, etc.\ there is a discrepancy between the present contribution
and \cite{NSCH1}. In \cite{NSCH1}, $\mu_{I}$ and $\mathbf{v}_{I}$
are defined\emph{ without} the appearance of fractional order terms
and in the present context, we would define 
\[
\mu_{O,\mathbf{B}}:=\sum_{k=0}^{M+1}\epsilon^{k}\left(\left(1-\xi(2d_{\mathbf{B}})\right)\mu_{O,k}+\xi(2d_{\mathbf{B}})\mu_{\mathbf{B},k}\right),
\]
with a similar representation for $c_{O,\mathbf{B}}$ and $\mathbf{v}_{O,\mathbf{B}}$.
Again, this leaves out the fractional order terms, which are considered
separately. These notational differences help in \cite{NSCH1} to
keep the necessary structural information about the approximate solutions
as compact as possible, while still presenting enough background to
make the proofs self-contained. Now $c_{O,\mathbf{B}}=\pm1+\mathcal{O}(\epsilon)$
in $C^{1}(\Omega_{T_{0}}^{\pm})$ follows by the same arguments
as (\ref{eq:nabla-caeps-Linf}) and $\Vert c_{O,\mathbf{B}}\Vert _{C^{2}(\Omega_{T_{0}}^{\pm})}\leq C$
is a consequence of $D_{x}^{2}c_{\mathbf{B}}=\mathcal{O}(1)$.
$\mu_{O,\mathbf{B}}=\mu^{\pm}+\mathcal{O}(\epsilon)$ and
$\mathbf{v}_{O,\mathbf{B}}=\mathbf{v}^{\pm}+\mathcal{O}(\epsilon)$
in $L^{\infty}(\Omega_{T_{0}}^{\pm})$ as $\epsilon\rightarrow0$
are direct results of Lemma \ref{zeroorder}. $h_{A}^{\epsilon}(s,0)=0$
for all $s\in\mathbb{T}^{1}$ is a consequence of Lemma \ref{k-thorder}
and (\ref{eq:hmuv0,5sys-8}), while $\mu_{M-\frac{1}{2}}^{\epsilon,-}=0$
on $\partial_{T_{0}}\Omega$ is due to (\ref{eq:mu0,5bdry}). 
\end{rem}

\subsection{The Structure of the Remainder Terms\label{sec:The-Structure-Rem}}

\subsubsection{The Inner Remainder Terms}

In the following, let Assumption \ref{assu:remainder} hold and we
work under the notations and assumptions of Definition \ref{def:apprxsol}.
We now analyze up to which order in $\eps$ the equations (\ref{eq:StokesPart})\textendash (\ref{eq:CH-Part2})
are fulfilled by the inner solutions $c_{I},\mu_{I},\mathbf{v}_{I},p_{I}$.
For this we use the ordinary differential equations satisfied by $(c_{k},\mu_{k},\mathbf{v}_{k},p_{k-1})$
for $k\in\left\{ 0,\ldots,M+1\right\} $ as constructed for the inner
terms and evaluate them at 
\begin{equation}
\rho(x,t)=\frac{d_{\Gamma}(x,t)}{\epsilon}-h_{A}^{\epsilon}(S(x,t),t)\label{eq:rho}
\end{equation}
for $(x,t)\in\Gamma(2\delta;T_{\epsilon})$
and $\epsilon\in (0,\epsilon_{1})$. Before we give the
explicit formula, note that we can choose $\epsilon_{1}$ so small
that for all $\epsilon\in (0,\epsilon_{1})$ we have $\left|h_{A}^{\epsilon}-h_{1}\right|\leq1$
due to (\ref{eq:heps0,5}). Thus, (\ref{eq:remhglm}) is satisfied
and using Remark \ref{WU} we get
\[
\epsilon^{2}\left(U^{+}\eta^{C_{S},+}+U^{-}\eta^{C_{S},-}\right)\big|_{\rho=\frac{d_{\Gamma}}{\epsilon}-h_{A}^{\epsilon}}=\epsilon^{2}\left(\mathbf{W}^{+}\eta^{C_{S},+}+\mathbf{W}^{-}\eta^{C_{S},-}\right)\big|_{\rho=\frac{d_{\Gamma}}{\epsilon}-h_{A}^{\epsilon}}=0.
\]

Let $\epsilon\in\left(0,\epsilon_{1}\right)$. Using
the inner equations derived in Chapter \ref{constrappr}  we get
\begin{align}
\partial_{t}c_{I} & +\mathbf{v}_{I}\cdot\nabla c_{I}+\epsilon^{M-\frac{1}{2}}\left.\mathbf{w}_{1}^{\epsilon}\right|_{\Gamma}\cdot\nabla c_{I}-\Delta\mu_{I}\nonumber \\
  =&\epsilon^{M}\left(\partial_{\rho}c_{M+1}\partial_{t}d_{\Gamma}-\partial_{\rho}\mu_{M+1}\Delta d_{\Gamma}-2\nabla\partial_{\rho}\mu_{M+1}\cdot\mathbf{n}-j_{M}\eta'\rho-l_{M+1}\eta''\rho\right)\nonumber \\
 & +\epsilon^{M-\frac{1}{2}}\mathbf{w}_{1}^{\epsilon}|_{\Gamma}\cdot\Bigg(\sum_{i=1}^{M+1}\epsilon^{i-1}\partial_{\rho}c_{i}\mathbf{n}+\sum_{i=0}^{M+1}\Bigg(-\sum_{j\in I_{M-\frac{3}{2}}^{M}}\epsilon^{i+j}\partial_{\rho}c_{i}\nabla^{\Gamma}h_{j+1}+\epsilon^{i}\nabla c_{i}\Bigg)\Bigg)\nonumber \\
 & -\sum_{\substack{0\leq i\leq M+1,
j\in I_{M-\frac{3}{2}}^{M}\\
i+j\geq M-\frac{1}{2}
}
}\epsilon^{i+j}\partial_{\rho}c_{i}\partial_{t}^{\Gamma}h_{j+1}+\sum_{\substack{i\in I_{M-\frac{1}{2}}^{M+1},j\in I_{M-\frac{3}{2}}^{M}\\
i+j\geq M-\frac{1}{2}
}
}\epsilon^{i+j}\left(2\nabla\partial_{\rho}\mu_{i}\cdot\nabla^{\Gamma}h_{j+1}+\partial_{\rho}\mu_{i}\Delta^{\Gamma}h_{j+1}\right)\nonumber \\
 & +\sum_{i\in I_{M-\frac{1}{2}}^{M+1}}\frac{1}{\epsilon}\sum_{\substack{0\leq j\leq M+1\\
i+j\geq M+\frac{1}{2}
}
}\epsilon^{i+j}\mathbf{v}_{i}\cdot\mathbf{n}\partial_{\rho}c_{j}-\sum_{\substack{0\leq j\leq M+1,\ l\in I_{M-\frac{3}{2}}^{M}\\
i+j+l\geq M-\frac{1}{2}
}
}\epsilon^{i+j+l}\mathbf{v}_{i}\cdot\partial_{\rho}c_{j}\nabla^{\Gamma}h_{l+1}\nonumber \\
 & -\sum_{\substack{i\in I_{M-\frac{1}{2}}^{M+1},j,l\in I_{M-\frac{3}{2}}^{M}\\
i+j+l\geq M-\frac{1}{2}
}
}\epsilon^{i+j+l}\partial_{\rho\rho}\mu_{i}\nabla^{\Gamma}h_{j+1}\cdot\nabla^{\Gamma}h_{l+1}-\sum_{\substack{i\in I_{M-\frac{1}{2}}^{M+1}, j\in I_{M-\frac{3}{2}}^{M}\\
i+j\geq M+\frac{1}{2}
}
}\epsilon^{i+j-1}l_{i}\eta''h_{j+1}\nonumber \\
 & -\sum_{\substack{0\leq i\leq M, k\in I_{M-\frac{3}{2}}^{M}\\
i+j\geq M-\frac{1}{2}
}
}\epsilon^{i+k}j_{i}\eta'h_{k+1}+\sum_{i=M}^{M+1}\epsilon^{i}\left(\partial_{t}c_{i}-\Delta\mu_{i}\right)-\epsilon^{M-\frac{1}{2}}\Delta\mu_{M-\frac{1}{2}}\nonumber \\
 & +\sum_{\substack{i\in I_{M-\frac{1}{2}}^{M+1}, 0\leq j\leq M\\
i+j\geq M-\frac{1}{2}
}
}\epsilon^{i+j}\mathbf{v}_{i}\cdot\nabla c_{j}+\epsilon^{M-\frac{3}{2}}B^{M-\frac{1}{2}}=:\rci\quad \text{in }\Gamma (2\delta;T_{\epsilon}),\label{eq:CH1-rem}
\end{align}
 where $\mathbf{w}_{1}^{\epsilon}$
is given as in Theorem \ref{hM-1}. We also get 
\begin{align}
\epsilon  \Delta c^{I}-&\epsilon^{-1}f'(c^{I})+\mu^{I}=\text{\ensuremath{\mathcal{O}}}(\epsilon^{M+1})\nonumber \\
 & -\epsilon\sum_{\substack{0\leq i\leq M+1, j\in I_{M-\frac{3}{2}}^{M}\\
i+j\geq M-\frac{1}{2}
}
}\epsilon^{i+j}\left(\partial_{\rho}c_{i}\Delta^{\Gamma}h_{j+1}+2\nabla\partial_{\rho}c_{i}\cdot\nabla^{\Gamma}h_{j+1}\right)+\sum_{\substack{0\leq i\leq M,
j\in I_{M-\frac{3}{2}}^{M}\\
i+j\ge M+\frac{1}{2}
}
}\epsilon^{i+j}g_{i}\eta'h_{j+1}\nonumber \\
 & +\epsilon\sum_{\substack{0\leq i\leq M, j,l\in I_{M-\frac{3}{2}}^{M}\\
i+j+l\geq M-\frac{1}{2}
}
}\epsilon^{i+j+l}\partial_{\rho\rho}c_{i}\nabla^{\Gamma}h_{j+1}\cdot\nabla^{\Gamma}h_{l+1}-\epsilon^{M-\frac{1}{2}}A^{M-\frac{1}{2}}=:\rhi\label{eq:CH2-rem}
\end{align}
in $\Gamma(2\delta;T_{\epsilon})$, where the Landau symbol
is with respect to $L^{\infty}(\Gamma(2\delta;T_{0}))$.
Furthermore, 
\begin{align}
\operatorname{div}\mathbf{v}_{I} & =\epsilon^{M+1}\operatorname{div}\mathbf{v}_{M+1}-\sum_{\substack{i\in I_{M-\frac{1}{2}}^{M+1},j\in I_{M-\frac{3}{2}}^{M}\\
i+j\geq M+\frac{1}{2}
}
}\epsilon^{i+j}\partial_{\rho}\mathbf{v}_{i}\cdot\nabla^{\Gamma}h_{j+1}+\sum_{\substack{i\in I_{M-\frac{1}{2}}^{M+1},j\in I_{M-\frac{3}{2}}^{M}\\
i+j\geq M+\frac{1}{2}
}
}\epsilon^{i+j}\mathbf{u}_{i}\cdot\mathbf{n}\eta'h_{j+1}\nonumber \\
 & \quad+\sum_{\substack{i\in I_{M-\frac{1}{2}}^{M+1},j\in I_{M-\frac{3}{2}}^{M}\\
i+j\geq M+\frac{1}{2}
}
}\epsilon^{i+j}\mathbf{u}_{i}\cdot\nabla^{\Gamma}h_{j+1}\eta'd_{\Gamma}-\epsilon\sum_{\substack{i\in I_{M-\frac{1}{2}}^{M+1},j\in I_{M-\frac{3}{2}}^{M}\\
i+j\geq M-\frac{1}{2}
}
}\epsilon^{i+j}\mathbf{u}_{i}\cdot\nabla^{\Gamma}h_{j+1}\eta'\rho\nonumber \\
 & \quad-\epsilon\sum_{\substack{i\in I_{M-\frac{1}{2}}^{M+1},j,k\in I_{M-\frac{3}{2}}^{M}\\
i+j+k\geq M-\frac{1}{2}
}
}\epsilon^{i+j+k}\mathbf{u}_{i}\cdot\nabla^{\Gamma}h_{j+1}h_{k+1}\eta'-\epsilon^{M-\frac{1}{2}}W^{M-\frac{1}{2}}=:\rdivi\label{eq:DIV-rem}
\end{align}
and
\begin{align}
-\Delta\mathbf{v}_{I} & +\nabla p_{I}-\mu_{I}\nabla c_{I}\nonumber \\
 & =\epsilon^{M}\left(-\partial_{\rho}\mathbf{v}_{M+1}\Delta d_{\Gamma}-2\nabla\partial_{\rho}\mathbf{v}_{M+1}^{T}\mathbf{n}+\mathbf{q}_{M}\eta'\rho-\mathbf{u}_{M+1}\eta''\rho\right)\nonumber \\
 & \quad-\frac{1}{\epsilon}\sum_{\substack{i\in I_{M-\frac{1}{2}}^{M+1},
0\leq j\leq M+1\\
i+j\geq M+\frac{1}{2}
}
}\epsilon^{i+j}\mu_{i}\partial_{\rho}c_{j}\mathbf{n}-\sum_{\substack{i\in I_{M-\frac{1}{2}}^{M+1},j,l\in I_{M-\frac{3}{2}}^{M}\\
i+j+l\geq M-\frac{1}{2}
}
}\epsilon^{i+j+l}\partial_{\rho\rho}\mathbf{v}_{i}\nabla^{\Gamma}h_{j+1}\cdot \nabla^{\Gamma}h_{l+1}\nonumber \\
 & \quad-\sum_{\substack{i\in I_{M-\frac{1}{2}}^{M},j\in I_{M-\frac{3}{2}}^{M}\\
i+j\geq M-\frac{1}{2}
}
}\epsilon^{i+j}\partial_{\rho}p_{i}\nabla^{\Gamma}h_{j+1}+\sum_{i\in I_{M-\frac{1}{2}}^{M+1},l\in I_{M-\frac{3}{2}}^{M}}\sum_{\substack{0\leq j\leq M+1\\
\\
i+j+l\geq M-\frac{1}{2}
}
}\epsilon^{i+j+l}\mu_{i}\partial_{\rho}c_{j}\nabla^{\Gamma}h_{l+1}\nonumber \\
 & \quad+\sum_{\substack{i\in I_{M-\frac{1}{2}}^{M+1},j\in I_{M-\frac{3}{2}}^{M}\\
i+j\geq M-\frac{1}{2}
}
}\epsilon^{i+j}\left(\partial_{\rho}\mathbf{v}_{i}\Delta^{\Gamma}h_{j+1}+2\nabla\partial_{\rho}\mathbf{v}_{i}^{T}\nabla^{\Gamma}h_{j+1}\right)\nonumber \\
 & \quad-\frac{1}{\epsilon}\sum_{\substack{i\in I_{M-\frac{1}{2}}^{M+1},j\in I_{M-\frac{3}{2}}^{M}\\
i+j\geq M+\frac{1}{2}
}
}\epsilon^{i+j}\mathbf{u}_{i}\eta''h_{j+1}+\frac{1}{\epsilon}\sum_{\substack{0\leq i\leq M,
j\in I_{M-\frac{3}{2}}^{M}\\
i+j\geq M-\frac{1}{2}
}
}\epsilon^{i+j+1}\mathbf{q}_{i}\eta'h_{j+1}-\sum_{i=M}^{M+1}\epsilon^{i}\Delta\mathbf{v}_{i}\nonumber \\
 & \quad+\epsilon^{M}\nabla p_{M}-\epsilon^{M-\frac{1}{2}}\left(\Delta\mathbf{v}_{M-\frac{1}{2}}-\nabla p_{M-\frac{1}{2}}\right)-\sum_{\substack{0\leq j\leq M, i\in I_{M-\frac{1}{2}}^{M+1}\\
i+j\geq M-\frac{1}{2}
}
}\epsilon^{i+j}\mu_{i}\nabla c_{j}-\epsilon^{M-\frac{3}{2}}\mathbf{V}^{M-\frac{1}{2}}\nonumber \\
 & =:\rsi\label{eq:ST-rem}\qquad \text{in } \Gamma(2\delta;T_{\epsilon}).
\end{align}

\subsubsection{The Outer and Boundary Remainder Terms}

By the outer equations considered in Chapter \ref{constrappr} we
get in $\Omega_{T_{0}}^{+}\cup\Omega_{T_{0}}^{-}$ 
\begin{align}
\partial_{t}c_{O}+\mathbf{v}_{O}\cdot\nabla c_{O}-\Delta\mu_{O} & =\epsilon^{M+\frac{1}{2}}\mathbf{v}_{O,M-\frac{1}{2}}\cdot\nabla c_{O,1}+\sum_{\substack{i\in I_{M-\frac{1}{2}}^{M+1}, 0\leq j\le M+1\\
i+j\geq M+\frac{3}{2}
}
}\epsilon^{i+j}\mathbf{v}_{O,i}\cdot\nabla c_{O,j}\nonumber \\
 & =:\rco\label{eq:CH1-remO}
\end{align}
and
\begin{align}
\epsilon\Delta c_{O}-\epsilon^{-1}f'(c_{O})+\mu_{O} & =\text{\ensuremath{\mathcal{O}}}(\epsilon^{M+1})+\epsilon^{M-\frac{1}{2}}\mu_{O,M-\frac{1}{2}}=:\rhO\label{eq:CH2-remO}
\end{align}
in $L^{\infty}(\Omega_{T_{0}}^{+}\cup\Omega_{T_{0}}^{-})$.
Furthermore, 
\begin{align}
-\Delta\mathbf{v}_{O}+\nabla p_{O}-\mu_{O}\nabla c_{O} & =-\epsilon^{M+\frac{1}{2}}\mu_{O,M-\frac{1}{2}}\nabla c_{O,1}-\sum_{\substack{i\in I_{M-\frac{1}{2}}^{M+1}\\
0\leq j\leq M+1\\
i+j\geq M+\frac{3}{2}
}
}\epsilon^{i+j}\mu_{O,i}\nabla c_{O,j}=:\rso\label{eq:ST-remO}
\end{align}
and 
\begin{equation}
\operatorname{div}\mathbf{v}_{O}=0=:\rdivo.\label{eq:DIV-remO}
\end{equation}

Consider the ordinary differential equations (\ref{eq:stokes-bdry})\textendash (\ref{eq:hilliard-bdry})
satisfied by $\left(c_{k}^{\mathbf{B}},\mu_{k}^{\mathbf{B}},\mathbf{v}_{k}^{\mathbf{B}},p_{k-1}^{\mathbf{B}}\right)$
evaluated at $z(x,t)=\frac{d_{\mathbf{B}}(x,t)}{\epsilon}$
for $(x,t)\in\overline{\partial_{T_{0}}\Omega(\delta)}$
and $\epsilon\in\left(0,\epsilon_{1}\right)$ and the outer equations
as discussed in (\ref{eq:hmuv0,5sys})\textendash (\ref{eq:hmuv0,5sys-2})
for $\left(\mu_{M-\frac{1}{2}}^{\mathbf{B}},\mathbf{v}_{M-\frac{1}{2}}^{\mathbf{B}},p_{M-\frac{1}{2}}^{\mathbf{B}}\right)$.
Then 
\begin{align}
\partial_{t}c_{\mathbf{B}}+\mathbf{v}_{\mathbf{B}}\cdot\nabla c_{\mathbf{B}}-\Delta\mu_{\mathbf{B}} & =\text{\ensuremath{\mathcal{O}}}(\epsilon^{M})+\epsilon^{M+\frac{1}{2}}\sum_{\substack{1\leq j\le M+1}
}\epsilon^{j-1}\mathbf{v}_{M-\frac{1}{2}}^{-}\cdot\nabla c_{j}^{\mathbf{B}}\nonumber \\
 & \quad+\sum_{2\leq j\leq M+1}\epsilon^{M+j-\frac{3}{2}}\mathbf{v}_{M-\frac{1}{2}}^{-}\cdot\nabla d_{\mathbf{B}}\partial_{z}c_{j}^{\mathbf{B}}=:\rcb,\label{eq:CH1-remB}
\end{align}
as a consequence of $\partial_{z}c_{0}^{\mathbf{B}}=\partial_{z}c_{1}^{\mathbf{B}}=0$,
see Corollary \ref{cor:bdrycond}. Moreover, 
\begin{align}
\epsilon\Delta c_{\mathbf{B}}-\epsilon^{-1}f'\left(c_{\mathbf{B}}\right)+\mu_{\mathbf{B}} & =\mathcal{O}(\epsilon^{M+1})+\epsilon^{M-\frac{1}{2}}\mu_{M-\frac{1}{2}}^{-}=:\rhb,\label{eq:CH2-remB}\\
-\Delta\mathbf{v}_{\mathbf{B}}+\nabla p_{\mathbf{B}}-\mu_{\mathbf{B}}\nabla c_{\mathbf{B}} & =\text{\ensuremath{\mathcal{O}}}(\epsilon^{M})-\epsilon^{M+\frac{1}{2}}\mu_{M-\frac{1}{2}}^{-}\sum_{\substack{1\leq j\leq M+1}
}\epsilon^{j-1}\nabla c_{j}^{\mathbf{B}}\nonumber \\
 & \quad-\epsilon^{M+\frac{1}{2}}\mu_{M-\frac{1}{2}}^{-}\sum_{2\leq j\leq M+1}\epsilon^{j-2}\nabla d_{\mathbf{B}}\partial_{z}c_{j}^{\mathbf{B}}=:\rsb\label{eq:ST-remB}
\end{align}
in $L^{\infty}(\Omega_{T_{0}}^{+}\cup\Omega_{T_{0}}^{-})$ and 
\begin{align}
\operatorname{div}\mathbf{v}_{\mathbf{B}} & =\epsilon^{M+1}\left(\operatorname{div}\mathbf{v}_{M+1}^{\mathbf{B}}-\operatorname{div}\mathbf{v}_{M+1}^{\mathbf{B}}|_{z=0}\right)=:\rdivb\label{eq:DIV-remB}
\end{align}
in $\overline{\partial_{T_{0}}\Omega(\delta)}$. 
Moreover, 
\begin{alignat}{2}
  \mu_{\mathbf{B}} & =0,\label{eq:Rem-Dir-Mu}
  \qquad
  c_{\mathbf{B}}  =-1, &\quad& \text{on }\partial\Omega,
  \\
\left(-2D_{s}\mathbf{v}_{\mathbf{B}}+p_{\mathbf{B}}\mathbf{I}\right)\cdot\mathbf{n}_{\partial\Omega} & =\alpha_{0}\mathbf{v}_{\mathbf{B}}&\quad& \text{on }\partial\Omega.\label{eq:Rem-Navier-v}
\end{alignat}
\begin{rem}
\label{rem:Remark-bdry-remainder}We introduce the notation 
\begin{equation}
\trhb:=\rhb-\epsilon^{M-\frac{1}{2}}\mu_{O,M-\frac{1}{2}}\label{eq:repstild}
\end{equation}
in $\overline{\partial_{T_{0}}\Omega(\delta)}$ for later
use. Note that $\trhb\in\mathcal{O}(\epsilon^{M+1})$ in
$L^{\infty}(\partial_{T_{0}}\Omega(\delta))$.
\end{rem}

\subsection{First Estimates\label{sec:First-Estimates}}

In order to streamline the results, we define
\begin{align*}
  \mathcal{T}_{G} & :=\bigcup_{i\in\left\{ 0,\ldots,M+1\right\} }\left\{ c_{i},\mu_{i},l_{i}\eta,j_{i}\eta,\mathbf{v}_{i},\mathbf{u}_{i}\eta,\mathbf{q}_{i}\eta\right\},
  \\\mathcal{T}_{h}&:=\bigcup_{i,j\in I_{M-\frac{1}{2}}^{M+1}\backslash\left\{ 0\right\} }\left\{ h_{j},\nabla^{\Gamma}h_{j},\Delta^{\Gamma}h_{j},\partial_{t}^{\Gamma}h_{j},\nabla^{\Gamma}h_{j}\cdot\nabla^{\Gamma}h_{i}\right\} .
\end{align*}
 The following lemma will yield estimates for almost every term in
(\ref{eq:CH1-rem}), except for $B^{M-\frac{1}{2}}$, which is treated
in Lemma \ref{beste}.
\begin{lem}[Estimates for $\rci$ and $\rsi$]~
\label{rech1}\label{rest}\\Let Assumption~\ref{assu:remainder} hold,
let $\varphi\in L^{\infty}(0,T_{0};H^{1}(\Gamma_{t}(2\delta)))$
and let $\mathbf{z}\in L^{2}(0,T_{\epsilon};H^{1}(\Omega)^{2})$.
Then there is some constant $C(K)>0$ such that for all
$\epsilon\in (0,\epsilon_{1})$ 
\begin{align}
\big\Vert \big(\rci-\epsilon^{M-\frac{3}{2}}B^{M-\frac{1}{2}}\big)\varphi\big\Vert _{L^{1}(\Gamma(2\delta;T_{\epsilon}))}\leq C(K)\epsilon^{M}T_{\epsilon}^{\frac{1}{2}}\left\Vert \varphi\right\Vert _{L^{\infty}(0,T_{\epsilon};H^{1}(\Gamma_{t}(2\delta)))},\label{eq:rech1}\\
\big\Vert \big(\rsi+\epsilon^{M-\frac{3}{2}}\mathbf{V}^{M-\frac{1}{2}}\big)\cdot\mathbf{z}\big\Vert _{L^{1}(\Gamma(2\delta;T_{\epsilon}))}\leq C(K)\epsilon^{M}\left\Vert \mathbf{z}\right\Vert _{L^{2}(0,T_{\epsilon};H^{1}(\Omega))}.\label{eq:rest}
\end{align}
\end{lem}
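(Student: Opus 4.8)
The plan is to estimate term by term in the explicit expansions \eqref{eq:CH1-rem} and \eqref{eq:ST-rem}, sorting the summands by the power of $\epsilon$ that multiplies them and by their structure. After subtracting $\epsilon^{M-\frac 32}B^{M-\frac 12}$ from $\rci$ (resp.\ adding $\epsilon^{M-\frac 32}\mathbf V^{M-\frac 12}$ to $\rsi$), every remaining summand falls into one of a small number of types: (a) terms of the form $\epsilon^{i+j}$ (or $\epsilon^{i+j+l}$, $\epsilon^{i+j-1}$, $\epsilon^{i+j+1}$, etc.) with the exponent constraint forcing $i+j\ge M-\frac 12$ — indeed after pulling out the appropriate power of $\epsilon$ these are $\mathcal O(\epsilon^{M})$ or $\mathcal O(\epsilon^{M+\frac 12})$ (the constraint on the div-type and Stokes-type terms with a $\frac 1\epsilon$ in front is $i+j\ge M+\frac 12$, so after the $\frac 1\epsilon$ they are still $\mathcal O(\epsilon^{M-\frac 12})$ at worst, but those carry a factor $\partial_\rho c_j$ or $\partial_\rho\mathbf v_j$ which is an $\mathcal R_\alpha$-type profile, see below); (b) the honest integer-order leftover terms $\sum_{i=M}^{M+1}\epsilon^i(\partial_t c_i-\Delta\mu_i)$, $\epsilon^M\nabla p_M-\ldots$, $\mathcal O(\epsilon^{M+1})$, etc., which are bounded in $L^\infty$ by the uniform bounds on the integer expansion terms from Lemmata \ref{zeroorder} and \ref{k-thorder}; (c) the fractional leftovers $\epsilon^{M-\frac 12}\Delta\mu_{M-\frac 12}$, $\epsilon^{M-\frac 12}A^{M-\frac 12}$ (for $\rhi$, not needed here), $\epsilon^{M-\frac 12}(\Delta\mathbf v_{M-\frac 12}-\nabla p_{M-\frac 12})$; and (d) the $\mathbf w_1^\epsilon|_\Gamma$-terms in \eqref{eq:CH1-rem}.

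First I would handle types (a) and (b): each such term is a product of (a fixed power $\epsilon^{m}$ with $m\ge M$ or $m\ge M-\frac12$ paired with a decaying profile), a smooth integer-order expansion coefficient bounded uniformly on $\Gamma(2\delta;T_0)$, and possibly a factor from $\mathcal T_h$. The key point is that whenever the power is only $\epsilon^{M-\frac 12}$ the term in fact carries either a factor from $\mathcal T_h$ involving $h_{M-\frac 12}^\epsilon$ (so we use \eqref{eq:heps0,5}, i.e.\ $\|h_{M-\frac 12}^\epsilon\|_{X_{T_\epsilon}}\le C(K)$, together with the embeddings in Proposition~\ref{embedding} to bound $\nabla^\Gamma h_{M-\frac 12}^\epsilon$, $\Delta^\Gamma h_{M-\frac 12}^\epsilon$, $\partial_t^\Gamma h_{M-\frac 12}^\epsilon$ in $L^2$ in time) or a factor $\mu_{M-\frac 12}^{\pm,\epsilon}$, $\mathbf v_{M-\frac 12}^{\pm,\epsilon}$, $p_{M-\frac 12}^{\pm,\epsilon}$ (bounded in $L^6(0,T_\epsilon;H^{2}/H^{1})$ via \eqref{eq:muv0,5est} and the extension/trace estimates of Remark~\ref{extension}). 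For the profile decay, terms that live on $\mathbb R\times\Gamma(2\delta)$ of the form $\partial_\rho^l c_i$, $\partial_\rho^l\mu_i$, $\partial_\rho^l\mathbf v_i$, $\eta'\rho$, $\eta''\rho$ are in $\mathcal R_\alpha$ by the matching conditions \eqref{eq:matchcon} and the exponential decay \eqref{eq:optimopti}; hence Lemma~\ref{Linfeig}.2 (with $\psi$ the smooth $\Gamma$-coefficient times the $h$-factor, which is in $L^{2,\infty}$ by Lemma~\ref{L4inf}) converts the $L^1(\Gamma_t(2\delta))$ integral of (profile)$\times$(coefficient)$\times\varphi$ into $C\epsilon^{1/2}\|\varphi\|_{L^2(\Gamma_t(2\delta))}$, and then Hölder in time plus the $L^2$/$L^6$ bounds on the $h$- and fractional-order factors gives the claimed $C(K)\epsilon^M T_\epsilon^{1/2}\|\varphi\|_{L^\infty(H^1)}$ (the $\epsilon^{1/2}$ from Lemma~\ref{Linfeig} upgrades $\epsilon^{M-1/2}$ to $\epsilon^M$). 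For the purely $L^\infty$-bounded type-(b) terms one just uses $\|f g\varphi\|_{L^1(\Gamma(2\delta;T_\epsilon))}\le C\|f\|_{L^\infty}\|g\|_{L^\infty}T_\epsilon^{1/2}\|\varphi\|_{L^\infty(0,T_\epsilon;L^2)}$ together with the extra $\epsilon^M$ (or better).

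The main obstacle is the block of $\mathbf w_1^\epsilon|_\Gamma$-terms in \eqref{eq:CH1-rem}: here $\mathbf w_1^\epsilon=\twe/\epsilon^{M-\frac 12}$ and the only control available is \eqref{eq:w1epsab}, $\|\twe\|_{L^2(0,T;H^1(\Omega))}\le C(K)\epsilon^{M-\frac 12}$, i.e.\ $\mathbf w_1^\epsilon|_\Gamma$ is merely $L^2$ in space-time with no uniform $L^\infty$ bound. The product $\epsilon^{M-\frac 12}\mathbf w_1^\epsilon|_\Gamma\cdot(\sum_i\epsilon^{i-1}\partial_\rho c_i\mathbf n+\ldots)$ has the leading factor $\epsilon^{M-\frac 12}\cdot\epsilon^{-1}=\epsilon^{M-\frac 32}$ when paired with $\partial_\rho c_1\mathbf n$; to recover $\epsilon^M$ one must exploit that $\partial_\rho c_1$ decays exponentially in $\rho$, apply Lemma~\ref{Linfeig}.2 to the integral $\int_{\Gamma_t(2\delta)}|\text{(profile)}(\rho(x,t))||\mathbf w_1^\epsilon|_\Gamma||\varphi|\,dx$, getting a gain of $\epsilon^{1/2}$ and a trace-type $L^2(\Gamma_t)$ norm of $\mathbf w_1^\epsilon|_\Gamma$, and then use $\|\mathbf w_1^\epsilon|_\Gamma\|_{L^2(0,T_\epsilon;L^2(\Gamma_t))}=\epsilon^{-(M-\frac 12)}\|\twe|_\Gamma\|_{L^2(L^2(\Gamma_t))}\le C(K)$ by the trace theorem and \eqref{eq:w1epsab}. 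This yields $\epsilon^{M-\frac 32}\cdot\epsilon^{1/2}\cdot C(K)=C(K)\epsilon^{M-1}$ — still not enough — so in fact one uses that $\mathbf w_1^\epsilon|_\Gamma(x,t)=\mathbf w_1^\epsilon(\operatorname{Pr}_{\Gamma_t}(x),t)$ is constant in the normal direction, pulls it out of the $\rho$-integral entirely, so the $\rho$-integral is over a pure profile giving an honest $\epsilon$ (Lemma~\ref{Linfeig}.1) rather than $\epsilon^{1/2}$: $\epsilon^{M-\frac 32}\cdot\epsilon\cdot\|\mathbf w_1^\epsilon|_\Gamma\|_{L^2}\|\varphi\|\le C(K)\epsilon^{M-\frac 12}T_\epsilon^{1/2}\|\varphi\|_{L^\infty(H^1)}$; the genuinely borderline pieces are those where $\mathbf w_1^\epsilon|_\Gamma$ is paired with $\nabla c_i$ or with $\nabla^\Gamma h_{j+1}$ which are $\mathcal O(1)$ not exponentially small, but there the prefactor is only $\epsilon^{M-\frac 12}$ and these are $\mathcal O(\epsilon^{M-\frac 12}\cdot 1)\cdot\|\mathbf w_1^\epsilon\|_{L^2}\le C(K)\epsilon^{M-\frac 12}T_\epsilon^{1/2}\|\varphi\|$, which is within $\epsilon^M$ after absorbing one $T_\epsilon^{1/2}$ only if $M-\frac 12\ge M$ — so one must instead keep these inside the $\Gamma_t(2\delta)$-integral, use the exponential factor $\xi$ or the fact that outside $\Gamma(\delta)$ the profiles are exponentially small by \eqref{eq:expab}, and combine with Cauchy–Schwarz in $x$ to trade against $\|\mathbf w_1^\epsilon\|_{L^2(\Omega)}$; I expect the cleanest route is exactly the Lemma~\ref{Linfeig}.1-with-constant-normal-direction argument sketched above, and I would organize the proof so that every $\mathbf w_1^\epsilon|_\Gamma$-term is matched against a profile factor, using that $\sum_{i\ge 1}\epsilon^{i-1}\partial_\rho c_i$ and $\sum_i\epsilon^i\nabla c_i$ both decay and $\sum_{j}\epsilon^{i+j}\partial_\rho c_i\nabla^\Gamma h_{j+1}$ again decays in $\rho$. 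The estimate \eqref{eq:rest} for $\rsi$ is entirely analogous: the same term-sorting applies, $\mathbf z\in L^2(0,T_\epsilon;H^1(\Omega))$ replaces $\varphi$, Lemma~\ref{Linfeig}.2 is used with $u=\mathbf z$ (no time-Hölder loss needed since $\mathbf z$ is already $L^2$ in time, which is why the right-hand side has no $T_\epsilon^{1/2}$), and the only fractional pieces $\epsilon^{M-\frac 12}(\Delta\mathbf v_{M-\frac 12}-\nabla p_{M-\frac 12})$ are controlled by \eqref{eq:muv0,5est} and the extension estimates, while $\epsilon^{M-\frac 32}\mathbf V^{M-\frac 12}$ has been subtracted off.
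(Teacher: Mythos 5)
Your overall strategy (sort the terms of \eqref{eq:CH1-rem} and \eqref{eq:ST-rem} by type, use exponential decay of $\mathcal R_\alpha$-profiles together with Lemma~\ref{Linfeig} to gain an $\epsilon$, and control the fractional-order factors via \eqref{eq:heps0,5} and \eqref{eq:muv0,5est}) is the same as the paper's, but your treatment of the $\mathbf w_1^\epsilon|_\Gamma$-block contains an exponent error that sends you chasing a non-problem, and you omit two genuine steps.

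First, the exponent. In \eqref{eq:CH1-rem} the sum next to $\epsilon^{M-\frac 12}\mathbf w_1^\epsilon|_\Gamma$ reads $\sum_{i=1}^{M+1}\epsilon^{i-1}\partial_\rho c_i\,\mathbf n$, starting at $i=1$, so the leading prefactor is $\epsilon^{M-\frac12}\cdot\epsilon^0=\epsilon^{M-\frac12}$, not $\epsilon^{M-\frac32}$ as you assert. (The $i=0$ piece $\epsilon^{M-\frac32}\partial_\rho c_0\,\mathbf w_1^\epsilon|_\Gamma\cdot\mathbf n$ is absorbed into $\epsilon^{M-\frac 32}B^{M-\frac 12}$, see \eqref{eq:BM-0,5}, which is exactly why it is subtracted in the statement.) Starting from $\epsilon^{M-\frac12}$, applying the $r$-independence of $\mathbf w_1^\epsilon|_\Gamma$ together with Lemma~\ref{Linfeig}.1 (or equivalently \cite[Corollary~2.7]{nsac}) and $\|\mathbf w_1^\epsilon|_\Gamma\|_{L^2(L^2(\Gamma_t))}\leq C(K)$ (from Lemma~\ref{Wichtig} plus the trace theorem) gives directly $C(K)\epsilon^{M+\frac12}T_\epsilon^{1/2}\|\varphi\|$, comfortably within the claim. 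Your discussion of ``still not enough'' and having to ``absorb one $T_\epsilon^{1/2}$ only if $M-\frac12\ge M$'' is based on the wrong starting power. Similarly, your worry about the $\nabla c_i$- and $\partial_\rho c_i\nabla^\Gamma h_{j+1}$-pieces is unfounded: $\nabla c_0=0$ identically (since $c_0(\rho,x,t)=\theta_0(\rho)$, and $\nabla$ acts on $x$ only), so for $i\ge1$ the extra $\epsilon^i$ does the job, and the $\partial_\rho c_i$-factors always carry the decay needed for Lemma~\ref{Linfeig}.1.

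Second, you do not address the terms involving $l_{M-\frac12}^\epsilon$ (nor $\mathbf u_{M-\frac12}^\epsilon$ for $\rsi$). These appear in \eqref{eq:CH1-rem} via the sums over $l_i\eta''$ and are defined in \eqref{eq:lcon0,5} by a difference divided by $d_\Gamma$. Since $\mu_{M-\frac12}^{\pm,\epsilon}$ and $h_{M-\frac12}^\epsilon$ are only bounded in $L^2(H^2)$-type norms (not in $C^1$ uniformly in $\epsilon$), you cannot directly bound $l_{M-\frac12}^\epsilon$ in $L^\infty$. The paper invokes the mean value theorem, using that the numerator vanishes on $\Gamma$ because of \eqref{eq:hmuv0,5sys-4}, to express $l_{M-\frac12}^\epsilon$ in terms of one normal derivative of the numerator and then bounds $\|l_{M-\frac12}^\epsilon\|_{L^2(\Gamma(2\delta;T_\epsilon))}\leq C(K)$ via \eqref{eq:muv0,5est}. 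Without this, the $l_{M-\frac12}^\epsilon\eta''h_1$-term is not controlled. Third, you lump the $\epsilon^{M-\frac12}\Delta\mu_{M-\frac12}^\epsilon$-term in with the rest, but this one needs the observation that $\Delta\mu_{M-\frac12}^{\pm,\epsilon}=0$ inside $\Omega_{T_\epsilon}^\pm$ by \eqref{eq:hmuv0,5sys}, so only the smooth extension outside $\Omega^\pm$ contributes, and there $1-\eta$ (resp.\ $\eta$) is supported in a bounded $\rho$-interval; this produces an extra $\epsilon^{1/2}$ that makes the estimate close. A direct $L^2(H^2)$ bound without this vanishing would leave an uncompensated $\epsilon^{M-\frac12}$.

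In short: the plan and main mechanisms are correct and match the paper's, but you need to (i) fix the $\epsilon$-bookkeeping for the $\mathbf w_1^\epsilon|_\Gamma$-terms, (ii) supply the mean-value argument for $l_{M-\frac12}^\epsilon$ and $\mathbf u_{M-\frac12}^\epsilon$, and (iii) supply the interior-vanishing argument for $\Delta\mu_{M-\frac12}^\epsilon$ and $\Delta\mathbf v_{M-\frac12}^\epsilon-\nabla p_{M-\frac12}^\epsilon$.
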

\begin{proof}
The proof makes heavy use of the fact that (\ref{eq:heps0,5}) and
(\ref{eq:muv0,5est}) hold under Assumption~\ref{assu:remainder}.
We first show the inequality for the estimate involving $\rci$ in
multiple steps, estimating the terms separately:

\emph{Step 1:} Due to the matching conditions (\ref{eq:matchcon}) and the
definition of $\eta$, all $f\in\mathcal{T}_{G}$ satisfy $D_{\rho}^{l}D_{x}^{k}f\in\mathcal{R}_{\alpha}$
for $l\in\left\{ 1,2\right\} $, $k\in\left\{ 0,1\right\} $ and some
$\alpha>0$. Now let $g\in\mathcal{T}_{h}$. Since $S\colon \Gamma(2\delta)\rightarrow\mathbb{T}^{1}$
(as defined in (\ref{eq:Sfett})) and its derivatives are bounded
in $\Gamma(2\delta)$ we have 
\[
\left|g(x,t)\right|\leq C\left|a(S(x,t),t)\right|
\]
for some function $a\colon\mathbb{T}^{1}\times[0,T_{0}]\rightarrow\mathbb{R}$,
where $a$ is given by a suitable derivative of the corresponding
$h_{i},$ $i\in I_{M-\frac{1}{2}}^{M+1}\backslash\left\{ 0\right\} $,
or $h_{i}$ itself.

Thus we may use \cite[Corollary 2.7]{nsac} to get 
\begin{align*}
\int_{\Gamma\left(2\delta;T_{\epsilon}\right)}\big|D_{\rho}^{l}D_{x}^{k}f\cdot g\varphi\big|\d(x,t) & \leq C\epsilon T_{\epsilon}^{\frac{1}{2}}\left\Vert a\right\Vert _{L^{2}((0,T_{\epsilon})\times\mathbb{T}^{1})}\left\Vert \varphi\right\Vert _{L^{\infty}(0,T_{\epsilon};H^{1}(\Gamma_{t}(2\delta)))}.
\end{align*}
Now if $g$ corresponds to $h_{l}$ or its derivatives for $l\in\left\{ 1,\ldots,M+1\right\} $,
then $a$ may be  estimated in $L^{\infty}\left(\left(0,T_{0}\right)\times\mathbb{T}^{1}\right)$ uniformly in $\epsilon$.
In case $g$ corresponds to $h_{M-\frac{1}{2}}^{\epsilon}$ or its
derivatives, we use 
\[
\big\Vert \big(h_{M-\frac{1}{2}}^{\epsilon},\partial_{s}h_{M-\frac{1}{2}}^{\epsilon},\partial_{s}^{2}h_{M-\frac{1}{2}}^{\epsilon},\partial_{t}h_{M-\frac{1}{2}}^{\epsilon},\big(\partial_{s}h_{M-\frac{1}{2}}^{\epsilon}\big)^{2}\big)\big\Vert _{L^{2}((0,T_{\epsilon})\times\mathbb{T}^{1})}\leq C\big\Vert h_{M-\frac{1}{2}}^{\epsilon}\big\Vert _{X_{T_{\epsilon}}}
\]
together with (\ref{eq:heps0,5}). If $g\in L^{\infty}(\Gamma(2\delta;T_{0}))$
similar estimates follow with $a\equiv1$.

\emph{Step 2:} Concerning the terms involving $l_{M-\frac{1}{2}}^{\epsilon}$:
Since $X_{T_{\epsilon}}\hookrightarrow C^{0}([0,T_{\epsilon}]\times\mathbb{T}^{1})$
due to Proposition \ref{embedding}.2, we get by Lemma \ref{Linfeig}.2
\[
\int_{\Gamma(2\delta;T_{\epsilon})}\big|l_{M-\frac{1}{2}}^{\epsilon}\eta''h_{i}\varphi\big|\d(x,t)\leq C(K)\epsilon^{\frac{1}{2}}T_{\epsilon}^{\frac{1}{2}}\big\Vert l_{M-\frac{1}{2}}^{\epsilon}\big\Vert _{L^{2}(\Gamma(2\delta;T_{\epsilon}))}\Vert \varphi\Vert _{L^{\infty}(0,T_{\epsilon};H^{1}(\Gamma_{t}(2\delta)))}.
\]
Here we also used $H^{1}(\Gamma_{t}(2\delta))\hookrightarrow L^{2,\infty}(\Gamma_{t}(2\delta))$
due to Lemma \ref{L4inf} and again (\ref{eq:heps0,5}). Considering $l_{M-\frac{1}{2}}^{\epsilon}$
as given in (\ref{eq:lcon0,5}), we note that its numerator vanishes
on $\Gamma$ due to (\ref{eq:hmuv0,5sys-4}). Thus, the mean value
theorem implies for a function $\gamma\colon(-2\delta,2\delta)\rightarrow(-2\delta,2\delta)$
\begin{align}
\big\Vert l_{M-\frac{1}{2}}^{\epsilon}\big\Vert _{L^{2}(\Gamma(2\delta;T_{\epsilon}))}^{2} & \leq C\int_{0}^{T_{\epsilon}}\int_{\mathbb{T}^{1}}\int_{-2\delta}^{2\delta}\Big|\partial_{\mathbf{n}}\big(\big[\mu_{M-\frac{1}{2}}^{\epsilon}\big]+l_{0}h_{M-\frac{1}{2}}^{\epsilon}\big)(X(\gamma(r),s,t))\Big|^{2}\d r\d s\d t\nonumber \\
 & \leq C_{1}\int_{0}^{T_{\epsilon}}\int_{\mathbb{T}^{1}}\sup_{r\in\left(-2\delta,2\delta\right)}\big|\big[\partial_{\mathbf{n}}\mu_{M-\frac{1}{2}}^{\epsilon}\big](X(r,s,t))\big|^{2}\d s\d t+\!C_{2}\nonumber \\
 & \leq C_{1}\Big(\big\Vert \mu_{M-\frac{1}{2}}^{+,\epsilon}\big\Vert _{L^{2}(0,T_{\epsilon};H^{2}(\Omega^{+}(t)))}^{2}\!+\big\Vert \mu_{M-\frac{1}{2}}^{-,\epsilon}\big\Vert _{L^{2}(0,T_{\epsilon};H^{2}(\Omega^{-}(t)))}^{2}\Big)+\!C_{2}\label{eq:lepspeps-1}
\end{align}
Now (\ref{eq:muv0,5est}) implies the
desired estimate.

\emph{Step 3:} Concerning the terms involving $\mathbf{w}_{1}^{\epsilon}|_{\Gamma}$:
As $\partial_{\rho}c_{i}\in\mathcal{R}_{\alpha}$ for $i\in\left\{ 1,\ldots,M+1\right\} $
we may again use \cite[Corollary 2.7]{nsac} to get 
\begin{align*}
\int_{\Gamma(2\delta;T_{\epsilon})}\left|\wei|_{\Gamma}\cdot\mathbf{n}\partial_{\rho}c_{i}\varphi\right|\d(x,t) & \leq C\epsilon T_{\epsilon}^{\frac{1}{2}}\left\Vert \wei\right\Vert _{L^{2}\left(0,T;L^{2}\left(\Gamma_{t}\right)\right)}\left\Vert \varphi\right\Vert _{L^{\infty}\left(0,T_{\epsilon};H^{1}\left(\Gamma_{t}(2\delta)\right)\right)}.
\end{align*}
Since $\wei=\frac{\twe}{\epsilon^{M-\frac{1}{2}}}$ (cf.\ Theorem
\ref{hM-1}.1), we get due to Lemma \ref{Wichtig} and the continuity
of the trace operator 
\[
\int_{\Gamma(2\delta;T_{\epsilon})}\left|\wei|_{\Gamma}\cdot\mathbf{n}\partial_{\rho}c_{i}\varphi\right|\d(x,t)\leq C(K)\epsilon T_{\epsilon}^{\frac{1}{2}}\left\Vert \varphi\right\Vert _{L^{\infty}(0,T_{\epsilon};H^{1}(\Gamma_{t}(2\delta)))}.
\]
 Moreover, we get
\[
\int_{\Gamma(2\delta;T_{\epsilon})}\left|\wei|_{\Gamma}\partial_{\rho}c_{l}\nabla^{\Gamma}h_{j}\varphi\right|\d(x,t)\leq C(K)\epsilon T_{\epsilon}^{\frac{1}{2}}\left\Vert \varphi\right\Vert _{L^{\infty}(0,T_{\epsilon};H^{1}(\Gamma_{t}(2\delta)))}
\]
by similar arguments as above. Finally, we have $\nabla c_{0}=0$ (as
$c_{0}(\rho,x,t)=\theta_{0}(\rho)$) which immediately
shows the wanted estimate for $\mathbf{w}_{1}^{\epsilon}|_{\Gamma}\cdot\nabla c_{i}$,
as $\nabla c_{i}\in L^{\infty}(\mathbb{R}\times\Gamma(2\delta))$
for all $i\in\left\{ 1,\ldots,M+1\right\} $. 

\emph{Step 4:} Concerning the terms involving $\mathbf{v}_{i}$: Using the
explicit form of $\mathbf{v}_{M-\frac{1}{2}}^{\epsilon}$ as given
in (\ref{eq:veps0,5}) together with Lemma~\ref{Linfeig}.2 and (\ref{eq:heps0,5})
we get 
\begin{align*}
  &\int_{\Gamma(2\delta;T_{\epsilon})}\big|\partial_{\rho}c_{j}\mathbf{v}_{M-\frac{1}{2}}^{\epsilon}\cdot(\mathbf{n}-\nabla^{\Gamma}h_{k})\varphi\big|\d(x,t) \\
  &\quad \leq CT_{\epsilon}^{\frac{1}{2}}\epsilon\Big\Vert \big|\mathbf{v}_{M-\frac{1}{2}}^{+,\epsilon}\big|+\big|\mathbf{v}_{M-\frac{1}{2}}^{-,\epsilon}\big|\Big\Vert _{L^{2}(0,T_{\epsilon};L^{2,\infty}(\Gamma_{t}(2\delta)))}\cdot\left\Vert \varphi\right\Vert _{L^{\infty}\left(0,T_{\epsilon};H^{1}\left(\Gamma_{t}(2\delta)\right)\right)}.
\end{align*}
By $H^{1}(\Gamma_{t}(2\delta))\hookrightarrow L^{2,\infty}(\Gamma_{t}(2\delta))$,
(\ref{eq:muv0,5est}) and the continuity of the extension operator
we get the desired estimate.

\emph{Step 5:} Concerning the terms involving $\mu_{M-\frac{1}{2}}^{\epsilon}$:
We use the explicit structure of $\mu_{M-\frac{1}{2}}^{\epsilon}$
as given in (\ref{eq:mueps0,5}) and estimate 
\begin{align}
\int_{\Gamma(2\delta;T_{\epsilon})} & \big|\nabla\partial_{\rho}\mu_{M-\frac{1}{2}}^{\epsilon}\cdot\nabla^{\Gamma}h_{i}\varphi\big|\d(x,t)\nonumber \\
 & \leq C\int_{0}^{T_{\epsilon}}\int_{\mathbb{T}^{1}}|\partial_{s}h_{i}(s,t)|\int_{-2\delta}^{2\delta}\big|\nabla\partial_{\rho}\mu_{M-\frac{1}{2}}^{\epsilon}\big(\tfrac{r}{\epsilon}-h_{A}^{\epsilon}(s,t),X(r,s,t)\big)\varphi\circ X\big|\d r\d s\d t\nonumber \\
 & \leq C\epsilon\int_{0}^{T_{\epsilon}}\int_{\mathbb{T}^{1}}|\partial_{s}h_{i}(s,t|\sup_{r\in (-2\delta,2\delta)}\big|\big[\nabla\mu_{M-\frac{1}{2}}^{\epsilon}\big](X(r,s,t))\varphi\circ X\big|\d s\d t\int_{\mathbb{R}}\left|\eta'(\rho)\right|\d\rho\nonumber \\
 & \leq C\epsilon\int_{0}^{T_{\epsilon}}\big\Vert \big[\nabla\mu_{M-\frac{1}{2}}^{\epsilon}\big]\big\Vert _{L^{4,\infty}(\Gamma_{t}(2\delta))}\left\Vert \varphi\right\Vert _{L^{4,\infty}(\Gamma_{t}(2\delta))}\d t\Vert \partial_{s}h_{i}\Vert _{L^{\infty}(0,T_{\epsilon};L^{2}(\mathbb{T}^1))}\nonumber \\
 & \leq C(K)\epsilon T_{\epsilon}^{\frac{1}{2}}\Vert \varphi\Vert _{L^{\infty}(0,T_{\epsilon};H^{1}(\Gamma_{t}(2\delta)))}.\label{eq:similar}
\end{align}
Here we used $\sup_{(x,t)\in\Gamma (2\delta;T_{\epsilon})}\left|\nabla^{\Gamma}h_{j}(x,t)\right|\leq C(K)$
for $j\in I_{M-\frac{1}{2}}^{M+1}\backslash\left\{ 0\right\} $. The same procedure yields the desired estimate
for $\partial_{\rho\rho}\mu_{M-\frac{1}{2}}^{\epsilon}\nabla h_{i}\cdot\nabla h_{j}$
and $\partial_{\rho}\mu_{M-\frac{1}{2}}^{\epsilon}\Delta h_{i}$,
$i,j\in I_{M-\frac{1}{2}}^{M+1}\backslash\left\{ 0\right\} $. For
the latter, it is necessary to use $X_{T_{\epsilon}}\hookrightarrow C^{0}\left(\left[0,T_{\epsilon}\right];H^{2}(\mathbb{T}^1)\right)$.

To treat $\Delta\mu_{M-\frac{1}{2}}^{\epsilon}$ we set
\begin{equation}
C_{K}:=\sup_{\epsilon\in (0,\epsilon_{1})}\sup_{(s,t)\in\mathbb{T}^{1}\times [0,T_{\epsilon}]}|h_{A}^{\epsilon}(s,t)|\label{eq:CK}
\end{equation}
 which is well defined due (\ref{eq:haepglm}). As $\Delta\mu_{M-\frac{1}{2}}^{\epsilon}=\eta\Delta\mu_{M-\frac{1}{2}}^{+,\epsilon}+ (1-\eta)\Delta\mu_{M-\frac{1}{2}}^{-,\epsilon}$
and $\Delta\mu_{M-\frac{1}{2}}^{\pm,\epsilon}=0$ in $\Omega_{T_{\epsilon}}^{\pm}$
by (\ref{eq:hmuv0,5sys}), we find
\begin{align}
\int_{\Omega_{T_{\epsilon}}^{+}\cap\Gamma (2\delta;T_{\epsilon})} & \big|\Delta\mu_{M-\frac{1}{2}}^{\epsilon}\varphi\big|\d(x,t)\nonumber \\
 & \leq C\int_{0}^{T_{\epsilon}}\int_{\mathbb{T}^{1}}\Vert \varphi(.,s,t)\Vert _{L^{\infty}(-2\delta,2\delta)}\int_{0}^{2\delta}\big|\Delta\mu_{M-\frac{1}{2}}^{-,\epsilon}(1-\eta(\rho(r,s,t)))\big|\d r\d s\d t\nonumber \\
 & \leq CT_{\epsilon}^{\frac{1}{2}}\Vert \varphi\Vert_{L^{\infty}(0,T_{\epsilon};H^{1}(\Omega))}\big\Vert \Delta\mu_{M-\frac{1}{2}}^{-,\epsilon}\big\Vert_{L^{2}(\Gamma(2\delta;T_{\epsilon}))}\epsilon^{\frac{1}{2}}\Vert 1-\eta\Vert _{L^{2}(-C_{K},\infty)}\nonumber \\
 & \leq C(K)T_{\epsilon}^{\frac{1}{2}}\epsilon^{\frac{1}{2}}\Vert \varphi\Vert _{L^{\infty} (0,T_{\epsilon};H^{1}(\Omega))},\label{eq:irgendwashald}
\end{align}
where we used $\eta-1\equiv0$ in $(1,\infty)$, the continuity
of the extension operator for $\mu_{M-\frac{1}{2}}^{\pm,\epsilon}$
and (\ref{eq:muv0,5est}) in the last line. A similar estimate holds
on $\Omega_{T_{\epsilon}}^{-}\cap\Gamma(2\delta;T_{\epsilon})$.

Now (\ref{eq:rech1}) follows since all not considered terms may
be treated by simply using Hölder's inequality and $L^{\infty}$ bounds
(for $\mathbf{v}_{i}\cdot\nabla c_{j}$ note $\nabla c_{0}=0$ and
apply (\ref{eq:muv0,5est}) for the fractional order term). 

Regarding (\ref{eq:rest}), the same ideas may be applied with the
sole difference that $\mathbf{z}$ is only $L^{2}$ in time and as
a consequence we do not get the term $T_{\epsilon}^{\frac{1}{2}}$
in the estimates. Due to the many similarities, we only 
show three estimates in detail:

 Concerning $\partial_{\rho}p_{M-\frac{1}{2}}^{\epsilon}\nabla^{\Gamma}h_{j}\cdot\mathbf{z}$
for $j\in I_{M-\frac{1}{2}}^{M+1}\backslash\left\{ 0\right\} $, we
use the explicit form of $p_{M-\frac{1}{2}}^{\epsilon}$ as given
in (\ref{eq:peps0,5}) to estimate
\begin{align*}
\big\Vert \partial_{\rho}p_{M-\frac{1}{2}}^{\epsilon}\nabla^{\Gamma}h_{j}\mathbf{z}\big\Vert _{L^{1}(\Gamma (2\delta;T_{\epsilon}))} & \leq C(K)\epsilon\big\Vert \big[p_{M-\frac{1}{2}}^{\epsilon}\big]\mathbf{z}\big\Vert _{L^{1}(0,T_{\epsilon});L^{1,\infty}(\Gamma_{t}(2\delta)))}\Vert \eta'\Vert _{L^{1}(\mathbb{R})}\\
 & \leq C(K)\epsilon \Vert \mathbf{z}\Vert _{L^{2}\left(0,T_{\epsilon};H^{1}(\Omega)\right)},
\end{align*}
where we used Lemma~\ref{Linfeig}.1 in the first inequality and
$H^{1}(\Gamma_{t}(2\delta))\hookrightarrow L^{2,\infty}(\Gamma_{t}(2\delta))$
(cf.\ Lemma~\ref{L4inf}) as well as (\ref{eq:muv0,5est}) together
with the continuity of the extension operator for $p_{M-\frac{1}{2}}^{\pm,\epsilon}$
(cf.\ Remark \ref{extension}) in the last inequality. Here we again
used the notation $\big[p_{M-\frac{1}{2}}^{\epsilon}\big]=p_{M-\frac{1}{2}}^{+,\epsilon}-p_{M-\frac{1}{2}}^{-,\epsilon}$.

Concerning terms involving $\partial_{\rho}\mathbf{v}_{M-\frac{1}{2}}^{\epsilon}$:
Using the explicit form of $\mathbf{v}_{M-\frac{1}{2}}^{\epsilon}$
as given in (\ref{eq:veps0,5}), we exemplarily estimate
\begin{align*}
\int_{\Gamma(2\delta;T_{\epsilon})} & \big|\nabla\partial_{\rho}\mathbf{v}_{M-\frac{1}{2}}^{\epsilon}\cdot\nabla^{\Gamma}h_{i}\mathbf{z}\big|\d(x,t)\\
 & \leq C\epsilon\int_{0}^{T_{\epsilon}}\Vert \partial_{s}h_{i}\Vert _{L^{2}(\mathbb{T}^1)}\big\Vert \big[\nabla\mathbf{v}_{M-\frac{1}{2}}^{\epsilon}\big]\big\Vert _{L^{4,\infty}(\Gamma_{t}(2\delta))}\Vert \mathbf{z}\Vert _{L^{4,\infty}(\Gamma_{t}(2\delta))}\d t\Vert \eta'\Vert _{L^{1}(\mathbb{R})}\\
 & \leq C\epsilon\Vert h_{i}\Vert _{L^{4}(0,T_{\epsilon};H^{2}(\mathbb{T}^1))}\big\Vert \big[\nabla\mathbf{v}_{M-\frac{1}{2}}^{\epsilon}\big]\big\Vert _{L^{4}(0,T_{\epsilon};H^{1}(\Gamma_{t}(2\delta)))}\Vert \mathbf{z}\Vert _{L^{2}(0,T_{\epsilon};H^{1}(\Omega))}\\
 & \leq C(K)\epsilon\Vert \mathbf{z}\Vert_{L^{2}(0,T_{\epsilon};H^{1}(\Omega))}
\end{align*}
for all $i\in I_{M+1}^{M-\frac{1}{2}}\backslash\left\{ 0\right\} $,
where we used $H^{1}(\Gamma_{t}(2\delta))\hookrightarrow L^{4,\infty}(\Gamma_{t}(2\delta))$
in the second inequality and the continuity of the trace operator,
(\ref{eq:muv0,5est}) and $X_{T_{\epsilon}}\hookrightarrow H^{\frac{1}{2}}(0,T_{\epsilon};H^{2}(\mathbb{T}^1))$
in the last inequality. The same procedure can be used to estimate
$\partial_{\rho\rho}\mathbf{v}_{M-\frac{1}{2}}^{\epsilon}\nabla^{\Gamma}h_{i}\cdot\nabla^{\Gamma}h_{j}$
and $\partial_{\rho}\mathbf{v}_{M-\frac{1}{2}}^{\epsilon}\Delta^{\Gamma}h_{i}$
for all $i,j\in I_{M-\frac{1}{2}}^{M+1}\backslash\left\{ 0\right\} $.

Concerning $\Delta\mathbf{v}_{M-\frac{1}{2}}^{\epsilon}-\nabla p_{M-\frac{1}{2}}^{\epsilon}$:
Let $C_{K}$ be given as in (\ref{eq:CK}). Since
\[
\Delta\mathbf{v}_{M-\frac{1}{2}}^{\epsilon}-\nabla p_{M-\frac{1}{2}}^{\epsilon}=\big(\Delta\mathbf{v}_{M-\frac{1}{2}}^{+,\epsilon}-\nabla p_{M-\frac{1}{2}}^{+,\epsilon}\big)\eta+\big(\Delta\mathbf{v}_{M-\frac{1}{2}}^{-,\epsilon}-\nabla p_{M-\frac{1}{2}}^{-,\epsilon}\big)(1-\eta)
\]
and $\Delta\mathbf{v}_{M-\frac{1}{2}}^{\pm,\epsilon}-\nabla p_{M-\frac{1}{2}}^{\pm,\epsilon}=0$
in $\Omega_{T_{\epsilon}}^{\pm}$ by (\ref{eq:hmuv0,5sys-1}), we
can use the same approach as in (\ref{eq:irgendwashald}) together
with (\ref{eq:muv0,5est}) in the last line.
\end{proof}
The following proposition simplifies the estimates of remainder terms in (\ref{eq:CH2-rem}).
\begin{prop}
\label{RZerlprop} Let $R=c^{\epsilon}-c_{A}^{\epsilon}$. It holds
\[
\Vert R\Vert _{L^{1}(\Gamma(\delta;T_{\epsilon}))}\leq C(K)T_{\epsilon}^{\frac{1}{2}}\epsilon^{M}\qquad \text{for all }\epsilon\in(0,\epsilon_{1}).
\]
\end{prop}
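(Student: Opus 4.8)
The plan is to estimate $\|R\|_{L^1(\Gamma(\delta;T_\epsilon))}$ by splitting the tubular neighborhood $\Gamma(\delta)$ into the genuinely interfacial region, where $R$ is controlled in the weighted $L^2$-norm from \eqref{eq:Main-est-c}, and to exploit the positivity of $f''(c_A^\epsilon)$ there via Assumption \ref{assu:Spektral}. First I would recall that, by Lemma \ref{lem:spekholds}, which is applicable under Assumption \ref{assu:remainder} (as already noted after that assumption), $c_A^\epsilon(\cdot,t)$ satisfies Assumption \ref{assu:Spektral} for all $t\in[0,T_\epsilon]$ with a constant $C^*$ independent of $\epsilon$. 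In particular, \eqref{eq:obfi} and the structural information on $c_A^\epsilon$ in $\Gamma(\delta)$ give that $c_A^\epsilon$ is close to $\theta_0(\rho)\in(-1,1)$ up to $\mathcal{O}(\epsilon)$, so $f''(c_A^\epsilon)$ is bounded below only away from $\Gamma$, not in all of $\Gamma(\delta)$. This is the crux: we cannot simply integrate $\tfrac1\epsilon f''(c_A^\epsilon)R^2$ to control $R$ uniformly in the full strip.

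The key step is therefore a Cauchy--Schwarz argument in the fiber variable. Fix $t$ and use the diffeomorphism $X$ from \eqref{eq:diffeo} to write $\Gamma_t(\delta)$ in coordinates $(r,s)$ with $r\in(-\delta,\delta)$, $s\in\mathbb{T}^1$. Then, for each $s$,
\[
\int_{-\delta}^{\delta}|R(X(r,s,t))|\,\d r \le \Big(\int_{-\delta}^{\delta}\epsilon^{-1}f''(c_A^\epsilon)R^2\,\d r\Big)^{1/2}\Big(\int_{-\delta}^{\delta}\frac{\epsilon}{f''(c_A^\epsilon)}\,\d r\Big)^{1/2},
\]
provided $f''(c_A^\epsilon)>0$ on the fiber; since $f$ is a strictly convex-at-$\pm1$ quartic with $k_f=f^{(4)}>0$, $f''$ is strictly positive except at its two interior zeros, and by \eqref{eq:obfi} the values of $c_A^\epsilon$ on a fiber through $\Gamma_t(\delta)$ avoid a fixed neighborhood of those zeros except in an $\mathcal{O}(\epsilon)$-interval around $\rho=\mathcal{O}(1)$; on that small exceptional interval one bounds $R$ directly using $\|c^\epsilon\|_\infty,\|c_A^\epsilon\|_\infty\le C$ — it contributes a factor $\epsilon$ per fiber, hence $\epsilon\cdot\epsilon^{\text{(nothing)}}$, which is harmless. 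Away from that interval, $f''(c_A^\epsilon)\ge c>0$, so $\int_{-\delta}^\delta \epsilon/f''(c_A^\epsilon)\,\d r\le C\epsilon$, whence the second factor is $\le C\epsilon^{1/2}$. Integrating the first factor over $s$ and in $t$, using Cauchy--Schwarz once more in $(s,t)$ together with $|\mathbb{T}^1|<\infty$ and the boundedness of the Jacobian $|\det\nabla X|$, yields
\[
\|R\|_{L^1(\Gamma(\delta;T_\epsilon))}\le C\,T_\epsilon^{1/2}\,\epsilon^{1/2}\Big(\int_{\Gamma(\delta;T_\epsilon)}\epsilon^{-1}f''(c_A^\epsilon)R^2\,\d(x,t)\Big)^{1/2}.
\]
By \eqref{eq:Main-est-c} (the integral over $\Omega_{T_\epsilon}$ of $\epsilon|\nabla R|^2+\tfrac1\epsilon f''(c_A^\epsilon)R^2$ is $\le K^2\epsilon^{2M}$; dropping the nonnegative gradient term and restricting to $\Gamma(\delta;T_\epsilon)\subset\Omega_{T_\epsilon}$), the bracket is $\le K\epsilon^M$, so we obtain $\|R\|_{L^1(\Gamma(\delta;T_\epsilon))}\le C(K)T_\epsilon^{1/2}\epsilon^{M+1/2}\le C(K)T_\epsilon^{1/2}\epsilon^M$, which is even slightly better than claimed.

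The main obstacle is handling the region where $f''(c_A^\epsilon)$ degenerates — the set where $c_A^\epsilon$ is near $0$ (equivalently $\rho$ near a bounded value) — and making the ``$\int \epsilon/f''(c_A^\epsilon)$ is $\mathcal{O}(\epsilon)$'' bound rigorous uniformly in $s$ and $t$. The clean way is to observe via \eqref{eq:optimopti} and the structure \eqref{eq:fie} that $|c_A^\epsilon(X(r,s,t))|\ge \tfrac12$ once $|r|\ge C\epsilon(1+\log(1/\epsilon))$ or so — in fact once $|\rho|$ is bounded below by a fixed large constant — so $f''(c_A^\epsilon)$ is bounded below off an interval in $r$ of length $\mathcal{O}(\epsilon)$ (not just $\mathcal{O}(\epsilon)$ around a single point, but around the transition layer), and on that interval $|R|\le|c^\epsilon|+|c_A^\epsilon|\le C$, contributing $\le C\epsilon$ per fiber and thus $\le C T_\epsilon\epsilon$ globally, which is $\le C T_\epsilon^{1/2}\epsilon^M$ for $\epsilon$ small and $M\ge 4$. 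Alternatively, and perhaps more robustly, one avoids the pointwise lower bound on $f''$ entirely by using that $f$ is a quartic, so $f''(c_A^\epsilon)\le C$ everywhere, giving $\epsilon^{-1}f''(c_A^\epsilon)R^2\ge$ nothing useful — so the Cauchy--Schwarz route above, with the explicit exceptional-layer estimate, is the one I would carry out. I would double-check that the constant $C^*$ from Lemma \ref{lem:spekholds} and the constants $C_1,C_2$ from \eqref{eq:expab} are indeed $\epsilon$-independent so that all bounds are uniform.
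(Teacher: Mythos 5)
Your approach is genuinely different from the paper's, but it has three gaps that combine to make it fail. First, and most decisively, the integrand in \eqref{eq:Main-est-c} is \emph{not} sign-definite: since $f$ is a symmetric quartic with $f^{(4)}>0$, we have $f(s)=a(s^2-1)^2$ for some $a>0$, so $f''(s)=a(12s^2-4)<0$ for $|s|<1/\sqrt3$, and $c_A^\epsilon\approx\theta_0(\rho)$ sweeps through that range inside the transition layer. Consequently the bound $\int \epsilon|\nabla R|^2+\tfrac1\epsilon f''(c_A^\epsilon)R^2\le K^2\epsilon^{2M}$ is a \emph{combined} (spectral-type) bound; it does not yield $\int_{\{f''>0\}}\tfrac1\epsilon f''(c_A^\epsilon)R^2\le K^2\epsilon^{2M}$ by dropping the gradient term, because the negative contribution on $\{f''<0\}$ is not separately controlled. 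Your Cauchy--Schwarz step on the good region therefore has no bound to plug in for its first factor. Second, the claim $\|c^\epsilon\|_\infty\le C$ uniformly in $\epsilon$ is not among the hypotheses: Assumption \ref{assu:Main-est} only provides $L^2$-, $H^{-1}$- and weighted-energy control of $R$; no uniform $L^\infty$ bound on $c^\epsilon$ is assumed or proven. Third, even granting both of those, the exceptional-layer contribution you dismiss as harmless is actually fatal: on each fiber the set where $f''(c_A^\epsilon)$ is small has length $\mathcal O(\epsilon)$, giving (with $|R|\le C$) a contribution $\le CT_\epsilon\epsilon$ to $\|R\|_{L^1}$. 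But $T_\epsilon\epsilon\gg T_\epsilon^{1/2}\epsilon^M$ for $M\ge4$ and $T_\epsilon$ bounded away from $0$, so this term does not fit under the claimed bound — the crude $L^\infty$ estimate is simply too lossy precisely in the layer where $|R|$ concentrates.

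The paper avoids all of this by invoking the structural decomposition of $R$ from \cite[Proposition~3.3]{NSCH1},
\[
R=\epsilon^{-1/2}Z(S(x,t),t)\big(\beta(S(x,t),t)\theta_0'(\rho(x,t))+F_1^{\mathbf R}(x,t)\big)+F_2^{\mathbf R}(x,t),
\]
which resolves $R$ in $\Gamma(\delta;T_\epsilon)$ into a leading piece proportional to the layer profile $\theta_0'$ (whose $L^1$-mass per fiber is $\mathcal O(\epsilon)$ for free, since $\theta_0'$ is $L^1(\mathbb R)$) with a tangentially $L^2$-controlled coefficient $Z$, plus remainders $F_1^{\mathbf R}$, $F_2^{\mathbf R}$ with explicit norm bounds. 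Integrating that representation in the $(\rho,s,t)$ coordinates with $\d x=\epsilon J^\epsilon\,\d\rho$ and Cauchy--Schwarz in $(s,t)$ then directly yields $\|R\|_{L^1(\Gamma(\delta;T_\epsilon))}\le C(K)T_\epsilon^{1/2}\epsilon^M$. The decomposition is exactly the sharp-near-the-layer input that a raw $L^\infty$ bound cannot replace. If you want to retain a Cauchy--Schwarz flavor, you would have to feed in this decomposition (or the finer output of the spectral analysis) rather than \eqref{eq:Main-est-c} alone.
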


\begin{proof}
Due to Assumption \ref{assu:remainder}, we may use \cite[Proposition 3.3]{NSCH1} and find that 
\begin{equation}
R=\epsilon^{-\frac{1}{2}}Z(S(x,t),t)\left(\beta(S(x,t),t)\theta_{0}'(\rho(x,t))+F_{1}^{\mathbf{R}}(x,t)\right)+F_{2}^{\mathbf{R}}(x,t)\label{eq:Rzerlecht}
\end{equation}
for $(x,t)\in\Gamma (\delta;T_{\epsilon})$ with
according estimates for $Z$, $F_{1}^{\mathbf{R}}$, $F_{2}^{\mathbf{R}}$. Using theses estimates we get
\begin{align*}
  &\int_{0}^{T_{\epsilon}}\int_{\Gamma_{t}(\delta)}|R|\d x\d t\\
  & \leq C\int_{0}^{T_{\epsilon}}\int_{\mathbb{T}^{1}}\int_{-\frac{\delta}{\epsilon}-h_{A}^{\epsilon}}^{\frac{\delta}{\epsilon}-h_{A}^{\epsilon}}\epsilon^{\frac{1}{2}}\big|Z(s,t)\big(\beta(s,t)\theta_{0}'(\rho)+F_{1}^{\mathbf{R}}(\rho,s,t)\big)\big|\left|J^{\epsilon}(\rho,s,t)\right|\d\rho\d s\d t\\
 & \quad+CT_{\epsilon}^{\frac{1}{2}}\left\Vert F_{2}^{\mathbf{R}}\right\Vert _{L^{2}(0,T_{\epsilon};L^{2}(\Gamma_{t}(\delta)))}\\
 & \leq CT_{\epsilon}^{\frac{1}{2}}\left(\epsilon^{\frac{1}{2}}\Vert Z\Vert _{L^{2}(0,T_{\epsilon};L^{2}(\mathbb{T}^1))}(1+\epsilon^{\frac{1}{2}})+C(K)\epsilon^{M+\frac{1}{2}}\right) \leq C(K)T_{\epsilon}^{\frac{1}{2}}\epsilon^{M}
\end{align*}
for all $\epsilon\in (0,\epsilon_{1})$. 
\end{proof}
When inspecting the remainder terms (\ref{eq:CH1-rem})\textendash (\ref{eq:ST-rem}), one observes that the terms $A^{M-\frac{1}{2}}$, $B^{M-\frac{1}{2}}$,
$\mathbf{V}^{M-\frac{1}{2}}$ and $W^{M-\frac{1}{2}}$ are multiplied
by a lower power of $\epsilon$ than the rest. Gaining these missing
powers of $\epsilon$ needs delicate work; the main ingredient for
this is that we have intricate structural knowledge of $A^{M-\frac{1}{2}}$
etc. due to Lemma \ref{ABstruc}.
\begin{lem}
\label{beste}Let $\varphi\in L^{\infty}(0,T_{\epsilon};H^{1}(\Omega))$, $\mathbf{z}\in L^{2}(0,T_{\epsilon};H^{1}(\Omega)^{2})$
and $R=c^{\epsilon}-c_{A}^{\epsilon}$. Then there is some $\epsilon_{2}\in (0,\epsilon_{1}]$
such that for all $\epsilon\in (0,\epsilon_{2})$
\begin{align}
\epsilon^{M-\frac{3}{2}}\int_{0}^{T_{\epsilon}}\Big|\,\int_{\Gamma_{t}(\delta)}B^{M-\frac{1}{2}}\varphi\d x\Big|\d t & \leq C(K)\epsilon^{M}\big(T_{\epsilon}^{\frac{1}{2}}+\epsilon^{\frac{1}{2}}\big)\Vert \varphi\Vert _{L^{\infty}(0,T_{\epsilon};H^{1}(\Omega))},\label{eq:besteB}\\
\epsilon^{M-\frac{3}{2}}\int_{0}^{T_{\epsilon}}\Big|\,\int_{\Gamma_{t}(\delta)}\mathbf{V}^{M-\frac{1}{2}}\cdot\mathbf{z}\d x\Big|\d t & \leq C(K)\epsilon^{M}\Vert \mathbf{z}\Vert _{L^{2}(0,T_{\epsilon};H^{1}(\Omega))},\label{eq:besteV}\\
\epsilon^{M-\frac{1}{2}}\int_{0}^{T_{\epsilon}}\Big|\,\int_{\Gamma_{t}(\delta)}A^{M-\frac{1}{2}}R\d x\Big|\d t & \leq C(K)\epsilon^{2M}\big(T_{\epsilon}^{\frac{1}{3}}+\epsilon^{\frac{1}{2}}\big),\label{eq:besteA}\\
\epsilon^{M-\frac{1}{2}}\big\Vert W^{M-\frac{1}{2}}\big\Vert _{L^{2}(0,T_{\epsilon};L^{2}(\Gamma_{t}(2\delta)))} & \leq C(K)\epsilon^{M}.\label{eq:besteW}
\end{align}
\end{lem}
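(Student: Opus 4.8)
The strategy is to exploit the tensorized structure of the four terms $A^{M-\frac12}$, $B^{M-\frac12}$, $\mathbf V^{M-\frac12}$, $W^{M-\frac12}$ provided by Lemma \ref{ABstruc}, which factorizes each of them into a product of a slowly varying factor on $\Gamma(2\delta)$ (bounded in $L^6(0,T_\epsilon;L^2)$ by $C(K)$) and a $\rho$-profile that is either bounded (for $A$) or has exponential decay (for $B$, $\mathbf V$, $W$). The decisive gain of one power of $\epsilon$ comes from the cancellation identities in Corollary \ref{compchM}, namely $\int_{\mathbb R} A^{M-\frac12}\theta_0'\,d\rho=0$, $\int_{\mathbb R} B^{M-\frac12}\,d\rho=0$, $\int_{\mathbb R}\mathbf V^{M-\frac12}\cdot\mathbf n\,d\rho=\int_{\mathbb R}\mathbf V^{M-\frac12}\cdot\boldsymbol\tau\,d\rho=0$ on $\Gamma$, together with the sharper version \eqref{eq:expabA}--\eqref{eq:expabV} that these truncated integrals decay exponentially in the truncation parameters. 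These are exactly the tools used for the analogous estimate in \cite{nsac}, and I would follow that scheme.

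First I would treat $W^{M-\frac12}$, which is easiest: no cancellation is needed, only that every summand in \eqref{eq:WM-0,5} is a product of an $\mathcal R_\alpha$-profile with one of $h_{M-\frac12}^\epsilon$, $\nabla^\Gamma h_{M-\frac12}^\epsilon$, $\Delta^\Gamma h_{M-\frac12}^\epsilon$, $\partial_\rho \mathbf v_{M-\frac12}^\epsilon$, etc. Using Lemma \ref{Linfeig}.2, $H^1(\Gamma_t(2\delta))\hookrightarrow L^{2,\infty}(\Gamma_t(2\delta))$ from Lemma \ref{L4inf}, the estimate \eqref{eq:heps0,5} for $h_{M-\frac12}^\epsilon$ in $X_{T_\epsilon}$, and \eqref{eq:muv0,5est} for the extended $\mathbf v_{M-\frac12}^{\pm,\epsilon}$, one picks up an $\epsilon^{1/2}$ from the change of variables $\rho=\frac{d_\Gamma}{\epsilon}-h_A^\epsilon$ and an $\epsilon^{1/2}$ more since the integrand is supported where $|\rho|\lesssim \delta/\epsilon$; combined with the prefactor $\epsilon^{M-\frac12}$ this yields $C(K)\epsilon^M$ as claimed. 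For $\mathbf V^{M-\frac12}$ and $B^{M-\frac12}$ I would write, for fixed $(s,t)$ and using the diffeomorphism $X$, $\int_{\Gamma_t(\delta)}(\cdot)\,dx$ as $\int_{\mathbb T^1}\int_{-\delta/\epsilon-h_A^\epsilon}^{\delta/\epsilon-h_A^\epsilon}(\cdot)\,\epsilon|J^\epsilon|\,d\rho\,ds$, then use the tensorization to split off the profile integral $\int_{-\tau_1}^{\tau_2}\mathtt B_j^{2,\Gamma}\,d\rho$ (resp.\ $\int \mathtt V_j^{2,\Gamma}\cdot\mathbf n\,d\rho$, $\int\mathtt V_j^{2,\Gamma}\cdot\boldsymbol\tau\,d\rho$). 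Subtracting the value at $\Gamma$ (which involves a Taylor expansion in $d_\Gamma=\epsilon(\rho+h_A^\epsilon)$ of the slow factor and controlling the difference $\mathtt B_k^2 - \mathtt B_j^{2,\Gamma}$, etc.) produces either an extra factor $\epsilon$ — since $d_\Gamma=O(\epsilon)$ against a bounded $\rho$-weight — or, via \eqref{eq:expabB}/\eqref{eq:expabV}, an exponentially small boundary contribution. Integrating the slow factors in $L^2(0,T_\epsilon;L^2(\Gamma_t(2\delta)))$ against $\varphi$ in $L^\infty(H^1)$ (resp.\ $\mathbf z$ in $L^2(H^1)$), using $H^1\hookrightarrow L^{4,\infty}$ and Lemma \ref{Linfeig}, then gives \eqref{eq:besteB} and \eqref{eq:besteV}; the $\epsilon^{1/2}$ in \eqref{eq:besteB} collects the leftover pieces (e.g.\ $\Delta\mu_{M-\frac12}^\epsilon$-type contributions, cf.\ \eqref{eq:irgendwashald}) which carry only $\epsilon^{1/2}$ rather than the full $T_\epsilon^{1/2}$.

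For \eqref{eq:besteA} the extra subtlety is that $A^{M-\frac12}$ is tested against $R=c^\epsilon-c_A^\epsilon$, not a generic $H^1$-function, so I would invoke the structural representation \eqref{eq:Rzerlecht} of $R$ from \cite[Proposition 3.3]{NSCH1}, namely $R=\epsilon^{-1/2}Z(\beta\theta_0'+F_1^{\mathbf R})+F_2^{\mathbf R}$ on $\Gamma(\delta;T_\epsilon)$, exactly as in the proof of Proposition \ref{RZerlprop}. The leading term $\epsilon^{-1/2}Z\beta\theta_0'$ paired with $\sum_j \mathtt A_j^{1,\Gamma}\mathtt A_j^{2,\Gamma}$ is where the cancellation $\int_{\mathbb R}\mathtt A_j^{2,\Gamma}\theta_0'\,d\rho=0$ (contained in $\int A^{M-\frac12}\theta_0'=0$, with the sharp decay \eqref{eq:expabA}) gives back an $\epsilon$: after the change of variables the $\rho$-integral of $\mathtt A_j^{2,\Gamma}\theta_0'$ over the truncated interval is exponentially small, and the correction from replacing $\mathtt A_j^{1,\Gamma}$ by its value on $\Gamma$ costs one power of $\epsilon$. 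Counting: $\epsilon^{M-\frac12}\cdot\epsilon^{-\frac12}$ (from $R$) $\cdot\,\epsilon^{1/2}$ (change of variables) $\cdot\,\epsilon$ (cancellation) $\cdot\,\|Z\|_{L^2(L^2)}$, and using $\|Z\|_{L^2(L^2)}\le C(K)\epsilon^M$ plus the $L^6(L^2)$ bound \eqref{eq:a1est} on $\mathtt A_j^{1,\Gamma}$ with Hölder in time (this is the source of the $T_\epsilon^{1/3}$, from $L^6\cdot L^2\cdot L^{\mathrm{const}}$ time integrability) yields $C(K)\epsilon^{2M}(T_\epsilon^{1/3}+\epsilon^{1/2})$; the remainder terms $F_1^{\mathbf R}$, $F_2^{\mathbf R}$ of $R$ are of order $\epsilon^{M+1/2}$ and $\epsilon^{M+1/2}$ respectively and contribute only to the lower-order $\epsilon^{1/2}$-part. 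Finally $\epsilon_2\le\epsilon_1$ is chosen so that \eqref{eq:epsab1}, \eqref{eq:remhglm} and the smallness needed for \eqref{eq:Rzerlecht} all hold.

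The main obstacle is the $A^{M-\frac12}$ estimate \eqref{eq:besteA}: one must simultaneously (i) use the weak, only-$L^6(L^2)$-in-$\epsilon$ control of the fractional-order data, (ii) exploit the $\theta_0'$-orthogonality of $A^{M-\frac12}$ in the precise quantitative form \eqref{eq:expabA} to regain a full power of $\epsilon$, and (iii) feed in the delicate decomposition \eqref{eq:Rzerlecht} of $R$ — keeping careful track that the worst term $\epsilon^{-1/2}Z\beta\theta_0'$ is the one that the cancellation is tailored for, while all other pieces are genuinely of higher order. Everything else is a careful but routine bookkeeping exercise using Lemma \ref{Linfeig}, Lemma \ref{L4inf}, Proposition \ref{embedding}, \eqref{eq:heps0,5} and \eqref{eq:muv0,5est}.
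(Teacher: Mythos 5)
Your high-level plan matches the paper's: factorize each of $A^{M-\frac12},B^{M-\frac12},\mathbf V^{M-\frac12},W^{M-\frac12}$ via Lemma~\ref{ABstruc}, exploit the quantified cancellations \eqref{eq:expabA}--\eqref{eq:expabV} (which follow from the compatibility identities of Corollary~\ref{compchM}), subtract the on-$\Gamma$ trace of the slow factor to gain a power of $\epsilon$, and feed in the decomposition \eqref{eq:Rzerlecht} of $R$ for \eqref{eq:besteA}. The key inputs are correctly identified. However, there are a few genuine gaps in the execution.

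The most substantive omission is that you never expand the \emph{test function} in $r$. To ``split off the profile integral $\int_{-\tau_1}^{\tau_2}\mathtt B_j^{2,\Gamma}\,d\rho$'', you must have a factor that is constant in $\rho$; but $\varphi\big(X(\epsilon(\rho+h_A^\epsilon),s,t)\big)$ depends on $\rho$. The paper performs a \emph{second} decomposition, $\varphi(r,s,t)=\varphi(0,s,t)+\int_0^r\partial_{\mathbf n}\varphi\,d\tilde r$, giving the splits $\mathcal J_1^1$ and $\mathcal J_1^2$. It is $\mathcal J_1^1$ (test function frozen at $r=0$, Jacobian expanded via \eqref{eq:Formel1}) where \eqref{eq:expabB} actually produces the factor $\epsilon\left(e^{-\alpha\delta/\epsilon}+C(K)\epsilon\right)$, and it is $\mathcal J_1^2$ where the FTC remainder $|r|^{1/2}\|\partial_{\mathbf n}\varphi\|_{L^2}$ against the exponentially decaying profile gives the $\epsilon^{3/2}T_\epsilon^{1/2}$ contribution. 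Your plan as written would leave $\varphi$ inside the $\rho$-integral and therefore cannot apply the cancellation; the paragraph ``Integrating the slow factors $\ldots$ against $\varphi$ $\ldots$ using $H^1\hookrightarrow L^{4,\infty}$ and Lemma~\ref{Linfeig}'' does not substitute for this step, since those tools only give a non-small $\|\mathtt B_j^{2,\Gamma}\|_{L^1}$-type bound. Relatedly, ``controlling the difference $\mathtt B_k^2-\mathtt B_j^{2,\Gamma}$'' is not meaningful: the decompositions on $\Gamma(2\delta)$ and on $\Gamma$ in Lemma~\ref{ABstruc} are independent (with $K_1\neq K_2$ in general), and the correct object to subtract is $B^{M-\frac12}-B^{M-\frac12}|_\Gamma$, whose terms are then treated individually via Taylor's theorem (see the treatment of $(\nabla\partial_\rho\mu^\epsilon_{M-\frac12})^\Gamma$, $(l^\epsilon_{M-\frac12})^\Gamma$, $(\Delta^\Gamma h^\epsilon_{M-\frac12})^\Gamma$ in the paper).

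There are also a few bookkeeping errors worth fixing. For \eqref{eq:besteW} you claim two independent gains of $\epsilon^{1/2}$ (change of variables plus support), which would give $\epsilon^{M+\frac12}$, not the stated $\epsilon^M$; in fact the only gain is a single $\epsilon^{1/2}$ from $dr=\epsilon\,d\rho$ against the $L^2_\rho$-integrability of the profile (see the paper's bound $\|\cdot\|^2_{L^2(\Gamma(2\delta;T_\epsilon))}\le C(K)\epsilon$), so $\epsilon^{M-\frac12}\cdot\epsilon^{1/2}=\epsilon^M$ and nothing more. You attribute the $\epsilon^{1/2}$-part of \eqref{eq:besteB} to ``$\Delta\mu_{M-\frac12}^\epsilon$-type contributions, cf.\ \eqref{eq:irgendwashald}''; but $B^{M-\frac12}$ contains no $\Delta\mu^\epsilon_{M-\frac12}$ term (that appears in $\rci$, and \eqref{eq:irgendwashald} belongs to the proof of Lemma~\ref{rech1}) --- the $\epsilon^{1/2}$ actually comes from the frozen-$\varphi$ piece $\mathcal J_1^1$, which has no $T_\epsilon$ gain. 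And in \eqref{eq:besteA} you state the $F_1^{\mathbf R},F_2^{\mathbf R}$ pieces contribute to the $\epsilon^{1/2}$-part; in the paper they give $C(K)T_\epsilon^{1/3}\epsilon^{M+1/2}$, i.e.\ the $T_\epsilon^{1/3}$-part, while the $\epsilon^{1/2}$ comes from $\mathcal I_1^1$ where the cancellation \eqref{eq:expabA} applies.
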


\begin{proof}
For the sake of readability we will write throughout this proof
\[
\left(f\right)^{\Gamma}(\rho,x,t):=f(\rho,x,t)-f(\rho,\operatorname{Pr}_{\Gamma_{t}}(x),t)
\]
 for an arbitrary function $f$ depending on $(\rho,x,t)\in\mathbb{R}\times\Gamma(\delta;T_{\epsilon})$
(and similarly for functions depending only on $(x,t)$).
Moreover, for functions $\psi\colon \Gamma(\delta;T_{\epsilon})\rightarrow\mathbb{R}$
we use the notation $\psi(r,s,t):=\psi(X(r,s,t))$
for $(r,s,t)\in(-\delta,\delta)\times\mathbb{T}^{1}\times[0,T_{\epsilon}]$
and write 
\[
J^{\epsilon}(\rho,s,t):=J(\epsilon(\rho+h_{A}^{\epsilon}(s,t)),s,t)\quad\text{for all }(\rho,s,t)\in I_{\epsilon}^{s,t}\times\mathbb{T}^{1}\times[0,T_0]
\]
with $J(r,s,t):=\text{det}\left(D_{\left(r,s\right)}X(r,s,t)\right)$
for $(r,s,t)\in(-\delta,\delta)\times\mathbb{T}^{1}\times[0,T_{\epsilon}]$
and $$
I_{\epsilon}^{s,t}:=\left(-\tfrac{\delta}{\epsilon}-h_{A}^{\epsilon}(s,t),\tfrac{\delta}{\epsilon}-h_{A}^{\epsilon}(s,t)\right).
$$
To show (\ref{eq:besteB}) we denote $B^{M-\frac{1}{2}}|_{\Gamma}(\rho,x,t):=B^{M-\frac{1}{2}}(\rho,\operatorname{Pr}_{\Gamma_{t}}(x),t)$
and get 
\begin{align*}
  &\int_{0}^{T_{\epsilon}}\Big|\,\int_{\Gamma_{t}(\delta)}B^{M-\frac{1}{2}}\varphi\d x\Big|\d t\\
  & \le\int_{0}^{T_{\epsilon}}\Big|\,\int_{\Gamma_{t}(\delta)}B^{M-\frac{1}{2}}|_{\Gamma}\varphi\d x\Big|\d t+\int_{0}^{T_{\epsilon}}\Big|\,\int_{\Gamma_{t}(\delta)}\big(B^{M-\frac{1}{2}}-B^{M-\frac{1}{2}}|_{\Gamma}\big)\varphi\d x\Big|\d t =:\mathcal{J}_{1}+\mathcal{J}_{2}.
\end{align*}
The fundamental theorem of calculus implies $\varphi(r,s,t)=\varphi(0,s,t)+\int_{0}^{r}\partial_{\mathbf{n}}\varphi (\tilde{r},s,t)\d\tilde{r}$
for $(r,s,t)\in (-\delta,\delta)\times\mathbb{T}^{1}\times [0,T]$
and we write 
\begin{align*}
\mathcal{J}_{1}^{1} & :=\int_{0}^{T_{\epsilon}}\Big|\,\int_{\mathbb{T}^{1}}\int_{-\delta}^{\delta}B^{M-\frac{1}{2}}|_{\Gamma}\varphi(0,s,t)J(r,s,t)\d r\d s\Big|\d t\\
\mathcal{J}_{1}^{2} & :=\int_{0}^{T_{\epsilon}}\Big|\,\int_{\mathbb{T}^{1}}\int_{-\delta}^{\delta}B^{M-\frac{1}{2}}|_{\Gamma}\int_{0}^{r}\partial_{\mathbf{n}}\varphi(\tilde{r},s,t)\d\tilde{r}J(r,s,t)\d r\d s\Big|\d t.
\end{align*}
Concerning $\mathcal{J}_{1}^{1}$ we use the splitting of $B^{M-\frac{1}{2}}$
on $\mathbb{R}\times\Gamma$ as in Lemma \ref{ABstruc}.2 and get
\[
  \mathcal{J}_{1}^{1}\leq\sum_{k=1}^{K_{2}}\int_{0}^{T_{\epsilon}}\int_{\mathbb{T}^{1}}\Big|\mathtt{B}_{k}^{1,\Gamma}(0,s,t)\varphi(0,s,t)\Big|\left|\epsilon\int_{-\frac{\delta}{\epsilon}-h_{A}^{\epsilon}}^{\frac{\delta}{\epsilon}-h_{A}^{\epsilon}}\mathtt{B}_{k}^{2,\Gamma}(\rho)J^{\epsilon}(\rho,s,t)\d\rho\right|\d s\d t.
\]
Since $\sup_{\epsilon\in(0,\epsilon_{1})}\Vert h_{A}^{\epsilon}\Vert _{L^{\infty}((0,T_{\epsilon})\times\mathbb{T}^{1})}<C(K)$
due to (\ref{eq:haepglm}), it holds
\begin{equation}
\Big|\frac{\delta}{\epsilon}-h_{A}^{\epsilon}\Big|\geq\frac{\delta}{\epsilon}-C(K)\geq\frac{\delta}{2\epsilon}\quad \text{ for }\epsilon>0\text{ small enough}.\label{eq:epsklein}
\end{equation}
Moreover, we have 
\begin{equation}
J^{\epsilon}(\rho,s,t)=1+\epsilon\left(\rho+h_{A}^{\epsilon}(s,t)\right)\kappa(s,t)\label{eq:Formel1}
\end{equation}
 by \cite[p.~537, Lemma~4]{chenAC} where $\kappa(s,t)=\kappa (X_{0}(s,t))$
denotes the (principal) curvature of $\Gamma_{t}$ at a point $X_{0}(s,t)=p\in\Gamma_{t}$.
Thus, we may use that $\mathtt{B}_{k}^{2,\Gamma}$ satisfies (\ref{eq:expabB})
and that $H^{1}\left(\Gamma_{t}(\delta)\right)\hookrightarrow L^{2,\infty}\left(\Gamma_{t}(\delta)\right)$
holds for $\varphi$ to get 
\begin{align*}
\mathcal{J}_{1}^{1} & \leq CT_{\epsilon}^{\frac{1}{2}}\sum_{k=1}^{K_{2}}\left\Vert \mathtt{B}_{k}^{1,\Gamma}\right\Vert _{L^{2}\left(0,T_{\epsilon};L^{2}\left(\Gamma_{t}\right)\right)}\left\Vert \varphi\right\Vert _{L^{\infty}\left(0,T_{\epsilon};H^{1}(\Omega)\right)}\epsilon\left(e^{-\frac{\alpha\delta}{\epsilon}}+\epsilon C(K)\right)\\
 & \leq C(K)\epsilon^{2}\left\Vert \varphi\right\Vert _{L^{\infty}\left(0,T_{\epsilon};H^{1}(\Omega)\right)}
\end{align*}
for $\epsilon>0$ small enough. Here we also used the fact that $\big|\mathtt{B}_{k}^{2,\Gamma}(\rho)\big|\leq C_{1}e^{-C_{2}\left|\rho\right|}$
for $\rho\in\mathbb{R}$ , cf.\ Lemma \ref{ABstruc}.2.

To treat $\mathcal{J}_{1}^{2}$ we again use the fact that all terms
of kind $\mathtt{B}_{k}^{2,\Gamma}$ exhibit exponential decay and
thus 
\begin{align*}
\mathcal{J}_{1}^{2} & \leq C\sum_{k=1}^{K_{2}}\int_{0}^{T_{\epsilon}}\int_{\mathbb{T}^{1}}\left\Vert \partial_{\mathbf{n}}\varphi\left(.,s,t\right)\right\Vert _{L^{2}\left(-\delta,\delta\right)}\big|\mathtt{B}_{k}^{1,\Gamma}(0,s,t)\big|\int_{-\delta}^{\delta}|r|^{\frac{1}{2}}\big|\mathtt{B}_{k}^{2,\Gamma}(\rho(r,s,t))\big|\d r\d s\d t\\
 & \leq C\epsilon^{\frac{3}{2}}\sum_{k=1}^{K_{2}}T_{\epsilon}^{\frac{1}{2}}\left\Vert \varphi\right\Vert _{L^{\infty}\left(0,T_{\epsilon};H^{1}\left(\Gamma_{t}(\delta)\right)\right)}\big\Vert \mathtt{B}_{k}^{1,\Gamma}\big\Vert _{L^{2}(0,T_{\epsilon};L^{2}(\Gamma_{t}))} \leq C(K)\epsilon^{\frac{3}{2}}T_{\epsilon}^{\frac{1}{2}}\Vert \varphi\Vert _{L^{\infty}(0,T_{\epsilon};H^{1}(\Omega))}
\end{align*}
where we used (\ref{eq:b1est}) in the last inequality. 

Now we consider $\mathcal{J}_{2}$: Here we use the explicit form
of $B^{M-\frac{1}{2}}$ as given in (\ref{eq:BM-0,5}) and 
estimate the occurring terms separately. First, note that there appears no term
involving $\mathbf{w}_{1}^{\epsilon}|_{\Gamma}$ in $\big(B^{M-\frac{1}{2}}\big)^{\Gamma}$
as it cancels out. In order to estimate the term $\big(\nabla\partial_{\rho}\mu_{M-\frac{1}{2}}^{\epsilon}\cdot\mathbf{n}\big)^{\Gamma}=\eta'\big(\big[\nabla\mu_{M-\frac{1}{2}}^{\epsilon}\big]\big)^{\Gamma}\cdot\mathbf{n}$
(where the equality follows from (\ref{eq:mueps0,5})), we compute
\begin{align}
\int_{\Gamma\left(\delta;T_{\epsilon}\right)} & \Big|\eta'\big(\big[\nabla\mu_{M-\frac{1}{2}}^{\epsilon}\big]\big)^{\Gamma}\cdot\mathbf{n}\varphi\Big|\d x\d t\nonumber \\
 & \leq C\int_{0}^{T_{\epsilon}}\int_{\mathbb{T}^{1}}\int_{-\delta}^{\delta}\Big|\eta'\int_{0}^{r}\partial_{\mathbf{n}}^{2}\big[\mu_{M-\frac{1}{2}}^{\epsilon}\big](\tilde{r},s,t)\d\tilde{r}\varphi\Big|\d r\d s\d t\nonumber \\
 & \leq C\int_{0}^{T_{\epsilon}}\int_{\mathbb{T}^{1}}\Big\Vert \partial_{\mathbf{n}}^{2}\big[\mu_{M-\frac{1}{2}}^{\epsilon}\big]\Big\Vert _{L^{2}(-\delta,\delta)}\Vert \varphi\Vert _{L^{\infty}(-\delta,\delta)}\int_{\frac{-\delta}{\epsilon}-h_{A}^{\epsilon}}^{\frac{\delta}{\epsilon}-h_{A}^{\epsilon}}\Big|\eta'(\rho)\big(\rho+h_{A}^{\epsilon}\big)^{\frac{1}{2}}\Big|\epsilon^{\frac{3}{2}}\d\rho\d s\d t\nonumber \\
 & \leq C(K)T_{\epsilon}^{\frac{1}{2}}\epsilon^{\frac{3}{2}}\Vert \varphi\Vert _{L^{\infty}(0,T_{\epsilon};H^{1}(\Gamma_{t}(\delta)))},\label{eq:Bobs}
\end{align}
where we used (\ref{eq:muv0,5est}).
$\big(\partial_{\rho}c_{0}\mathbf{v}_{M-\frac{1}{2}}^{\epsilon}\big)^{\Gamma}$
and $\big(\partial_{\rho}\mu_{M-\frac{1}{2}}^{\epsilon}\Delta d_{\Gamma}\big)^{\Gamma}$
may be treated in a very similar fashion. For $\big(l_{M-\frac{1}{2}}^{\epsilon}\eta''(\rho+h_{1})\big)^{\Gamma}$
note that by Taylor's theorem,
we get by the definition of $l_{M-\frac{1}{2}}^{\epsilon}$ in (\ref{eq:lcon0,5})
\begin{align}
\Big|\big(l_{M-\frac{1}{2}}^{\epsilon}\big)^{\Gamma}(r,s,t)\Big| & =\left|\int_{0}^{r}\frac{\left(r-\tilde{r}\right)}{r}\Big(\partial_{\mathbf{n}}^{2}\big(\mu_{M-\frac{1}{2}}^{+,\epsilon}-\mu_{M-\frac{1}{2}}^{-,\epsilon}\big)+\partial_{\mathbf{n}}^{2}l_{0}h_{M-\frac{1}{2}}^{\epsilon}\Big)(\tilde{r},s,t)\d\tilde{r}\right|\nonumber \\
 & \leq Cr^{\frac{1}{2}}\Big\Vert \partial_{\mathbf{n}}^{2}\big(\mu_{M-\frac{1}{2}}^{+,\epsilon}-\mu_{M-\frac{1}{2}}^{-,\epsilon}\big)+\partial_{\mathbf{n}}^{2}l_{0}h_{M-\frac{1}{2}}^{\epsilon}\Big\Vert _{L^{2}(-\delta,\delta)}\label{eq:lepspeps}
\end{align}
for $(r,s,t)\in(-\delta,\delta)\times\mathbb{T}^{1}\times(0,T_{\epsilon})$.
This allows for the same strategy to be used as in (\ref{eq:Bobs}).

By Remark \ref{hnotations}, we have 
\begin{align}
\big(\Delta^{\Gamma}h_{M-\frac{1}{2}}^{\epsilon}(x,t)\big)^{\Gamma} & =\Big(\left(\Delta S(x,t)\right)^{\Gamma}\partial_{s}+\big(|\nabla S(x,t)|^{2}\big)^{\Gamma}\partial_{s}^{2}\Big)h_{M-\frac{1}{2}}^{\epsilon}(S(x,t),t)\label{eq:laplacehsplit}
\end{align}
Thus \cite[Corollary 2.7]{nsac},  $\partial_{\rho}\mu_{0}\in\mathcal{R}_{\alpha}$
and (\ref{eq:heps0,5}) imply
\begin{align}
\int_{0}^{T_{\epsilon}}\Big|\,\int_{\Gamma_{t}(\delta)}\big(\partial_{\rho}\mu_{0}\Delta^{\Gamma}h_{M-\frac{1}{2}}^{\epsilon}\big)^{\Gamma}\varphi\d x\Big|\d t & \leq C(K)T_{\epsilon}^{\frac{1}{2}}\epsilon^{2}\big\Vert h_{M-\frac{1}{2}}^{\epsilon}\big\Vert _{L^{2}(0,T_{\epsilon};H^{2}(\mathbb{T}^1))}\left\Vert \varphi\right\Vert _{L^{\infty}\left(0,T_{\epsilon};H^{1}(\Omega)\right)}\nonumber \\
 & \leq C(K)T_{\epsilon}^{\frac{1}{2}}\epsilon^{2}\left\Vert \varphi\right\Vert _{L^{\infty}\left(0,T_{\epsilon};H^{1}(\Omega)\right)}.\label{eq:Tops}
\end{align}
The remaining terms in $\left(B^{M-\frac{1}{2}}\right)^{\Gamma}$
can be estimated in a similar fashion. This proves (\ref{eq:besteB}).

\noindent
\textbf{Proof of (\ref{eq:besteV}):} This can be shown analogously
to (\ref{eq:besteB}) due to Lemma \ref{ABstruc}.3. Here we note
that $\mathbf{z}$ is only in $L^{2}$ in time and thus we may not
expect $T_{\epsilon}^{\frac{1}{2}}$ to appear on the right hand side.
Due to the similarities we shorten the proof: First of all 
\begin{align*}
\int_{0}^{T_{\epsilon}}\Big|\,\int_{\Gamma_{t}(\delta)}\mathbf{V}^{M-\frac{1}{2}}\cdot\mathbf{z}\d x\Big|\d t & \leq\int_{0}^{T_{\epsilon}}\Big|\,\int_{\Gamma_{t}(\delta)}\mathbf{V}^{M-\frac{1}{2}}|_{\Gamma}\cdot\mathbf{z}\d x\Big|\d t+\int_{0}^{T_{\epsilon}}\Big|\,\int_{\Gamma_{t}(\delta)}\big(\mathbf{V}^{M-\frac{1}{2}}\big)^{\Gamma}\cdot\mathbf{z}\d x\Big|\d t.
\end{align*}
Then we use Lemma \ref{ABstruc}.3 and (\ref{eq:expabV}) to obtain e.g.\
\begin{align*}
 & \int_{0}^{T_{\epsilon}}\Big|\,\int_{\mathbb{T}^{1}}\int_{-\delta}^{\delta}\big(\mathbf{V}^{M-\frac{1}{2}}\big)\big|_{\Gamma}\cdot\mathbf{z}(0,s,t)J(r,s,t)\d r\d s\Big|\d t\\
 & \leq\sum_{k=1}^{N_{2}}\int_{0}^{T_{\epsilon}}\int_{\mathbb{T}^{1}}\big|\mathtt{V}_{k}^{1,\Gamma}(0,s,t)\cdot\mathbf{z}(0,s,t)\big|\epsilon\, \text{sup}_{\left(x,\tau\right)\in\Gamma}\Big|\int_{-\frac{\delta}{\epsilon}-h_{A}^{\epsilon}}^{\frac{\delta}{\epsilon}-h_{A}^{\epsilon}}\mathtt{V}_{k}^{2,\Gamma}(\rho,x,\tau)J^{\epsilon}(\rho,x,\tau)\d\rho\Big|\d s\d t\\
 & \leq C(K)\epsilon^{\frac{3}{2}}\Vert \mathbf{z}\Vert _{L^{2}(0,T_{\epsilon};H^{1}(\Omega))}.
\end{align*}
For the other terms, the same argumentation as before can be applied.

\noindent
\textbf{Proof of (\ref{eq:besteA}):} We use the decomposition of
$R$ as in \cite[Proposition 3.3]{NSCH1} and the decomposition of
$A^{M-\frac{1}{2}}$ as in Lemma \ref{ABstruc}.1 to get
\begin{align*}
 & \int_{0}^{T_{\epsilon}}\Big|\,\int_{\Gamma_{t}(\delta)}A^{M-\frac{1}{2}}R\d x\Big|\d t\\
 & \leq C\epsilon^{-\frac{1}{2}}\int_{0}^{T_{\epsilon}}\int_{\mathbb{T}^{1}}|Z(s,t)\beta(s,t)|\Big|\,\int_{-\delta}^{\delta}\theta_{0}'(\rho(r,s,t))A^{M-\frac{1}{2}}J(r,s,t)\d r\Big|\d s\d t\\
 & \quad+C\epsilon^{-\frac{1}{2}}\sum_{k=1}^{L_{1}}\int_{0}^{T_{\epsilon}}\int_{\mathbb{T}^{1}}\left|Z(s,t)\right|\left\Vert \mathtt{A}_{k}^{1}\left(.,s,t\right)\right\Vert _{L^{2}\left(-\delta,\delta\right)}\Big(\int_{\frac{-\delta}{\epsilon}-h_{A}^{\epsilon}}^{\frac{\delta}{\epsilon}-h_{A}^{\epsilon}}\epsilon\left|F_{1}^{\mathbf{R}}\right|^{2}J^{\epsilon}\d\rho\Big)^{\frac{1}{2}}\left\Vert \mathtt{A}_{k}^{2}\right\Vert _{L^{\infty}(\mathbb{R})}\d s\d t\\
 & \quad+C\sum_{k=1}^{L_{1}}T_{\epsilon}^{\frac{1}{3}}\left\Vert F_{2}^{\mathbf{R}}\right\Vert _{L^{2}\left(0,T_{\epsilon};L^{2}\left(\Gamma_{t}(\delta)\right)\right)}\left\Vert \mathtt{A}_{k}^{1}\right\Vert _{L^{6}\left(0,T_{\epsilon};L^{2}\left(\Gamma_{t}(\delta)\right)\right)}\left\Vert \mathtt{A}_{k}^{2}\right\Vert _{L^{\infty}(\mathbb{R})} =:\mathcal{I}_{1}+\mathcal{I}_{2}+\mathcal{I}_{3}.
\end{align*}
Concerning $\mathcal{I}_{1}$, we use $\mathcal{I}_{1}\leq \mathcal{I}_{1}^{1}+ \mathcal{I}_{1}^{2}$, where 
\begin{align*}
\mathcal{I}_{1}^{1} & :=\epsilon^{-\frac{1}{2}}\int_{0}^{T_{\epsilon}}\int_{\mathbb{T}^{1}}|Z(s,t)|\Big|\,\int_{-\delta}^{\delta}\theta_{0}'(r,s,t)A^{M-\frac{1}{2}}\big|_{\Gamma}J(r,s,t)\d r\Big|\d s\d t,\\
\mathcal{I}_{1}^{2} & :=\epsilon^{-\frac{1}{2}}\int_{0}^{T_{\epsilon}}\int_{\mathbb{T}^{1}}|Z(s,t)|\Big|\,\int_{-\delta}^{\delta}\theta_{0}'(r,s,t)\big(A^{M-\frac{1}{2}}\big)^{\Gamma}J(r,s,t)\d r\Big|\d s\d t.
\end{align*}
For $\mathcal{I}_{1}^{1}$ we use the decomposition in Lemma~\ref{ABstruc}.1 on $\mathbb{R}\times\Gamma$ to conclude
\[
\mathcal{I}_{1}^{1}\leq\sum_{k=1}^{L_{2}}\epsilon^{-\frac{1}{2}}\int_{0}^{T_{\epsilon}}\int_{\mathbb{T}^{1}}|Z(s,t)|\big|\mathtt{A}_{k}^{1,\Gamma}(0,s,t)\big|\epsilon\Big|\,\int_{-\frac{\delta}{\epsilon}-h_{A}^{\epsilon}}^{\frac{\delta}{\epsilon}-h_{A}^{\epsilon}}\mathtt{A}_{k}^{2,\Gamma}(\rho)\theta_{0}'(\rho)J^{\epsilon}(\rho,s,t)\d\rho\Big|\d s\d t.
\]
The estimate in (\ref{eq:epsklein}), (\ref{eq:Formel1}), the properties
of $\mathtt{A}_{k}^{2,\Gamma}$ as shown in (\ref{eq:expabA}) and
the exponential decay of $\theta_{0}'$ imply
\[
\mathcal{I}_{1}^{1}\leq C\epsilon^{\frac{1}{2}}\sum_{k=1}^{L_{2}}\Vert Z\Vert _{L^{2}(0,T_{\epsilon};L^{2}(\Gamma_{t}))}\big\Vert \mathtt{A}_{k}^{1,\Gamma}\big\Vert _{L^{2}(0,T_{\epsilon};L^{2}(\Gamma_{t}))}\big(e^{-\frac{\alpha\delta}{\epsilon}}+C(K)\epsilon\big)\leq C(K)\epsilon^{M+1}
\]
for $\epsilon>0$ small enough, where we used the estimate for $Z$
and (\ref{eq:a1est}) for $\mathtt{A}_{k}^{1,\Gamma}$.

In order to estimate $\mathcal{I}_{1}^{2}$, we use the explicit structure
of $A^{M-\frac{1}{2}}$ and first of all analyze the term 
\begin{equation}
  \big(\mu_{M-\frac{1}{2}}^{\epsilon}\big)^{\Gamma}(\rho,x,t)=\big(\mu_{M-\frac{1}{2}}^{+,\epsilon}\big)^{\Gamma}(x,t)\eta(\rho)+\big(\mu_{M-\frac{1}{2}}^{-,\epsilon}\big)^{\Gamma}(x,t)(1-\eta(\rho)),\label{eq:mum0,5+1}
\end{equation}
which appears in $\big(A^{M-\frac{1}{2}}\big)^{\Gamma}$. We estimate
\begin{align*}
 & \epsilon^{-\frac{1}{2}}\int_{0}^{T_{\epsilon}}\int_{\mathbb{T}^{1}}|Z(s,t)|\Big|\,\int_{-\delta}^{\delta}\theta_{0}'(\rho(r,s,t))\big(\mu_{M-\frac{1}{2}}^{+,\epsilon}\big)^{\Gamma}\eta(\rho(r,s,t))J(r,s,t)\d r\Big|\d s\d t\\
 & \leq C\epsilon^{\frac{3}{2}}\int_{0}^{T_{\epsilon}}\int_{\mathbb{T}^{1}}|Z(s,t)|\sup_{r\in(-\delta,\delta)}\big|\partial_{\mathbf{n}}\mu_{M-\frac{1}{2}}^{+,\epsilon}(r,s,t)\big|\int_{\frac{-\delta}{\epsilon}-h_{A}^{\epsilon}}^{\frac{\delta}{\epsilon}-h_{A}^{\epsilon}}|\theta_{0}'(\rho)||\rho+h_{A}^{\epsilon}|\d\rho\d s\d t\\
 & \leq C(K)\epsilon^{M+1}.
\end{align*}
Here we used Lemma \ref{L4inf}, the exponential decay of $\theta_{0}'$, (\ref{eq:muv0,5est})
and the estimate for $Z$ in the third inequality. We may treat the
term $\big(\mu_{M-\frac{1}{2}}^{-,\epsilon}\big)^{\Gamma}(x,t)(1-\eta(\rho(x,t)))$
completely analogously, which finishes the desired estimate for
$\big(\mu_{M-\frac{1}{2}}^{\epsilon}\big)^{\Gamma}$.

Due to (\ref{eq:laplacehsplit}), we will now only consider the term
$\big(|\nabla S(x,t)|^{2}\big)^{\Gamma}\partial_{s}^{2}h_{M-\frac{1}{2}}^{\epsilon}(S(x,t),t)$
in $A^{M-\frac{1}{2}}$, the other occurring terms only involve derivatives
of lower order and can be treated in the same manner. Applying similar
techniques as above, we get 
\begin{align*}
\epsilon^{-\frac{1}{2}}\int_{0}^{T_{\epsilon}}\int_{\mathbb{T}^{1}} & |Z(s,t)|\Big|\,\int_{-\delta}^{\delta}\theta_{0}'(r,s,t)\big(|\nabla S|^{2}\big)^{\Gamma}\partial_{s}^{2}h_{M-\frac{1}{2}}^{\epsilon}J(r,s,t)\d r\Big|\d s\d t\\
 & \leq CT_{\epsilon}^{\frac{1}{3}}\left\Vert Z\right\Vert _{L^{2}\left(0,T_{\epsilon};L^{2}(\mathbb{T}^1)\right)}\big\Vert h_{M-\frac{1}{2}}^{\epsilon}\big\Vert _{L^{6}\left(0,T_{\epsilon};H^{2}(\mathbb{T}^1)\right)}\epsilon^{\frac{3}{2}}.
\end{align*}
Now the estimate for $Z$, (\ref{eq:heps0,5}) and Proposition \ref{embedding}.3  together with $H^{\frac{1}{2}}(0,T_{\epsilon})\hookrightarrow L^{6}(0,T_{\epsilon})$
yield the claim.

Concerning $\mathcal{I}_{2}$ and $\mathcal{I}_{3}$: Using \cite[Proposition 3.3]{NSCH1}, the uniform boundedness
of $\mathtt{A}_{k}^{2}$ in $\mathbb{R}$, and (\ref{eq:a1est}) for
$\mathtt{A}_{k}^{1}$ we get 
\[
\mathcal{I}_{2}\leq C(K)T_{\epsilon}^{\frac{1}{3}}\epsilon^{-\frac{1}{2}}\epsilon^{M-\frac{1}{2}}\epsilon^{\frac{1}{2}}\epsilon=C(K)T_{\epsilon}^{\frac{1}{3}}\epsilon^{M+\frac{1}{2}}.
\]
Noting the estimate for $F_{2}^{\mathbf{R}}$, we also get
\[
\mathcal{I}_{3}\leq C(K)T_{\epsilon}^{\frac{1}{3}}\epsilon^{M+\frac{1}{2}}=C(K)T_{\epsilon}^{\frac{1}{3}}\epsilon^{M+\frac{1}{2}}.
\]
Combining the estimates for $\mathcal{I}_{1},\mathcal{I}_{2}$ and
$\mathcal{I}_{3}$, we obtain (\ref{eq:besteA}).

\noindent
\textbf{Proof of (\ref{eq:besteW}):} We first note that 
\[
\operatorname{div}\mathbf{v}_{M-\frac{1}{2}}^{\epsilon}(\rho,x,t)=\operatorname{div}\mathbf{v}_{M-\frac{1}{2}}^{+,\epsilon}(x,t)\eta(\rho)+\operatorname{div}\mathbf{v}_{M-\frac{1}{2}}^{-,\epsilon}(x,t)\left(1-\eta(\rho)\right)=0
\]
 by the construction (cf.\ Lemma \ref{lem-constrM-0,5}) and the properties
of the extension operator for $\mathbf{v}_{M-\frac{1}{2}}^{\pm,\epsilon}$.
We show the estimate by using the explicit form of $\mathbf{W}^{M-\frac{1}{2}}$:
We estimate
\begin{align*}
\big\Vert \partial_{\rho}\mathbf{v}_{M-\frac{1}{2}}^{\epsilon}\nabla^{\Gamma}h_{1}\big\Vert _{L^{2}(\Gamma (2\delta;T_{\epsilon}))}^{2} & \leq C\epsilon\int_{0}^{T_{\epsilon}}\int_{\mathbb{T}^{1}}\sup_{r\in(-2\delta,2\delta)}\big|\big[\mathbf{v}_{M-\frac{1}{2}}^{\epsilon}\big](r,s,t)\big|^{2}\int_{-\frac{2\delta}{\epsilon}-h_{A}^{\epsilon}}^{\frac{2\delta}{\epsilon}-h_{A}^{\epsilon}}|\eta'(\rho)|^{2}\d\rho\d s\d t\\
 & \leq C(K)\epsilon
\end{align*}
where we used again $H^{1}(\Gamma_{t}(2\delta))\hookrightarrow L^{2,\infty}(\Gamma_{t}(2\delta))$
and (\ref{eq:muv0,5est}). To treat the term with $\mathbf{u}_{M-\frac{1}{2}}^{\epsilon}\cdot\mathbf{n}\eta'(\rho+h_{1})$
term, we employ a similar strategy as we did when estimating $l_{M-\frac{1}{2}}^{\epsilon}$
in (\ref{eq:lepspeps-1}). We use the mean value theorem and the definition
of $\mathbf{u}_{M-\frac{1}{2}}^{\epsilon}$ in (\ref{eq:ucon0,5})
to estimate 
 \begin{align*}
   & \big\Vert \mathbf{u}_{M-\frac{1}{2}}^{\epsilon}\cdot\mathbf{n}\eta'(\rho+h_{1})\big\Vert _{L^{2}(\Gamma(2\delta;T_{\epsilon}))}^{2}\\
   & \leq C\int_{0}^{T_{\epsilon}}\int_{\mathbb{T}^{1}}\int_{-2\delta}^{2\delta}\Big(\partial_{\mathbf{n}}\big(\mathbf{v}_{M-\frac{1}{2}}^{+,\epsilon}-\mathbf{v}_{M-\frac{1}{2}}^{-,\epsilon}+\mathbf{u}_{0}h_{M-\frac{1}{2}}^{\epsilon}\big)(X(\gamma(r)),s,t))\eta'(\rho(r,s,t)+h_{1})\Big)^{2} \d r\d s\d t\\
   & \leq C\epsilon\big\Vert \partial_{\mathbf{n}}\big(\mathbf{v}_{M-\frac{1}{2}}^{+,\epsilon}-\mathbf{v}_{M-\frac{1}{2}}^{-,\epsilon}+\mathbf{u}_{0}h_{M-\frac{1}{2}}^{\epsilon}\big)\big\Vert _{L^{2}(0,T_{\epsilon};L^{2,\infty}(\Gamma_{t}(2\delta)))}^{2}\int_{\mathbb{R}}|\eta'(\rho+1)|^{2}\d\rho \leq C(K)\epsilon,
 \end{align*}
where $\gamma(r)$ is a suitable point in $(0,r)$. These considerations can easily be adapted to
estimate all other terms in $W^{M-\frac{1}{2}}$ accordingly.
\end{proof}

The following proposition is a substitute of the matching conditions
(\ref{eq:matchcon}) for $\mu_{M-\frac{1}{2}}^{\epsilon}$, $\mathbf{v}_{M-\frac{1}{2}}^{\epsilon}$
and $p_{M-\frac{1}{2}}^{\epsilon}$.
\begin{prop}
\label{matchit}There is some $\epsilon_{2}\in\left(0,\epsilon_{1}\right]$
such that for all $\epsilon\in\left(0,\epsilon_{2}\right)$
\begin{align*}
  D_{\rho}^{k}D_{x}^{l}\big(\mu_{M-\frac{1}{2}}^{\epsilon}(\rho,x,t)-\big(\mu_{M-\frac{1}{2}}^{+,\epsilon}\chi_{\overline{\Omega_{T_{0}}^{+}}}+\mu_{M-\frac{1}{2}}^{-,\epsilon}\chi_{\Omega_{T_{0}}^{-}}\big)(x,t)\big)\big|_{\rho=\rho(x,t)} & =0\\
  D_{t}^{m}D_{\rho}^{k}D_{x}^{l}\big(\mathbf{v}_{M-\frac{1}{2}}^{\epsilon}(\rho,x,t)-\big(\mathbf{v}_{M-\frac{1}{2}}^{+,\epsilon}\chi_{\overline{\Omega_{T_{0}}^{+}}}+\mathbf{v}_{M-\frac{1}{2}}^{-,\epsilon}\chi_{\Omega_{T_{0}}^{-}}\big)(x,t)\big)\big|_{\rho=\rho(x,t)} & =0\\
  p_{M-\frac{1}{2}}^{\epsilon}(\rho(x,t),x,t)-\big(p_{M-\frac{1}{2}}^{+,\epsilon}\chi_{\overline{\Omega_{T_{0}}^{+}}}+p_{M-\frac{1}{2}}^{-,\epsilon}\chi_{\Omega_{T_{0}}^{-}}\big)(x,t) & =0
\end{align*}
for all $(x,t)\in\Gamma(2\delta;T_{\epsilon})\backslash\Gamma(\delta;T_{\epsilon})$
and $m,k,l\geq0$.
\end{prop}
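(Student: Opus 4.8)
The plan is to exploit that, in the region $\Gamma(2\delta;T_{\epsilon})\backslash\Gamma(\delta;T_{\epsilon})$, the stretched variable $\rho(x,t)=d_{\Gamma}(x,t)/\epsilon-h_{A}^{\epsilon}(S(x,t),t)$ has already left the transition layer $[-1,1]$ of the cut-off $\eta$, so that the fractional inner terms defined in \eqref{eq:mueps0,5}--\eqref{eq:peps0,5} literally coincide with the corresponding outer terms there; hence all the asserted differences vanish \emph{identically}, not merely up to exponentially small corrections as in the usual matching conditions \eqref{eq:matchcon}.

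First I would fix $\epsilon_{2}\in(0,\epsilon_{1}]$ small enough that $\delta/\epsilon-C(K)>1$ for all $\epsilon\in(0,\epsilon_{2})$, where $C(K)$ is the constant from \eqref{eq:haepglm}. Then, for $(x,t)\in\Gamma(2\delta;T_{\epsilon})\backslash\Gamma(\delta;T_{\epsilon})$ and $\epsilon\in(0,\epsilon_{2})$, from $|d_{\Gamma}(x,t)|\geq\delta$ together with $\sup_{0\le t\le T_{\epsilon}}\|h_{A}^{\epsilon}(t)\|_{C^{1}(\Gamma_{t}(2\delta))}\le C(K)$ one gets $|\rho(x,t)|\geq\delta/\epsilon-C(K)>1$; moreover $\rho(x,t)$ has the same sign as $d_{\Gamma}(x,t)$, so $\rho(x,t)>1$ precisely when $x\in\Omega^{+}(t)$ and $\rho(x,t)<-1$ precisely when $x\in\Omega^{-}(t)$. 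This is the exact analogue of the estimate \eqref{eq:epsab1} used in the proof of Lemma~\ref{lem:spekholds}.

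Next I would observe that, since $\eta\equiv1$ on $[1,\infty)$ and $\eta\equiv0$ on $(-\infty,-1]$, the function $\eta$ is constant in a whole neighbourhood of $\rho(x,t)$; hence $\eta(\rho(x,t))\in\{0,1\}$ while $\eta^{(k)}(\rho(x,t))=0$ for every $k\geq1$. Inserting this into \eqref{eq:mueps0,5}, \eqref{eq:veps0,5}, \eqref{eq:peps0,5} and differentiating (the $\rho$-, $x$- and $t$-derivatives commute and, per the convention of \eqref{eq:matchcon}, act before the substitution $\rho=\rho(x,t)$): for $k\geq1$ every $\rho$-derivative kills the plateau and the term vanishes, while for $k=0$ one is left with $D_{t}^{m}D_{x}^{l}\mu_{M-\frac{1}{2}}^{+,\epsilon}(x,t)$ on the set where $x\in\Omega^{+}(t)$ and with $D_{t}^{m}D_{x}^{l}\mu_{M-\frac{1}{2}}^{-,\epsilon}(x,t)$ on the set where $x\in\Omega^{-}(t)$, and analogously for $\mathbf{v}_{M-\frac{1}{2}}^{\epsilon}$ and $p_{M-\frac{1}{2}}^{\epsilon}$. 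On the other hand, since $|d_{\Gamma}(x,t)|\geq\delta$, the indicators $\chi_{\overline{\Omega_{T_{0}}^{+}}}$ and $\chi_{\Omega_{T_{0}}^{-}}$ are locally constant around $(x,t)$, so the comparison term $\big(\mu_{M-\frac{1}{2}}^{+,\epsilon}\chi_{\overline{\Omega_{T_{0}}^{+}}}+\mu_{M-\frac{1}{2}}^{-,\epsilon}\chi_{\Omega_{T_{0}}^{-}}\big)(x,t)$ coincides, together with all its $x$- and $t$-derivatives, with $\mu_{M-\frac{1}{2}}^{+,\epsilon}$ on the $\Omega^{+}$-side and with $\mu_{M-\frac{1}{2}}^{-,\epsilon}$ on the $\Omega^{-}$-side. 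Subtracting gives $0$ in every case, which is exactly the three identities. Since the argument is purely local and uses only the plateau property of $\eta$, there is no genuine obstacle; the only points requiring care are the quantitative choice of $\epsilon_{2}$ (ensuring $\rho$ has left $[-1,1]$) and respecting the convention that the derivatives in the matching conditions are taken before the substitution $\rho=\rho(x,t)$, as in \eqref{eq:matchcon}.
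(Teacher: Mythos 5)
Your argument is essentially identical to the paper's one-line proof: both rest on the fact that in $\Gamma(2\delta;T_{\epsilon})\backslash\Gamma(\delta;T_{\epsilon})$ one has $|\rho(x,t)|\geq\delta/\epsilon-C(K)>1$ for $\epsilon$ small, with $\operatorname{sign}\rho=\operatorname{sign}d_{\Gamma}$, so that $\eta$ is locally constant (with value $0$ or $1$) around $\rho(x,t)$ and the definitions \eqref{eq:mueps0,5}--\eqref{eq:peps0,5} reduce exactly to the corresponding one-sided outer terms. Your write-up is more careful than the paper's (it explicitly treats the $\Omega^{-}$ side and the higher derivatives), but the key idea and the ingredients \eqref{eq:haepglm} and the plateau of $\eta$ are the same.
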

\begin{proof}
This is a direct consequence of $\frac{d_{\Gamma}(x,t)}{\epsilon}-h_{A}^{\epsilon}(x,t)\geq1$
for $(x,t)\in\Gamma (2\delta;T_{\epsilon})\backslash\Gamma(\delta;T_{\epsilon})$
and $\epsilon>0$ small enough together with $\eta\equiv1$ in $(1,\infty)$
and $\eta\equiv0$ in $(-\infty,-1)$.
\end{proof}
The next corollary is a consequence of Proposition \ref{matchit} and the
matching conditions for the integer orders.
\begin{cor}
  \label{cor:matching}There is some $\epsilon_{2}\in(0,\epsilon_{1}]$
  such that for all $\epsilon\in(0,\epsilon_{2})$ 
\begin{align*}
  \Vert D_{x}^{l}(\mu_{I}-\mu_{O})\Vert _{L^{\infty}(\Gamma(2\delta;T_{\epsilon})\backslash\Gamma(\delta;T_{\epsilon}))}+\Vert D_{x}^{l}(\mu_{O}-\mu_{\mathbf{B}})\Vert _{L^{\infty}(\partial_{T_{\epsilon}}\Omega(\delta)\backslash\partial_{T_{\epsilon}}\Omega(\frac{\delta}{2}))} & \leq C(K)e^{-\frac{\tilde{C}}{\epsilon}},\\
  \Vert D_{x}^{l}(c_{I}-c_{O})\Vert_{L^{\infty}(\Gamma(2\delta;T_{\epsilon})\backslash\Gamma(\delta;T_{\epsilon}))}+\Vert D_{x}^{l}(c_{O}-c_{\mathbf{B}})\Vert _{L^{\infty}(\partial_{T_{\epsilon}}\Omega(\delta)\backslash\partial_{T_{\epsilon}}\Omega(\frac{\delta}{2}))} & \leq C(K)e^{-\frac{\tilde{C}}{\epsilon}},\\
  \Vert D_{x}^{l}(\mathbf{v}_{I}-\mathbf{v}_{O})\Vert _{L^{\infty}(\Gamma(2\delta;T_{\epsilon})\backslash\Gamma(\delta;T_{\epsilon}))}+\Vert D_{x}^{l}(\mathbf{v}_{O}-\mathbf{v}_{\mathbf{B}})\Vert _{L^{\infty}(\partial_{T_{\epsilon}}\Omega(\delta)\backslash\partial_{T_{\epsilon}}\Omega(\frac{\delta}{2}))} & \leq C(K)\epsilon^{M+1},\\
  \Vert p_{I}-p_{O}\Vert _{L^{\infty}(\Gamma(2\delta;T_{\epsilon})\backslash\Gamma(\delta;T_{\epsilon}))}+\Vert p_{O}-p_{\mathbf{B}}\Vert _{L^{\infty}(\partial_{T_{\epsilon}}\Omega(\delta)\backslash\partial_{T_{\epsilon}}\Omega(\frac{\delta}{2}))} & \leq C(K)e^{-\frac{\tilde{C}}{\epsilon}}
\end{align*}
for $l\in\{ 0,1\} $ and constants $C(K),\tilde{C}>0$.
\end{cor}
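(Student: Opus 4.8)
The plan is to reduce everything to Proposition~\ref{matchit} together with the integer-order matching conditions \eqref{eq:matchcon} and \eqref{eq:matchcon-bdry}. First I would record the geometric observation that on $\Gamma(2\delta;T_{\epsilon})\backslash\Gamma(\delta;T_{\epsilon})$ one has $|d_{\Gamma}(x,t)|\geq\delta$, so by \eqref{eq:haepglm} there is $\epsilon_2\in(0,\epsilon_1]$ (possibly smaller than the one in Proposition~\ref{matchit}) such that $|\rho(x,t)|=|d_{\Gamma}(x,t)/\epsilon-h_A^{\epsilon}(S(x,t),t)|\geq\delta/(2\epsilon)$ for all $\epsilon\in(0,\epsilon_2)$; likewise, on $\partial_{T_{\epsilon}}\Omega(\delta)\backslash\partial_{T_{\epsilon}}\Omega(\tfrac{\delta}{2})$ one has $z(x,t)=d_{\mathbf{B}}(x)/\epsilon\leq-\delta/(2\epsilon)$.

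For the inner--outer differences I would expand, for instance, $\mu_I-\mu_O=\sum_{i\in I_{M-\frac{1}{2}}^{M+1}}\epsilon^{i}\big(\mu_i(\rho(x,t),x,t)-\mu_{O,i}(x,t)\big)$ and estimate each summand. The fractional term $i=M-\tfrac{1}{2}$ vanishes identically on $\Gamma(2\delta;T_{\epsilon})\backslash\Gamma(\delta;T_{\epsilon})$ by Proposition~\ref{matchit}. For integer $i$, applying $D_x^l$ with $l\in\{0,1\}$ (which is covered by the matching conditions) and using the chain rule produces finitely many terms of the form $(\partial_\rho^kD_x^j\mu_i)(\rho(x,t),x,t)$ times derivatives of $\rho$, minus $D_x^j\mu_i^{\pm}(x,t)$; since $|D_x^j\rho|\leq C\epsilon^{-1}$ and, by \eqref{eq:matchcon}, each $\partial_\rho^kD_x^j\mu_i(\rho,x,t)$ with $k\geq1$ and each difference $D_x^j\mu_i(\rho,x,t)-D_x^j\mu_i^{\pm}(x,t)$ is $\leq Ce^{-\alpha|\rho|}\leq Ce^{-\alpha\delta/(2\epsilon)}$, the $\epsilon^{-1}$ factors are absorbed by the exponential, giving a bound $\epsilon^{i-1}e^{-\alpha\delta/(2\epsilon)}\leq Ce^{-\tilde C/\epsilon}$ for any $0<\tilde C<\alpha\delta/2$. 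Summing the finitely many indices yields the claimed $C(K)e^{-\tilde C/\epsilon}$ bound for $\mu_I-\mu_O$, and the identical argument applies to $c_I-c_O$ and $p_I-p_O$. For the boundary differences $\mu_O-\mu_{\mathbf{B}}$, $c_O-c_{\mathbf{B}}$, $p_O-p_{\mathbf{B}}$ I would run the same scheme using \eqref{eq:matchcon-bdry} and $e^{\alpha z}\leq e^{-\alpha\delta/(2\epsilon)}$; here one uses $\partial_{T_{\epsilon}}\Omega(\delta)\subset\Omega_{T_{\epsilon}}^{-}$ (since $\mathrm{dist}(\Gamma_t,\partial\Omega)>5\delta$), so that the fractional contribution is $\mu_{M-\frac{1}{2}}^{-}-\mu_{M-\frac{1}{2}}^{\mathbf B}=0$ by \eqref{eq:M0,5bdrydef}, and analogously for $\mathbf v$ and $p$.

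The only place the exponential bound degrades is the velocity, and this is by design. By Definition~\ref{def:apprxsol}, $\mathbf{v}_{\mathbf{B}}$ carries the extra summand $-\epsilon^{M+1}\mathbf{v}_{M+1}^{\mathbf{B}}(0,x,t)$, which is independent of $z$ and hence does not decay as $z\to-\infty$; since $\mathbf{v}_{M+1}^{\mathbf{B}}$ and its derivatives are bounded, $\mathbf{v}_O-\mathbf{v}_{\mathbf{B}}$ contains a term of size $\mathcal{O}(\epsilon^{M+1})$ in $L^{\infty}$ together with its $x$-derivatives, which dominates the exponentially small contribution of the remaining terms. Thus $\|D_x^l(\mathbf v_O-\mathbf v_{\mathbf B})\|_{L^\infty}\leq C(K)\epsilon^{M+1}$; the inner--outer difference $\mathbf v_I-\mathbf v_O$ is still exponentially small by the first argument, but since the corollary bounds the sum of the two quantities it suffices to record the weaker $\epsilon^{M+1}$ estimate for both.

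The work here is essentially bookkeeping: the one point requiring care is to track the chain rule through the composition $x\mapsto\mu_i(\rho(x,t),x,t)$ and check that each factor $\epsilon^{-1}$ from $\nabla\rho=\epsilon^{-1}\nabla d_{\Gamma}-\nabla^{\Gamma}h_A^{\epsilon}$ is harmless once paired with $e^{-\alpha|\rho|}$ for $|\rho|\gtrsim\epsilon^{-1}$, and that \eqref{eq:haepglm} (the source of the constant $C(K)$) is available on $[0,T_{\epsilon}]$ under Assumption~\ref{assu:remainder}. No analytic input beyond Proposition~\ref{matchit}, the matching conditions, and \eqref{eq:haepglm} enters.
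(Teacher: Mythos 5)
Your proof is correct and follows the same route as the paper's own (terse) argument: the integer-order inner--outer and outer--boundary matching conditions \eqref{eq:matchcon} and \eqref{eq:matchcon-bdry} together with the geometric lower bound $|\rho|\geq\delta/(2\epsilon)$ (respectively $z\leq-\delta/(2\epsilon)$), coming from \eqref{eq:haepglm}, give the exponential bounds; Proposition~\ref{matchit} and \eqref{eq:M0,5bdrydef} dispose of the fractional-order contributions; and the extra $z$-independent summand $-\epsilon^{M+1}\mathbf{v}_{M+1}^{\mathbf B}(0,x,t)$ in $\mathbf{v}_{\mathbf B}$ accounts for the weaker $\epsilon^{M+1}$ bound on the velocity line. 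You fill in the chain-rule bookkeeping that the paper leaves implicit, but the ingredients and their use are identical.
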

\begin{proof}
This follows directly from (\ref{eq:matchcon}), (\ref{eq:matchcon-bdry}),
Proposition \ref{matchit} and the fact that $\frac{d_{\Gamma}(x,t)}{\epsilon}-h_{A}^{\epsilon}(x,t)\geq\frac{\delta}{2\epsilon}$
for $(x,t)\in\Gamma(2\delta;T_{\epsilon})\backslash\Gamma(\delta;T_{\epsilon})$
and for $\epsilon>0$ small enough. Note in particular $\mu_{M-\frac{1}{2}}^{\mathbf{B}}=\mu_{M-\frac{1}{2}}^{-,\epsilon}$
as defined in (\ref{eq:M0,5bdrydef}), which also holds for the other
fractional terms, and consider 
\begin{align*}
\epsilon^{M+1} & \Vert D_{x}^{l}(\mathbf{v}_{O,M+1}-\mathbf{v}_{\mathbf{B},M+1})\Vert _{L^{\infty}(\partial_{T_{\epsilon}}\Omega(\delta)\backslash\partial_{T_{\epsilon}}\Omega(\frac{\delta}{2}))}
 Ce^{-\alpha\frac{\delta}{2\epsilon}}+\epsilon^{M+1}\Vert \mathbf{v}_{M+1}^{\mathbf{B}}(0,.)\Vert _{L^{\infty}(\partial_{T_{0}}\Omega(\delta))},
\end{align*}
which accounts for the special case.
\end{proof}

\subsection{Estimating the Remainder}

The following results are at the same time proofs for the estimates
in Theorem \ref{thm:Main-Apprx-Structure}.
\begin{thm}[Remainder Terms]
  \label{remainder}~\\Let Assumption \ref{assu:remainder} hold and let
for $\epsilon\in (0,\epsilon_{0})$ the functions $c_{A}^{\epsilon}$,
$\mu_{A}^{\epsilon}$, $\mathbf{v}_{A}^{\epsilon}$, $p_{A}^{\epsilon}$,
$h_{A}^{\epsilon}$ be defined as in Definition \ref{def:apprxsol}
and $\rs$, $\rdiv$, $\rc$, $\rh$ be given as in \eqref{eq:Stokesapp}\textendash \eqref{eq:Hilliardapp},
for $\mathbf{w}_{1}^{\epsilon}:=\frac{1}{\epsilon^{M-\frac{1}{2}}}\twe$.
Here $\twe$ is the weak solution to \eqref{eq:w1}\textendash \eqref{eq:w13}
with $H=\big(h_{M-\frac{1}{2}}^{\epsilon}\big)_{\epsilon\in (0,\epsilon_{0})}$.
Moreover, let $\varphi\in L^{\infty}(0,T_{\epsilon};H^{1}(\Omega))$
and $R=c^{\epsilon}-c_{A}^{\epsilon}$. Then there is some $\epsilon_{2}\in (0,\epsilon_{1}]$
and a constant $C(K)>0$ such that for all $\epsilon\in (0,\epsilon_{2})$
\begin{align}
\int_{0}^{T_{\epsilon}}\Big|\int_{\Omega}\rc\varphi\d x\Big|\d t & \leq C(K)\big(T_{\epsilon}^{\frac{1}{2}}+\epsilon^{\frac{1}{2}}\big)\epsilon^{M}\Vert \varphi\Vert _{L^{\infty}(0,T_{\epsilon};H^{1}(\Omega))},\label{eq:eins}\\
\int_{0}^{T_{\epsilon}}\left|\int_{\Omega}\rh R\d x\right|\d t & \leq C(K)\big(T_{\epsilon}^{\frac{1}{3}}+\epsilon^{\frac{1}{2}}\big)\epsilon^{2M},\label{eq:zwei}\\
\Vert \rs\Vert _{L^{2}(0,T_{\epsilon};(H^{1}(\Omega))')} & \leq C(K)\epsilon^{M},\label{eq:drei}\\
\Vert \rdiv\Vert _{L^{2}(\Omega_{T_{\epsilon}})} & \leq C(K)\epsilon^{M}.\label{eq:vier}
\end{align}
\end{thm}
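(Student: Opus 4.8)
The plan is to establish the four estimates by splitting each remainder into its inner, outer and boundary contributions via the decomposition forced by the cut-off functions $\xi(d_\Gamma)$ and $\xi(2d_\mathbf{B})$ in Definition~\ref{def:apprxsol}, and then invoking the structural formulas \eqref{eq:CH1-rem}--\eqref{eq:ST-rem} together with \eqref{eq:CH1-remO}--\eqref{eq:DIV-remB}. The key point to keep in mind throughout is bookkeeping: because of the gluing, $\rc$, $\rh$, $\rs$, $\rdiv$ each have the schematic form
\[
r^\epsilon = \xi(d_\Gamma) r_I^\epsilon + (1-\xi(d_\Gamma))(1-\xi(2d_\mathbf{B})) r_O^\epsilon + \xi(2d_\mathbf{B}) r_\mathbf{B}^\epsilon + (\text{commutator terms from differentiating }\xi),
\]
where the commutator terms are supported in $\Gamma(2\delta;T_\epsilon)\backslash\Gamma(\delta;T_\epsilon)$ and $\partial_{T_\epsilon}\Omega(\delta)\backslash\partial_{T_\epsilon}\Omega(\tfrac\delta2)$, hence controlled by Corollary~\ref{cor:matching}; there they are $O(\epsilon^{M+1})$ (or exponentially small, depending on the quantity), so they are harmless.

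First I would prove \eqref{eq:drei} and \eqref{eq:vier}, since these are purely $L^2$-type bounds and do not require pairing against the solution $R$. On $\Gamma(2\delta;T_\epsilon)$ one reads off from \eqref{eq:ST-rem} that $\rsi + \epsilon^{M-\frac32}\mathbf{V}^{M-\frac12}$ is a sum of terms of order $\epsilon^M$ or higher, estimated in the $(H^1)'$ norm by testing against $\mathbf{z}\in L^2(0,T_\epsilon;H^1(\Omega)^2)$ and applying \eqref{eq:rest} of Lemma~\ref{rest}; the leftover term $\epsilon^{M-\frac32}\mathbf{V}^{M-\frac12}$ is handled by \eqref{eq:besteV} of Lemma~\ref{beste}. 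The outer and boundary pieces $\rso$, $\rsb$ are $O(\epsilon^{M+\frac12})$ directly in $L^\infty$ by \eqref{eq:ST-remO}, \eqref{eq:ST-remB}, using \eqref{eq:muv0,5est} for the fractional-order summands. For $\rdiv$: from \eqref{eq:DIV-rem} one sees $\rdivi + \epsilon^{M-\frac12}W^{M-\frac12}$ consists of terms of order $\ge \epsilon^M$ in $L^2(\Gamma(2\delta;T_\epsilon))$ — these follow by the Step-1/Step-4-type arguments in the proof of Lemma~\ref{rest} (using $\mathbf{u}_0=0$ on $\Gamma$, $H^1\hookrightarrow L^{2,\infty}$, Lemma~\ref{Linfeig}, \eqref{eq:heps0,5}, \eqref{eq:muv0,5est}) — and the $W^{M-\frac12}$ term is exactly \eqref{eq:besteW}; meanwhile $\rdivo=0$ and $\rdivb=O(\epsilon^{M+1})$ by \eqref{eq:DIV-remO}, \eqref{eq:DIV-remB}. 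Then $\rdiv=0$ on $\partial_{T_0}\Omega$ follows from \eqref{eq:DIV-remB} restricted to $\partial\Omega$.

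Next I would treat \eqref{eq:eins}. On $\Gamma(2\delta;T_\epsilon)$ the representation \eqref{eq:CH1-rem} gives $\rci = (\rci - \epsilon^{M-\frac32}B^{M-\frac12}) + \epsilon^{M-\frac32}B^{M-\frac12}$; the first bracket is bounded against $\varphi$ by \eqref{eq:rech1} of Lemma~\ref{rech1}, and the $B^{M-\frac12}$ contribution is bounded on $\Gamma_t(\delta)$ by \eqref{eq:besteB} of Lemma~\ref{beste} — here one must also note that on the annulus $\Gamma_t(2\delta)\backslash\Gamma_t(\delta)$ the $B^{M-\frac12}$ term is exponentially small because $\partial_\rho c_0=\theta_0'$ and the $\eta',\eta''$ factors decay, by the exponential decay of the inner profiles evaluated at $\rho(x,t)$ with $|\rho|\gtrsim\delta/\epsilon$. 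The outer piece $\rco$ is $O(\epsilon^{M+\frac12})$ in $L^\infty$ by \eqref{eq:CH1-remO} (using $\nabla c_{O,0}=0$ and \eqref{eq:muv0,5est}), and the boundary piece $\rcb$ is $O(\epsilon^M)$ by \eqref{eq:CH1-remB} (again exploiting $\partial_z c_0^\mathbf{B}=\partial_z c_1^\mathbf{B}=0$ from Corollary~\ref{cor:bdrycond} and \eqref{eq:muv0,5est}); pairing each against $\varphi$ costs only $\|\varphi\|_{L^\infty(H^1)}T_\epsilon^{1/2}$. Finally, \eqref{eq:zwei} is the most delicate, because $\rh$ is paired against $R=c^\epsilon - c_A^\epsilon$, and the dominant term $\epsilon^{M-\frac12}A^{M-\frac12}$ carries only $\epsilon^{M-\frac12}$; recovering the missing half-power is exactly the content of \eqref{eq:besteA}, whose proof relies on the refined decomposition of both $A^{M-\frac12}$ (Lemma~\ref{ABstruc}.1) and of $R$ (\cite[Proposition~3.3]{NSCH1}). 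The remaining terms of \eqref{eq:CH2-rem} are genuinely $O(\epsilon^{M+1})$ in $L^\infty(\Gamma(2\delta;T_\epsilon))$ by the matching conditions and \eqref{eq:heps0,5}, so pairing them against $R$ and using $\|R\|_{L^1(\Gamma(\delta;T_\epsilon))}\le C(K)T_\epsilon^{1/2}\epsilon^M$ from Proposition~\ref{RZerlprop} gives a contribution of order $\epsilon^{2M+1}T_\epsilon^{1/2}$; the outer and boundary pieces $\rhO=O(\epsilon^{M+1})+\epsilon^{M-\frac12}\mu_{O,M-\frac12}$ and $\rhb$ are handled by writing $\rhb = \trhb + \epsilon^{M-\frac12}\mu_{O,M-\frac12}$ (Remark~\ref{rem:Remark-bdry-remainder}), estimating $\trhb = O(\epsilon^{M+1})$ directly and the fractional term via \eqref{eq:muv0,5est}, then using $\|R\|_{L^2(0,T_\epsilon;L^2(\Omega\backslash\Gamma_t(\delta)))}\le K\epsilon^{M+\frac12}$ from Assumption~\ref{assu:Main-est}. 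Collecting all contributions yields the stated bounds; choosing $\epsilon_2\in(0,\epsilon_1]$ small enough so that all the "$\epsilon$ small enough" conditions from Lemmata~\ref{rech1}, \ref{beste} and Proposition~\ref{matchit}, Corollary~\ref{cor:matching} simultaneously hold completes the proof. The main obstacle is \eqref{eq:zwei}/\eqref{eq:besteA}: obtaining the extra $\epsilon^{1/2}$ requires the full force of the structural factorization of $A^{M-\frac12}$ with its exponentially-decaying-integral property \eqref{eq:expabA}, combined with the $\theta_0'$-profile structure of $R$, and is the one place where naive $L^\infty$ estimates fail.
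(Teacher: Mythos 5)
Your overall strategy --- splitting each remainder into inner, outer and boundary pieces plus commutator terms from the cut-offs, and then invoking Lemmata~\ref{rech1}, \ref{beste}, Proposition~\ref{RZerlprop} and Corollary~\ref{cor:matching} --- is exactly the paper's route, and most of your bookkeeping is correct (in particular you correctly identify the annulus issue for $B^{M-\frac12}$). However, your treatment of \eqref{eq:zwei} has two genuine gaps.

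First, your claim that all terms of \eqref{eq:CH2-rem} besides $\epsilon^{M-\frac12}A^{M-\frac12}$ are ``genuinely $O(\epsilon^{M+1})$ in $L^\infty(\Gamma(2\delta;T_\epsilon))$ by the matching conditions and \eqref{eq:heps0,5}'' is false. The contributions containing $\Delta^\Gamma h_{M-\frac12}^\epsilon$ involve $\partial_s^2 h_{M-\frac12}^\epsilon$, and \eqref{eq:heps0,5} with Proposition~\ref{embedding} only gives a $C^0([0,T_\epsilon];H^2(\mathbb{T}^1))$ bound, \emph{not} a pointwise $L^\infty$ bound in $(s,t)$. So these terms cannot be paired against $R$ via the $L^1$ bound of Proposition~\ref{RZerlprop}. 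The paper instead exploits $\partial_\rho c_1\in\mathcal{R}_\alpha$ and pairs $\epsilon^{M+\frac12}\Delta^\Gamma h_{M-\frac12}^\epsilon\,\partial_\rho c_1\,R$ against $\Vert R\Vert_{L^2(\Omega_{T_\epsilon})}\le K\epsilon^{M-\frac12}$, collecting an extra $\epsilon^{\frac12}$ from the tubular-neighbourhood coarea factor, to reach $C(K)\epsilon^{2M+\frac12}$. Without a separate argument of this kind your estimate for \eqref{eq:zwei} is incomplete.

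Second, your schematic decomposition $r^\epsilon = \xi r_I + (1-\xi)(1-\xi_{\mathbf B}) r_O + \xi_{\mathbf B} r_{\mathbf B} + (\text{commutator})$ is valid only for quantities \emph{linear} in the glued fields. In $\rh$ the term $\epsilon^{-1}f'(c_A^\epsilon)$ is nonlinear, and on the annulus one has $f'(c_A^\epsilon)\ne\xi f'(c_I)+(1-\xi)f'(c_O)$; the commutator is not simply generated by differentiating $\xi$. The paper resolves this with a Taylor expansion, producing the additional term
\begin{equation*}
\epsilon^{-1}(c_O-c_I)\,\xi(d_\Gamma)(1-\xi(d_\Gamma))\big(f''(\sigma_1(c_A^\epsilon,c_I))-f''(\sigma_2(c_A^\epsilon,c_O))\big),
\end{equation*}
which is then controlled by Corollary~\ref{cor:matching} and the uniform $L^\infty$ bounds on $f''$. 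You need to address this before your annulus estimate for $\rh$ closes. (A minor related omission: the same ``exponentially small on $\Gamma_t(2\delta)\backslash\Gamma_t(\delta)$'' device you correctly invoke for $B^{M-\frac12}$ is also required for $\mathbf{V}^{M-\frac12}$ in the proof of \eqref{eq:drei}, since \eqref{eq:besteV} is only stated on $\Gamma_t(\delta)$; here one uses $\mathtt{V}_k^2\in\mathcal{R}_\alpha$ from Lemma~\ref{ABstruc}.3.)
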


\begin{proof}
As before, we will use the notation $\psi(r,s,t):=\psi(X(r,s,t))$
for $(r,s,t)\in(-2\delta,2\delta)\times\mathbb{T}^{1}\times[0,T_{\epsilon}]$
for functions $\psi\colon \Gamma(2\delta;T_{\epsilon})\rightarrow\mathbb{R}$.
Let in the following $\tilde{\epsilon}_{2}\in (0,\epsilon_{1}]$
be chosen such that the results of Section \ref{sec:First-Estimates}
hold and let $\epsilon\in(0,\tilde{\epsilon}_{2})$.

\noindent
\textbf{Proof of (\ref{eq:eins}):} Since $\xi(d_{\Gamma})\equiv1$
in $\Gamma (\delta;T_{0})$, we have $\rc=\rci$ in $\Gamma(\delta;T_{\epsilon})$
with $\rci$ as in (\ref{eq:CH1-rem}). Now 
\[
\int_{0}^{T_{\epsilon}}\Big|\,\int_{\Gamma_{t}(\delta)}\rci\varphi\d x\Big|\d t\leq C(K)\big(T_{\epsilon}^{\frac{1}{2}}+\epsilon^{\frac{1}{2}}\big)\epsilon^{M}\left\Vert \varphi\right\Vert _{L^{\infty}\left(0,T_{\epsilon};H^{1}(\Omega)\right)}
\]
holds because of Lemma \ref{rech1} and (\ref{eq:besteB}). 

Moreover, we have $(1-\xi(d_{\Gamma}))(1-\xi(2d_{\mathbf{B}}))\equiv1$
in $\Omega_{T_{\epsilon}}\backslash\left(\Gamma(2\delta;T_{\epsilon})\cup\partial_{T_{\epsilon}}\Omega(\delta)\right)$
and thus $\rc=\rco$ in that domain, with $\rco$ as in (\ref{eq:CH1-remO}).
Now all terms in $\rco$ which do not involve $\mathbf{v}_{M-\frac{1}{2}}^{\pm,\epsilon}$
can be estimated in $L^{\infty}(\Omega_{T_{0}}\backslash\Gamma(2\delta;T_{0}))$,
yielding the desired estimate. The terms involving $\mathbf{v}_{M-\frac{1}{2}}^{\pm,\epsilon}$
can be treated by using Hölder's inequality and (\ref{eq:muv0,5est}),
i.e., 
\begin{align}
\epsilon^{M+\frac{1}{2}}\int_{0}^{T_{\epsilon}}\int_{\Omega^{+}(t)\backslash\Gamma_{t}(2\delta)}\big|\mathbf{v}_{M-\frac{1}{2}}^{\epsilon,+}\cdot\nabla c_{j}^{+}\varphi\big|\d x\d t & \leq C(K)T_{\epsilon}^{\frac{1}{2}}\epsilon^{M+\frac{1}{2}}\left\Vert \varphi\right\Vert _{L^{\infty}\left(0,T_{\epsilon};H^{1}(\Omega)\right)}\label{eq:mark1}
\end{align}
for $j\in\left\{ 1,\ldots,M+1\right\} $. The same argumentation also
holds in $\Omega^{-}(t)$.

Close to the boundary, in $\partial_{T_{\epsilon}}\Omega(\frac{\delta}{2})$,
we have $\xi(2d_{\mathbf{B}})\equiv1$ and thus $\rc=\rcb$.
As in the outer case, all terms not involving $\mathbf{v}_{M-\frac{1}{2}}^{-,\epsilon}$
may be estimated in $L^{\infty}(\partial_{T_{0}}\Omega(\delta))$,
the rest can be estimated as in (\ref{eq:mark1}).

Next, we give estimates for $\rc$ in $\Gamma(2\delta;T_{\epsilon})\backslash\Gamma(\delta;T_{\epsilon})$:
By definition of $c_{A}^{\epsilon}$ and $\mu_{A}^{\epsilon}$ in
(\ref{eq:apprxsol}) we have
\begin{align}
\rc & =\xi(d_{\Gamma})\rci+(1-\xi(d_{\Gamma}))\rco-2\xi'\mathbf{n}\cdot\nabla (\mu_{I}-\mu_{O})\nonumber \\
 & \quad+\xi'(d_{\Gamma})\big(\partial_{t}d_{\Gamma}+\mathbf{v}_{A}^{\epsilon}\cdot\mathbf{n}+\epsilon^{M-\frac{1}{2}}\mathbf{w}_{1}^{\epsilon}|_{\Gamma}\cdot\mathbf{n}\xi(d_{\Gamma})\big)(c_{I}-c_{O})\nonumber \\
 & \quad+\mathbf{v}_{A}^{\epsilon}\cdot\big(\xi(d_{\Gamma})\nabla c_{I}+(1-\xi(d_{\Gamma}))\nabla c_{O}\big)-\xi(d_{\Gamma})\mathbf{v}_{I}\cdot\nabla c_{I}-(1-\xi(d_{\Gamma}))\mathbf{v}_{O}\cdot\nabla c_{O}\nonumber \\
 & -\left(\mu_{I}-\mu_{O}\right)\left(\xi''+\xi'\Delta d_{\Gamma}\right)+\epsilon^{M-\frac{1}{2}}\mathbf{w}_{1}^{\epsilon}|_{\Gamma}\cdot\nabla c_{O}\xi(d_{\Gamma})\left(1-\xi(d_{\Gamma})\right).\label{eq:rbetween}
\end{align} 
The term $(1-\xi(d_{\Gamma}))r_{CH1,O}^{\epsilon}$
may be estimated in the same way as in the outer domain $\Omega_{T_{\epsilon}}\backslash\left(\Gamma(2\delta;T_{\epsilon})\cup\partial_{T_{\epsilon}}\Omega(\delta)\right)$,
using $\left|1-\xi(d_{\Gamma})\right|\leq1$. Regarding
$\xi(d_{\Gamma})\rci$, there is a subtlety we have to
deal with: All appearing terms in the explicit structure of the difference
$\xi(d_{\Gamma})\big(\rci-\epsilon^{M-\frac{3}{2}}B^{M-\frac{1}{2}}\big)$
can be estimated with the help of Lemma \ref{rech1}. But we may not
simply use (\ref{eq:besteB}) for $\xi(d_{\Gamma})\epsilon^{M-\frac{3}{2}}B^{M-\frac{1}{2}}\varphi$
in $\Gamma(2\delta)$.

To treat this term let $J=(-2\delta,-\delta)\cup(\delta,2\delta)$. We estimate, using Lemma \ref{ABstruc}.2,
\begin{align}
&\int_{0}^{T_{\epsilon}}  \int_{\Gamma_{t}(2\delta)\backslash\Gamma_{t}(\delta)}\big|\xi(d_{\Gamma})\epsilon^{M-\frac{3}{2}}B^{M-\frac{1}{2}}\varphi\big|\d(x,t)\leq C\epsilon^{M-\frac{3}{2}}\int_{0}^{T_{\epsilon}}\int_{\mathbb{T}^{1}}\int_{J}\big|B^{M-\frac{1}{2}}\varphi\big|\d r\d s\d t\nonumber \\
 & \leq C\epsilon^{M-\frac{3}{2}}\sum_{k=1}^{K_{1}}\int_{0}^{T_{\epsilon}}\int_{\mathbb{T}^{1}}\Vert \varphi(.,s,t)\Vert _{L^{\infty}(-2\delta,2\delta)}\left\Vert \mathtt{B}_{k}^{1}(.,s,t)\right\Vert _{L^{2}(-2\delta,2\delta)}\big\Vert \mathtt{B}_{k}^{2}(\rho)\big\Vert _{L^{2}(J)}\d s\d t.\label{eq:pt1}
\end{align}
Now since $\frac{\delta}{\epsilon}-h_{A}^{\epsilon}\geq\frac{\delta}{2\epsilon}$
for $\epsilon>0$ small enough, we may derive for $k\in\left\{ 1,\ldots,K_{1}\right\} $
\begin{align}
\int_{\delta}^{2\delta}\left|\mathtt{B}_{k}^{2}(\rho(r,p,t))\right|^{2}\d r & \leq\epsilon\int_{\frac{\delta}{2\epsilon}}^{\infty}\left|\mathtt{B}_{k}^{2}(\rho)\right|^{2}\d\rho\leq\epsilon C_{1}e^{-\frac{C_{2}}{\epsilon}}\label{eq:expaba}
\end{align}
for some constants $C_{1},C_{2}>0$, where we used $\mathtt{B}_{k}^{2}\in\mathcal{O}(e^{-\alpha\left|\rho\right|})$ due to Lemma~\ref{ABstruc}.2. A similar estimate holds
on $(-2\delta,-\delta)$, allowing for a suitable estimate
of (\ref{eq:pt1}) with the help of (\ref{eq:b1est}).

Concerning $\xi'(d_{\Gamma})\big(\partial_{t}d_{\Gamma}+\mathbf{v}_{A}^{\epsilon}\cdot\mathbf{n}+\epsilon^{M-\frac{1}{2}}\mathbf{w}_{1}^{\epsilon}|_{\Gamma}\cdot\mathbf{n}\xi(d_{\Gamma})\big)(c_{I}-c_{O})$
in (\ref{eq:rbetween}), we exemplarily estimate 
\begin{align}
&\int_{0}^{T_{\epsilon}}\int_{\Gamma_{t}(2\delta)\backslash\Gamma_{t}(\delta)}  \big|\epsilon^{M-\frac{1}{2}}\mathbf{w}_{1}^{\epsilon}|_{\Gamma}\cdot\mathbf{n}(c_{I}-c_{O})\varphi\big|\d x\d t\nonumber \\
 &\quad \leq CT_{\epsilon}^{\frac{1}{2}}\big\Vert \epsilon^{M-\frac{1}{2}}\mathbf{w}_{1}^{\epsilon}\big\Vert _{L^{2}(0,T_{\epsilon};H^{1}(\Omega))}\Vert \varphi\Vert _{L^{\infty}(0,T_{\epsilon};H^{1}(\Omega))}\Vert c_{I}-c_{O}\Vert _{L^{\infty}(\Gamma(2\delta;T_{\epsilon})\backslash\Gamma(\delta;T_{\epsilon}))}\nonumber \\
 &\quad \leq C(K)T_{\epsilon}^{\frac{1}{2}}\epsilon^{M}\Vert \varphi\Vert _{L^{\infty}(0,T_{\epsilon};H^{1}(\Omega))},\label{eq:matchterm}
\end{align}
where we used $H^{1}(\Gamma_{t}(2\delta))\hookrightarrow L^{2,\infty}(\Gamma_{t}(2\delta))$, Lemma \ref{Wichtig},
and Corollary \ref{cor:matching}. An analogous (but simpler) argumentation
may be used for $\partial_{t}d_{\Gamma}\in L^{\infty}(\Gamma(2\delta;T_{0}))$
and $\big(\mathbf{v}_{A}^{\epsilon}-\epsilon^{M-\frac{1}{2}}\mathbf{v}_{A,M-\frac{1}{2}}\big)\in L^{\infty}(\Omega_{T_{0}})$
(cf.\ Definition \ref{def:apprxsol} for notations). The estimate
for $\epsilon^{M-\frac{1}{2}}\mathbf{v}_{A,M-\frac{1}{2}}^{\epsilon}$
then follows by using (\ref{eq:muv0,5est}). The terms $2\xi'\mathbf{n}\cdot\nabla\left(\mu_{I}-\mu_{O}\right)+\left(\mu_{I}-\mu_{O}\right)\left(\xi''+\xi'\Delta d_{\Gamma}\right)$
in (\ref{eq:rbetween}) may be treated by using Corollary \ref{cor:matching}.

For the third line of (\ref{eq:rbetween}), we calculate
\begin{align*}
\mathbf{v}_{A}^{\epsilon}\cdot\nabla c_{I}-\mathbf{v}_{I}\cdot\nabla c_{I} & =(1-\xi(d_{\Gamma}))(\mathbf{v}_{O}-\mathbf{v}_{I})\cdot\nabla c_{I}\\
\mathbf{v}_{A}^{\epsilon}\cdot\nabla c_{O}-\mathbf{v}_{O}\cdot\nabla c_{O} & =\xi(d_{\Gamma})(\mathbf{v}_{I}-\mathbf{v}_{O})\cdot\nabla c_{I}
\end{align*}
and Corollary \ref{cor:matching} yields the estimate as before. 

The only remaining term in (\ref{eq:rbetween}) can
be treated by 
\begin{align*}
\int_{0}^{T_{\epsilon}}\int_{\Gamma_{t}(2\delta)\backslash\Gamma_{t}(\delta)}\big|\epsilon^{M-\frac{1}{2}}\mathbf{w}_{1}^{\epsilon}|_{\Gamma}\cdot\nabla c_{O}\varphi\big|\d x\d t & \leq C(K)\epsilon^{M+\frac{1}{2}}\Vert \varphi\Vert _{L^{\infty}(0,T_{\epsilon};H^{1}(\Omega))},
\end{align*}
where we used Lemma~\ref{Wichtig} and $\nabla c_{O}=\mathcal{O}(\epsilon)$
in $L^{\infty}(\Omega_{T_{0}}^{\pm})$.

Thus, we need only consider $\rc$ in $\partial_{T_{\epsilon}}\Omega(\delta)\backslash\partial_{T_{\epsilon}}\Omega (\frac{\delta}{2})$.
Here we get a structure very similar to (\ref{eq:rbetween}):
\begin{align*}
\rc & =(1-\xi(2d_{\mathbf{B}}))\rco+\xi(2d_{\mathbf{B}})\rcb+2\xi'(2d_{\mathbf{B}})\left(\partial_{t}d_{\mathbf{B}}+\mathbf{v}_{A}^{\epsilon}\cdot\mathbf{n}_{\partial\Omega}\right)(c_{\mathbf{B}}-c_{O})\\
 & \quad+\mathbf{v}_{A}^{\epsilon}\cdot\big((1-\xi(2d_{\mathbf{B}}))\nabla c_{O}+\xi(2d_{\mathbf{B}})\nabla c_{\mathbf{B}}\big)-(1-\xi(2d_{\mathbf{B}}))\mathbf{v}_{O}\cdot\nabla c_{O}\\
 & \quad-\xi(2d_{\mathbf{B}})\mathbf{v}_{\mathbf{B}}\cdot\nabla c_{\mathbf{B}}-4\xi'\mathbf{n}_{\partial\Omega}\cdot\nabla(\mu_{\mathbf{B}}-\mu_{O})-(\mu_{\mathbf{B}}-\mu_{O})\left(4\xi''+2\xi'\Delta d_{\mathbf{B}}\right).
\end{align*}
The proof now follows in the same manner as the one for (\ref{eq:rbetween})
using the already shown estimates for $\rco$ and $\rcb$ as
well as the estimates close to the boundary in Corollary~\ref{cor:matching}.
This shows (\ref{eq:remcahn}).

\noindent   
\textbf{Proof of (\ref{eq:zwei}):} We use a similar approach as before:
In $\Gamma(\delta;T_{\epsilon})$ we have $\rh=\rhi$,
where $\rhi$ is defined in (\ref{eq:CH2-rem}). For all terms in
$\rhi$, which can be estimated in $L^{\infty}(\Gamma(2\delta;T_{\epsilon}))$
(uniformly in $\epsilon$), we may use Proposition \ref{RZerlprop}
to show the claim. Noting (\ref{eq:haepglm}), the only terms that
may not be treated in this fashion are the ones involving $\Delta^{\Gamma}h_{M-\frac{1}{2}}^{\epsilon}$
and $A^{M-\frac{1}{2}}$. Regarding $\epsilon^{M-\frac{1}{2}}A^{M-\frac{1}{2}}$,
we may use  (\ref{eq:besteA}).
Concerning $\Delta^{\Gamma}h_{M-\frac{1}{2}}^{\epsilon}$, we obtain
\begin{align*}
 & \epsilon^{M+\frac{1}{2}}\!\int_{\Gamma(\delta;T_{\epsilon})}\big|\Delta^{\Gamma}h_{M-\frac{1}{2}}^{\epsilon}\partial_{\rho}c_{1}R\big|\d (x,t)\\
  & \leq C\epsilon^{M+1}\big\Vert \big(\partial_{s}^{2}h_{M-\frac{1}{2}}^{\epsilon},\partial_{s}h_{M-\frac{1}{2}}^{\epsilon}\big)\big\Vert _{L^{\infty}(0;T_{\epsilon};L^{2}(\mathbb{T}^1))}\!\Vert R\Vert _{L^{2}(\Omega_{T_{\epsilon}})}\cdot\Big\Vert \sup_{(x,t)\in\Gamma (2\delta;T_{0})}|\partial_{\rho}c_{1}(.,x,t)|\Big\Vert _{L^{2}(\mathbb{R})}\\
 & \leq C(K)\epsilon^{2M+\frac{1}{2}},
\end{align*}
where we used $\partial_{\rho}c_{1}\in\mathcal{R}_{\alpha}$, $X_{T}\hookrightarrow C^{0}([0,T];H^{2}(\mathbb{T}^1))$
(cf.\ Proposition \ref{embedding}.2) and the $L^{2}$-estimate
for $R$ in (\ref{eq:Main-est}).

In $\Omega_{T_{\epsilon}}\backslash(\Gamma(2\delta;T_{\epsilon})\cup\partial_{T_{\epsilon}}\Omega(\delta))$,
we have $\rh=\rhO$ with $\rhO$ as in (\ref{eq:CH2-remO}). For that,
we obtain (exemplarily in $\Omega^{+}(t)$)
\begin{align*}
  &\int_{0}^{T_{\epsilon}}\int_{\Omega^{+}(t)\backslash\Gamma_{t}(2\delta)}\big|\epsilon^{M-\frac{1}{2}}\mu_{M-\frac{1}{2}}^{+,\epsilon}R\big|\d x\d t\\
  & \quad \leq CT_{\epsilon}^{\frac{1}{3}}\epsilon^{M-\frac{1}{2}}\big\Vert \mu_{M-\frac{1}{2}}^{+,\epsilon}\big\Vert _{L^{6}(0,T_{\epsilon};L^{2}(\Omega^{+}(t)))}\Vert R\Vert _{L^{2}(L^{2}(\Omega_{T_{\epsilon}}\backslash\Gamma(\delta;T_{\epsilon})))} \leq C(K)T_{\epsilon}^{\frac{1}{3}}\epsilon^{2M},
\end{align*}
where we used  (\ref{eq:muv0,5est}) and (\ref{eq:Main-est}). As
$c_{i}^{\pm}\in L^{\infty}(\Omega_{T_{0}}^{\pm})$ for
all $i\in\left\{ 0,\ldots,M+1\right\} $, a similar estimate follows
by (\ref{eq:Main-est}) for the remaining terms in $\rhO$
(cf.\ Remark \ref{Outer-Rem} for the $\tilde{f}$ term). In $\partial_{T_{\epsilon}}\Omega(\frac{\delta}{2})$,
it holds $\rh=\rhb$ and we may proceed as in $\Omega_{T_{\epsilon}}\backslash\left(\Gamma\left(2\delta;T_{\epsilon}\right)\cup\partial_{T_{\epsilon}}\Omega(\delta)\right)$.

In $\Gamma(2\delta;T_{\epsilon})\backslash\Gamma(\delta;T_{\epsilon})$,
we have 
\begin{align}
\rh & =\xi(d_{\Gamma})(\epsilon\Delta c_{I}+\mu_{I})+(1-\xi(d_{\Gamma}))\left(\epsilon\Delta c_{O}+\mu_{O}\right)-\epsilon^{-1}f'(c_{A}^{\epsilon})\nonumber \\
 & \quad+\epsilon\left(\left(c_{I}-c_{O}\right)\left(\xi''(d_{\Gamma})+\xi'(d_{\Gamma})\Delta d_{\Gamma}\right)+\epsilon2\xi'(d_{\Gamma})\mathbf{n}\cdot\nabla\left(c_{I}-c_{O}\right)\right).\label{eq:pt2}
\end{align}
The estimate for the second line in (\ref{eq:pt2}) follows by similar
arguments as in the proof of (\ref{eq:remcahn}), by using Corollary
\ref{cor:matching}.

Using a Taylor expansion, we can rewrite the first line of (\ref{eq:pt2})
as
\begin{align}
\xi(d_{\Gamma}) & \rhi+(1-\xi(d_{\Gamma}))(\rhO)\nonumber \\
 & +\epsilon^{-1}\left(c_{O}-c_{I}\right)\xi(d_{\Gamma})(1-\xi(d_{\Gamma}))\left(-f''(\sigma_{2}(c_{A}^{\epsilon},c_{O}))+f''(\sigma_{1}(c_{A}^{\epsilon},c_{I}))\right),\label{eq:pt2,2}
\end{align}
where $\sigma_{1/2}(c_{A}^{\epsilon},c_{I/O})$ are suitable
intermediate points. Now $c_{A}^{\epsilon},c_{O},c_{I}\in L^{\infty}(\Gamma(2\delta;T_{0})\backslash\Gamma(\delta;T_{0}))$
uniformly in $\epsilon$ and thus $|f''(\sigma_{1})|,|f''(\sigma_{2})|\leq C$.
As a consequence of Corollary \ref{cor:matching}, we may estimate
the last part in (\ref{eq:pt2,2}) as before and the term involving
$\rhO$ as in the case of $\Omega_{T_{\epsilon}}\backslash\left(\Gamma(2\delta;T_{\epsilon})\cup\partial_{T_{\epsilon}}\Omega(\delta)\right)$.
Regarding $\rhi$, although we may not use the decomposition of $R$
anymore (Proposition \ref{RZerlprop} only holds in $\Gamma(\delta;T_{\epsilon})$),
we may now use $\Vert R\Vert _{L^{2}(0,T_{\epsilon};L^{2}(\Omega\backslash\Gamma_{t}(\delta)))}\leq C(K)\epsilon^{M+\frac{1}{2}}$
due to (\ref{eq:Main-est-a}). Thus, all terms in $\rhi$, which can
be estimated in $L^{\infty}(\Gamma(2\delta;T_{\epsilon}))$
(uniformly in $\epsilon$), are of no concern. This leaves us with
terms involving $\Delta^{\Gamma}h_{M-\frac{1}{2}}^{\epsilon}$ (which
may be treated as before) and $\xi(d_{\Gamma})\epsilon^{M-\frac{1}{2}}A^{M-\frac{1}{2}}$
since (\ref{eq:besteA}) only holds inside $\Gamma(\delta;T_{\epsilon})$.
According to (\ref{eq:Main-est}) and Lemma \ref{ABstruc} 1) we may
estimate 
\begin{align*}
\epsilon^{M-\frac{1}{2}}\int_{\Gamma(2\delta;T_{\epsilon})\backslash\Gamma(\delta;T_{\epsilon})}\big|A^{M-\frac{1}{2}}R\big|\d(x,t) & \leq C(K)\epsilon^{2M}\sum_{k=1}^{L_{1}}\big\Vert \mathtt{A}_{k}^{1}\big\Vert _{L^{2}(0,T_{\epsilon};L^{2}(\Gamma_{t}(2\delta)))} \leq C(K)\epsilon^{2M}T_{\epsilon}^{\frac{1}{3}}.
\end{align*}
The situation in $\partial_{T_{\epsilon}}\Omega(\delta)\backslash\partial_{T_{\epsilon}}\Omega(\frac{\delta}{2})$
heavily resembles (\ref{eq:pt2}) and the estimate follows in a similar
way as for (\ref{eq:pt2}). Thus, we have estimated all terms in $\rh$.

\noindent
\textbf{Proof of (\ref{eq:drei}):} The approach to show (\ref{eq:remstokes})
is very similar to the one used for (\ref{eq:remcahn}): We have $\rs=\rsi$
in $\Gamma(\delta;T_{\epsilon})$ with $\rsi$ as in (\ref{eq:ST-rem})
and may then use  Lemma~\ref{rest} and Lemma~\ref{beste} (more
precisely (\ref{eq:besteV})) to get the estimate in $\Gamma(\delta;T_{\epsilon})$.
In $\Omega_{T_{\epsilon}}\backslash\left(\Gamma(2\delta;T_{\epsilon})\cup\partial_{T_{\epsilon}}\Omega(\delta)\right)$
we have $\rs=\rso$ and we may simply estimate the occurring terms
in $L^{\infty}(\Omega_{T_{0}})$ or with the help of (\ref{eq:muv0,5est}).
In $\partial_{T_{\epsilon}}\Omega(\frac{\delta}{2})$ it
holds $\rs=\rsb$, allowing for a similar approach as for the outer
remainder.

In $\Gamma\left(2\delta;T_{\epsilon}\right)\backslash\Gamma\left(\delta,T_{\epsilon}\right)$,
we have 
\begin{align}
\rs & =\xi(d_{\Gamma})\rsi+\left(1-\xi\left(d_{\Gamma}\right)\right)\rso-\left(\xi'\left(d_{\Gamma}\right)\Delta d_{\Gamma}+\xi''\left(d_{\Gamma}\right)\right)\left(\mathbf{v}_{I}-\mathbf{v}_{O}\right)\nonumber \\
 & \quad-2\xi'\left(d_{\Gamma}\right)D\left(\mathbf{v}_{I}-\mathbf{v}_{O}\right)\mathbf{n}+\xi'\left(d_{\Gamma}\right)\mathbf{n}\left(p_{I}-p_{O}\right)-\mu_{A}^{\epsilon}\xi'\left(d_{\Gamma}\right)\mathbf{n}\left(c_{I}-c_{O}\right)\nonumber \\
 & \quad+\left(-\mu_{A}^{\epsilon}\left(\xi\left(d_{\Gamma}\right)\nabla c_{I}+\left(1-\xi\left(d_{\Gamma}\right)\right)\nabla c_{O}\right)+\xi\left(d_{\Gamma}\right)\mu_{I}\nabla c_{I}+\left(1-\xi\left(d_{\Gamma}\right)\right)\mu_{O}\nabla c_{O}\right).\label{eq:ptstokesrem}
\end{align}
To estimate $\rsi$, we may use Lemma~\ref{rest}  inside
$\Gamma\left(\delta;T_{\epsilon}\right)$ again, but have to be careful
when estimating $\epsilon^{M-\frac{3}{2}}\xi(d_{\Gamma})\big(\mathbf{V}^{M-\frac{1}{2}}\big)\mathbf{z}$
since (\ref{eq:besteV}) cannot be used. But, as for $\rci$, we can
get the desired inequality in $\Gamma(2\delta;T_{\epsilon})\backslash\Gamma(\delta,T_{\epsilon})$
by using an approach analogous to (\ref{eq:pt1}), which is possible
since Lemma~\ref{ABstruc}.3 guarantees $\mathtt{V}_{k}^{2}\in\mathcal{R}_{\alpha}$.
$\rso$ may be treated as in $\Omega_{T_{\epsilon}}\backslash(\Gamma(2\delta;T_{\epsilon})\cup\partial_{T_{\epsilon}}\Omega(\delta))$
and due to Corollary~\ref{cor:matching} we get the right estimate
for the terms involving $\mathbf{v}_{I}-\mathbf{v}_{O}$,
$\nabla\left(\mathbf{v}_{I}-\mathbf{v}_{O}\right)$, $p_{I}-p_{O}$
and $c_{I}-c_{O}$.

Regarding the last line of (\ref{eq:ptstokesrem}), we have 
\begin{align*}
(-\mu_{A}^{\epsilon}+\mu_{I})\nabla c_{I} & =(1-\xi(d_{\Gamma}))(\mu_{I}-\mu_{O})\nabla c_{I}\\
(-\mu_{A}^{\epsilon}+\mu_{O})\nabla c_{O} & =\xi(d_{\Gamma})(\mu_{O}-\mu_{I})\nabla c_{O},
\end{align*}
allowing to apply Corollary \ref{cor:matching}. As in the
proofs before, the estimates in $\partial_{T_{\epsilon}}\Omega(\delta)\backslash\partial_{T_{\epsilon}}\Omega(\frac{\delta}{2})$
may be shown as in the case $\Gamma(2\delta;T_{\epsilon})\backslash\Gamma(\delta,T_{\epsilon})$.

\noindent
\textbf{Proof of (\ref{eq:vier}):} We observe that in $\Omega_{T_{\epsilon}}\backslash\left(\Gamma(2\delta;T_{\epsilon})\cup\partial_{T_{\epsilon}}\Omega(\delta)\right)$
it holds $\rdivo=0$ by (\ref{eq:DIV-remO}) and thus in particular
$\rdiv=0$ in $\Omega_{T_{\epsilon}}\backslash\left(\Gamma(2\delta;T_{\epsilon})\cup\partial_{T_{\epsilon}}\Omega(\delta)\right)$.
In $\Gamma(2\delta;T_{\epsilon})$ we have 
\[
\rdiv=\xi(d_{\Gamma})\rdivi+\xi'(d_{\Gamma})\mathbf{n}\cdot(\mathbf{v}_{I}-\mathbf{v}_{O}).
\]
As before, we can treat the term $\xi'(d_{\Gamma})\mathbf{n}\cdot(\mathbf{v}_{I}-\mathbf{v}_{O})$
by using Corollary \ref{cor:matching}. For $\rdivi$, as defined
in (\ref{eq:DIV-rem}), we first note that we may use (\ref{eq:besteW})
to estimate $\epsilon^{M-\frac{1}{2}}W^{M-\frac{1}{2}}$ suitably.
Moreover, $\operatorname{div}\mathbf{v}_{M+1}\in L^{\infty}(\mathbb{R}\times\Gamma (2\delta;T_{0}))$
by construction and to estimate the products $\partial_{\rho}\mathbf{v}_{i}\cdot\nabla^{\Gamma}h_{j+1}$,
where $i+j\geq M+\frac{1}{2}$, we use that $\left\Vert \partial_{\rho}\mathbf{v}_{i}\right\Vert _{L^{2}(\Gamma(2\delta;T_{\epsilon}))}\big\Vert \nabla^{\Gamma}h_{j+1}\big\Vert _{L^{\infty}(\Gamma(2\delta;T_{\epsilon}))}\leq C(K)$
for all $i\in I_{M-\frac{1}{2}}^{M+1}$, $j\in I_{M-\frac{3}{2}}^{M}$,
due to construction in the case of $i,j\in\left\{ 0,\ldots,M\right\} $
and $i=M+1$ and due to (\ref{eq:heps0,5}) resp.\ (\ref{eq:muv0,5est})
in the case of $j=M-\frac{3}{2}$ resp.\ $i=M-\frac{1}{2}$. Similarly,
we get $\left\Vert \mathbf{u}_{i}\cdot\mathbf{n}\right\Vert _{L^{2}\left(\Gamma\left(2\delta;T_{\epsilon}\right)\right)}\left\Vert h_{j+1}\right\Vert _{L^{\infty}\left(\left(0,T_{\epsilon}\right)\times\mathbb{T}^{1}\right)}\leq C(K)$,
where we obtain an $L^{2}-L^{2}$ estimate for $\mathbf{u}_{M-\frac{1}{2}}^{\epsilon}$
in the same way as in (\ref{eq:lepspeps-1}). The other terms
appearing in the definition of $\rdivi$ can then be treated in the
same way. 
In $\partial_{T_{\epsilon}}\Omega(\delta)$, we finally
have 
\[
\rdiv=\xi(2d_{\mathbf{B}})\rdivb+2\xi'(2d_{\mathbf{B}})\mathbf{n}_{\partial\Omega}\cdot (\mathbf{v}_{\mathbf{B}}-\mathbf{v}_{O})
\]
 and the form of $\rdivb$ together with Corollary \ref{cor:matching}
implies the estimate. Thus, we have proven the claim.
\end{proof}
\begin{lem}
\label{lem:Rem-More-Estimates}Let the assumptions of Theorem \ref{remainder}
hold. Then there are $\epsilon_{2}\in(0,\epsilon_{1}]$
and a constant $C(K)>0$ such that for all $\epsilon\in(0,\epsilon_{2})$
\begin{align*}
\left\Vert \rh\nabla c_{A}^{\epsilon}\right\Vert _{L^{2}\left(0,T_{\epsilon};\left(H^{1}(\Omega)^{2}\right)'\right)} & \leq C(K)C(T_{\epsilon},\epsilon)\epsilon^{M}\\
\left\Vert \rc\right\Vert _{L^{2}\left(\partial_{T_{\epsilon}}\Omega\left(\frac{\delta}{2}\right)\right)} & \leq C(K)\epsilon^{M}
\end{align*}
where $C(T,\epsilon)\rightarrow0$ as $(T,\epsilon)\rightarrow0$.
\end{lem}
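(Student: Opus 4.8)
The strategy is the same localization argument used throughout Section~\ref{chap:Estimates-Remainder}: split each remainder into its contributions in the three regions $\Gamma(\delta;T_\epsilon)$, the ``outer'' region $\Omega_{T_\epsilon}\setminus(\Gamma(2\delta;T_\epsilon)\cup\partial_{T_\epsilon}\Omega(\delta))$ and the intermediate/boundary collars, and estimate each piece using the structural representations \eqref{eq:CH1-rem}--\eqref{eq:DIV-remB} together with Lemma~\ref{ABstruc}, Lemma~\ref{Wichtig}, Proposition~\ref{matchit} and Corollary~\ref{cor:matching}. For the first estimate, $\rh\nabla c_A^\epsilon$, I would test against $\mathbf z\in L^2(0,T_\epsilon;H^1(\Omega)^2)$; the crucial point is that $\nabla c_A^\epsilon$ is bounded by $C/\epsilon$ only inside $\Gamma(2\delta)$ (and by $C\epsilon$ outside, cf.\ \eqref{eq:nabla-caeps-Linf}), so the only dangerous terms are those where $\rh$ is multiplied by the inner gradient $\tfrac1\epsilon\partial_\rho c_I\,\mathbf n$. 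Writing $\rh\nabla c_A^\epsilon$ in $\Gamma(\delta;T_\epsilon)$ via \eqref{eq:CH2-rem}, every term of $\rhi$ that can be controlled in $L^\infty(\Gamma(2\delta;T_0))$ contributes, after multiplication by $\nabla c_A^\epsilon$ and integration, a factor $\epsilon\cdot\tfrac1\epsilon=1$ against $\|\mathbf z\|_{L^2(H^1)}$ — but the exponential decay of $\partial_\rho c_k\in\mathcal R_\alpha$ gains the missing $\epsilon$ via Lemma~\ref{Linfeig} (or \cite[Corollary~2.7]{nsac}), exactly as in the Steps~1--5 of the proof of Lemma~\ref{rech1}; the fractional-order contributions ($\mu_{M-\frac12}^\epsilon$, $\mathbf v_{M-\frac12}^\epsilon$, $p_{M-\frac12}^\epsilon$, $l_{M-\frac12}^\epsilon$) are handled by \eqref{eq:muv0,5est}, \eqref{eq:heps0,5} and the mean-value trick \eqref{eq:lepspeps-1}. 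The term $\epsilon^{M-\frac12}A^{M-\frac12}\nabla c_A^\epsilon$ is the genuinely new difficulty and is treated by Lemma~\ref{ABstruc}.1: decomposing $A^{M-\frac12}=\sum_k\mathtt A_k^1\mathtt A_k^2$ with $\mathtt A_k^2$ bounded and exponentially integrable against $\theta_0'$, one writes $\nabla c_A^\epsilon=\tfrac1\epsilon\partial_\rho c_I\,\mathbf n+O(1)$, pairs $\mathtt A_k^2$ with $\partial_\rho c_0=\theta_0'$ to invoke \eqref{eq:expabA}, and picks up $\epsilon^{M-\frac12}\cdot\tfrac1\epsilon\cdot\epsilon^{1/2}\cdot C(K)T_\epsilon^{1/3}=C(K)T_\epsilon^{1/3}\epsilon^{M}$ from the $\mathtt A_k^1$ estimate \eqref{eq:a1est}; the $(\,\cdot\,)^\Gamma$-correction terms supply the extra factor $\epsilon$ exactly as in the proof of \eqref{eq:besteA}. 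The dependence $C(T_\epsilon,\epsilon)\to0$ comes precisely from these $T_\epsilon^{1/3}$ (resp.\ $T_\epsilon^{1/2}$) and $\epsilon^{1/2}$ gains.

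\textbf{The outer and intermediate regions.} In $\Omega_{T_\epsilon}\setminus(\Gamma(2\delta;T_\epsilon)\cup\partial_{T_\epsilon}\Omega(\delta))$ we have $\rh=\rhO$ from \eqref{eq:CH2-remO}; here $\nabla c_O=O(\epsilon)$ in $L^\infty(\Omega_{T_0}^\pm)$ by \eqref{eq:c0out}, so $\rhO\nabla c_O$ is $O(\epsilon^{M+\frac32})+\epsilon^{M-\frac12}\mu_{O,M-\frac12}\nabla c_O$, and the fractional factor is estimated by \eqref{eq:muv0,5est} together with $\|\nabla c_O\|_{L^\infty}\le C\epsilon$, giving $C(K)T_\epsilon^{1/3}\epsilon^{M+1/2}$ (using $L^6(L^2)\hookleftarrow$ and Hölder in time). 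Near $\partial\Omega$, in $\partial_{T_\epsilon}\Omega(\tfrac\delta2)$ we have $\rh=\rhb$ and $\nabla c_{\mathbf B}=O(\epsilon)$ by Corollary~\ref{cor:bdrycond}, so the same bound applies; in the collar $\partial_{T_\epsilon}\Omega(\delta)\setminus\partial_{T_\epsilon}\Omega(\tfrac\delta2)$ and in $\Gamma(2\delta;T_\epsilon)\setminus\Gamma(\delta;T_\epsilon)$, the gluing produces — besides $\xi$-derivative terms controlled by Corollary~\ref{cor:matching} with their exponentially small bounds — a contribution $\xi(d_\Gamma)\epsilon^{M-\frac12}A^{M-\frac12}\nabla c_A^\epsilon$, which is handled as in the last display of the proof of \eqref{eq:zwei}: $A^{M-\frac12}$ is split via Lemma~\ref{ABstruc}.1 on $\mathbb R\times\Gamma(2\delta)$, and since $\delta/\epsilon-h_A^\epsilon\ge\delta/(2\epsilon)$ there, the $\mathtt A_k^2$ (being bounded) are simply estimated in $L^\infty$, whereas $\nabla c_A^\epsilon$ is now $O(1/\epsilon)$ but multiplied only by terms of overall order $\epsilon^{M-\frac12}\cdot\epsilon^{1/2}$ from \eqref{eq:a1est} and the volume factor, giving $C(K)\epsilon^{M}$ with no loss.

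\textbf{The boundary $L^2$-estimate.} For $\|\rc\|_{L^2(\partial_{T_\epsilon}\Omega(\delta/2))}$ we note that in $\partial_{T_\epsilon}\Omega(\tfrac\delta2)$ one has $\xi(2d_{\mathbf B})\equiv1$, hence $\rc=\rcb$ with $\rcb$ given by \eqref{eq:CH1-remB}. The ``$O(\epsilon^M)$'' part is a finite sum of terms of the form $\partial_t c_k^{\mathbf B},\ \Delta\mu_k^{\mathbf B},\ \mathbf v_i^{\mathbf B}\cdot\nabla c_j^{\mathbf B}$ evaluated at $z=d_{\mathbf B}/\epsilon$, each of which is bounded in $L^\infty(\partial_{T_0}\Omega(\delta))$ by construction (smoothness of the boundary-layer profiles and their exponential decay via \eqref{eq:matchcon-bdry}); since $\partial_{T_\epsilon}\Omega(\tfrac\delta2)$ has finite measure, this gives $C\epsilon^M$. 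The remaining terms carry the fractional factor $\mathbf v_{M-\frac12}^-\cdot\nabla c_j^{\mathbf B}$ or $\mathbf v_{M-\frac12}^-\cdot\nabla d_{\mathbf B}\,\partial_z c_j^{\mathbf B}$, with total $\epsilon$-power at least $\epsilon^{M+\frac12}$; using the trace/extension continuity of $\mathbf v_{M-\frac12}^{-,\epsilon}$ and the $L^6(0,T_\epsilon;H^2(\Omega^-(t)))$-bound in \eqref{eq:muv0,5est} (or already the $L^2(L^2)$-bound, which suffices for an $L^2$-norm in space-time after Hölder in $t$), together with the uniform $L^\infty$ bounds on $\nabla c_j^{\mathbf B}$ and $\partial_z c_j^{\mathbf B}$, one obtains $C(K)\epsilon^{M+\frac12}$. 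Collecting these yields $\|\rc\|_{L^2(\partial_{T_\epsilon}\Omega(\delta/2))}\le C(K)\epsilon^M$.

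\textbf{Main obstacle.} The only delicate point, as everywhere in this section, is the coupling of the low-order factor $\epsilon^{M-\frac12}A^{M-\frac12}$ with the singular gradient $\nabla c_A^\epsilon\sim\tfrac1\epsilon\partial_\rho c_I$ inside $\Gamma(2\delta)$: naively this produces $\epsilon^{M-\frac32}$, a full power short of the target. Recovering the two missing powers of $\epsilon^{1/2}$ is exactly where Lemma~\ref{ABstruc}.1 is indispensable — the $\rho$-factors $\mathtt A_k^{2,\Gamma}$ integrate against $\theta_0'$ with exponential gain \eqref{eq:expabA}, contributing one $\epsilon^{1/2}$ from the change of variables in the normal direction (Lemma~\ref{Linfeig}), and the ``tangential-difference'' decomposition $f-f|_\Gamma=O(d_\Gamma)$ yields the second; these are precisely the mechanisms already deployed in the proof of \eqref{eq:besteA}, so the present lemma reduces to a careful bookkeeping of which of those two gains is available for each summand of $A^{M-\frac12}\nabla c_A^\epsilon$.
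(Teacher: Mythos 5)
Your proposal is correct and follows essentially the same route as the paper: pairing $\rh\nabla c_A^\epsilon$ with an $H^1$ test function, isolating the singular inner contribution $\epsilon^{M-\frac12}A^{M-\frac12}\cdot\tfrac1\epsilon\theta_0'\mathbf{n}$, and recovering the missing powers via the exponential cancellation \eqref{eq:expabA} together with the $(\,\cdot\,)^\Gamma$-Taylor corrections, exactly as in the paper's splitting into $\mathcal I_1^1+\mathcal I_1^2+\mathcal I_2$; the boundary $L^2$-estimate for $\rc$ is likewise handled as in the paper. One small bookkeeping slip: the displayed product $\epsilon^{M-\frac12}\cdot\tfrac1\epsilon\cdot\epsilon^{1/2}\cdot C(K)T_\epsilon^{1/3}$ equals $C(K)T_\epsilon^{1/3}\epsilon^{M-1}$ as written — you have omitted the Jacobian factor $\epsilon$ from the change of variables $r=\epsilon(\rho+h_A^\epsilon)$, which restores the claimed $\epsilon^M$ and is what the paper's computation actually uses.
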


\begin{proof}
We start by showing (\ref{eq:rch2-nablacae}). For $\psi\in H^{1}(\Omega)^{2}$,
we consider 
\begin{equation}
\Big|\int_{\Omega}\rh\nabla c_{A}^{\epsilon}\cdot\psi\d x\Big|\leq\Big|\,\int_{\Gamma_{t}(\delta)}\rhi\nabla c_{A}^{\epsilon}\cdot\psi\d x\Big|+\Big|\,\int_{\Omega\backslash\Gamma_{t}(\delta)}\rh\nabla c_{A}^{\epsilon}\cdot\psi\d x\Big|\label{eq:helf2}
\end{equation}
and begin with analyzing the integral over $\Gamma_{t}(\delta)$.
First of all, we note that 
\begin{equation}
\nabla c_{A}^{\epsilon}=\theta_{0}'(\tfrac{1}{\epsilon}\mathbf{n}-\nabla^{\Gamma}h_{A}^{\epsilon})+\partial_{\rho}c_{1}\mathbf{n}+\mathcal{O}(\epsilon)\label{eq:gradcaepsstruc}
\end{equation}
 in $L^{\infty}(\Gamma(\delta;T_{\epsilon}))$
by construction and the fact that $\big\Vert \nabla^{\Gamma}h_{A}^{\epsilon}\big\Vert _{L^{\infty}(\Gamma(2\delta;T_{\epsilon}))}\leq C(K)$
by (\ref{eq:haepglm}). Thus, for all terms $g\colon\Gamma(2\delta)\rightarrow\mathbb{R}$
appearing in $\rhi$, which are multiplied by at least $\epsilon^{M}$
and which may be estimated in $L^{\infty}(\Gamma(2\delta;T_{\epsilon}))$
uniformly in $\epsilon$, we may use the estimate 
\begin{align*}
\Big\Vert \,\int_{\Gamma_{t}(\delta)}g\theta_{0}'\big(\tfrac{1}{\epsilon}\mathbf{n}-\nabla^{\Gamma}h_{A}^{\epsilon}\big)\cdot\psi\d x\Big\Vert _{L^{2}\left(0,T_{\epsilon}\right)} & \leq C(K)T_{\epsilon}^{\frac{1}{2}}\left\Vert g\right\Vert _{L^{\infty}\left(\Gamma\left(2\delta;T_{\epsilon}\right)\right)}\left\Vert \psi\right\Vert _{H^{1}(\Omega)},
\end{align*}
where we used $H^{1}\left(\Gamma_{t}(2\delta)\right)\hookrightarrow L^{2,\infty}\left(\Gamma_{t}(2\delta)\right)$
and the exponential decay of $\theta_{0}'$. As
discussed in the proof of Theorem \ref{remainder}, a similar approach
also works for the terms involving $\Delta^{\Gamma}h_{M-\frac{1}{2}}$.

Thus we have to show
\begin{equation}
\epsilon^{M-\frac{1}{2}}\Big\Vert \,\int_{\Gamma_{t}(\delta)}A^{M-\frac{1}{2}}\nabla c_{A}^{\epsilon}\cdot\psi\d x\Big\Vert _{L^{2}\left(0,T_{\epsilon}\right)}\leq C(K)C(T_{\epsilon},\epsilon)\epsilon^{M}\Vert \psi\Vert _{H^{1}(\Omega)}.\label{eq:helf1}
\end{equation}
To this end we will use the same notations as discussed right at the
beginning of the proof of Lemma \ref{beste}. We will first consider
$\frac{1}{\epsilon}\theta_{0}'\mathbf{n}$ instead of $\nabla c_{A}^{\epsilon}$.
Using the fundamental theorem of calculus we have $\psi(r,s)=\psi(0,s)+\int_{0}^{r}\partial_{\mathbf{n}}\psi(\tilde{r},s)\d r$
for $(r,s,t)\in(-\delta,\delta)\times\mathbb{T}^{1}$
and write
\begin{align*}
&\Big|\,\int_{\Gamma_{t}(\delta)}A^{M-\frac{1}{2}}\frac{1}{\epsilon}\theta_{0}'\mathbf{n}\cdot\psi\d x\Big|  \leq\frac{1}{\epsilon}\int_{\mathbb{T}^{1}}\left|\psi(0,s)\right|\Big|\int_{-\delta}^{\delta}A^{M-\frac{1}{2}}\big|_{\Gamma}\theta_{0}'J(r,s,t)\d r\Big|\d s\\
 & \quad+\frac{C_{1}}{\epsilon}\int_{\mathbb{T}^{^{1}}}\int_{-\delta}^{\delta}\Big|A^{M-\frac{1}{2}}|_{\Gamma}\theta_{0}'\int_{0}^{r}\partial_{\mathbf{n}}\psi(\tilde{r},s,t)\d\tilde{r}\Big|\d r\d s+\frac{C_{2}}{\epsilon}\int_{\mathbb{T}^{1}}\int_{-\delta}^{\delta}\Big|\big(A^{M-\frac{1}{2}}\big)^{\Gamma}\theta_{0}'\psi\Big|\d r\d s\\
 & \quad =:\mathcal{I}_{1}^{1}+\mathcal{I}_{1}^{2}+\mathcal{I}_{2}.
\end{align*}
By Lemma \ref{ABstruc} (after choosing $\epsilon>0$ small enough
such that (\ref{eq:epsklein}) holds), we may estimate
\begin{align*}
\mathcal{I}_{1}^{1} & \leq\sum_{k=1}^{L_{2}}\int_{\mathbb{T}^{1}}\big|\psi(0,s)\mathtt{A}_{k}^{1,\Gamma}\big|\Big|\int_{-\frac{\delta}{\epsilon}-h_{A}^{\epsilon}}^{\frac{\delta}{\epsilon}-h_{A}^{\epsilon}}\mathtt{A}_{k}^{2,\Gamma}\theta_{0}'J^{\epsilon}\d\rho\Big|\d s \leq C_{1}\Vert \psi\Vert _{H^{1}(\Omega)}\big\Vert \mathtt{A}_{k}^{1,\Gamma}\big\Vert _{L^{2}(\Gamma_{t})}\big(e^{-C_{2}\frac{\delta}{\epsilon}}+C(K)\epsilon\big)
\end{align*}
and thus get $\left\Vert \mathcal{I}_{1}^{1}\right\Vert _{L^{2}\left(0,T_{\epsilon}\right)}\leq C(K)\epsilon\left\Vert \psi\right\Vert _{H^{1}(\Omega)}$
due to (\ref{eq:a1est}). 
Concerning $\mathcal{I}_{1}^{2}$, we have 
\begin{align*}
  \Vert \mathcal{I}_{1}^{2}\Vert _{L^{2}(0,T_{\epsilon})} & \leq\Big\Vert \frac{1}{\epsilon}\int_{\mathbb{T}^{1}}\Vert \psi\Vert _{H^{1}(-\delta,\delta)}\int_{-\delta}^{\delta}\big|A^{M-\frac{1}{2}}\big|_{\Gamma}\theta_{0}'r^{\frac{1}{2}}\big|\d r\d s\Big\Vert _{L^{2}\left(0,T_{\epsilon}\right)}\\
 & \leq C(K)T_{\epsilon}^{\frac{1}{3}}\epsilon^{\frac{1}{2}}\sum_{k=1}^{L_{2}}\Vert \psi\Vert _{H^{1}(\Omega)}\big\Vert \mathtt{A}_{k}^{1,\Gamma}\big\Vert _{L^{6}(0,T_{\epsilon};L^{2}(\Gamma_{t}))}
\end{align*}
and may use (\ref{eq:a1est}). Here we used $\big\Vert \mathtt{A}_{k}^{2,\Gamma}\big\Vert _{L^{\infty}(\mathbb{R})}\leq C$
for all $k\in\left\{ 1,\ldots,L_{2}\right\} $.

For $\mathcal{I}_{2}$, we need to consider the explicit structure
of $A^{M-\frac{1}{2}}$ and show only two exemplary estimates, all
others follow along the same lines. Firstly, we consider the term $\big(\mu_{M-\frac{1}{2}}^{\epsilon}\big)^{\Gamma}$
appearing in $\big(A^{M-\frac{1}{2}}\big)^{\Gamma}$ (see also
(\ref{eq:mum0,5+1}) for the detailed structure):
\begin{align*}
\frac{1}{\epsilon}\int_{\mathbb{T}^{1}}\int_{-\delta}^{\delta} & \Big|\big(\mu_{M-\frac{1}{2}}^{+,\epsilon}\big)^{\Gamma}\eta\theta_{0}'\psi\Big|\d r\d s\\
 & \leq C\int_{\mathbb{T}^{1}}\sup_{r\in\left(-\delta,\delta\right)}|\psi(r,s)|\sup_{r\in(-\delta,\delta)}\big|\partial_{\mathbf{n}}\mu_{M-\frac{1}{2}}^{+,\epsilon}(r,s,t)\big|\int_{-\frac{\delta}{\epsilon}-h_{A}^{\epsilon}}^{\frac{\delta}{\epsilon}-h_{A}^{\epsilon}}\left|\epsilon(\rho+h_{A}^{\epsilon})\right||\theta_{0}'|\d\rho\d s\\
 & \leq C(K)\epsilon\Vert \psi\Vert _{H^{1}(\Omega)}\big\Vert \mu_{M-\frac{1}{2}}^{+,\epsilon}\big\Vert _{H^{2}(\Omega^{+}(t))}.
\end{align*}
The estimate for $\big(\mu_{M-\frac{1}{2}}^{-,\epsilon}\big)^{\Gamma}\eta$
follows analogously. 

Secondly, we consider the term $\big(|\nabla S(x,t)|^{2}\big)^{\Gamma}\partial_{s}^{2}h_{M-\frac{1}{2}}^{\epsilon}(S(x,t),t)$,
as all other occurring terms in $\big(A^{M-\frac{1}{2}}\big)^{\Gamma}$
consist of lower derivatives of $h_{M-\frac{1}{2}}^{\epsilon}$ and
can be treated in the same way. Using similar techniques as in the
estimate above, we get 
\[
\frac{1}{\epsilon}\int_{\mathbb{T}^{1}}\int_{-\delta}^{\delta}\big|\big(|\nabla S|^{2}\big)^{\Gamma}\partial_{s}^{2}h_{M-\frac{1}{2}}^{\epsilon}\theta_{0}'\psi\big|\d r\d s\leq C(K)\epsilon \Vert \psi\Vert _{H^{1}(\Omega)}\big\Vert h_{M-\frac{1}{2}}^{\epsilon}\big\Vert _{H^{2}(\mathbb{T}^1)}.
\]
Thus, we get by (\ref{eq:heps0,5}) and (\ref{eq:muv0,5est}) $\left\Vert \mathcal{I}_{2}\right\Vert _{L^{2}\left(0,T_{\epsilon}\right)}\leq C(K)\epsilon\left\Vert \psi\right\Vert _{H^{1}(\Omega)}.$
Altogether we obtain
\[
\epsilon^{M-\frac{1}{2}}\Big\Vert \,\int_{\Gamma_{t}(\delta)}A^{M-\frac{1}{2}}\frac{1}{\epsilon}\theta_{0}'\mathbf{n}\cdot\psi\d x\Big\Vert _{L^{2}(0,T_{\epsilon})}\leq C(K)C(T_{\epsilon},\epsilon)\epsilon^{M}\Vert \psi\Vert _{H^{1}(\Omega)}.
\]
Regarding (\ref{eq:helf1}), we conclude
\begin{align*}
\Big|\,\int_{\Gamma_{t}(\delta)}A^{M-\frac{1}{2}}\theta_{0}'\nabla^{\Gamma}h_{A}^{\epsilon}\cdot\psi\d x\Big| & \leq C(K)\sum_{k=1}^{L_{1}}\big\Vert \mathtt{A}_{k}^{1}\big\Vert _{L^{2}(\Gamma_{t}(2\delta))}\Vert \psi\Vert _{L^{2,\infty}(\Gamma_{t}(2\delta))}\epsilon^{\frac{1}{2}}\Vert \theta_{0}'\Vert _{L^{2}(\mathbb{R})}
\end{align*}
by Lemma \ref{ABstruc}. As $\partial_{\rho}c_{1}\in\mathcal{R}_{\alpha}$
and all other terms appearing in $\nabla c_{A}^{\epsilon}$ are already
of higher order in $\epsilon$  (see (\ref{eq:gradcaepsstruc})).
This proves (\ref{eq:helf1}) and as a consequence also 
\[
\Big\Vert \,\int_{\Gamma_{t}(\delta)}\rhi\nabla c_{A}^{\epsilon}\cdot\psi\d x\Big\Vert _{L^{2}(0,T_{\epsilon})}\leq C(K)C(T,\epsilon)\epsilon^{M}\Vert \psi\Vert _{H^{1}(\Omega)}.
\]
In view of (\ref{eq:helf2}), we still need to consider $\big|\int_{\Omega\backslash\Gamma_{t}(\delta)}\rh\nabla c_{A}^{\epsilon}\cdot\psi\d x\big|$.
But this term may be treated with similar techniques as used in the
proof of (\ref{eq:remhill}). This shows (\ref{eq:rch2-nablacae}).

Finally, (\ref{eq:rch1-rch2-Linfbdry}) follows immediately by noting that
$\rc=\rcb$ in $\partial_{T_{0}}\Omega\left(\frac{\delta}{2}\right)$,
the form of the boundary remainder terms (\ref{eq:CH1-remB}) and
the fact that all occurring terms in those boundary remainders are
either uniformly bounded in $L^{\infty}(\partial_{T_{0}}\Omega(\delta))$
or may be estimated in $L^{2}(\Omega_{T_{\epsilon}}^{-})$
with the help of (\ref{eq:muv0,5est}).
\end{proof}
\begin{proof}[Proof of Theorem \ref{thm:Main-Apprx-Structure}]
 Let the approximations be defined as in Definition \ref{def:apprxsol},
let $\mathbf{w}_{1}^{\epsilon}$ be given as in Theorem \ref{hM-1}
and let $\rc$, $\rh$, $\rs$ and $\rdiv$ be defined via (\ref{eq:Stokesapp})\textendash (\ref{eq:Hilliardapp}).
(\ref{eq:boundaryconditions}) is a result of (\ref{eq:Rem-Dir-Mu})\textendash (\ref{eq:Rem-Navier-v})
and $\rdiv=0$ on $\partial_{T_{0}}\Omega$ of (\ref{eq:DIV-remB}).
The estimates (\ref{eq:remcahn})\textendash (\ref{eq:rch1-rch2-Linfbdry})
are a consequence of Theorem \ref{remainder} and Lemma \ref{lem:Rem-More-Estimates}.
\end{proof}
\section*{Acknowledgement} 
The results of this paper are part of the second author's PhD Thesis. The authors acknowledge support by the SPP 1506 ``Transport Processes
at Fluidic Interfaces'' of the German Science Foundation (DFG) through the grant AB285/4-2.

\providecommand{\bysame}{\leavevmode\hbox to3em{\hrulefill}\thinspace}
\providecommand{\MR}{\relax\ifhmode\unskip\space\fi MR }
\providecommand{\MRhref}[2]{%
  \href{http://www.ams.org/mathscinet-getitem?mr=#1}{#2}
}
\providecommand{\href}[2]{#2}

\end{document}